\DeclareMathAlphabet      {\mathbf}{OT1}{cmr}{bx}{it}
\DeclareMathAlphabet      {\mathbfx}{OT1}{cmr}{bx}{n}
\numberwithin{section}{chapter}
\begin{document}

\hypersetup{pageanchor=false}

\author{\Large\textbf{Aidan Sims}\\\today}
\address{School of Mathematics and Applied Statistics\\University of Wollongong\\Wollongong NSW 2522\\Australia}
\email{asims@uow.edu.au}

\title{\huge\textbf{Hausdorff \'etale groupoids and their $C^*$-algebras}}
\maketitle

\thispagestyle{empty} \cleardoublepage

\hypersetup{pageanchor=true}

\pagenumbering{roman}

\tableofcontents

\section*{Foreword}
These notes were written as supplementary material for a five-hour lecture series
presented at the Centre de Recerca Mathem\`atica at the Universitat Aut\`onoma de
Barcelona from the 13th to the 17th of March 2017. This was part of the Intensive
Research Program \emph{Operator algebras: dynamics and interactions} which ran from March
to July 2017. I thank the CRM for the excellent support, facility, and atmosphere it
provided. The whole IRP provided an exceptional environment for research and research
interactions, and it was a real pleasure to be a part of the enthusiastic and productive
research activity that was going on. The place was really buzzing.

The intention of these notes is to give a brief overview of some key topics in the area
of $C^*$-algebras associated to \'etale groupoids. The scope has been deliberately
contained to the case of \'etale groupoids with the intention that much of the
representation-theoretic technology and measure-theoretic analysis required to handle
general groupoids can be suppressed in this simpler setting. Because these notes are
based on a short course, they feature only a small selection of topics, chosen for their
relevance and interest to participants in the masterclass. My choice to include or omit
any particular topic is not a comment on the interest of that topic in general---it was a
question of developing something consistent and coherent that could sensibly be presented
in a week's worth of lectures, and would hopefully seem sensible in the context of the
masterclass and the other two lecture series being presented. So, for example, I have not
included any discussion here of inverse semigroups and their connections with \'etale
groupoids even though, arguably, inverse semigroups and \'etale groupoids are more or
less inseparable. I apologise wholeheartedly to all those people whose very nice work on
groupoids and groupoid $C^*$-algebras has every right to be discussed in a set of notes
like this but has not been mentioned.

My thanks to those participants in the masterclass who pointed out errors and possible
improvements on the draft version of the notes that was circulated at the time of the
lecture series. Special thanks to Kevin Brix from the University of Copenhagen for a
number of helpful corrections and suggestions, and to Valentin Deaconu for spotting a
number of typos. Also my thanks to CRM and to Birkh\"auser for agreeing to have a copy
posted on the arXiv.

\subsection*{Errata}
There are doubtless errors in these notes, despite my best efforts to weed them out and
the generous help I've had from others. I will maintain an up-to-date list of errata for
these notes as part of a common errata file for these notes and the other two sets of
notes in this volume at \url{http://www.uow.edu.au/~asims/2017crm/errata.pdf}. Please let
me know by email to \href{mailto:asims@uow.edu.au}{asims@uow.edu.au} if you find any
typos or other errors---I'll be very grateful.

\clearpage

\pagenumbering{arabic}\setcounter{page}{1}

\chapter{Introduction}

Groupoids are algebraic objects that behave like a group except that the multiplication
operation is only partially defined. Topological groupoids provide a useful unifying
model for groups and group actions, and equivalence relations induced by continuous maps
between topological spaces. They also provide a good algebraic model for the quotient of
a topological space by a group or semigroup action in instances where the quotient space
itself is, topologically, poorly behaved---for example, the quotient of a shift-space
determined by the shift map, or the quotient of the circle by an irrational rotation.

The collection $\mathcal{G}^{(0)}$ of idempotent elements in a groupoid $\mathcal{G}$ is
called its unit space, since these are precisely the elements $x$ that satisfy $x \gamma
= \gamma$ and $\eta x = \eta$ whenever these products are defined. This leads to a
natural fibred structure of $\mathcal{G}$ over $\mathcal{G}^{(0)}$: the fibre over a unit
$x$ is the collection $\mathcal{G}_x$ of elements $\gamma$ for which the product $\gamma
x$ is defined. If $\mathcal{G}$ is a topological groupoid, then $\mathcal{G}^{(0)}$, as
well as each $\mathcal{G}_x$, is a topological space in the subspace topology, and it is
often helpful to think of the subspaces $\mathcal{G}_x$ as transverse to
$\mathcal{G}^{(0)}$. In the special case that $\mathcal{G}$ is a group, its unit space
has just one element $e$, and then $\mathcal{G} = \mathcal{G}_e$. So the analogues, in
the setting of groupoids, of topological properties of groups, typically involve
corresponding topological conditions on the sets $\mathcal{G}_x$. In particular, the
analogue of a discrete group is a groupoid in which the sets $\mathcal{G}_x$ are all
discrete in a coherent way. More specifically, we ask that the map that sends each
$\gamma$ to the unique element $s(\gamma)$ for which $\gamma s(\gamma)$ makes sense
should be a local homeomorphism. Renault \cite{Ren80} called such groupoids ``\'etale"
(which means something like ``loose" or ``spread out"), and the terminology has stuck.

The study of $C^*$-algebras associated to groupoids was initiated by Renault in his PhD
thesis \cite{Ren80}, and was motivated by earlier work, particularly that of Feldman and
Moore \cite{FM1, FM2, FM3} for von Neumann algebras. Groupoid $C^*$-algebras have been
studied intensively ever since, and provide useful and concrete models of many classes of
$C^*$-algebras. The construction and study of groupoid $C^*$-algebras in general is
fairly involved and requires a significant amount of representation-theoretic background.
But just as the study of $C^*$-algebras of discrete groups and their crossed products
requires less background than the study of group $C^*$-algebras in general, so the
restriction to the situation of \'etale groupoids significantly reduces the overheads
involved in studying groupoid $C^*$-algebras. And, as Renault realised from the outset,
\'etale-groupoid $C^*$-algebras are sufficient to capture a lot of examples. For example,
all crossed products of commutative $C^*$-algebras by discrete-group actions; all AF
algebras; all Cuntz--Krieger algebras and graph algebras; all Kirchberg algebras in the
UCT class; and many others.

My intention in preparing these notes was to develop a concise account of the elementary
theory of \'etale groupoids and their $C^*$-algebras that minimises the
represent\-ation-theoretic and analytic background needed. For this reason, I have
chosen, for example, not to include any discussion of non-Hausdorff groupoids, even
though there are many good reasons for studying these (for example, Exel and Pardo's
$C^*$-algebras associated to  self-similar actions of groups on graphs admit
\'etale-groupoid models, but these groupoids are frequently non-Hausdorff). I have also
skimmed over some of the more technical aspects of groupoid theory (for example the
question of amenability). I have tried to include enough examples along the way, together
with two sections that outline a couple of important applications of the theory, to
illustrate what is going on with key concepts.

\smallskip

We start in Chapter~\ref{ch:groupoids} with a discussion of groupoids themselves: the
axioms and set up, a number of illustrative examples, the definition of a topological
groupoid, and a discussion of the \'etale condition.

In Chapter~\ref{ch:C*-algs}, we describe the construction of the convolution algebra of
an \'etale groupoid, then of its two $C^*$-algebras---the full $C^*$-algebra and the
reduced $C^*$-algebra---and finally of the notion of equivalence of \'etale groupoids and
Renault's equivalence theorem. We already see advantages to sticking to \'etale groupoids
here, since we are able to get through all of this material using fairly elementary
techniques, and in particular without having to include a treatment of Renault's
Disintegration Theorem, which is one of the mainstays of groupoid $C^*$-algebra theory in
general, but quite a complicated piece of work.

In Chapter~\ref{ch:structure}, we discuss some of the elementary structure theory of
groupoid $C^*$-algebras. As mentioned earlier, I have chosen to skim over the details of
amenability for groupoids, though I have tried to give enough references to help the
interested reader find out more. There is a whole book on the subject of amenability for
groupoids \cite{AR}, and unlike the situation for groups, it's far from a done deal. My
focus in discussing amenability has been to describe its $C^*$-algebraic consequences,
and some standard techniques for showing that a given groupoid is amenable. We then go on
to discuss effective groupoids (these are like topologically free group actions). Again,
things simplify significantly in the \'etale setting, and we are able to present a short
and self-contained proof that every nonzero ideal of the reduced $C^*$-algebra of an
effective \'etale (Hausdorff) groupoid must have nonzero intersection with the abelian
subalgebra of $C_0$-functions on the unit space. We follow this with a discussion of
invariant sets of units and the ideal structure of the $C^*$-algebra of an amenable
\'etale groupoid. We use the results we have put together on ideal structure to
characterise the amenable \'etale (Hausdorff) groupoids whose $C^*$-algebras are simple.
To finish Chapter~\ref{ch:structure}, we describe Anantharaman-Delaroche's notion of
locally contracting groupoids, and prove that the reduced $C^*$-algebra of a locally
contracting groupoid is purely infinite.

Interestingly, modulo treating amenability as a black box, all of the structure theory
developed in these first few chapters is done without recourse to any heavy machinery
like Renault's Disintegration theorem. Though probably known to, or at least expected by,
experts, I am not aware of such an approach having appeared in print previously.

In Chapter~\ref{ch:Weyl gpd} we discuss beautiful results of Renault, extending earlier
work of Kumjian, that provide a $C^*$-algebraic version of Feldman--Moore theory, and
then go on to discuss an application of this machinery to the classification of Fell
algebras.

In Section~\ref{sec:KR-theory}, we discuss the notion of a twist $\mathcal{E}$ over an
\'etale groupoid $\mathcal{G}$, and of the associated full and reduced $C^*$-algebras.
These can be thought of as the analogue, for groupoids, of the reduced and full twisted
$C^*$-algebras of a discrete group with respect to a $\mathbb{T}$-valued 2-cocycle. The
details here begin to get significantly more complicated than in the previous three
sections, and so I have given an overview with sketches of proofs rather than a detailed
treatment of all the results. We discuss Renault's definition of a Cartan pair of
$C^*$-algebras, indicate how a twist over an effective \'etale groupoid gives rise to a
Cartan pair of $C^*$-algebras, and outline Renault's proof that the twist can be
recovered from its Cartan pair, so that Cartan pairs are in bijection with twists over
effective \'etale groupoids. This implies, for example, due to work of Barlak and Li,
that any separable nuclear $C^*$-algebra that admits a Cartan subalgebra belongs to the
UCT class.

We then wrap up in Section~\ref{sec:DD} by outlining an application of the groupoid
technology we have developed to the classification of Fell algebras. Fell algebras are
Type~I $C^*$-algebras that generalise the continuous trace $C^*$-algebras, which in turn
are the subject of the famous Dixmier--Douady classification theorem. The classical
approach to the Dixmier--Douady theorem does not work well for Fell algebras, but another
approach is available: we sketch how to construct, from each Fell algebra $A$, a Cartan
pair $(C,D)$ in which $C$ is Morita equivalent to $A$. We then outline how to make the
collection of isomorphism classes of twists over a given groupoid $\mathcal{G}$ into a
group $\operatorname{Tw}_\mathcal{G}$. If $\mathcal{G}$ is the equivalence relation
determined by a local homeomorphism of a locally compact Hausdorff space $Y$ onto a
locally locally compact, locally Hausdorff space $X$, then its twist group is isomorphic
to a second sheaf-cohomology group of $X$. In particular, the pair $(C, D)$ discussed
above determines an element of $H^2(\widehat{A}, \mathcal{S})$; moreover this class is
independent of any of the choices involved in our constructions, so we can regard it as
an invariant $\delta(A)$ of $A$. The main result discussed in the section says that
$\delta(A)$ is a complete invariant of $A$, and also that the range is exhausted in the
sense that every element of $H^2(X, \mathcal{S})$ can be obtained as $\delta(A)$ for some
Fell algebra $A$ with spectrum $X$. In this section, I have been very brief. I provide no
detailed proofs, and very few proof sketches, and instead try to present the big picture.
The details can be found in \cite{aHKS}.

\smallskip

These notes are just a brief introduction to a small part of the theory of groupoid
$C^*$-algebras. There are many useful references for the more general theory. The theory
began with Renault \cite{Ren80}, and this remains the definitive text. Exel \cite{Exel}
and Paterson \cite{Paterson} have both given excellent discussions of \'etale
groupoids---particularly as they relate to inverse semigroups---but in the non-Hausdorff
setting, where the details are a little trickier. Renault's equivalence theorem for
(full) groupoid $C^*$-algebras first appeared in print in the work of
Muhly--Renault--Williams \cite{MRW}, but the approach used here, via linking groupoids,
is based on \cite{SW} and also owes a lot to many valuable conversations I have had with
Alex Kumjian. It is also closely related to \cite{Paravicini, StadlerOuchie, Tu}. A trove
of information amount about amenability of groupoids is contained in
Anantharaman-Delaroche and Renault's book \cite{AR} on the topic. There are countless
other very useful references that I have forgotten, or that are hard to come by (for
example Paul Muhly's excellent but lamentably unfinished book on the subject). I
apologise to the surely long list of people whose work I have overlooked (despite its
being eminently worthy of mention and attention) in this brief and far-from-comprehensive
discussion.

What I believe \emph{is} missing from the literature is an elementary and self-contained
introduction to the $C^*$-algebras of \'etale Hausdorff groupoids (though  Exel's paper
\cite{Exel} does contain an excellent treatment of the construction of the groupoid
$C^*$-algebra for non-\'etale Hausdorff groupoids); these notes should go a little way to
filling this gap. I hope that they give a flavour for the subject and a useful reference
for those who find themselves in the enviable position of having all their groupoids turn
out to be Hausdorff and \'etale. I think that most of the arguments in the first three
sections here (with the exceptions of anything about amenability, and of
Anantharaman--Delaroche's pure-infiniteness result) were developed from scratch; but of
course the results are not new, and the treatment reflects the many ideas and techniques
that I have accumulated both from the literature, and from discussion and collaboration
with many people including Jon Brown, Lisa Clark, Valentin Deaconu, Ruy Exel, Astrid an
Huef, Alex Kumjian, Paul Muhly and Dana Williams. Some parts are more identifiably
attributable to ideas I learned from others: in particular, the elementary approach to
the construction of the universal $C^*$-algebra $C^*(\mathcal{G})$ was showed to me by
Robin Deeley during a series of beautiful graduate-level lectures he gave at the research
event \emph{Refining $C^*$-algebraic invariants for dynamics using $KK$-theory} at the
MATRIX facility of the University of Melbourne in July 2016. Robin tells me that the idea
came to him in turn from lecture notes of Ian Putnam.

\chapter{\texorpdfstring{\'Etale}{Etale} groupoids}\label{ch:groupoids}

\section{What is a groupoid?}

The following definition of a groupoid comes from \cite{Hahn} (see
\cite[page~7]{Paterson}); Hahn himself attributes it to a conversation with G. Mackey.
This is a fairly minimal set of axioms, so optimal for the purposes of checking whether a
given object is a groupoid, but I refer the reader forward to Remark~\ref{rmk:gpd verbose
axioms} for an equivalent, but less efficient, list of axioms that might provide a little
more intuition.

\begin{definition}\label{dfn:gpd}
A \emph{groupoid} is a set $\mathcal{G}$ together with a distinguished subset
$\mathcal{G}^{(2)} \subseteq \mathcal{G} \times \mathcal{G}$, a multiplication map
$(\alpha,\beta) \mapsto \alpha\beta$ from $\mathcal{G}^{(2)}$ to $\mathcal{G}$ and an
inverse map $\gamma \mapsto \gamma^{-1}$ from $\mathcal{G}$ to $\mathcal{G}$ such that
\begin{enumerate}
\item\label{it:gpd0} $(\gamma^{-1})^{-1} = \gamma$ for all $\gamma \in \mathcal{G}$;
\item\label{it:gpd1} if $(\alpha,\beta)$ and $(\beta,\gamma)$ belong to
    $\mathcal{G}^{(2)}$, then $(\alpha\beta,\gamma)$ and $(\alpha,\beta\gamma)$
    belong to $\mathcal{G}^{(2)}$, and $(\alpha\beta)\gamma = \alpha(\beta\gamma)$;
    and
\item\label{it:gpd2} $(\gamma,\gamma^{-1}) \in \mathcal{G}^{(2)}$ for all $\gamma \in
    \mathcal{G}$, and for all $(\gamma,\eta) \in \mathcal{G}^{(2)}$, we have
    $\gamma^{-1} (\gamma \eta) = \eta$ and $(\gamma\eta)\eta^{-1} = \gamma$.
\end{enumerate}
\end{definition}

Axiom~(\ref{it:gpd1}) shows that for products of three groupoid elements, there is no
ambiguity in dropping the parentheses (as we do in groups), and simply writing
$\alpha\beta\gamma \coloneq (\alpha\beta)\gamma$.

To get a feeling for groupoids, we begin by exploring some of the consequences of the
above axioms.

Given a groupoid $\mathcal{G}$ we shall write $\mathcal{G}^{(0)} \coloneq
\{\gamma^{-1}\gamma \mid \gamma \in \mathcal{G}\}$ and refer to elements of
$\mathcal{G}^{(0)}$ as \emph{units} and to $\mathcal{G}^{(0)}$ itself as the \emph{unit
space}. Since $(\gamma^{-1})^{-1} = \gamma$ for all $\gamma$, we also have
$\mathcal{G}^{(0)} = \{\gamma\gamma^{-1} \mid \gamma \in \mathcal{G}\}$. We define $r,s :
\mathcal{G} \to \mathcal{G}^{(0)}$ by
\[
r(\gamma) \coloneq \gamma\gamma^{-1}\qquad\text{ and }\qquad s(\gamma) \coloneq \gamma^{-1}\gamma
\]
for all $\gamma \in \mathcal{G}$.

\begin{lemma}\label{lem:unique inverse}
If $\mathcal{G}$ is a groupoid and $\gamma \in \mathcal{G}$, then $(r(\gamma),\gamma)$
and $(\gamma, s(\gamma))$ belong to $\mathcal{G}^{(2)}$, and
\[
r(\gamma)\gamma = \gamma = \gamma s(\gamma).
\]
We have $r(\gamma^{-1}) = s(\gamma)$ and $s(\gamma^{-1}) = r(\gamma)$. Moreover,
$\gamma^{-1}$ is the unique element such that $(\gamma,\gamma^{-1}) \in
\mathcal{G}^{(2)}$ and $\gamma \gamma^{-1} = r(\gamma)$, and also the unique element such
that $(\gamma^{-1},\gamma) \in \mathcal{G}^{(2)}$ and $\gamma^{-1}\gamma = s(\gamma)$.
\end{lemma}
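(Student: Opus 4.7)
The plan is to derive everything from axioms~(1)--(3) by using axiom~(3) in two complementary roles: as a source of composable pairs (it hands us $(\gamma,\gamma^{-1})\in\mathcal{G}^{(2)}$ and, applied to $\gamma^{-1}$ together with axiom~(1), also gives $(\gamma^{-1},\gamma)\in\mathcal{G}^{(2)}$), and as a cancellation principle ($\gamma^{-1}(\gamma\eta)=\eta$ and $(\gamma\eta)\eta^{-1}=\gamma$). Associativity from axiom~(2) then lets us rebracket products of three elements. The real work is organisational: certifying every composability hypothesis before invoking (2) or (3).

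First I would handle composability and the identity-like equations together. Starting from $(\gamma,\gamma^{-1})$ and $(\gamma^{-1},\gamma)$ both in $\mathcal{G}^{(2)}$, axiom~(2) with $\alpha=\gamma$, $\beta=\gamma^{-1}$, and third entry $\gamma$ immediately gives both $(\gamma\gamma^{-1},\gamma)\in\mathcal{G}^{(2)}$ and $(\gamma,\gamma^{-1}\gamma)\in\mathcal{G}^{(2)}$---that is, $(r(\gamma),\gamma)$ and $(\gamma,s(\gamma))$ both lie in $\mathcal{G}^{(2)}$---together with the equality $(\gamma\gamma^{-1})\gamma=\gamma(\gamma^{-1}\gamma)$. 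To evaluate either side, apply the first cancellation identity of axiom~(3) with $\gamma$ replaced by $\gamma^{-1}$ and $\eta$ replaced by $\gamma$: this needs $(\gamma^{-1},\gamma)\in\mathcal{G}^{(2)}$ (already known) and yields $(\gamma^{-1})^{-1}(\gamma^{-1}\gamma)=\gamma$, i.e.\ $\gamma(\gamma^{-1}\gamma)=\gamma$ by axiom~(1). Hence $r(\gamma)\gamma=\gamma=\gamma s(\gamma)$. The identities $r(\gamma^{-1})=s(\gamma)$ and $s(\gamma^{-1})=r(\gamma)$ then follow at once by unfolding the definitions of $r,s$ and applying axiom~(1).

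For uniqueness, suppose $(\gamma,\delta)\in\mathcal{G}^{(2)}$ with $\gamma\delta=r(\gamma)=\gamma\gamma^{-1}$. Apply the left-cancellation identity of axiom~(3) twice: $\delta=\gamma^{-1}(\gamma\delta)=\gamma^{-1}(\gamma\gamma^{-1})=\gamma^{-1}$, where the last equality is axiom~(3) taken with $\eta=\gamma^{-1}$. The symmetric uniqueness statement---that $\delta=\gamma^{-1}$ whenever $(\delta,\gamma)\in\mathcal{G}^{(2)}$ and $\delta\gamma=s(\gamma)$---follows identically, replacing left-cancellation by the right-cancellation identity $(\alpha\beta)\beta^{-1}=\alpha$ and taking $\alpha=\gamma^{-1}$, $\beta=\gamma$.

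The main obstacle is not conceptual but bookkeeping: the first step has to pass through axiom~(2) applied to two separately-secured composable pairs, and one must check at every invocation of axiom~(3) that the required pair really is in $\mathcal{G}^{(2)}$. Once that is in place, each individual equality collapses to a two- or three-line rearrangement within the axioms, and in particular the existence claims $(r(\gamma),\gamma),(\gamma,s(\gamma))\in\mathcal{G}^{(2)}$ come for free as a byproduct of the same associativity step used to prove the identity laws.
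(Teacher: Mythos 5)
Your proof is correct and follows essentially the same route as the paper's: axiom~(3) applied with $\eta=\gamma^{-1}$ (to both $\gamma$ and $\gamma^{-1}$) yields the identity laws and the $r(\gamma^{-1})=s(\gamma)$ statements, and the two cancellation identities of axiom~(3) give uniqueness. The only cosmetic difference is that you certify $(r(\gamma),\gamma),(\gamma,s(\gamma))\in\mathcal{G}^{(2)}$ by passing through the associativity axiom~(2), whereas the paper reads this composability directly off the products asserted to exist in axiom~(3).
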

\begin{proof}
The first statement, is just axiom~(\ref{it:gpd2}) with $\eta = \gamma^{-1}$.

We have $r(\gamma^{-1}) = \gamma^{-1}(\gamma^{-1})^{-1} = \gamma^{-1}\gamma = s(\gamma)$
by axiom~(\ref{it:gpd0}).

If $(\gamma,\alpha) \in \mathcal{G}^{(2)}$ and $\gamma\alpha = r(\gamma) =
\gamma\gamma^{-1}$, then axiom~(\ref{it:gpd1}) shows that $(\gamma^{-1}\gamma, \alpha)
\in \mathcal{G}^{(2)}$ and $\alpha = \gamma^{-1}\gamma\alpha = \gamma^{-1}r(\gamma) =
\gamma^{-1}s(\gamma^{-1}) = \gamma^{-1}$. A similar argument shows that $\alpha\gamma =
s(\gamma)$ only for $\alpha = \gamma^{-1}$.
\end{proof}

We also quickly see that groupoids have cancellation.

\begin{lemma}\label{lem:gpd cancellation}
Let $\mathcal{G}$ be a groupoid. Suppose that $(\alpha,\gamma), (\beta,\gamma) \in
\mathcal{G}^{(2)}$ and that $\alpha \gamma = \beta\gamma$. Then $\alpha = \beta$.
Similarly if $(\gamma,\alpha), (\gamma,\beta) \in \mathcal{G}^{(2)}$ and $\gamma\alpha =
\gamma\beta$ then $\alpha = \beta$.
\end{lemma}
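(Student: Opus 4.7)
The plan is to reduce cancellation to a direct application of axiom~(\ref{it:gpd2}), which already asserts the one-sided cancellation identities $\gamma^{-1}(\gamma\eta) = \eta$ and $(\gamma\eta)\eta^{-1} = \gamma$. The idea is simply to multiply both sides of the hypothesised equation by $\gamma^{-1}$ on the appropriate side.

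For the first statement, I would start from $(\alpha,\gamma), (\beta,\gamma) \in \mathcal{G}^{(2)}$. Applying the second identity of axiom~(\ref{it:gpd2}) with the roles of $(\gamma,\eta)$ played by $(\alpha,\gamma)$ and then by $(\beta,\gamma)$ yields $(\alpha\gamma)\gamma^{-1} = \alpha$ and $(\beta\gamma)\gamma^{-1} = \beta$. Since the hypothesis gives $\alpha\gamma = \beta\gamma$, these two identities immediately force $\alpha = \beta$. The only small caveat is to verify that both expressions $(\alpha\gamma)\gamma^{-1}$ and $(\beta\gamma)\gamma^{-1}$ make sense, but this is precisely what axiom~(\ref{it:gpd2}) guarantees under the hypothesis that $(\alpha,\gamma),(\beta,\gamma)\in\mathcal{G}^{(2)}$.

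The second statement is completely symmetric: from $(\gamma,\alpha),(\gamma,\beta)\in\mathcal{G}^{(2)}$, the first identity of axiom~(\ref{it:gpd2}) (again applied twice) gives $\gamma^{-1}(\gamma\alpha) = \alpha$ and $\gamma^{-1}(\gamma\beta) = \beta$, and the hypothesis $\gamma\alpha = \gamma\beta$ concludes the proof.

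There is really no main obstacle here; the lemma is essentially a reformulation of axiom~(\ref{it:gpd2}). The only thing to be careful about is not to invoke results that require more than the bare axioms — in particular, one should not appeal to Lemma~\ref{lem:unique inverse} in a way that would introduce a circularity — but since we only use axiom~(\ref{it:gpd2}) directly, this is not an issue.
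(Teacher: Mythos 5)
Your proof is correct and is essentially identical to the paper's: both multiply $\alpha\gamma = \beta\gamma$ on the right by $\gamma^{-1}$ and invoke the identity $(\gamma\eta)\eta^{-1} = \gamma$ from axiom~(\ref{it:gpd2}) (the paper also cites axiom~(\ref{it:gpd1}), which is what formally guarantees that $(\alpha\gamma,\gamma^{-1})$ is composable, the caveat you flag). Nothing further is needed.
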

\begin{proof}
If $\alpha\gamma = \beta\gamma$, then axioms (\ref{it:gpd1})~and~(\ref{it:gpd2}) show
that $\alpha = \alpha\gamma\gamma^{-1} = \beta\gamma\gamma^{-1} = \beta$.
\end{proof}

\begin{lemma}\label{lem:gpd properties}
Let $\mathcal{G}$ be a groupoid. Then $(\alpha,\beta) \in \mathcal{G}^{(2)}$ if and only
if $s(\alpha) = r(\beta)$. We have
\begin{enumerate}
\item\label{it:comp r,s} $r(\alpha\beta) = r(\alpha)$ and $s(\alpha\beta) = s(\beta)$
    for all $(\alpha,\beta) \in \mathcal{G}^{(2)}$;
\item\label{it:comp inv} $(\alpha\beta)^{-1} = \beta^{-1}\alpha^{-1}$ for all
    $(\alpha,\beta) \in \mathcal{G}^{(2)}$; and
\item\label{it:unit r,s} $r(x) = x = s(x)$ for all $x\in \mathcal{G}^{(0)}$.
\end{enumerate}
\end{lemma}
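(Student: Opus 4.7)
The plan is to prove the ``if and only if'' statement first, since the three enumerated items then fall out fairly mechanically from it together with Lemmas~\ref{lem:unique inverse} and~\ref{lem:gpd cancellation}. For the forward direction, axiom~(\ref{it:gpd1}) applied to the composable pairs $(\alpha^{-1},\alpha)$ and $(\alpha,\beta)$ yields $s(\alpha)\beta = \alpha^{-1}(\alpha\beta) = \beta$ by axiom~(\ref{it:gpd2}); comparing with $r(\beta)\beta = \beta$ from Lemma~\ref{lem:unique inverse} and cancelling $\beta$ on the right via Lemma~\ref{lem:gpd cancellation} delivers $s(\alpha) = r(\beta)$.

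The reverse direction is the genuine obstacle, because axiom~(\ref{it:gpd1}) only \emph{produces} new composable pairs from old ones; it does not let us infer composability of a pair from composability of one of its ``expansions''. The trick I would use is to insert $\beta\beta^{-1}$: given $s(\alpha) = r(\beta) = \beta\beta^{-1}$, Lemma~\ref{lem:unique inverse} immediately supplies $(\alpha, \beta\beta^{-1}) = (\alpha, s(\alpha)) \in \mathcal{G}^{(2)}$ and $(\beta\beta^{-1}, \beta) = (r(\beta), \beta) \in \mathcal{G}^{(2)}$, and then axiom~(\ref{it:gpd1}) promotes these to $(\alpha, \beta\beta^{-1}\beta) \in \mathcal{G}^{(2)}$. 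Since $\beta\beta^{-1}\beta = r(\beta)\beta = \beta$, the pair $(\alpha, \beta)$ is composable.

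Once the biconditional is available, item~(\ref{it:comp r,s}) is a two-line cancellation argument: both $r(\alpha)(\alpha\beta)$ and $r(\alpha\beta)(\alpha\beta)$ equal $\alpha\beta$, so right cancellation gives $r(\alpha) = r(\alpha\beta)$, and the $s$-identity is symmetric. For item~(\ref{it:comp inv}), the biconditional guarantees $(\beta^{-1},\alpha^{-1}) \in \mathcal{G}^{(2)}$ since $s(\beta^{-1}) = r(\beta) = s(\alpha) = r(\alpha^{-1})$, and then a short associativity chain yields
\[
(\alpha\beta)(\beta^{-1}\alpha^{-1}) = \alpha(\beta\beta^{-1})\alpha^{-1} = \alpha s(\alpha)\alpha^{-1} = \alpha\alpha^{-1} = r(\alpha) = r(\alpha\beta);
\]
the uniqueness clause of Lemma~\ref{lem:unique inverse} then identifies $\beta^{-1}\alpha^{-1}$ with $(\alpha\beta)^{-1}$.

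Finally, item~(\ref{it:unit r,s}) is clean-up: any $x \in \mathcal{G}^{(0)}$ has the form $x = \gamma^{-1}\gamma = s(\gamma)$, and item~(\ref{it:comp r,s}) together with $r(\gamma^{-1}) = s(\gamma)$ from Lemma~\ref{lem:unique inverse} give both $s(x) = s(\gamma^{-1}\gamma) = s(\gamma) = x$ and $r(x) = r(\gamma^{-1}\gamma) = r(\gamma^{-1}) = s(\gamma) = x$. The only substantive hurdle is the reverse direction of the biconditional; after that, everything is bookkeeping with cancellation, associativity, and uniqueness of inverses.
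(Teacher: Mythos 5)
Your proof is correct and follows essentially the same route as the paper's: the same $\beta\beta^{-1}$-insertion trick for one direction of the biconditional, the same cancellation argument for the other, and the same use of cancellation and the uniqueness clause of Lemma~\ref{lem:unique inverse} for items (\ref{it:comp r,s})--(\ref{it:unit r,s}). The only difference is the order in which you treat the two directions of the biconditional, which is immaterial.
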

\begin{proof}
First suppose that $s(\alpha) = r(\beta)$; that is, $\alpha^{-1}\alpha =
\beta\beta^{-1}$. In particular, we have $(\alpha, \beta\beta^{-1}) =
(\alpha,\alpha^{-1}\alpha) \in \mathcal{G}^{(2)}$. Since $(\beta\beta^{-1},\beta) \in
\mathcal{G}^{(2)}$ as well, we deduce from axiom~(\ref{it:gpd1}) that
$(\alpha,\beta\beta^{-1}\beta)$, which is just $(\alpha,\beta)$, belongs to
$\mathcal{G}^{(2)}$. On the other hand, if $(\alpha,\beta) \in \mathcal{G}^{(2)}$, then
$(\alpha^{-1}, \alpha\beta) \in \mathcal{G}^{(2)}$ with $\alpha^{-1}\alpha \beta = \beta
= r(\beta) \beta$, and so Lemma~\ref{lem:gpd cancellation} shows that $s(\alpha) =
\alpha^{-1}\alpha = r(\beta)$.

For~(\ref{it:comp r,s}) note that axiom~(\ref{it:gpd1}) shows that
$(r(\alpha),\alpha\beta)$ belongs to $\mathcal{G}^{(2)}$ and $r(\alpha)(\alpha\beta) =
(r(\alpha)\alpha)\beta = \alpha\beta = r(\alpha\beta)(\alpha\beta)$. So
Lemma~\ref{lem:gpd cancellation} shows that $r(\alpha) = r(\alpha\beta)$. A similar
argument gives $s(\beta) = s(\alpha\beta)$.

For~(\ref{it:comp inv}) we use~(\ref{it:comp r,s}) to see that $(\beta^{-1},\alpha^{-1})$
and $(\alpha\beta, \beta^{-1}\alpha^{-1})$ belong to $\mathcal{G}^{(2)}$. Since
$r(\beta^{-1}) = s(\beta)$ and since~(\ref{it:comp r,s}) gives $s(\alpha\beta\beta^{-1})
= s(\beta^{-1}) = r(\alpha^{-1})$, we can use~(\ref{it:comp r,s}) twice more to see that
the products $(\alpha\beta\beta^{-1})\alpha^{-1}$ and
$(\alpha\beta)(\beta^{-1}\alpha^{-1})$ make sense and are equal. We have
$(\alpha\beta\beta^{-1})\alpha^{-1} = \alpha \alpha^{-1} = r(\alpha) = r(\alpha\beta)$,
and so uniqueness in the final statement of Lemma~\ref{lem:unique inverse} implies that
$\beta^{-1}\alpha^{-1} = (\alpha\beta)^{-1}$.

For~(\ref{it:unit r,s}), fix $x \in \mathcal{G}^{(0)}$, say $x = \gamma^{-1}\gamma$.
Then~(\ref{it:comp r,s}) shows that $r(x) = r(\gamma^{-1}) = s(\gamma) =
\gamma^{-1}\gamma = x$. Similarly, $s(x) = s(\gamma^{-1}\gamma) = s(\gamma) =
\gamma^{-1}\gamma = x$.
\end{proof}

In many places you will find the definition of a groupoid summarised with the pithy ``a
groupoid is a small category with inverses." The above results should convince you that
this is equivalent to Definition~\ref{dfn:gpd}. It's a slick definition, but if it means
much to you, then you probably already knew what a groupoid was anyway\dots

Lemma~\ref{lem:gpd properties} shows that if $\mathcal{G}$ is a groupoid, then
$\mathcal{G}^{(2)} = \{(\alpha,\beta) \in \mathcal{G} \times \mathcal{G} \mid s(\alpha) =
r(\beta)\}$. When describing a groupoid (in an example) it is often convenient, and more
helpful, to make use of this: we typically specify the set $\mathcal{G}$, the
distinguished subset $\mathcal{G}^0$ and the maps $r,s : \mathcal{G} \to
\mathcal{G}^{(0)}$ first; and then specify an associative multiplication map from
$\{(\alpha,\beta) \mid s(\alpha) = r(\beta) \in \mathcal{G}^{(0)}\}$ to $\mathcal{G}$
satisfying $r(\alpha\beta) = r(\alpha)$, $s(\alpha\beta) = s(\beta)$ and $r(\alpha)\alpha
= \alpha = \alpha s(\alpha)$, and specify an inverse map satisfying $s(\alpha^{-1}) =
r(\alpha)$, $r(\alpha^{-1}) = s(\alpha)$ and $\alpha^{-1}\alpha = s(\alpha)$ and
$\alpha\alpha^{-1} = r(\alpha)$. Using the results above, you should be able to convince
yourself that this is equivalent to Definition~\ref{dfn:gpd}. We will specify groupoids
this way throughout these notes.

\begin{remark}\label{rmk:gpd verbose axioms}
An earlier version of these notes contained the following definition of a groupoid: A
groupoid is a set $\mathcal{G}$ with a distinguished subset $\mathcal{G}^{(0)}$, maps
$r,s : \mathcal{G} \to \mathcal{G}^{(0)}$, a map $(\alpha,\beta) \mapsto \alpha\beta$
from $\{(\alpha,\beta) \in \mathcal{G} \times \mathcal{G} \mid s(\alpha) = r(\beta)\}$ to
$\mathcal{G}$ and a map $\gamma \mapsto \gamma^{-1}$ from $\mathcal{G}$ to $\mathcal{G}$
with the following properties:
\begin{itemize}
\item[(G1)] $r(x) = x = s(x)$ for all $x\in \mathcal{G}^{(0)}$;
\item[(G2)] $r(\gamma)\gamma = \gamma = \gamma s(\gamma)$ for all $\gamma \in
    \mathcal{G}$;
\item[(G3)] $r(\gamma^{-1}) = s(\gamma)$ and $s(\gamma^{-1}) = r(\gamma)$ for all
    $\gamma\in \mathcal{G}$;
\item[(G4)] $\gamma^{-1}\gamma = s(\gamma)$ and $\gamma\gamma^{-1} = r(\gamma)$ for
    all $\gamma \in \Gamma$;
\item[(G5)] $r(\alpha\beta) = r(\alpha)$ and $s(\alpha\beta) = s(\beta)$ whenever
    $s(\alpha) = r(\beta)$; and
\item[(G6)] $(\alpha\beta)\gamma = \alpha(\beta\gamma)$ whenever $s(\alpha) =
    r(\beta)$ and $s(\beta) = r(\gamma)$.
\end{itemize}
Lemmas \ref{lem:unique inverse}~and~\ref{lem:gpd properties} show that every groupoid has
these properties, and it is not hard to check that given the structure above, putting
$\mathcal{G}^{(2)} := \{(\alpha,\beta) \in \mathcal{G} \times \mathcal{G} \mid s(\alpha)
= r(\beta)\}$ yields a groupoid according to Definition~\ref{dfn:gpd}. (The only thing to
check is that (G1)--(G6) force $(\gamma^{-1})^{-1} = \gamma$; and since~(G2),
(G4)~and~(G6) ensure cancellativity just as in Lemma~\ref{lem:gpd cancellation}, this
follows from the observation that (G3)~and~(G4) give $(\gamma^{-1})^{-1}\gamma^{-1} =
s(\gamma^{-1}) = r(\gamma) = \gamma\gamma^{-1}$.) In the end, I decided to stick with
Definition~\ref{dfn:gpd} because it is the definition that appears in the literature. But
I still feel that (G1)--(G6), while not maximally efficient, are a good way to present
groupoids to an audience unfamiliar with them and trying to come to grips with them: this
presentation emphasises the point of view of a groupoid as a collection of reversible
arrows, carrying an associative composition operation, between points in
$\mathcal{G}^{(0)}$ (see Example~\ref{eg:trans gpd} below).
\end{remark}

\begin{corollary}\label{cor:idempotents}
If $\mathcal{G}$ is a groupoid then $\mathcal{G}^{(0)} = \{\gamma \in \mathcal{G} \mid
(\gamma,\gamma) \in \mathcal{G}^{(2)}\text{ and } \gamma^2 = \gamma\}$.
\end{corollary}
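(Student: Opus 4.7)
The plan is to prove the set equality by double inclusion, using the machinery already established in Lemmas~\ref{lem:unique inverse}, \ref{lem:gpd cancellation}, and~\ref{lem:gpd properties}. Both directions should be short once we exploit cancellation.

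For the forward inclusion, I would fix $x \in \mathcal{G}^{(0)}$ and invoke Lemma~\ref{lem:gpd properties}(\ref{it:unit r,s}) to get $r(x) = s(x) = x$. Since $s(x) = r(x)$, the composability criterion from Lemma~\ref{lem:gpd properties} gives $(x,x) \in \mathcal{G}^{(2)}$. Then Lemma~\ref{lem:unique inverse} yields $x \cdot x = r(x) \cdot x = x$, so $x^2 = x$, as required.

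For the reverse inclusion, suppose $\gamma \in \mathcal{G}$ satisfies $(\gamma,\gamma) \in \mathcal{G}^{(2)}$ and $\gamma^2 = \gamma$. By Lemma~\ref{lem:unique inverse} we always have $\gamma = \gamma s(\gamma)$, so $\gamma \gamma = \gamma = \gamma s(\gamma)$. Applying the left-cancellation part of Lemma~\ref{lem:gpd cancellation} produces $\gamma = s(\gamma)$, and since $s(\gamma) \in \mathcal{G}^{(0)}$ by definition, we conclude $\gamma \in \mathcal{G}^{(0)}$.

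There is no real obstacle here: the only subtlety is remembering that the hypothesis $(\gamma,\gamma) \in \mathcal{G}^{(2)}$ is exactly what is needed to guarantee that the equation $\gamma\gamma = \gamma s(\gamma)$ sits inside $\mathcal{G}^{(2)}$ on both sides (so that cancellation legitimately applies), which in turn requires $s(\gamma) = r(\gamma)$; this follows from $(\gamma,\gamma) \in \mathcal{G}^{(2)}$ via Lemma~\ref{lem:gpd properties}. Everything else is a one-line application of a named lemma.
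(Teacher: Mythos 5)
Your proof is correct and follows essentially the same route as the paper: the forward inclusion via $r(x)=x=s(x)$ together with $r(x)x=x$ from Lemma~\ref{lem:unique inverse}, and the reverse inclusion by cancelling $\gamma$ on the left in $\gamma\gamma=\gamma s(\gamma)$ using Lemma~\ref{lem:gpd cancellation}. The paper's proof is merely terser in the $\subseteq$ direction; your added verification that $(x,x)\in\mathcal{G}^{(2)}$ is the detail it leaves implicit.
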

\begin{proof}
Lemma~\ref{lem:unique inverse} combined with Lemma~\ref{lem:gpd properties}(\ref{it:unit
r,s}) gives $\subseteq$. For $\supseteq$, suppose that $(\gamma,\gamma) \in
\mathcal{G}^{(2)}$ and $\gamma^2 = \gamma$. Then $\gamma^2 = \gamma = \gamma s(\gamma)$
by Lemma~\ref{lem:unique inverse}, and then Lemma~\ref{lem:gpd cancellation} shows that
$\gamma = s(\gamma) \in \mathcal{G}^{(0)}$.
\end{proof}

\begin{example}
Every group $\Gamma$ can be viewed as a groupoid, with $\Gamma^{(0)} = \{e\}$,
multiplication given by the group operation, and inversion the usual group inverse. A
groupoid is a group if and only if its unit space is a singleton.
\end{example}

\begin{example}[Group bundles]\label{ex:group bundle}
Let $X$ be a set, and for each $x \in X$, let $\Gamma_x$ be a group. Let $\mathcal{G}
\coloneq \bigcup_{x \in X} \{x\} \times \Gamma_x$. This is a groupoid with
$\mathcal{G}^{(0)} = \{(x, e_{\Gamma_x}) \mid x \in X\}$ identified with $X$, $r(x, g) =
x = s(x, g)$, $(x, g)(x, h)= (x,gh)$ and $(x,g)^{-1} = (x, g^{-1})$.
\end{example}

\begin{example}[Matrix groupoids]
Fix $N \ge 1$. Define
\[
R_N \coloneq \{1, \dots, N\} \times \{1, \dots, N\}.
\]
Put $R_N^{(0)} = \{(i,i) \mid i \le N\}$, $r(i,j) = (i,i)$, $s(i,j) = (j,j)$ and
$(i,j)(j,k) = (i,k)$. Then $R_N$ is a groupoid, and $(i,j)^{-1} = (j,i)$ for all $i,j$.
We usually identify $R_N^{(0)}$ with $\{1, \dots, N\}$ in the obvious way.
\end{example}

\begin{example}
There was nothing special about $\{1, \dots, N\}$. For any set $X$, the set $R_X \coloneq
X \times X$ is a groupoid with operations analogous to those above. Again, we identify
$R_X^{(0)}$ with $X$.
\end{example}

\begin{example}[Equivalence relations]
More generally again, if $R$ is an equivalence relation on a set $X$, then $R^{(0)}
\coloneq \{(x,x) \mid x \in X\}$ is contained in $R$ by reflexivity; we identify
$R^{(0)}$ with $X$ again. The maps $r(x,y) = x$, $s(x,y) = y$, $(x,y)(y,z) = (x,z)$ and
$(x,y)^{-1} = (y,x)$ make $R$ into a groupoid.
\end{example}

If $R$ is an equivalence relation on $X$, then the map $\gamma \mapsto (r(\gamma),
s(\gamma))$ from $R$ to $R^{(0)} = X$ is the identity map from $R$ to $R$.

Given groupoids $\mathcal{G}$ and $\mathcal{H}$, we call a map $\phi : \mathcal{G} \to
\mathcal{H}$ a \emph{groupoid homomorphism} if $(\phi \times \phi)(\mathcal{G}^{(2)})
\subseteq \mathcal{H}^{(2)}$ and $\phi(\alpha)\phi(\beta) = \phi(\alpha\beta)$ for all
$(\alpha,\beta) \in \mathcal{G}^{(2)}$.

\begin{lemma}
If $\mathcal{G}$ and $\mathcal{H}$ are groupoids and $\phi : \mathcal{G} \to \mathcal{H}$
is a groupoid homomorphism, then $\phi(\mathcal{G}^{(0)}) \subseteq \mathcal{H}^{(0)}$.
We have $\phi(r(\gamma)) = r(\phi(\gamma))$, $\phi(s(\gamma)) = s(\phi(\gamma))$ and
$\phi(\gamma^{-1}) = \phi(\gamma)^{-1}$ for all $\gamma \in \mathcal{G}$.
\end{lemma}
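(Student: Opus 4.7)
The plan is to handle the three assertions in the order stated, each one feeding the next. The only nontrivial ingredients will be the characterization of units as self-composable idempotents (Corollary~\ref{cor:idempotents}), the fact that $\mathcal{H}^{(2)}$ is determined by matching $s$ and $r$ values (Lemma~\ref{lem:gpd properties}), and the uniqueness clause for inverses in Lemma~\ref{lem:unique inverse}.

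For the first assertion, I would fix $x \in \mathcal{G}^{(0)}$ and apply Corollary~\ref{cor:idempotents} to see that $(x,x) \in \mathcal{G}^{(2)}$ and $x^2 = x$. Pushing this through $\phi$ gives $(\phi(x),\phi(x)) \in \mathcal{H}^{(2)}$ (from the inclusion $(\phi\times\phi)(\mathcal{G}^{(2)})\subseteq \mathcal{H}^{(2)}$) and $\phi(x)^2 = \phi(x^2) = \phi(x)$. The reverse direction of Corollary~\ref{cor:idempotents} then places $\phi(x)$ in $\mathcal{H}^{(0)}$.

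For the intertwining of $r$ and $s$, I would fix $\gamma \in \mathcal{G}$ and note that by Lemma~\ref{lem:unique inverse} the pair $(r(\gamma),\gamma)$ lies in $\mathcal{G}^{(2)}$ with product $\gamma$. Applying $\phi$ yields $(\phi(r(\gamma)),\phi(\gamma)) \in \mathcal{H}^{(2)}$ with $\phi(r(\gamma))\phi(\gamma) = \phi(\gamma)$. Since $\phi(r(\gamma)) \in \mathcal{H}^{(0)}$ by the previous step, Lemma~\ref{lem:gpd properties}(\ref{it:unit r,s}) gives $s(\phi(r(\gamma))) = \phi(r(\gamma))$, and the composability condition in Lemma~\ref{lem:gpd properties} forces $\phi(r(\gamma)) = s(\phi(r(\gamma))) = r(\phi(\gamma))$. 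The argument for $s$ is entirely symmetric, using $(\gamma, s(\gamma)) \in \mathcal{G}^{(2)}$.

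Finally, for inverses, I would apply $\phi$ to the relation $\gamma \gamma^{-1} = r(\gamma)$, which by Lemma~\ref{lem:unique inverse} is a valid composition, to get $\phi(\gamma)\phi(\gamma^{-1}) = \phi(r(\gamma)) = r(\phi(\gamma))$, where the last equality uses the previous step. The uniqueness clause in Lemma~\ref{lem:unique inverse} then identifies $\phi(\gamma^{-1})$ as $\phi(\gamma)^{-1}$. There is no real obstacle here; the only minor subtlety is remembering to use the characterization of units to bootstrap the first claim before invoking the composability criterion, since the composability criterion itself is what lets us conclude the $r$/$s$-intertwining.
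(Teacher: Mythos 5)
Your proof is correct, and its skeleton matches the paper's: units first via the idempotent characterisation, then the $r$/$s$ intertwining, then inverses via the uniqueness clause of Lemma~\ref{lem:unique inverse}. The first and third steps are essentially identical to the paper's. The one place you diverge is the middle step: the paper deduces $\phi(s(\gamma)) = s(\phi(\gamma))$ by writing $\phi(\gamma)\phi(s(\gamma)) = \phi(\gamma) = \phi(\gamma)\,s(\phi(\gamma))$ and invoking cancellation (Lemma~\ref{lem:gpd cancellation}), whereas you use only the composability of the pair $(\phi(r(\gamma)),\phi(\gamma))$ together with the criterion $(\alpha,\beta)\in\mathcal{H}^{(2)} \iff s(\alpha)=r(\beta)$ and the fact that units are fixed by $r$ and $s$. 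Your version is marginally leaner here --- you never need the value of the product $\phi(r(\gamma))\phi(\gamma)$, only that the pair is composable --- while the paper's cancellation argument has the small advantage of not relying on the ``iff'' characterisation of $\mathcal{H}^{(2)}$. Both are complete; this is a cosmetic difference rather than a substantive one. (Incidentally, your final step correctly writes $\gamma\gamma^{-1} = r(\gamma)$; the paper's printed proof has a slip at this point, writing $\phi(\gamma\gamma^{-1}) = \phi(s(\gamma))$, though the conclusion is unaffected.)
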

\begin{proof}
For $u \in \mathcal{G}^{(0)}$ we have $\phi(u)^2 = \phi(u^2) = \phi(u)$, so $\phi(u)$ is
idempotent and therefore a unit by Corollary~\ref{cor:idempotents}. For $\gamma \in
\mathcal{G}$ we have
\[
\phi(\gamma)\phi(s(\gamma)) = \phi(\gamma s(\gamma)) = \phi(\gamma) = \phi(\gamma)s(\phi(\gamma))
\]
and similarly $\phi(r(\gamma)) \phi(\gamma) = r(\phi(\gamma)) \phi(\gamma)$. So
Lemma~\ref{lem:gpd cancellation} shows that $\phi(s(\gamma)) = s(\phi(\gamma))$ and
$\phi(r(\gamma)) = r(\phi(\gamma))$. We then have $\phi(\gamma)\phi(\gamma^{-1}) =
\phi(\gamma\gamma^{-1}) = \phi(s(\gamma)) = s(\phi(\gamma))$, and so the uniqueness
assertion in Lemma~\ref{lem:unique inverse} shows that $\phi(\gamma^{-1}) =
\phi(\gamma)^{-1}$.
\end{proof}

\begin{lemma}\label{lem:R(G)}
If $\mathcal{G}$ is a groupoid, then $R = R(\mathcal{G}) \subseteq \mathcal{G}^{(0)}
\times \mathcal{G}^{(0)}$ defined by $R = \{(r(\gamma), s(\gamma)) \mid \gamma \in
\mathcal{G}\}$ is an equivalence relation on $\mathcal{G}^{(0)}$, and $\gamma \mapsto
(r(\gamma), s(\gamma))$ is a surjective groupoid homomorphism from $\mathcal{G}$ to $R$.
We call $R$ the \emph{equivalence relation of $\mathcal{G}$}.
\end{lemma}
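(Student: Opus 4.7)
The plan is to verify the three equivalence-relation axioms for $R$, then check the two defining conditions for the map $\gamma \mapsto (r(\gamma), s(\gamma))$ to be a groupoid homomorphism, with surjectivity being immediate from the definition of $R$. Essentially every step reduces to applying Lemma~\ref{lem:gpd properties} together with the observation that units satisfy $r(x) = s(x) = x$.

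For reflexivity, given any $x \in \mathcal{G}^{(0)}$, I would use Lemma~\ref{lem:gpd properties}(\ref{it:unit r,s}) to write $(x,x) = (r(x), s(x))$, so $(x,x) \in R$. For symmetry, if $(x,y) = (r(\gamma), s(\gamma)) \in R$, then Lemma~\ref{lem:unique inverse} gives $r(\gamma^{-1}) = s(\gamma) = y$ and $s(\gamma^{-1}) = r(\gamma) = x$, so $(y,x) = (r(\gamma^{-1}), s(\gamma^{-1})) \in R$. For transitivity, suppose $(x,y) = (r(\alpha), s(\alpha))$ and $(y,z) = (r(\beta), s(\beta))$ both lie in $R$. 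Then $s(\alpha) = y = r(\beta)$, so by Lemma~\ref{lem:gpd properties} the pair $(\alpha,\beta)$ belongs to $\mathcal{G}^{(2)}$ and $(r(\alpha\beta), s(\alpha\beta)) = (r(\alpha), s(\beta)) = (x,z)$, placing $(x,z)$ in $R$.

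Having established that $R$ is an equivalence relation on $\mathcal{G}^{(0)}$, I know from earlier in the excerpt that it inherits a groupoid structure with $R^{(2)} = \{((x,y),(y,z))\}$ and $(x,y)(y,z) = (x,z)$. Now set $\phi(\gamma) = (r(\gamma), s(\gamma))$. Surjectivity of $\phi : \mathcal{G} \to R$ is by definition of $R$. For the homomorphism property, take $(\alpha,\beta) \in \mathcal{G}^{(2)}$; then $s(\alpha) = r(\beta)$, so the second coordinate of $\phi(\alpha)$ equals the first coordinate of $\phi(\beta)$, meaning $(\phi(\alpha), \phi(\beta)) \in R^{(2)}$. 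Finally, using Lemma~\ref{lem:gpd properties}(\ref{it:comp r,s}),
\[
\phi(\alpha)\phi(\beta) = (r(\alpha), s(\alpha))(r(\beta), s(\beta)) = (r(\alpha), s(\beta)) = (r(\alpha\beta), s(\alpha\beta)) = \phi(\alpha\beta),
\]
which completes the verification.

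There is no real obstacle here: the entire argument is a bookkeeping exercise that draws on what is already packaged in Lemmas~\ref{lem:unique inverse} and~\ref{lem:gpd properties}. The only mild subtlety is being careful, in the transitivity step, to confirm that the required product $\alpha\beta$ actually exists before reading off its range and source, which is precisely where the characterisation of $\mathcal{G}^{(2)}$ in terms of $s$ and $r$ is used.
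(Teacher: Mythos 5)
Your proof is correct and follows essentially the same route as the paper's: verify reflexivity, symmetry, and transitivity via the identities in Lemmas~\ref{lem:unique inverse} and~\ref{lem:gpd properties}, and note that surjectivity is immediate while the homomorphism property is Lemma~\ref{lem:gpd properties}(\ref{it:comp r,s}). Your version merely spells out the composability check in the transitivity step, which the paper leaves implicit.
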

\begin{proof}
We have $(x,x) = (r(x), s(x)) \in R$ for all $x \in \mathcal{G}^{(0)}$, so $R$ is
reflexive. We have $(s(\gamma), r(\gamma)) = (r(\gamma^{-1}), s(\gamma^{-1}))$ for each
$\gamma$, so $R$ is symmetric. And if $(x,y), (y,z) \in R$, say $x = r(\alpha)$, $y =
s(\alpha)$, and $y = r(\beta)$, $z = s(\beta)$, then $(x,z) = (r(\alpha\beta),
s(\alpha\beta)) \in R$, so $R$ is transitive. That is, $R$ is an equivalence relation.
The map $\gamma \mapsto (r(\gamma), s(\gamma))$ is surjective by definition of $R$, and
is a homomorphism by Lemma~\ref{lem:gpd properties}(\ref{it:comp r,s}).
\end{proof}

We say that a groupoid $\mathcal{G}$ is \emph{principal} if $\gamma \mapsto (r(\gamma),
s(\gamma))$ is injective.

\begin{lemma}\label{lem:principal<->EqRel}
A groupoid $\mathcal{G}$ is algebraically isomorphic to an equivalence relation if and
only if it is principal, in which case it is algebraically isomorphic to
$R(\mathcal{G})$.
\end{lemma}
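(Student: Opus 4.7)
The plan is to prove both directions by leveraging the homomorphism $\pi : \gamma \mapsto (r(\gamma), s(\gamma))$ from $\mathcal{G}$ to $R(\mathcal{G})$ established in Lemma~\ref{lem:R(G)}.

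For the forward direction, I would first observe (as noted in the excerpt just after the definition of equivalence relations as groupoids) that for an equivalence relation $R$ on $X$, the map $(x,y) \mapsto (r(x,y), s(x,y)) = (x,y)$ is the identity, hence trivially injective. So every equivalence relation is principal. Then I would argue that principality transfers across groupoid isomorphism: if $\phi : \mathcal{G} \to \mathcal{H}$ is a groupoid isomorphism, then the lemma about $\phi$ respecting $r$ and $s$ gives a commuting diagram between $\pi_\mathcal{G}$ and $\pi_\mathcal{H}$ via $\phi$ on $\mathcal{G}$ and the induced bijection on unit spaces, so injectivity of $\pi_\mathcal{H}$ forces injectivity of $\pi_\mathcal{G}$. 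Hence if $\mathcal{G} \cong R$ for an equivalence relation $R$, then $\mathcal{G}$ is principal.

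For the reverse direction, suppose $\mathcal{G}$ is principal. By Lemma~\ref{lem:R(G)}, $\pi : \mathcal{G} \to R(\mathcal{G})$ is a surjective groupoid homomorphism; principality gives injectivity, so $\pi$ is a bijective homomorphism. What remains is to confirm that $\pi^{-1}$ is also a groupoid homomorphism, making $\pi$ an isomorphism. This is essentially formal: given $((x,y), (y,z)) \in R(\mathcal{G})^{(2)}$, the equalities $s(\pi^{-1}(x,y)) = y = r(\pi^{-1}(y,z))$ follow from $\pi$ preserving $r,s$ combined with the fact that $\pi$ restricts to a bijection on unit spaces, so $(\pi^{-1}(x,y), \pi^{-1}(y,z)) \in \mathcal{G}^{(2)}$; then $\pi(\pi^{-1}(x,y) \cdot \pi^{-1}(y,z)) = (x,y)(y,z) = \pi(\pi^{-1}((x,y)(y,z)))$, and injectivity of $\pi$ yields multiplicativity of $\pi^{-1}$.

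The argument is mostly bookkeeping; the only mildly delicate step is justifying that a bijective groupoid homomorphism automatically has a homomorphism as its set-theoretic inverse, which I would handle once at the start and then invoke in both directions. The rest is assembling pieces already established in Lemmas~\ref{lem:unique inverse}, \ref{lem:gpd cancellation}, \ref{lem:gpd properties}, and \ref{lem:R(G)}.
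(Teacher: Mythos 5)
Your proof is correct and follows essentially the same route as the paper's: both directions rest on the surjective homomorphism $\gamma \mapsto (r(\gamma), s(\gamma))$ from Lemma~\ref{lem:R(G)} together with the observation that equivalence relations are principal. You simply make explicit two points the paper leaves implicit --- that principality transfers across groupoid isomorphism, and that a bijective groupoid homomorphism has a homomorphism as its inverse --- both of which you justify correctly.
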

\begin{proof}
We just saw that equivalence relations are always principal, so the ``only if"
implication is clear. For the ``if" implication, suppose that $\mathcal{G}$ is principal.
Lemma~\ref{lem:R(G)} shows that $\gamma \mapsto (r(\gamma), s(\gamma))$ is a surjective
groupoid homomorphism onto $R(\mathcal{G})$, and it is injective because $\mathcal{G}$ is
principal.
\end{proof}

From the preceding lemma, it may seem a little strange, in the first instance, to make
the distinction between an ``equivalence relation" and a ``principal groupoid." Indeed,
algebraically there is no difference. But when we start introducing topology into the mix
the distinction makes sense. The term ``equivalence relation" is reserved for principal
groupoids that have the relative topology inherited from the product topology on
$\mathcal{G}^{(0)} \times \mathcal{G}^{(0)}$, whereas ``principal groupoids" may have
finer topologies.

\begin{example}[Transformation groupoids]\label{eg:trans gpd}
Let $X$ be a set, and let $\Gamma$ be a group acting on $X$ by bijections. Let
$\mathcal{G} \coloneq \Gamma \times X$, put $\mathcal{G}^{(0)} = \{e\} \times X$
(identified with $X$ in the obvious way), define $r(g,x) \coloneq g\cdot x$ and $s(g,x)
\coloneq x$, and define $(g, h\cdot x)(h,x) \coloneq (gh, x)$ and $(g,x)^{-1} \coloneq
(g^{-1}, g\cdot x)$. Then $\mathcal{G}$ is a groupoid, called the \emph{transformation
groupoid}.
\end{example}

\begin{example}[Deaconu--Renault groupoids]
Let $X$ be a set, $\Gamma$ an abelian group, and $S \subseteq \Gamma$ a subsemigroup of
$\Gamma$ that contains $0$. Suppose that $S$ acts on $X$ in the sense that we have maps
$x \mapsto s \cdot x$ from $X \to X$ satisfying $s\cdot(t\cdot x) = (s+t)\cdot x$, and $0
\cdot x = x$ for all $x$. Let
\[
\mathcal{G} \coloneq \{(x, s-t, y) \mid s \cdot x = t\cdot y\}.
\]
Put $\mathcal{G}^{(0)} = \{(x,e,x) \mid x \in X\}$ and identify it with $X$. Define $r(x,
g, y) = x$ and $s(x, g, y) = y$, and put $(x, g, y)^{-1} = (y, -g, x)$. Suppose that $s_1
\cdot x = t_1 \cdot y$ and $s_2 \cdot y = t_2 \cdot z$. Then
\[
(s_1 + s_2) \cdot x
    = s_2 \cdot (s_1 \cdot x)
    = s_2 \cdot (t_1 \cdot y)
    = (t_1 + s_2) \cdot y
    = t_1 \cdot (s_2 \cdot y)
    = t_1 \cdot (t_2 \cdot z)
    = (t_1 + t_2) \cdot z.
\]
So we can define $(x, g, y)(y, h, z) = (x, gh, z)$. Under these operations, $\mathcal{G}$
is a groupoid.
\end{example}

\section{Isotropy}

Before discussing the isotropy of a groupoid, we introduce some standard notation.

\textbf{Notation.} For $x \in \mathcal{G}^{(0)}$ we write $\mathcal{G}_x \coloneq
\{\gamma \in \mathcal{G} \mid s(\gamma) = x\}$, and $\mathcal{G}^x \coloneq \{\gamma \in
\mathcal{G} \mid r(\gamma) = x\}$. In some articles you might see these sets denoted
$\mathcal{G} x$ and $x \mathcal{G}$; this is helpful and sensible notation, but the
literature is well-established and the superscript-subscript notation is quite standard,
so we'll stick with it here. We write $\mathcal{G}^y_x \coloneq \mathcal{G}_x \cap
\mathcal{G}^y$.

We will, however, write $UV$ for $\{\alpha\beta \mid \alpha \in U, \beta \in V, s(\alpha)
= r(\beta)\}$ for any pair of subsets $U, V$ of a groupoid $\mathcal{G}$.

\smallskip

If $\mathcal{G}$ is a groupoid, then the \emph{isotropy subgroupoid} of $\mathcal{G}$, or
just the isotropy of $\mathcal{G}$, is the subset $\operatorname{Iso}(\mathcal{G}) =
\bigcup_{x \in \mathcal{G}^{(0)}} \mathcal{G}^x_x = \{\gamma \in \mathcal{G} \mid
r(\gamma) = s(\gamma)\}$. It is straightforward to see that the isotropy subgroupoid
really is a subgroupoid; indeed, it is a group bundle as in Example~\ref{ex:group
bundle}.

Clearly $\mathcal{G}^{(0)} \subseteq \operatorname{Iso}(\mathcal{G})$.

\begin{lemma}
A groupoid $\mathcal{G}$ is principal if and only if $\operatorname{Iso}(\mathcal{G}) =
\mathcal{G}^{(0)}$.
\end{lemma}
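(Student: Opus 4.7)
The plan is to prove both implications directly from the definitions, using the cancellation and range/source identities already established.

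For the forward implication, I would assume $\mathcal{G}$ is principal and take any $\gamma \in \operatorname{Iso}(\mathcal{G})$. Setting $x = r(\gamma) = s(\gamma) \in \mathcal{G}^{(0)}$, Lemma~\ref{lem:gpd properties}(\ref{it:unit r,s}) gives $(r(x), s(x)) = (x,x) = (r(\gamma), s(\gamma))$, so principality forces $\gamma = x \in \mathcal{G}^{(0)}$. Combined with the containment $\mathcal{G}^{(0)} \subseteq \operatorname{Iso}(\mathcal{G})$ already noted, this gives equality.

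For the reverse implication, assume $\operatorname{Iso}(\mathcal{G}) = \mathcal{G}^{(0)}$, and suppose $\alpha, \beta \in \mathcal{G}$ satisfy $r(\alpha) = r(\beta)$ and $s(\alpha) = s(\beta)$. Since $s(\alpha) = s(\beta) = r(\beta^{-1})$, the product $\alpha\beta^{-1}$ is defined; moreover $r(\alpha\beta^{-1}) = r(\alpha)$ and $s(\alpha\beta^{-1}) = s(\beta^{-1}) = r(\beta) = r(\alpha)$, so $\alpha\beta^{-1} \in \operatorname{Iso}(\mathcal{G}) = \mathcal{G}^{(0)}$. Being a unit, it equals its own range, namely $r(\alpha)$. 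Multiplying on the right by $\beta$ (noting $s(\alpha\beta^{-1}) = r(\alpha) = r(\beta)$, so the product is defined) yields
\[
\alpha = \alpha s(\alpha) = \alpha \beta^{-1}\beta = r(\alpha)\beta = r(\beta)\beta = \beta,
\]
showing that $\gamma \mapsto (r(\gamma), s(\gamma))$ is injective, i.e., $\mathcal{G}$ is principal.

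Neither direction presents a real obstacle; the main point is simply to recognize that $\alpha\beta^{-1}$ is the natural element to inspect, and that by Corollary~\ref{cor:idempotents} (or equivalently Lemma~\ref{lem:gpd properties}(\ref{it:unit r,s})) a unit can be replaced by its range, after which Lemma~\ref{lem:gpd cancellation} (or a direct unit-absorption computation) finishes the argument.
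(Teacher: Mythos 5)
Your proof is correct and follows essentially the same route as the paper: both directions hinge on observing that principality applied to a unit's range--source pair handles the forward implication, and that $\alpha\beta^{-1}$ lands in $\operatorname{Iso}(\mathcal{G}) = \mathcal{G}^{(0)}$ for the converse. The only cosmetic difference is that the paper concludes via the uniqueness clause of Lemma~\ref{lem:unique inverse} where you multiply on the right by $\beta$ and absorb the unit, which amounts to the same thing.
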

\begin{proof}
If $\mathcal{G}$ is principal and $\gamma \in \operatorname{Iso}(\mathcal{G})$, then $x =
r(\gamma)$ satisfies $(r(\gamma), s(\gamma)) = (r(x), s(x))$, and since $\mathcal{G}$ is
principal it follows that $\gamma = x \in \mathcal{G}^{(0)}$. Now suppose that
$\operatorname{Iso}(\mathcal{G}) = \mathcal{G}^{(0)}$, and that $(r(\gamma), s(\gamma)) =
(r(\alpha), s(\alpha))$. Then $\alpha\gamma^{-1} \in \operatorname{Iso}(\mathcal{G})$ and
therefore $\alpha\gamma^{-1} = r(\alpha)$. So Lemma~\ref{lem:unique inverse} forces
$\alpha = \gamma$.
\end{proof}

\begin{example}
If $X$ is a set, and $\Gamma$ is a group acting on $X$, then, as usual, the isotropy
subgroup of $\Gamma$ at $x \in X$ is $\Gamma_x \coloneq \{g \in \Gamma \mid g \cdot x =
x\}$. The isotropy subgroupoid of the transformation groupoid $\mathcal{G}$ is then
$\bigcup_{x \in X} \{x\} \times \Gamma_x$, the union of the isotropy subgroups associated
to points $x \in X$.
\end{example}

\begin{lemma}\label{lem:isotropy isomorphisms}
If $\mathcal{G}$ is a groupoid and $\gamma \in \mathcal{G}$, then the map
$\operatorname{Ad}_\gamma : \alpha \mapsto \gamma\alpha\gamma^{-1}$ is a group
isomorphism from $\mathcal{G}^{s(\gamma)}_{s(\gamma)}$ to
$\mathcal{G}^{r(\gamma)}_{r(\gamma)}$.
\end{lemma}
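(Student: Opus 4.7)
The plan is to verify three things in order: that $\operatorname{Ad}_\gamma$ genuinely maps $\mathcal{G}^{s(\gamma)}_{s(\gamma)}$ into $\mathcal{G}^{r(\gamma)}_{r(\gamma)}$, that it is a group homomorphism, and that $\operatorname{Ad}_{\gamma^{-1}}$ is its two-sided inverse.

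First I would check well-definedness. For $\alpha \in \mathcal{G}^{s(\gamma)}_{s(\gamma)}$, we have $r(\alpha) = s(\gamma)$, so $(\gamma, \alpha) \in \mathcal{G}^{(2)}$ by Lemma~\ref{lem:gpd properties}, and $s(\gamma\alpha) = s(\alpha) = s(\gamma) = r(\gamma^{-1})$, so $(\gamma\alpha, \gamma^{-1}) \in \mathcal{G}^{(2)}$ as well. Applying Lemma~\ref{lem:gpd properties}(\ref{it:comp r,s}) twice, $r(\gamma\alpha\gamma^{-1}) = r(\gamma)$ and $s(\gamma\alpha\gamma^{-1}) = s(\gamma^{-1}) = r(\gamma)$, so $\operatorname{Ad}_\gamma(\alpha)$ lies in $\mathcal{G}^{r(\gamma)}_{r(\gamma)}$ as required.

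Next I would verify the homomorphism property. Given $\alpha, \beta \in \mathcal{G}^{s(\gamma)}_{s(\gamma)}$, repeated use of associativity (axiom~(\ref{it:gpd1})) together with $\gamma^{-1}\gamma = s(\gamma)$ and the fact that $s(\gamma) = r(\beta)$ gives
\[
\operatorname{Ad}_\gamma(\alpha)\operatorname{Ad}_\gamma(\beta)
    = \gamma\alpha(\gamma^{-1}\gamma)\beta\gamma^{-1}
    = \gamma\alpha s(\gamma)\beta\gamma^{-1}
    = \gamma(\alpha\beta)\gamma^{-1}
    = \operatorname{Ad}_\gamma(\alpha\beta),
\]
where in the penultimate step I use that $s(\gamma)\beta = r(\beta)\beta = \beta$ by Lemma~\ref{lem:unique inverse}. (One should also check that the composable-pair hypotheses are satisfied at each step, but this is immediate from the source/range calculations above.)

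Finally, the same argument with $\gamma$ replaced by $\gamma^{-1}$ produces a map $\operatorname{Ad}_{\gamma^{-1}} : \mathcal{G}^{r(\gamma)}_{r(\gamma)} \to \mathcal{G}^{s(\gamma)}_{s(\gamma)}$, and for any $\alpha \in \mathcal{G}^{s(\gamma)}_{s(\gamma)}$,
\[
\operatorname{Ad}_{\gamma^{-1}}(\operatorname{Ad}_\gamma(\alpha))
    = \gamma^{-1}\gamma\alpha\gamma^{-1}\gamma
    = s(\gamma)\alpha s(\gamma) = \alpha,
\]
and symmetrically in the other direction, so $\operatorname{Ad}_\gamma$ is a bijection with inverse $\operatorname{Ad}_{\gamma^{-1}}$. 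There is no real obstacle here; the only thing to be careful about is that every intermediate product lies in $\mathcal{G}^{(2)}$, which one tracks using Lemma~\ref{lem:gpd properties}(\ref{it:comp r,s}) and the assumption $r(\alpha) = s(\alpha) = s(\gamma)$.
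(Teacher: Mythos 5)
Your proof is correct and follows essentially the same route as the paper's, which simply notes that $\operatorname{Ad}_{\gamma^{-1}}$ is plainly an inverse and records the one-line computation $\operatorname{Ad}_\gamma(\alpha)\operatorname{Ad}_\gamma(\beta) = \gamma\alpha\gamma^{-1}\gamma\beta\gamma^{-1} = \operatorname{Ad}_\gamma(\alpha\beta)$. You have merely spelled out the composability and range/source checks that the paper leaves implicit.
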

\begin{proof}
The map $\operatorname{Ad}_{\gamma^{-1}}$ is plainly an inverse for
$\operatorname{Ad}_\gamma$; and
\[
\operatorname{Ad}_\gamma(\alpha)\operatorname{Ad}_\gamma(\beta) = \gamma \alpha \gamma^{-1}\gamma \beta \gamma^{-1} = \operatorname{Ad}_\gamma(\alpha\beta).\qedhere
\]
\end{proof}

\section{Topological groupoids}

Since we are going to be interested here in $C^*$-algebras, we will want to topologise
our groupoids.

\begin{definition}
A \emph{topological groupoid} is a groupoid $\mathcal{G}$ endowed with a locally compact
topology under which $\mathcal{G}^{(0)} \subseteq \mathcal{G}$ is Hausdorff in the
relative topology, the maps $r$, $s$ and $\gamma \mapsto \gamma^{-1}$ are continuous, and
the map $(g,h) \to gh$ is continuous with respect to the relative topology on
$\mathcal{G}^{(2)}$ as a subset of $\mathcal{G} \times \mathcal{G}$.
\end{definition}

We naturally expect that the unit space will be closed in a topological groupoid; but in
fact, this is true only when $\mathcal{G}$ is Hausdorff.

\begin{lemma}\label{lem:go closed}
If $\mathcal{G}$ is a topological groupoid, then $\mathcal{G}^{(0)}$ is closed in
$\mathcal{G}$ if and only if $\mathcal{G}$ is Hausdorff.
\end{lemma}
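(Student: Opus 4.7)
The plan is to prove both directions by exploiting the characterisation $\mathcal{G}^{(0)} = \{\gamma \in \mathcal{G} \mid r(\gamma) = \gamma\}$ (from Lemma~\ref{lem:gpd properties}(\ref{it:unit r,s})), and for the harder direction to use continuity of multiplication to produce disjoint neighbourhoods from the openness of $\mathcal{G}\setminus\mathcal{G}^{(0)}$.

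For the forward direction, assume $\mathcal{G}$ is Hausdorff. I would note that the map $f : \mathcal{G} \to \mathcal{G} \times \mathcal{G}$ given by $f(\gamma) = (\gamma, r(\gamma))$ is continuous since $r$ is, and that $\mathcal{G}^{(0)} = f^{-1}(\Delta_\mathcal{G})$ where $\Delta_\mathcal{G}$ is the diagonal. Hausdorffness of $\mathcal{G}$ makes $\Delta_\mathcal{G}$ closed in $\mathcal{G} \times \mathcal{G}$, so $\mathcal{G}^{(0)}$ is closed. (Equivalently, one could check directly that any net in $\mathcal{G}^{(0)}$ converging to $\gamma$ forces $r(\gamma) = \gamma$ by uniqueness of limits.)

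For the reverse direction, suppose $\mathcal{G}^{(0)}$ is closed, and fix distinct $\alpha, \beta \in \mathcal{G}$. I would split into two cases. If $r(\alpha) \ne r(\beta)$, then since $\mathcal{G}^{(0)}$ is Hausdorff by definition of a topological groupoid, I separate $r(\alpha)$ and $r(\beta)$ by disjoint open sets $U_0, V_0 \subseteq \mathcal{G}^{(0)}$ and pull back via continuity of $r$ to obtain disjoint open neighbourhoods $r^{-1}(U_0) \ni \alpha$ and $r^{-1}(V_0) \ni \beta$. If instead $r(\alpha) = r(\beta)$, then $\alpha^{-1}\beta$ is defined. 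A short calculation using cancellation (Lemma~\ref{lem:gpd cancellation}) shows that $\alpha^{-1}\beta \in \mathcal{G}^{(0)}$ would force $\beta = \alpha$, so $\alpha^{-1}\beta$ lies in the open set $\mathcal{G} \setminus \mathcal{G}^{(0)}$.

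Now I bring in continuity of the map $\mu(\gamma,\delta) = \gamma^{-1}\delta$ on $\{(\gamma,\delta) \in \mathcal{G}\times\mathcal{G} \mid r(\gamma) = r(\delta)\}$ with its subspace topology. This yields a basic open neighbourhood $(U \times V) \cap \{r(\gamma) = r(\delta)\}$ of $(\alpha,\beta)$ whose image under $\mu$ avoids $\mathcal{G}^{(0)}$. The punchline is that $U$ and $V$ must then be disjoint: any common point $\gamma \in U \cap V$ would give $(\gamma,\gamma)$ in that neighbourhood with $\mu(\gamma,\gamma) = s(\gamma) \in \mathcal{G}^{(0)}$, a contradiction. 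The main thing to be careful about is correctly handling the subspace topology on $\mathcal{G}^{(2)}$ (or on $\mathcal{G} *_r \mathcal{G}$) when extracting the product-form neighbourhood $U \times V$; that is the only real subtlety, since once the neighbourhood is in product form the disjointness argument is immediate.
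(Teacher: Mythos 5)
Your proof is correct, but the ``closed $\Rightarrow$ Hausdorff'' direction runs along a genuinely different track from the paper's. The paper argues via nets: if $\gamma_i \to \alpha$ and $\gamma_i \to \beta$, then $s(\gamma_i) = \gamma_i^{-1}\gamma_i \to \alpha^{-1}\beta$, which must land in the closed set $\mathcal{G}^{(0)}$, forcing $\alpha = \beta$; Hausdorffness then follows from uniqueness of net limits. You instead construct the separating open sets directly, splitting into the case $r(\alpha) \neq r(\beta)$ (handled by Hausdorffness of $\mathcal{G}^{(0)}$ and continuity of $r$) and the case $r(\alpha) = r(\beta)$, where you pull the open set $\mathcal{G}\setminus\mathcal{G}^{(0)}$ back through the continuous map $(\gamma,\delta) \mapsto \gamma^{-1}\delta$ on the fibred product and extract a product-form neighbourhood $U \times V$, whose two factors must be disjoint because the diagonal maps into $\mathcal{G}^{(0)}$. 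Your version is longer and requires the care you flag about the subspace topology on $\mathcal{G} *_r \mathcal{G}$, but it is more explicit, and your case split cleanly handles a point the paper's argument glosses over: one needs $(\alpha^{-1},\beta) \in \mathcal{G}^{(2)}$ before invoking continuity of multiplication, which in the net argument requires a separate appeal to Hausdorffness of $\mathcal{G}^{(0)}$ to conclude $r(\alpha) = r(\beta)$. The forward directions are essentially the same observation packaged two ways (closed diagonal versus unique net limits), and both are fine.
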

\begin{proof}
First suppose that $\mathcal{G}$ is Hausdorff, and suppose that $(x_i)_{i \in I}$ is a
net in $\mathcal{G}^{(0)}$ such that $x_i \to \gamma \in \mathcal{G}$. Since $r$ is
continuous, we have $x_i = r(x_i) \to r(\gamma) \in \mathcal{G}^{(0)}$. Since
$\mathcal{G}$ is Hausdorff, this limit point is unique, and we deduce that $\gamma =
r(\gamma)$.

Now suppose that $\mathcal{G}^{(0)}$ is closed. To see that $\mathcal{G}$ is Hausdorff,
it suffices to show that convergent nets have unique limit points. For this, suppose that
$(\gamma_i)_{i \in I}$ is a net and that $\gamma_i \to \alpha$ and $\gamma_i \to \beta$.
By continuity, we then have $\gamma_i^{-1}\gamma_i \to \alpha^{-1}\beta$. Since each
$\gamma_i^{-1}\gamma_i = s(\gamma_i) \in \mathcal{G}^{(0)}$ and $\mathcal{G}^{(0)}$ is
closed, we deduce that $\alpha^{-1}\beta \in \mathcal{G}^{(0)}$. Hence $\alpha = \beta$.
\end{proof}

\smallskip

Although there are many interesting and important examples without these properties, in
these notes, all the topological groupoids that I discuss will be second-countable and
Hausdorff as topological spaces. So in these notes, $\mathcal{G}^{(0)}$ is always a
closed subset of $\mathcal{G}$.

\smallskip

\begin{example}[Discrete groupoids]
Every groupoid is a topological groupoid in the discrete topology.
\end{example}

\begin{example}[Topological equivalence relations]
If $X$ is a second-countable Hausdorff space, and $R$ is an equivalence relation on $X$,
then $R$ is a topological groupoid in the relative topology inherited from $X \times X$.
\end{example}

The previous example, combined with Lemma~\ref{lem:principal<->EqRel}, shows that if
$\mathcal{G}$ is a principal groupoid, then any second-countable Hausdorff topology on
$\mathcal{G}^{(0)}$ induces a topological-groupoid structure on $\mathcal{G}$. Further,
if $\mathcal{G}$ is a principal topological groupoid, then $\gamma \mapsto (r(\gamma),
s(\gamma))$ is a bijective continuous map from $\mathcal{G}$ to the topological
equivalence relation $R(\mathcal{G})$. So the topology on $\mathcal{G}$ must be finer
than the one inherited from the product topology on $\mathcal{G}^{(0)} \times
\mathcal{G}^{(0)}$. It can be strictly finer:

\begin{example}\label{ex:2infty relation}
Let $X \coloneq \prod^\infty_{i=1} \{0,1\}$, viewed as right-infinite strings of $0$'s
and $1$'s, and given the product topology. Define an equivalence relation $R$ on $X$ by
$(x,y) \in R$ if and only if there exists $n \in \mathbb{N}$ such that $x_j = y_j$ for
all $j \ge n$. Let $\mathcal{R}_{2^\infty} \coloneq \{(x, y) \mid x \sim y\}$ be an
algebraic copy of $R$. For finite words $v,w \in \{0,1\}^n$, define $Z(v,w) = \{(vx, wx)
\mid x \in X\} \subseteq \mathcal{R}_{2^\infty}$. Observe that
\[
Z(v,w) \cap Z(v', w')
    = \begin{cases}
        Z(v,w) &\text{ if $v = v'u$ and $w = w' u$ for some $u$}\\
        Z(v',w') &\text{ if $v' = vu$ and $w' = wu$ for some $u$}\\
        \emptyset &\text{ otherwise.}
    \end{cases}
\]
So the $Z(v,w)$ form a base for a topology.

We claim that $\mathcal{R}_{2^\infty}$ is a topological groupoid in this topology. It is
Hausdorff because if $(x,y) \not= (x',y')$ then there is an $n$ such that $\big(x(0,n),
y(0,n)\big) \not= \big(x'(0,n), y'(0,n)\big)$, and then $Z\big(x(0,n), y(0,n)\big)$ and
$Z\big(x'(0,n), y'(0,n)\big)$ separate these points. The sets $Z(v,w)$ are also compact:
the map $x \mapsto (v x, w x)$ is a bijective continuous map from the compact space $X$
to the Hausdorff space $Z(v,w)$, and therefore a homeomorphism. The maps $r,s$ restrict
to homeomorphisms of $Z(v,w)$ onto $Z(v)$ and $Z(w)$, so they are continuous. Inversion
is clearly continuous. Multiplication is continuous because the pre-image of $Z(v,w)$ is
$\bigcup_{|y|=|v|} \bigcup_u \big(Z(vu, yu) \times Z(yu, wu)\big)\cap
\mathcal{R}_{2^\infty}$. This proves the claim.

We now claim that this topology is finer than the one inherited from the product
topology. To see this, consider the sequence $\gamma_n = \big((0^n10^\infty,
0^\infty)\big)^\infty_{n=1}$. In the product topology, we have $\gamma_n \to (0^\infty,
0^\infty)$. But the basic neighbourhoods of $(0^\infty, 0^\infty)$ are of the form
$Z(0^m, 0^m)$, and we have $\gamma_n \not\in Z(0^m, 0^m)$ for $n > m$. So $\gamma_n
\not\to (0^\infty, 0^\infty)$ in $\mathcal{R}_{2^\infty}$.
\end{example}

As we will see later, this example is important: its $C^*$-algebra is
$M_{2^\infty}(\mathbb{C})$.

\begin{example}
Suppose that $X$ is a second-countable locally compact Hausdorff space and that $\Gamma$
is a locally compact group acting on $X$ by homeomorphisms as in
\cite[Section~1.1]{WilliamsNotes}. Then the transformation groupoid $\mathcal{G}$ is a
topological groupoid in the product topology.
\end{example}

In these notes, a \emph{local homeomorphism} from $X$ to $Y$ is a continuous map $h : X
\to Y$ such that every $x \in X$ has an open neighbourhood $U$ such that $h(U) \subseteq
Y$ is open and $h : U \to h(U)$ is a homeomorphism.

\begin{example}
If $X$ is a second-countable locally compact Hausdorff space, $\Gamma$ is a discrete
abelian group, $S$ is a subsemigroup of $\Gamma$ containing $0$, and $S$ acts on $X$ by
local homeomorphisms, then the Deaconu--Renault groupoid becomes a locally compact
Hausdorff groupoid in the topology with basic open sets
\[
Z(U, p, q, V) = \{(x, p-q, y) \mid x \in U, y \in V, p\cdot x = q\cdot y\}
\]
indexed by pairs $U,V$ of open subsets of $X$ and pairs $p,q \in S$.
\end{example}

\section{\'Etale groupoids}

In these notes, we will focus on \'etale groupoids. These are the analogue, in the
groupoid world, of discrete groups.

\begin{definition}
A topological groupoid $\mathcal{G}$ is \emph{\'etale} if the range map $r : \mathcal{G}
\to \mathcal{G}$ is a local homeomorphism.
\end{definition}

Note: a subtle but important point is that $r$ is a local homeomorphism as a map from
$\mathcal{G}$ to $\mathcal{G}$; not just from $\mathcal{G}$ to $\mathcal{G}^{(0)}$ in the
relative topology. The first important consequence is the following.

\begin{lemma}\label{lem:go open}
If $\mathcal{G}$ is an \'etale groupoid, then $\mathcal{G}^{(0)}$ is open in
$\mathcal{G}$.
\end{lemma}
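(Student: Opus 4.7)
The plan is to exploit the subtle point the author flags just before the lemma: the range map $r$ is a local homeomorphism from $\mathcal{G}$ to $\mathcal{G}$, so the image of any small enough open set under $r$ is open in $\mathcal{G}$ itself (and not merely in the subspace topology on $\mathcal{G}^{(0)}$). Combined with the trivial algebraic fact that $r$ takes values in $\mathcal{G}^{(0)}$, this should hand us an open neighbourhood of an arbitrary unit $x$ that is contained in $\mathcal{G}^{(0)}$.

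Concretely, fix $x \in \mathcal{G}^{(0)}$. Using that $r$ is a local homeomorphism, I choose an open neighbourhood $U$ of $x$ in $\mathcal{G}$ such that $r(U)$ is open in $\mathcal{G}$ and $r|_U : U \to r(U)$ is a homeomorphism. By the definition of $r$ we have $r(\gamma) = \gamma\gamma^{-1} \in \mathcal{G}^{(0)}$ for every $\gamma \in \mathcal{G}$, and Lemma~\ref{lem:gpd properties}(\ref{it:unit r,s}) gives $r(x) = x$. Hence $r(U) \subseteq \mathcal{G}^{(0)}$ and $x = r(x) \in r(U)$. Therefore $r(U)$ is an open subset of $\mathcal{G}$ contained in $\mathcal{G}^{(0)}$ and containing $x$, so $\mathcal{G}^{(0)}$ is a neighbourhood of each of its points, i.e. is open.

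There is essentially no obstacle here once one sees the trick of passing from $U$ to $r(U)$; the argument only has bite because $r$ is a local homeomorphism as a map to $\mathcal{G}$. If one only knew that $r$ were a local homeomorphism onto $\mathcal{G}^{(0)}$ in its relative topology, one could not conclude that $r(U)$ is open in $\mathcal{G}$, and this conclusion would fail.
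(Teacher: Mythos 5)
Your proof is correct and is essentially the paper's own argument: both use that $r$ is a local homeomorphism into $\mathcal{G}$ to produce open sets $r(U)$ that lie in $\mathcal{G}^{(0)}$ and cover it. The only cosmetic difference is that the paper writes $\mathcal{G}^{(0)}$ as the union $\bigcup_\gamma r(U_\gamma)$ over all $\gamma \in \mathcal{G}$, whereas you work pointwise at a unit $x$; these are the same proof.
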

\begin{proof}
For each $\gamma \in \mathcal{G}$ chose an open $U_\gamma$ containing $\gamma$ such that
$r : U_\gamma \to r(U_\gamma)$ is a homeomorphism onto an open set. Then
$\mathcal{G}^{(0)} = \bigcup_\gamma r(U_\gamma)$ is open.
\end{proof}

\begin{example}
Every discrete groupoid is \'etale.
\end{example}

\begin{example}
The principal groupoid of Example~\ref{ex:2infty relation} is \'etale: we proved that $r$
is a homeomorphism of $Z(u,v)$ onto $Z(u) \subseteq \mathcal{G}^{(0)}$ for each $(u,v)$.
\end{example}

\begin{example}
A transformation groupoid is \'etale if and only if the acting group $\Gamma$ is
discrete.
\end{example}

\begin{example}
The Deaconu--Renault groupoid associated to an action by local homeomorphisms is always
\'etale: For each $(x, p, q, y) \in \mathcal{G}$ we can choose open neighbourhoods $U$ of
$x$ and $V$ of $y$ such that $u \mapsto p\cdot u$ is a homeomorphism from $U$ to $p\cdot
U$, and $v \mapsto q \cdot v$ is a homeomorphism of $V$ onto $p \cdot V$. Let $W = p\cdot
U \cap q\cdot V$ and let $U' \coloneq \{u \in U \mid p \cdot u \in W\}$ and $V' \coloneq
\{v \in V \mid q \cdot v \in W\}$. Then $Z(U', p,q, V')$ is an open neighbourhood of $(x,
p, q, y)$, and the range map restricts to a homeomorphism of this neighbourhood onto
$U'$.
\end{example}

\begin{example}[Graph groupoids]\label{ex:graph gpd}
Let $E$ be a directed graph with vertex set $E^0$, edge set $E^1$ and direction of edges
described by range and source maps $r,s : E^1 \to E^0$. Assume that $E$ is row-finite
with no sources (so $r^{-1}(v)$ is finite and nonempty for every vertex $v$). See
\cite{Raeburn} for background on graphs in the context of graph $C^*$-algebras; we will
use Raeburn's conventions and notation for graphs throughout. Let $E^\infty$ denote the
space of right-infinite paths in $E$, so $E^\infty = \{x_1 x_2 x_3 \cdots \mid x_i \in
E^1, r(x_{i+1}) = s(x_i)\}$. Give $E^\infty$ the topology inherited from the product
space $\prod_{i=1}^\infty E^1$. Then $E^\infty$ is a totally-disconnected locally compact
Hausdorff space, and the sets $Z(\mu) = \{\mu x \mid x \in E^\infty, r(x_1) = s(\mu)\}$
form a base of compact open sets for the topology. The map $\sigma : E^\infty \to
E^\infty$ given by $\sigma(x)_i = x_{i+1}$ is a local homeomorphism (it restricts to a
homeomorphism on $Z(\mu)$ whenever $|\mu| \ge 1$), so induces an action of $\mathbb{N}$
by local homeomorphisms. The associated Deaconu--Renault groupoid $\mathcal{G}_E = \{(x,
m-n, y) \mid \sigma^m(x) = \sigma^n(y)\}$ is called the \emph{graph groupoid} of $E$.
\end{example}

Since $\gamma \mapsto \gamma^{-1}$ is continuous and self-inverse, if $\mathcal{G}$ is
\'etale, then $s : \mathcal{G} \to \mathcal{G}^{(0)}$ is also a local homeomorphism. So
there are plenty of open sets on which $r,s$ are both homeomorphisms.

\begin{definition}
A subset $B$ of an \'etale groupoid $\mathcal{G}$ is a \emph{bisection} if there is an
open set $U$ containing $B$ such that $r : U \to r(U)$ and $s : U \to s(U)$ are both
homeomorphisms onto open subsets of $\mathcal{G}^{(0)}$.
\end{definition}

\begin{lemma}\label{lem:bisection base}
Let $\mathcal{G}$ be a second-countable Hausdorff \'etale groupoid. Then $\mathcal{G}$
has a countable base of open bisections.
\end{lemma}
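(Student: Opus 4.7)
The plan is to first show that the open bisections form a base for the topology on $\mathcal{G}$, and then extract a countable such base using second countability. The lemma already notes that because inversion is a self-homeomorphism and $s = r \circ (\cdot)^{-1}$, the source map $s$ is also a local homeomorphism, so I can use both local-homeomorphism hypotheses freely.

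First I would show that every point $\gamma \in \mathcal{G}$ sits inside an open bisection. Pick an open $U \ni \gamma$ on which $r$ restricts to a homeomorphism onto an open subset $r(U)$ of $\mathcal{G}^{(0)}$, and similarly an open $V \ni \gamma$ on which $s$ restricts to a homeomorphism onto an open $s(V)$. Let $W = U \cap V$. The restriction of a homeomorphism to an open subset of its domain is still a homeomorphism onto its (open) image, so $r|_W$ and $s|_W$ are homeomorphisms onto the open sets $r(W) \subseteq r(U)$ and $s(W) \subseteq s(V)$; since $\mathcal{G}^{(0)}$ is open in $\mathcal{G}$ by Lemma~\ref{lem:go open}, these images are open in $\mathcal{G}^{(0)}$ as required. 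Thus $W$ is an open bisection containing $\gamma$.

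Next I would upgrade this to the statement that the open bisections form a base. Given any open $O \subseteq \mathcal{G}$ and $\gamma \in O$, the set $W \cap O$ (for $W$ as above) is open, contains $\gamma$, is contained in $O$, and is a bisection because $r$ and $s$ still restrict to homeomorphisms onto open images on this further open subset. So open bisections form a base.

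To finish, I would invoke second countability. Fix a countable base $\{V_n\}$ for $\mathcal{G}$. Each $V_n$ is covered by open bisections (by the previous paragraph, applied to $O = V_n$ and each $\gamma \in V_n$), and $V_n$ is second countable as a subspace of $\mathcal{G}$, hence Lindel\"of, so this cover admits a countable subcover $\{B_{n,k}\}_{k}$ of $V_n$ by open bisections. The countable family $\{B_{n,k}\}_{n,k}$ is then a base: for any open $U$ and $\gamma \in U$, choose $n$ with $\gamma \in V_n \subseteq U$, and then $k$ with $\gamma \in B_{n,k}$, whence $\gamma \in B_{n,k} \subseteq V_n \subseteq U$. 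The only genuine subtlety in the argument is the first step---checking that the intersection $U \cap V$ really is a bisection in the sense of the stated definition, i.e.\ that the images remain open in $\mathcal{G}^{(0)}$---which relies crucially on Lemma~\ref{lem:go open} and the fact that an open subset of an open set is open.
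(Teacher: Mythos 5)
Your proof is correct. The key step---that intersecting an open neighbourhood on which $r$ is a homeomorphism onto an open set with one on which $s$ is a homeomorphism onto an open set yields an open bisection, so that open bisections form a base---is exactly the idea underlying the paper's proof. Where you differ is in how countability is extracted: the paper chooses a countable dense subset $\{\gamma_n\}$ and countable neighbourhood bases $\{U_{n,i}\}_i$, $\{V_{n,i}\}_i$ at each $\gamma_n$ adapted to $r$ and $s$ respectively, and takes the family $\{U_{n,i}\cap V_{n,i}\}$; you instead fix a countable base $\{V_n\}$, cover each $V_n$ by open bisections contained in it, and use the Lindel\"of property of a second-countable space to pass to countable subcovers. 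Both routes are legitimate consequences of second countability, and yours is arguably the cleaner of the two: verifying that the paper's family $\{U_{n,i}\cap V_{n,i}\}$ is actually a base requires an extra (suppressed) argument locating a suitable $\gamma_n$ near an arbitrary point, whereas your verification that $\{B_{n,k}\}$ is a base is immediate from $B_{n,k}\subseteq V_n$. You are also right to flag the one genuine subtlety, namely that the images $r(W)$ and $s(W)$ remain open, which follows since an open subset of an open set is open (and $\mathcal{G}^{(0)}$ is open by Lemma~\ref{lem:go open}).
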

\begin{proof}
Choose a countable dense subset $\{\gamma_n\}$ of $\mathcal{G}$. For each $\gamma_n$,
choose countable neighbourhood bases $\{U_{n,i}\}_i$ and $\{V_{n,i}\}_i$ at $\gamma_n$
such that $r$ is a homeomorphism of each $U_{n,i}$ onto an open set, and $s$ is a
homeomorphism of each $V_{n,i}$ onto an open set. Then $\{U_{n,i} \cap V_{n,i} \mid n,i
\in \mathbb{N}\}$ is a countable base of open bisections.
\end{proof}

\begin{corollary}\label{cor:fibres discrete}
If $\mathcal{G}$ is an \'etale groupoid, then each $\mathcal{G}_x$ and each
$\mathcal{G}^x$ is discrete in the relative topology.
\end{corollary}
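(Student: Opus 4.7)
The plan is to use the étale condition directly: since $r$ is a local homeomorphism from $\mathcal{G}$ to $\mathcal{G}$, each $\gamma \in \mathcal{G}^x$ has an open neighbourhood $U$ on which $r$ is injective, and this $U$ will witness that $\{\gamma\}$ is open in $\mathcal{G}^x$ relative to the subspace topology.

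More precisely, fix $x \in \mathcal{G}^{(0)}$ and $\gamma \in \mathcal{G}^x$, so $r(\gamma) = x$. By the étale hypothesis, choose an open $U \subseteq \mathcal{G}$ with $\gamma \in U$ such that $r|_U : U \to r(U)$ is a homeomorphism. Then for any $\eta \in U \cap \mathcal{G}^x$ we have $r(\eta) = x = r(\gamma)$, and injectivity of $r|_U$ forces $\eta = \gamma$. Hence $U \cap \mathcal{G}^x = \{\gamma\}$, so $\{\gamma\}$ is open in $\mathcal{G}^x$; since $\gamma$ was arbitrary, $\mathcal{G}^x$ is discrete.

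For $\mathcal{G}_x$ I would invoke the observation made just before the definition of bisection: because $\gamma \mapsto \gamma^{-1}$ is a self-inverse homeomorphism of $\mathcal{G}$, the source map $s$ is also a local homeomorphism. Then the identical argument with $r$ replaced by $s$ shows that each $\mathcal{G}_x$ is discrete. Alternatively, one could just note that inversion restricts to a homeomorphism $\mathcal{G}_x \to \mathcal{G}^x$ and transport discreteness across it.

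There is essentially no obstacle here; the only thing worth being careful about is the subtlety flagged in the note after the definition of étale, namely that $r$ is a local homeomorphism as a map $\mathcal{G} \to \mathcal{G}$ (not merely $\mathcal{G} \to \mathcal{G}^{(0)}$), which is exactly what lets us produce the neighbourhood $U$ open in $\mathcal{G}$ and conclude that $U \cap \mathcal{G}^x = \{\gamma\}$ is open in the relative topology on $\mathcal{G}^x$.
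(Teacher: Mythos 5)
Your argument is correct and is essentially the paper's proof: the paper picks an open bisection $U_\gamma$ around each $\gamma$ and observes $U_\gamma \cap \mathcal{G}_x = \{\gamma\}$, which is exactly your local-injectivity argument (using $s$ for $\mathcal{G}_x$ and $r$ for $\mathcal{G}^x$). Your attention to the subtlety that $r$ is a local homeomorphism into $\mathcal{G}$, not just into $\mathcal{G}^{(0)}$, is well placed but changes nothing here.
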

\begin{proof}
For each $\gamma \in \mathcal{G}_x$, choose an open bisection $U_\gamma$ containing
$\gamma$. Then $U_\gamma \cap \mathcal{G}_x = \{\gamma\}$, and so $\{\gamma\}$ is open in
$\mathcal{G}_x$.
\end{proof}

We finish this section with the important observation that multiplication is an open map
in an \'etale groupoid. The following quick proof was shown to me by Dana Williams.

\begin{lemma}
If $\mathcal{G}$ is a topological groupoid and $r$ is an open map, then the
multiplication map on $\mathcal{G}$ is open. In particular, if $\mathcal{G}$ is \'etale,
then multiplication is an open map.
\end{lemma}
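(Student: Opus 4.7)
My plan is to exhibit multiplication as the composition of a homeomorphism of $\mathcal{G}^{(2)}$ with a projection, where the projection is open for a soft reason.

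First I would note that openness of $r$ forces openness of $s$. Indeed, $\gamma \mapsto \gamma^{-1}$ is a continuous involution, hence a homeomorphism of $\mathcal{G}$, and $s = r \circ (\cdot)^{-1}$, so $s(U) = r(U^{-1})$ is open for every open $U \subseteq \mathcal{G}$. So I may freely use that $s$ is open as well.

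Next I would introduce the fibred product $H := \{(\gamma,\beta) \in \mathcal{G} \times \mathcal{G} \mid s(\gamma) = s(\beta)\}$ with the subspace topology from $\mathcal{G} \times \mathcal{G}$, and the ``shearing'' map $\tau : \mathcal{G}^{(2)} \to H$ defined by $\tau(\alpha,\beta) = (\alpha\beta,\beta)$. By Lemma~\ref{lem:gpd properties}(\ref{it:comp r,s}) we have $s(\alpha\beta) = s(\beta)$, so $\tau$ really lands in $H$. Its formal inverse is $(\gamma,\beta) \mapsto (\gamma\beta^{-1},\beta)$: on $H$ we have $s(\gamma) = s(\beta) = r(\beta^{-1})$, so $\gamma\beta^{-1}$ is a legal product, and Lemma~\ref{lem:gpd properties}(\ref{it:comp r,s}) gives $s(\gamma\beta^{-1}) = s(\beta^{-1}) = r(\beta)$, verifying that $(\gamma\beta^{-1},\beta) \in \mathcal{G}^{(2)}$. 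That the two maps are mutually inverse follows from $\gamma \beta^{-1} \beta = \gamma s(\beta) = \gamma s(\gamma) = \gamma$ (using Lemma~\ref{lem:unique inverse}) and similarly for the composition in the other order. Both $\tau$ and $\tau^{-1}$ are continuous by continuity of multiplication and inversion, so $\tau$ is a homeomorphism.

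The key observation is that the multiplication map $m : \mathcal{G}^{(2)} \to \mathcal{G}$ factors as $m = \pi_1 \circ \tau$, where $\pi_1 : H \to \mathcal{G}$ is projection to the first coordinate. So it suffices to show that $\pi_1$ is open. A basic open subset of $H$ has the form $(A \times B) \cap H$ for open $A,B \subseteq \mathcal{G}$, and its image under $\pi_1$ is exactly $A \cap s^{-1}(s(B))$: an element of $A$ is of the form $\pi_1(\gamma,\beta)$ for some $\beta \in B$ if and only if its image under $s$ agrees with $s(\beta)$ for some $\beta \in B$. Since $s(B)$ is open by the first step and $s$ is continuous, $s^{-1}(s(B))$ is open, and hence so is $A \cap s^{-1}(s(B))$. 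The ``in particular'' clause is then immediate because every local homeomorphism is open.

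The only conceptually nontrivial step is guessing the shearing factorization $m = \pi_1 \circ \tau$; once that trick is in hand, the rest is straightforward bookkeeping with Lemmas~\ref{lem:unique inverse} and~\ref{lem:gpd properties}, and I do not expect any serious obstacle.
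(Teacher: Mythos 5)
Your argument is correct, and it is genuinely different from the one in the notes. The proof given there is a direct net argument: fixing $(\alpha,\beta)\in (U\times V)\cap\mathcal{G}^{(2)}$ and a net $\gamma_i\to\alpha\beta$, it uses openness of $r$ to select lifts $\alpha_i\in U$ of $r(\gamma_i)$ converging to $\alpha$, and then writes $\gamma_i=\alpha_i(\alpha_i^{-1}\gamma_i)$ with the second factor eventually in $V$. Your route instead factors multiplication as $m=\pi_1\circ\tau$, where $\tau(\alpha,\beta)=(\alpha\beta,\beta)$ is a shearing homeomorphism onto the fibred product $H=\{(\gamma,\beta)\mid s(\gamma)=s(\beta)\}$ and $\pi_1$ is the first-coordinate projection; openness then reduces to the computation $\pi_1\bigl((A\times B)\cap H\bigr)=A\cap s^{-1}(s(B))$, which is open because $s$ is open (being $r$ composed with inversion) and continuous. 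All the intermediate claims check out: $\tau$ lands in $H$ and its inverse $(\gamma,\beta)\mapsto(\gamma\beta^{-1},\beta)$ lands in $\mathcal{G}^{(2)}$ by the formulas $s(\alpha\beta)=s(\beta)$ and $s(\gamma\beta^{-1})=r(\beta)$, and the two composites are the identity by cancellation. What your approach buys is that it is entirely point-set topological, avoids nets and the slightly delicate coherent-choice step in the paper's selection of the $\alpha_i$, and isolates the one fact actually being used, namely that $s$ is an open map. What the paper's argument buys is that it needs no auxiliary space: it works directly with the sets $U$, $V$ and $UV$. Either proof is acceptable; yours is arguably the cleaner of the two.
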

\begin{proof}
Fix open sets $U, V \subseteq \mathcal{G}$, and an element $(\alpha,\beta) \in U \times V
\cap \mathcal{G}^{(2)}$. Fix a sequence $\gamma_i$ converging to $\alpha\beta$; it
suffices to show that the $\gamma_i$ eventually belong to $UV$. Fix a descending
neighbourhood base $\{U_j\}_{j \in \mathbb{N}}$ for $\alpha$ contained in $U$. Since $r$
is an open map, each $r(U_j)$ is an open neighbourhood of $r(\alpha)$. Since $\gamma_i
\to \alpha\beta$, we have $r(\gamma_i) \to r(\alpha\beta) = r(\alpha)$, so for each $j$
we eventually have $r(\gamma_i) \in r(U_j)$. Choose $\alpha_i$ in $U$ with $r(\alpha_i) =
r(\gamma_i)$ and $\alpha_i \in U_j$ whenever $r(\gamma_i) \in r(U_j)$. Then $\alpha_i \to
\alpha$. Hence $\alpha_i^{-1}\gamma \to \beta$, and therefore $\alpha_i^{-1}\gamma \in V$
for large $i$. But then $\gamma_i = \alpha_i (\alpha_i^{-1}\gamma) \in UV$ for large $i$.
\end{proof}

\chapter{\texorpdfstring{$C^*$}{C*}-algebras and equivalence}\label{ch:C*-algs}

In this section, we will associate two $C^*$-algebras to each \'etale groupoid. As with
groups and dynamical systems, each groupoid has both a reduced $C^*$-algebra and a full
$C^*$-algebra. We first discuss the convolution product on $C_c(\mathcal{G})$ and then
its two key $C^*$-completions, $C^*(\mathcal{G})$ and $C^*_r(\mathcal{G})$. At the end of
the chapter, we discuss equivalence of groupoids and Renault's equivalence theorem for
their $C^*$-algebras.

\section{The convolution algebra}

\begin{proposition}
Let $\mathcal{G}$ be a second-countable locally compact Hausdorff \'etale groupoid. For
$f,g \in C_c(\mathcal{G})$ and $\gamma \in \mathcal{G}$, the set
\[
\{(\alpha,\beta) \in \mathcal{G}^{(2)} \mid \alpha\beta = \gamma\text{ and } f(\alpha)g(\beta) \not= 0\}
\]
is finite. The complex vector space $C_c(\mathcal{G})$ is a $^*$-algebra with
multiplication given by $(f*g)(\gamma) = \sum_{\alpha\beta = \gamma} f(\alpha)g(\beta)$
and involution $f^*(\gamma) = \overline{f(\gamma^{-1})}$. For $f,g \in C_c(\mathcal{G})$
we have $\operatorname{supp}(f*g) \subseteq
\operatorname{supp}(f)\operatorname{supp}(g)$.
\end{proposition}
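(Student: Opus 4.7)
The plan is to handle the finiteness and support statements first, since they follow quickly from compactness of supports together with Corollary~\ref{cor:fibres discrete}; then to tackle continuity of $f * g$ as the main technical step via a reduction to open bisections; and finally to verify the remaining $^*$-algebra axioms as routine computations.

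For the finiteness claim, if $\alpha\beta = \gamma$ then $r(\alpha) = r(\gamma)$ and $\beta = \alpha^{-1}\gamma$, so factorisations of $\gamma$ are parametrised by elements $\alpha \in \mathcal{G}^{r(\gamma)}$. Those contributing a nonzero term must lie in $\operatorname{supp}(f) \cap \mathcal{G}^{r(\gamma)}$, a compact subset of a discrete space by Corollary~\ref{cor:fibres discrete}, hence finite; this makes $(f * g)(\gamma)$ well-defined. For the support inclusion, $(f*g)(\gamma) \neq 0$ forces $\gamma = \alpha\beta$ for some $(\alpha,\beta) \in (\operatorname{supp}(f) \times \operatorname{supp}(g)) \cap \mathcal{G}^{(2)}$. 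This intersection is a closed subset of the compact set $\operatorname{supp}(f) \times \operatorname{supp}(g)$ (closed because $\mathcal{G}$ is Hausdorff, so $\mathcal{G}^{(2)}$ is closed in $\mathcal{G} \times \mathcal{G}$), and is therefore compact; its continuous image $\operatorname{supp}(f)\operatorname{supp}(g)$ under multiplication is then compact and, in a Hausdorff space, closed. So the set where $f*g$ is nonzero is contained in this closed set, yielding both the asserted support inclusion and compact support of $f*g$.

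The continuity of $f * g$ is the heart of the argument. Using Lemma~\ref{lem:bisection base} and compactness of $\operatorname{supp}(f)$, I cover $\operatorname{supp}(f)$ by finitely many open bisections and take a subordinate partition of unity to write $f = \sum_{i} f_i$ with each $f_i \in C_c(\mathcal{G})$ supported in an open bisection $U_i$; similarly $g = \sum_j g_j$ with $g_j$ supported in an open bisection $V_j$. By bilinearity it suffices to prove continuity of each $f_i * g_j$, so I may assume $f$ is supported in an open bisection $U$ and $g$ in an open bisection $V$. Since $r|_U$ is injective, for each $\gamma$ there is at most one $\alpha \in U$ with $r(\alpha) = r(\gamma)$, so the convolution sum collapses to the single term $f(\alpha) g(\alpha^{-1}\gamma)$ when such an $\alpha$ exists and $\alpha^{-1}\gamma \in V$, and vanishes otherwise. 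The support of $f * g$ therefore lies in the open set $UV$, and on $UV$ the unique $\alpha$ is $(r|_U)^{-1}(r(\gamma))$, which depends continuously on $\gamma$; continuity of $f * g$ on $UV$ then follows from continuity of $r$, inversion and multiplication, and the function vanishes on the complement of $UV$, so is continuous everywhere.

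The remaining $^*$-algebra axioms are then essentially bookkeeping. Associativity is checked by expanding $((f * g) * h)(\gamma)$ and $(f * (g * h))(\gamma)$ as sums over triple factorisations $\alpha\beta\delta = \gamma$; all sums are finite by the argument above, so rearrangement is legal and the two expressions agree. The involution $f^*$ lies in $C_c(\mathcal{G})$ because $\gamma \mapsto \gamma^{-1}$ is a continuous involution of $\mathcal{G}$, and the identities $(f^*)^* = f$ and $(f * g)^* = g^* * f^*$ follow by direct substitution using Lemma~\ref{lem:gpd properties}(\ref{it:comp inv}). The genuine obstacle is the continuity step; once the partition-of-unity reduction puts us on open bisections, the convolution sum collapses to a single term and continuity becomes transparent.
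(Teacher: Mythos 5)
Your proof is correct and, on the one point the paper actually argues in detail---finiteness of the set of factorisations of $\gamma$, obtained from discreteness of the fibres $\mathcal{G}^{r(\gamma)}$ and $\mathcal{G}_{s(\gamma)}$ (Corollary~\ref{cor:fibres discrete}) together with compactness of the supports---it is identical to the paper's proof, which dismisses everything else as routine. Your completion of the ``routine'' part, in particular proving continuity of $f*g$ by using a partition of unity to reduce to functions supported on open bisections, is precisely the technique the paper deploys immediately afterwards in Lemmas \ref{lem:bisection span}~and~\ref{lem:bisection conv}, and the only slightly glossed step---that a function continuous on the open set $UV$ and vanishing off it is continuous everywhere---is covered by your earlier observation that $\operatorname{supp}(f*g)$ lies in the compact, hence closed, set $\operatorname{supp}(f)\operatorname{supp}(g)\subseteq UV$.
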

\begin{proof}
If $\alpha\beta = \gamma$, then $\alpha \in G^{r(\gamma)}$ and $\beta \in G_{s(\gamma)}$.
We saw in Corollary~\ref{cor:fibres discrete} that these are discrete sets, so their
intersections with the compact sets $\operatorname{supp}(f)$ and $\operatorname{supp}(g)$
are finite. The rest is routine.
\end{proof}

\begin{remark}
The convolution formula can be (and often is) equivalently reformulated as
\[
(f*g)(\gamma) = \sum_{\alpha \in \mathcal{G}^{r(\gamma)}} f(\alpha) g(\alpha^{-1}\gamma).
\]
\end{remark}

We will see later that the multiplication operation, and also the $C^*$-norms, are
simpler for elements of $C_c(\mathcal{G})$ whose supports are contained in a bisection
than for general elements. So it's helpful to know that when $\mathcal{G}$ is \'etale,
such functions span the convolution algebra.

\begin{lemma}\label{lem:bisection span}
Suppose that $\mathcal{G}$ is a second-countable locally compact Hausdorff \'etale
groupoid. Then
\[
C_c(\mathcal{G}) = \operatorname{span}\{f \in C_c(\mathcal{G}) \mid \operatorname{supp}(f) \text{ is a bisection}\}.
\]
\end{lemma}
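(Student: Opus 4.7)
The plan is to prove the statement via a partition-of-unity argument on the compact support of an arbitrary $f \in C_c(\mathcal{G})$. Given $f$, set $K \coloneq \operatorname{supp}(f)$, which is compact. For each $\gamma \in K$, étaleness together with continuity of inversion means I can choose an open neighbourhood $B_\gamma$ of $\gamma$ on which both $r$ and $s$ restrict to homeomorphisms onto open subsets of $\mathcal{G}^{(0)}$; that is, an open bisection containing $\gamma$. By compactness, finitely many of these, say $B_1, \dots, B_n$, cover $K$.

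Next I would invoke a standard locally compact Hausdorff partition-of-unity result to produce functions $\phi_1, \dots, \phi_n \in C_c(\mathcal{G})$ with $\operatorname{supp}(\phi_i) \subseteq B_i$, $0 \le \phi_i \le 1$, and $\sum_i \phi_i \equiv 1$ on a neighbourhood of $K$. Then
\[
f = \sum_{i=1}^n \phi_i f,
\]
and for each $i$ the function $\phi_i f$ lies in $C_c(\mathcal{G})$ with $\operatorname{supp}(\phi_i f) \subseteq \operatorname{supp}(\phi_i) \subseteq B_i$, a bisection. This exhibits $f$ as a finite linear combination of functions supported in bisections, giving the non-trivial inclusion; the reverse inclusion is immediate.

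The only step requiring a moment of care is the existence of the partition of unity: $\mathcal{G}$ is assumed locally compact Hausdorff, so one can shrink the open cover $\{B_1,\dots,B_n\}$ of the compact set $K$ to an open cover by precompact sets with closures contained in the $B_i$, and then apply Urysohn's lemma in the standard way. I do not expect any real obstacle here; the content is really just that étaleness provides enough open bisections to cover any compact set, and partitions of unity convert this covering into an algebraic decomposition.
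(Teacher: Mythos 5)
Your argument is correct and is essentially the paper's own proof: cover the compact support by open bisections (the paper cites its countable base of open bisections, you construct them directly from \'etaleness and continuity of inversion), pass to a finite subcover, and use a subordinate partition of unity to write $f = \sum_i \phi_i f$ with each summand supported in a bisection. No gaps.
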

\begin{proof}
Fix $f \in C_c(\mathcal{G})$. By Lemma~\ref{lem:bisection base}, we can cover
$\operatorname{supp}(f)$ with open bisections, and then use compactness to pass to a
finite subcover $U_1, \dots, U_n$. Choose a partition of unity $\{h_i\}$ on $\bigcup U_i$
subordinate to the $U_i$. The pointwise products $f_i \coloneq f \cdot h_i$ belong to
$C_c(\mathcal{G})$ with $\operatorname{supp}(f_i) \subseteq U_i$, and we have $f = \sum_i
f_i$.
\end{proof}

One reason why the preceding lemma is so useful is because convolution is very easy to
compute for functions supported on bisections.

\begin{lemma}\label{lem:bisection conv}
Suppose that $\mathcal{G}$ is a second-countable locally compact Hausdorff \'etale
groupoid. If $U,V \subseteq \mathcal{G}$ are open bisections and $f,g \in
C_c(\mathcal{G})$ satisfy $\operatorname{supp}(f) \subseteq U$ and
$\operatorname{supp}(g) \subseteq V$, then $\operatorname{supp}(f*g) \subseteq UV$ and
for $\gamma = \alpha\beta \in UV$, we have
\begin{equation}\label{eq:bisection conv}
    (f*g)(\gamma) = f(\alpha)g(\beta).
\end{equation}
We have $C_c(\mathcal{G}^{(0)}) \subseteq C_c(\mathcal{G})$. If $f \in C_c(\mathcal{G})$
is supported on a bisection, then $f^**f \in C_c(\mathcal{G}^{(0)})$ is supported on
$s(\operatorname{supp}(f))$ and $(f^**f)(s(\gamma)) = |f(\gamma)|^2$ for all $\gamma \in
\operatorname{supp}(f)$. Similarly $f*f^*$ is supported on $r(\operatorname{supp}(f))$
and $(f*f^*)(r(\gamma)) = |f(\gamma)|^2$ for $\gamma \in \operatorname{supp}(f)$. For $f
\in C_c(\mathcal{G})$ and $h \in C_c(\mathcal{G}^{(0)})$, we have
\[
(h*f)(\gamma) = h(r(\gamma)) f(\gamma)\quad\text{ and }\quad (f*h)(\gamma) = f(\gamma)h(s(\gamma)).
\]
\end{lemma}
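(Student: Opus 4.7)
The entire lemma rests on one structural fact: if $U, V$ are open bisections and $\gamma \in UV$, then $\gamma$ admits a \emph{unique} factorisation $\gamma = \alpha\beta$ with $\alpha \in U$, $\beta \in V$. Indeed, if $\gamma = \alpha\beta = \alpha'\beta'$ with $\alpha, \alpha' \in U$, then $r(\alpha) = r(\gamma) = r(\alpha')$, so the injectivity of $r$ on the bisection $U$ forces $\alpha = \alpha'$, and Lemma~\ref{lem:gpd cancellation} then gives $\beta = \beta'$. Given $f, g \in C_c(\mathcal{G})$ supported in $U, V$ respectively, every nonzero term $f(\alpha)g(\beta)$ in the convolution sum at $\gamma$ satisfies $\alpha \in U$ and $\beta \in V$, so uniqueness collapses the sum to a single term and yields~\eqref{eq:bisection conv}; the containment $\operatorname{supp}(f*g) \subseteq UV$ is then immediate from the proposition above, since $\operatorname{supp}(f)\operatorname{supp}(g) \subseteq UV$.

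The inclusion $C_c(\mathcal{G}^{(0)}) \subseteq C_c(\mathcal{G})$ is essentially formal: $\mathcal{G}^{(0)}$ is open in $\mathcal{G}$ by Lemma~\ref{lem:go open} and closed by Lemma~\ref{lem:go closed} (using the standing Hausdorff assumption), so extension by zero of any $h \in C_c(\mathcal{G}^{(0)})$ produces a compactly supported continuous function on $\mathcal{G}$.

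For $f$ supported on an open bisection $B$, I would apply~\eqref{eq:bisection conv} with $U = B^{-1}$ and $V = B$; note that $B^{-1}$ is a bisection since inversion is a self-homeomorphism swapping the roles of $r$ and $s$. For $\beta^{-1}\alpha \in B^{-1}B$ with $\alpha, \beta \in B$, composability requires $r(\alpha) = r(\beta)$, so injectivity of $r|_B$ gives $\alpha = \beta$ and hence $\beta^{-1}\alpha = s(\alpha)$; this shows $B^{-1}B \subseteq s(B) \subseteq \mathcal{G}^{(0)}$, so $f^**f \in C_c(\mathcal{G}^{(0)})$ is supported on $s(\operatorname{supp}(f))$. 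At $s(\gamma) = \gamma^{-1}\gamma$ for $\gamma \in \operatorname{supp}(f)$, formula~\eqref{eq:bisection conv} gives $(f^**f)(s(\gamma)) = f^*(\gamma^{-1})f(\gamma) = \overline{f(\gamma)}f(\gamma) = |f(\gamma)|^2$. The $f*f^*$ assertion is the mirror of this argument, with $r$ in place of $s$.

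Finally, for $h \in C_c(\mathcal{G}^{(0)})$ and $f \in C_c(\mathcal{G})$, each nonzero term $h(\alpha)f(\beta)$ in $(h*f)(\gamma) = \sum_{\alpha\beta=\gamma}h(\alpha)f(\beta)$ forces $\alpha \in \mathcal{G}^{(0)}$, so $s(\alpha) = \alpha = r(\beta)$ by Lemma~\ref{lem:gpd properties}; together with $\alpha\beta = \gamma$ and Lemma~\ref{lem:unique inverse}, this pins down $\alpha = r(\gamma)$ and $\beta = \gamma$, leaving the single term $h(r(\gamma))f(\gamma)$. The companion identity for $f*h$ follows symmetrically. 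There is no real obstacle here: once the uniqueness-of-factorisation observation in a bisection product is secured, each clause of the lemma reduces to bookkeeping with the convolution formula.
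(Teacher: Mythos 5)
Your proof is correct and follows essentially the same route as the paper's: the key point in both is that a product of elements of two bisections factors uniquely, so the convolution sum at $\gamma \in UV$ collapses to the single term $f(\alpha)g(\beta)$, and every remaining clause is then direct bookkeeping with~\eqref{eq:bisection conv} (the paper simply states that these follow, whereas you spell them out). The openness of $\mathcal{G}^{(0)}$ from Lemma~\ref{lem:go open} is likewise what the paper uses to justify $C_c(\mathcal{G}^{(0)}) \subseteq C_c(\mathcal{G})$.
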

\begin{proof}
We have $f(\gamma) = \sum_{\eta\zeta = \gamma} f(\eta)g(\zeta)$. For any $\eta,\zeta$
appearing in the sum, we have $r(\eta) = r(\gamma)$ and $s(\zeta) = s(\gamma)$. Since $f$
and $g$ are supported on bisections, and since $\alpha \in \mathcal{G}^{r(\gamma)}$ and
$\beta \in \mathcal{G}^{s(\gamma)}$ are contained in $\operatorname{supp}(f)$ and
$\operatorname{supp}(g)$ respectively, it follows that the only term in the sum that can
be nonzero is $f(\alpha)g(\beta)$.

Since $\mathcal{G}^{(0)}$ is open by Lemma~\ref{lem:go open}, we can regard
$C_c(\mathcal{G}^{(0)})$ as a subalgebra of $C_c(\mathcal{G})$ in the usual way: for $f
\in C_c(\mathcal{G}^{(0)})$ the corresponding element of $C_c(\mathcal{G})$ agrees with
$f$ on $\mathcal{G}^{(0)}$ and vanishes on its complement. The remaining statements
follow from the convolution formula~\eqref{eq:bisection conv}.
\end{proof}

\begin{example}
Consider the matrix groupoid $R_N$. Since this is a finite discrete groupoid, we have
$C_c(R_N) = \operatorname{span}\{1_{(i,j)} \mid i,j \le N\}$. Lemma~\ref{lem:bisection
conv} shows that $1_{(i,j)}1_{(k,l)} = \delta_{j,k} 1_{(i,l)}$. So the $1_{(i,j)}$ are
matrix units, and $C_c(R_N) \cong M_N(\mathbb{C})$.
\end{example}

\begin{example}
If $\Gamma$ is a discrete group, regarded as a groupoid, then its convolution algebra as
described above is identical to the usual group algebra $C_c(\Gamma)$.
\end{example}

\begin{example}\label{eg:trans gpd conv}
Let $\Gamma$ be a discrete group acting on a compact space $X$, and let $\mathcal{G}$ be
the associated transformation groupoid. Let $\alpha$ be the action of $\Gamma$ on $C(X)$
induced by the $\Gamma$ action on $X$. Let $C_c(\Gamma, C(X))$ be the convolution algebra
of the $C^*$-dynamical system $(C(X), \Gamma, \alpha)$ described in
\cite[Section~1.3.2]{WilliamsNotes}. Then there is an isomorphism $\omega :
C_c(\mathcal{G}) \to C_c(\Gamma, C(X))$ given by $\omega(f)(g)(x) = f(g,x)$ for all $f
\in C_c(\mathcal{G})$, all $g \in \Gamma$ and all $x \in X$.
\end{example}

\section{The full \texorpdfstring{$C^*$}{C*}-algebra}

There are two ways to describe the full $C^*$-algebra of a discrete group. The first is
as the universal $C^*$-algebra generated by a unitary representation of $\Gamma$. The
second is as the universal $C^*$-algebra generated by a $^*$-representation of
$C_c(\Gamma)$.

There is a version of the first description for groupoids, which we will discuss briefly
later; but it's a little technical. The second description, on the other hand,
generalises nicely, and is the one we'll make use of in these notes---a luxury that we
can afford because we are sticking to \'etale groupoids throughout. The following
elementary construction of the full $C^*$-norm on the convolution algebra of an \'etale
groupoid was shown to me by Robin Deeley. It appears, for non-Hausdorff groupoids, in Ruy
Exel's paper \cite[Definition~3.17]{Exel}; thanks to Eduardo Scarparo for reminding me.

\begin{proposition}\label{prp:rep bound}
Let $\mathcal{G}$ be a second-countable locally compact Hausdorff \'etale groupoid. For
each $f \in C_c(\mathcal{G})$, there is a constant $K_f \ge 0$ such that $\|\pi(f)\| \le
K_f$ for every $^*$-representation $\pi : C_c(\mathcal{G}) \to \mathcal{B}(\mathcal{H})$
of $C_c(\mathcal{G})$ on Hilbert space. If $f$ is supported on a bisection, we can take
$K_f = \|f\|_\infty$.
\end{proposition}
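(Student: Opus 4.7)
The plan is to reduce the general case to the bisection case via Lemma~\ref{lem:bisection span}, and then to prove the bisection case by exhibiting a concrete ``square root'' inside the unitisation of $C_c(\mathcal{G})$ that certifies positivity of $\|f\|_\infty^2 \cdot 1 - f^**f$.

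First I would treat an $f$ supported on a bisection. By Lemma~\ref{lem:bisection conv}, $g \coloneq f^**f$ lies in $C_c(\mathcal{G}^{(0)})$ and satisfies $g(s(\gamma)) = |f(\gamma)|^2$, so $g \ge 0$ and $\|g\|_\infty = \|f\|_\infty^2 =: \alpha$. Since $\|\pi(f)\|^2 = \|\pi(f)^*\pi(f)\| = \|\pi(g)\|$ and $\pi(g) \ge 0$ (write $g = \sqrt{g}*\sqrt{g}$, using that $\sqrt{g} \in C_c(\mathcal{G}^{(0)})$ because $g$ is compactly supported and non-negative), it suffices to show $\pi(g) \le \alpha I$.

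The key trick is to pass to the unitisation $C_c(\mathcal{G})^{\sim} = C_c(\mathcal{G}) \oplus \mathbb{C}\cdot 1$ and extend $\pi$ unitally to $\pi^{\sim}$. I would define
\[
c \coloneq \sqrt{\alpha - g} - \sqrt{\alpha},
\]
a continuous real-valued function on $\mathcal{G}^{(0)}$. Off $\operatorname{supp}(g)$ one has $g = 0$, so $c$ vanishes there; hence $\operatorname{supp}(c) \subseteq \operatorname{supp}(g)$ is compact and $c \in C_c(\mathcal{G}^{(0)})$. A direct computation using $c = c^*$ and pointwise multiplication on $\mathcal{G}^{(0)}$ gives
\[
(\sqrt{\alpha}\cdot 1 + c)^*(\sqrt{\alpha}\cdot 1 + c) = \alpha\cdot 1 + 2\sqrt{\alpha}\,c + c^2 = \alpha\cdot 1 - g,
\]
so applying $\pi^{\sim}$ shows $\alpha I - \pi(g) \ge 0$, i.e.\ $\|\pi(g)\| \le \alpha$. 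This yields $\|\pi(f)\| \le \|f\|_\infty$, giving the stated bound $K_f = \|f\|_\infty$ when $f$ is supported on a bisection.

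For a general $f \in C_c(\mathcal{G})$, Lemma~\ref{lem:bisection span} lets me write $f = \sum_{i=1}^n f_i$ with each $f_i$ supported on an open bisection; then I take $K_f \coloneq \sum_{i=1}^n \|f_i\|_\infty$ and obtain $\|\pi(f)\| \le \sum_i \|\pi(f_i)\| \le K_f$ by the triangle inequality and the bisection case. The only conceptually delicate step is the square-root construction showing $\alpha\cdot 1 - g$ is a square in the unitisation---the potential obstacle being that the naive candidate $\sqrt{\alpha - g}$ is not compactly supported---but subtracting the constant $\sqrt{\alpha}$ exactly cancels the off-support behaviour and keeps us inside $C_c(\mathcal{G}^{(0)})$.
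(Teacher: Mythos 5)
Your proof is correct and follows essentially the same route as the paper's: decompose $f$ into bisection-supported pieces via Lemma~\ref{lem:bisection span}, bound each piece using $\|\pi(f_i)\|^2 = \|\pi(f_i^**f_i)\|$ with $f_i^**f_i \in C_c(\mathcal{G}^{(0)})$, and apply the triangle inequality. The only difference is that the paper simply asserts that a $^*$-representation of the commutative $^*$-algebra $C_c(\mathcal{G}^{(0)})$ is $\|\cdot\|_\infty$-contractive, whereas you supply the standard justification explicitly via the square root $\sqrt{\alpha}\cdot 1 + c$ in the unitisation --- a worthwhile detail, correctly executed.
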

\begin{proof}
Fix $f \in C_c(\mathcal{G})$ and use Lemma~\ref{lem:bisection span} to write $f =
\sum^n_{i=1} f_i$ with each $f_i$ supported on a bisection. Define $K_f \coloneq
\sum^n_{i=1} \|f_i\|_\infty$.

Suppose that $\pi$ is a $^*$-representation. Then $\pi|_{C_c(\mathcal{G}^{(0)})}$ is a
$^*$-representation of the commutative $^*$-algebra $C_c(\mathcal{G}^{(0)})$, and so
$\|\pi(h)\| \le \|h\|_\infty$ for every $h \in C_c(\mathcal{G}^{(0)})$.
Lemma~\ref{lem:bisection conv} implies first that each $f_i^* * f_i$ is supported on
$\mathcal{G}^{(0)}$ and second that $\|f_i^* * f_i\|_\infty = \|f_i\|_\infty^2$. So
\[
\|\pi(f_i)\|^2
    = \|\pi(f_i^**f_i)\|
    \le \|f_i^* * f_i\|_\infty
    = \|f_i\|_\infty^2,
\]
and so each $\|\pi(f_i)\| \le \|f_i\|_\infty$. Now the triangle inequality gives
$\|\pi(f)\| \le K_f$. If $f$ is supported on a bisection, then there is just one term in
the sum, so $K_f = \|f\|_\infty$.
\end{proof}

This allows us to define the universal $C^*$-algebra of an \'etale groupoid.

\begin{theorem}\label{thm:C*(G)}
Suppose that $\mathcal{G}$ is a second-countable locally compact Hausdorff \'etale
groupoid. There exist a $C^*$-algebra $C^*(\mathcal{G})$ and a $^*$-homomorphism
$\pi_{\max} : C_c(\mathcal{G}) \to C^*(\mathcal{G})$ such that
$\pi_{\max}(C_c(\mathcal{G}))$ is dense in $C^*(\mathcal{G})$, and such that for every
$^*$-representation $\pi : C_c(\mathcal{G}) \to \mathcal{B}(\mathcal{H})$ there is a
representation $\psi$ of $C^*(\mathcal{G})$ such that $\psi \circ \pi_{\max} = \pi$. The
norm on $C^*(\mathcal{G})$ satisfies
\[
\|\pi_{\max}(f)\| = \sup\big\{\|\pi(f)\| \mathbin{\big|} \pi\text{ is a $^*$-representation of $C_c(\mathcal{G})$}\big\}
\]
for all $f \in C_c(\mathcal{G})$.
\end{theorem}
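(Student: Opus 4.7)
The plan is to define $C^*(\mathcal{G})$ as the completion, modulo its null ideal, of $C_c(\mathcal{G})$ in a suitable maximum $C^*$-seminorm; the whole construction is driven by Proposition~\ref{prp:rep bound}. For each $f \in C_c(\mathcal{G})$, set
\[
\|f\|_{\max} := \sup\{\|\pi(f)\| : \pi \text{ is a } ^*\text{-representation of } C_c(\mathcal{G})\}.
\]
Proposition~\ref{prp:rep bound} guarantees this supremum is finite, bounded by the constant $K_f$ produced there. One small subtlety to address at the outset is that the class of all $^*$-representations is not a set: since $\mathcal{G}$ is second-countable, $C_c(\mathcal{G})$ is separable, and every $^*$-representation decomposes as a direct sum of cyclic ones, each of which is determined up to unitary equivalence by a state on $C_c(\mathcal{G})$. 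So the supremum can be computed over a set without changing its value.

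Next I would verify the standard $C^*$-seminorm properties of $\|\cdot\|_{\max}$: subadditivity, scalar homogeneity, submultiplicativity, $^*$-invariance and the $C^*$-identity $\|f^* * f\|_{\max} = \|f\|_{\max}^2$ all transfer directly from the corresponding properties of the operator norm together with the fact that each $\pi$ is a $^*$-homomorphism. Let $N := \{f \in C_c(\mathcal{G}) : \|f\|_{\max} = 0\}$; submultiplicativity and $^*$-closure of $\|\cdot\|_{\max}$ force $N$ to be a two-sided $^*$-ideal, so $\|\cdot\|_{\max}$ descends to a genuine $C^*$-norm on $A_0 := C_c(\mathcal{G})/N$. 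I would then define $C^*(\mathcal{G})$ to be the $C^*$-completion of $A_0$ and let $\pi_{\max} : C_c(\mathcal{G}) \to C^*(\mathcal{G})$ be the composition of the quotient $C_c(\mathcal{G}) \to A_0$ with the isometric inclusion of $A_0$ into its completion; density of $\pi_{\max}(C_c(\mathcal{G}))$ in $C^*(\mathcal{G})$ is then automatic.

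For the universal property, fix a $^*$-representation $\pi : C_c(\mathcal{G}) \to \mathcal{B}(\mathcal{H})$. By the definition of $\|\cdot\|_{\max}$, one has $\|\pi(f)\| \le \|f\|_{\max} = \|\pi_{\max}(f)\|$ for all $f$, so the assignment $\pi_{\max}(f) \mapsto \pi(f)$ is well defined and contractive on the dense $^*$-subalgebra $\pi_{\max}(C_c(\mathcal{G})) \subseteq C^*(\mathcal{G})$; it extends uniquely by continuity to a bounded linear map $\psi : C^*(\mathcal{G}) \to \mathcal{B}(\mathcal{H})$, and continuity transports the multiplicative and $^*$-preserving identities from the dense subalgebra to all of $C^*(\mathcal{G})$. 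The norm formula stated in the theorem is then just the definition of $\|f\|_{\max}$. I expect the main obstacle to be nothing more than the bookkeeping around the proper class of $^*$-representations; once Proposition~\ref{prp:rep bound} is in hand, everything reduces to the standard ``maximal $C^*$-seminorm'' construction.
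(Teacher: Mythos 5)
Your proposal is correct and follows essentially the same route as the paper: use Proposition~\ref{prp:rep bound} to see that the supremum defining the universal seminorm is finite, check that it is a $C^*$-seminorm, quotient by the null ideal, complete, and read off the universal property from the fact that every $^*$-representation is dominated by the seminorm. The only addition is your remark about the class of all $^*$-representations not being a set, which the paper passes over silently; your fix via separability and cyclic decomposition is the standard one and is a reasonable thing to make explicit.
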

\begin{proof}
For each $a \in C_c(\mathcal{G})$, Proposition~\ref{prp:rep bound} shows that the set
\[
\{\pi(a) \mid \pi \text{ is a $^*$-representation of $C_c(\mathcal{G})$}\}
\]
is bounded above, and it is nonempty because of the zero representation. So we can define
$\rho : C_c(\mathcal{G}) \to [0,\infty)$ by
\[
\rho(f) \coloneq \sup\big\{\|\pi(f)\| \mathbin{\big|} \pi : C_c(\mathcal{G}) \to \mathcal{B}(\mathcal{H})\text{ is a $^*$-representation}\big\}.
\]
It is routine to check that $\rho$ is a $^*$-seminorm satisfying the $C^*$-identity using
that each $f \mapsto \|\pi(f)\|$ has the same properties. So we can define
$C^*(\mathcal{G})$ to be the completion of the quotient of $C_c(\mathcal{G})$ by $N
\coloneq \big\{f \in C_c(\mathcal{G}) \mathbin{\big|} \|f\| = 0\big\}$ in the
pre-$C^*$-norm $\|\cdot\|$ induced by $\rho$. We define $\pi_{\max}(f) \coloneq f + N \in
C^*(\mathcal{G})$.

By construction of $\rho$, if $\pi$ is a $^*$-representation of $C^*(\mathcal{G})$, then
$\|\pi(f)\| \le \rho(f) = \|\pi_{\max}(f)\|$ for all $f \in C_c(\mathcal{G})$. This
implies that there is a well-defined norm-decreasing linear map $\psi : C^*(\mathcal{G})
\to \mathcal{B}(\mathcal{H})$ satisfying $\psi \circ \pi_{\max} = \pi$. Continuity then
ensures that this $\psi$ is a $C^*$-homomorphism.
\end{proof}

Of course, we expect that $\pi_{\max}$ is injective; we will prove this in the next
section.

It is not immediately obvious that the norm defined in Theorem~\ref{thm:C*(G)} agrees
with Renault's definition. This is because, to deal with non-\'etale groupoids, Renault
defines the universal norm, not as the supremum over all $^*$-representations of
$C_c(\mathcal{G})$, but as the supremum only over $^*$-representations of
$C_c(\mathcal{G})$ that are bounded with respect to the ``$I$-norm'' on
$C_c(\mathcal{G})$. When $\mathcal{G}$ is \'etale, the $I$-norm is given by
\[
\|f\|_I = \sup_{x \in \mathcal{G}^{(0)}} \max\Big\{ \sum_{\gamma \in \mathcal{G}_x} |f(\gamma)|, \sum_{\gamma \in \mathcal{G}^x} |f(\gamma)|\Big\}.
\]
(Think of it like a fibrewise $1$-norm.) Renault shows that boundedness in the $I$-norm
is equivalent to continuity in the inductive-limit topology on $C_c(\mathcal{G})$: the
topology obtained by regarding $C_c(\mathcal{G})$ as the inductive limit of the subspaces
$X_K \coloneq \big(\{f \in C(\mathcal{G}) \mid \operatorname{supp}(f) \subseteq K\},
\|\cdot\|_\infty\big)$ indexed by compact subsets $K$ of $\mathcal{G}$ (see, for example,
\cite[Definition~5.4 and Example~5.10]{Conway}). In the general non-\'etale setting, this
equivalence between $I$-norm boundedness and continuity in the inductive-limit topology
is nontrivial, and requires an appeal to Renault's Disintegration Theorem, which we will
discuss later.

So to make sure we are talking about the same universal $C^*$-algebra as Renault, we must
verify that every $^*$-representation of $C_c(\mathcal{G})$ is continuous in the
inductive-limit topology when $\mathcal{G}$ is \'etale; and for completeness we should
also prove that continuity in this topology implies boundedness with respect to the
$I$-norm. The nice proof of this latter fact given below was developed by Ben Maldon in
his honours thesis; it does not seem to have appeared in the literature previously.

\begin{lemma}
Suppose that $\mathcal{G}$ is a second-countable locally compact Hausdorff \'etale
groupoid. Then every $^*$-representation $\pi$ of $C_c(\mathcal{G})$ is both continuous
in the inductive-limit topology, and bounded in the $I$-norm.
\end{lemma}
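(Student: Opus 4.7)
The lemma has two assertions, and my plan is to prove them separately: first continuity in the inductive-limit topology, then boundedness in the $I$-norm.

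\textbf{Inductive-limit continuity.} Fix a compact set $K \subseteq \mathcal{G}$. By Lemma~\ref{lem:bisection base}, I cover $K$ by finitely many open bisections $U_1, \dots, U_n$ and fix a subordinate partition of unity $h_1, \dots, h_n \in C_c(\mathcal{G})$ with $\operatorname{supp}(h_i) \subseteq U_i$ and $\sum_i h_i = 1$ on $K$. For any $f \in C_c(\mathcal{G})$ with $\operatorname{supp}(f) \subseteq K$, the pointwise product $h_i f$ is supported on the bisection $U_i$ and satisfies $\|h_i f\|_\infty \le \|f\|_\infty$. Proposition~\ref{prp:rep bound} then gives $\|\pi(h_i f)\| \le \|h_i f\|_\infty \le \|f\|_\infty$, so writing $f = \sum_i h_i f$ and applying the triangle inequality yields $\|\pi(f)\| \le n \|f\|_\infty$. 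This is continuity on the subspace $X_K$, and so $\pi$ is continuous in the inductive-limit topology by definition.

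\textbf{$I$-norm boundedness.} The plan here is to bootstrap from the bisection case using the $C^*$-identity. For $f$ supported on a single bisection $B$, Lemma~\ref{lem:bisection conv} shows that $f^* * f \in C_c(\mathcal{G}^{(0)})$ with $(f^* * f)(x) = |f(\gamma)|^2$ for the unique $\gamma \in B$ with $s(\gamma) = x$; hence $\|f^**f\|_\infty = \|f\|_\infty^2$. Since $\pi|_{C_c(\mathcal{G}^{(0)})}$ is a $^*$-representation of a commutative $^*$-algebra it is sup-norm decreasing, so $\|\pi(f)\|^2 = \|\pi(f^**f)\| \le \|f\|_\infty^2 = \|f\|_I^2$ (the last equality because $s$ is injective on $B$, so each fibrewise sum defining $\|f\|_I$ contains at most one nonzero term).

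For general $f$, I iterate the $C^*$-identity to get $\|\pi(f)\|^{2^{k+1}} = \|\pi((f^**f)^{2^k})\|$. The inductive-limit continuity from the first part gives $\|\pi((f^**f)^{2^k})\| \le C_k \|(f^**f)^{2^k}\|_\infty$, where $C_k$ is (bounded by) the number of bisections needed to cover $\operatorname{supp}((f^**f)^{2^k})$. A direct fibrewise calculation (analogous to the classical submultiplicativity of $\|\cdot\|_1$ on group algebras, applied separately to $\|\cdot\|_{I,s}$ and $\|\cdot\|_{I,r}$) gives $\|g*h\|_I \le \|g\|_I \|h\|_I$, so $\|(f^**f)^{2^k}\|_\infty \le \|(f^**f)^{2^k}\|_I \le \|f\|_I^{2^{k+1}}$. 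Taking $2^{k+1}$-th roots yields $\|\pi(f)\| \le C_k^{1/2^{k+1}} \|f\|_I$; and if $\operatorname{supp}(f)$ is covered by $m$ bisections then a combinatorial count shows $\operatorname{supp}((f^**f)^{2^k})$ is covered by at most $m^{2 \cdot 2^k}$ bisections, so the factor $C_k^{1/2^{k+1}}$ remains bounded by $m$ in the limit.

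\textbf{Main obstacle.} The bootstrap above gives a bound of the form $\|\pi(f)\| \le m\,\|f\|_I$ with $m$ depending on the support of $f$ rather than on $\pi$ alone, which is weaker than a genuine continuity statement for $\pi$ with respect to $\|\cdot\|_I$. Promoting this to a uniform (in $f$) bound is the key difficulty and is where I expect Maldon's argument to be genuinely clever. Naive attempts---row/column decompositions as operators $\mathcal{H}^n \to \mathcal{H}$ built from bisection-supported pieces $\pi(f_i)$, or Schur-test style estimates based on the fibrewise $\ell^1$-norms $\|f\|_{I,s}$ and $\|f\|_{I,r}$---each lose a $\sqrt{n}$ factor when one reassembles $\pi(f) = \sum_i \pi(f_i)$ from the diagonal embedding. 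Overcoming this loss will likely require exploiting positivity of $\pi(f^**f)$ together with a conditional-expectation decomposition of $f^**f$ into its $C_c(\mathcal{G}^{(0)})$-valued ``diagonal'' (whose sup-norm is controlled by $\|f\|_{I,s}^2$) and its off-diagonal bisection pieces, in order to obtain the desired clean bound $\|\pi(f)\| \le \|f\|_I$.
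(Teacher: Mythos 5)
Your first half (inductive-limit continuity) is exactly the paper's argument---cover the compact support by finitely many open bisections, take a subordinate partition of unity, and apply Proposition~\ref{prp:rep bound} to each piece---and it is fine. The problem is the second half: you never actually prove $I$-norm boundedness. Your bootstrap yields $\|\pi(f)\| \le m_f\,\|f\|_I$ with $m_f$ the number of bisections needed to cover $\operatorname{supp}(f)$, and, as you yourself observe, this constant does not disappear in the limit over $k$: the covering count for $\operatorname{supp}\big((f^* * f)^{2^k}\big)$ grows like $m^{2^{k+1}}$, so its $2^{k+1}$-th root is stuck at $m$. A bound whose constant depends on the support of $f$ is not boundedness of the linear map $\pi$, so the second assertion of the lemma remains unproved; and your closing speculations (Schur tests, row--column decompositions, a conditional-expectation splitting of $f^* * f$) are not the route the paper takes and are not developed far enough to close the gap.

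The missing idea is to stop estimating inside $\mathcal{B}(\mathcal{H})$ and instead work in the Banach $^*$-algebra completion $\overline{C_c(\mathcal{G})}^I$. The paper first upgrades inductive-limit continuity to $I$-norm continuity using only the inequality $\|f\|_\infty \le \|f\|_I$, so that $\pi$ extends to a $^*$-homomorphism of $\overline{C_c(\mathcal{G})}^I$ into $\mathcal{B}(\mathcal{H})$, and then invokes the spectral-radius inequality for homomorphisms of Banach algebras:
\[
\|\pi(f)\|^2 = \|\pi(f^* * f)\| = \rho_{\mathcal{B}(\mathcal{H})}(\pi(f^* * f)) \le \rho_{\overline{C_c(\mathcal{G})}^I}(f^* * f) \le \|f^* * f\|_I \le \|f\|_I^2.
\]
Your iterated $C^*$-identity is of course just the spectral-radius formula for the positive element $\pi(f^* * f)$; the reason your constant survives is that you re-estimate $\|\pi((f^* * f)^{2^k})\|$ at every stage via the inductive-limit bound, whereas the inequality $\rho(\pi(a)) \le \rho(a)$ (valid because $\sigma(\pi(a)) \subseteq \sigma(a) \cup \{0\}$ once $\pi$ is a homomorphism defined on the Banach algebra) lets you pass directly to $\lim_k \|(f^* * f)^{2^k}\|_I^{1/2^k}$, where only submultiplicativity of $\|\cdot\|_I$ enters and no covering count ever appears. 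So the genuinely missing step in your write-up is the observation that $\pi$ already extends to the $I$-norm completion; once that is in place the rest is soft Banach-algebra theory rather than the hard combinatorial estimate you were bracing for.
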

\begin{proof}
Fix a nondegenerate $^*$-representation of $C_c(\mathcal{G})$. By, for example,
\cite[Proposition~5.7]{Conway}, to see that $\pi$ is continuous in the inductive-limit
topology, we just have to check that $\pi|_{X_K}$ is continuous for each compact $K
\subseteq \mathcal{G}$. To see this, fix a compact $K \subseteq \mathcal{G}$. We can
cover $K$ by open bisections, and then use compactness to obtain a finite subcover $K
\subseteq \bigcup^n_{i=1} U_i$. Fix a partition of unity $\{h_i\}$ for $K$ subordinate to
the $U_i$. Then for $f \in X_k$, Lemma~\ref{prp:rep bound} gives
\[
\|\pi(f)\|
    = \Big\| \sum_i \pi(h_i \cdot f)\Big\|
    \le \sum_{i=1}^n \|\pi(h_i \cdot f)\|
    \le \sum_{i=1}^n \|h_i \cdot f\|_\infty
    \le n \|f\|_\infty.
\]
So $\pi$ is Lipschitz on $X_K$ with Lipschitz constant at most $n$. This shows that $\pi$
is continuous in the inductive-limit topology.

To see that it is $I$-norm bounded, observe that if $f \in C_c(\mathcal{G})$, then
$\|f\|_\infty \le \|f\|_I$. So the inductive-limit topology is weaker than the $I$-norm
topology, and we deduce that $\pi$ is continuous for the $I$-norm. Since continuity is
equivalent to boundedness for linear maps on normed spaces, we deduce that $\pi$ is
$I$-norm bounded. Routine calculations show that the $I$-norm is a $^*$-algebra norm, so
$\pi$ extends to a $^*$-homomorphism from the Banach $^*$-algebra completion
$\overline{C_c(\mathcal{G})}^I$ of $C_c(\mathcal{G})$ in the $I$-norm into
$\mathcal{B}(\mathcal{H})$. Now we can apply spectral theory: Write $\rho_A : A \to
[0,\infty)$ for the spectral-radius function on a Banach algebra $A$. For each $f \in
C_c(\mathcal{G})$, we have $\|\pi(f)\|^2 = \|\pi(f^*f)\| =
\rho_{\mathcal{B}(\mathcal{H})}(\pi(f^*f)) \le \rho_{\overline{C_c(\mathcal{G})}^I}(f^*f)
\le \|f^*f\|_I \le \|f\|_I^2$.
\end{proof}

We will not really need the preceding result from here on in, but we can take comfort
that we are discussing the same family of representations as Renault is; so we can appeal
to his theorems at need.

\begin{example}
If $\mathcal{G}$ is a group, then $C^*(\mathcal{G})$ is the usual full group
$C^*$-algebra.
\end{example}

\begin{example}
Let $X$ be a compact Hausdorff space, and $\Gamma$ a discrete group acting on $X$. Then
the elements $U_g \coloneq 1_{\{g\} \times X}$ indexed by $g \in \Gamma$ belong to
$C_c(\mathcal{G})$, and there is an inclusion $\pi : C(\mathcal{G}^{(0)}) \to
C_c(\mathcal{G})$ such that $\pi(f)(g, x) = \delta_{e, g} f(x)$. For $f \in C(X)$ and $g
\in \Gamma$ we have $U_g \pi(f) U^*_g (h, x) = \sum_{\alpha\beta\gamma = (h, x)} 1_{\{g\}
\times X}(\alpha) \pi(f)(\beta) 1_{\{g\} \times X}^*(\gamma)$. This can only be nonzero
if $h = 0$, and then the only nonzero term occurs when $\alpha = (g, g^{-1} \cdot x)$ and
$\gamma = (g^{-1}, x)$. For this $\alpha,\beta$ we have $1_{\{g\} \times X}(\alpha)
\pi(f)(\beta) 1_{\{g\} \times X}^*(\gamma) = \pi(f)(e, \gamma^{-1} \cdot x) =
f(\gamma^{-1}\cdot x)$. So $U_g\pi(f) U^*_g \in \pi(C(X))$, and agrees with $\pi(x
\mapsto f(g^{-1} \cdot x))$. So the universal property of the crossed product $C(X)
\rtimes \Gamma$ (see \cite[Theorem~1.3.3]{WilliamsNotes} and \cite[Theorem~2.6.1]{TFB^2})
gives a homomorphism $C(X) \rtimes \Gamma \to C^*(\mathcal{G})$ that takes each
$i_\Gamma(g)$ to $U_g$ and each $i_{C(X)}(f)$ to $\pi(f)$. Conversely, for $f \in
C_c(\mathcal{G})$ and $g \in \mathcal{G}$, define $f_g \in C(X)$ by $f_g(x) = f(g,x)$.
Easy calculations show that the map $\psi(f) \coloneq \sum_{g \in \Gamma} i_{C(X)}(f_g)
i_\Gamma(g)$ gives a $^*$-homomorphism $\psi : C_c(\mathcal{G}) \to C(X) \rtimes \Gamma$,
and that $\psi$ is inverse to $\pi \times U$. So $\psi$ is an isomorphism
$C^*(\mathcal{G}) \cong C(X) \times \Gamma$.
\end{example}

\begin{example}
Consider the groupoid $\mathcal{R}_{2^\infty}$ of Example~\ref{ex:2infty relation}. For
each $n \ge 0$ and each pair $u,v \in \{0,1\}^n$, let $\theta_n(u,v) \coloneq 1_{Z(u,v)}
\in C_c(\mathcal{R}_{2^\infty})$. Easy calculations using Lemma~\ref{lem:bisection conv}
show that $\theta_n(u,v)^* = \theta_n(v,u)$ and $\theta_n(u,v)\theta_n(w,y) =
\delta_{v,w} \theta_n(u,y)$. So $A_n \coloneq \operatorname{span}\{\theta_n(u,v) \mid u,v
\in \{0,1\}^n\}$ is isomorphic to $M_{\{0,1\}^n}(\mathbb{C})$ via the map $\sum_{u,v}
a_{u,v} \theta_n(u,v) \mapsto (a_{u,v})_{u,v \in \{0,1\}^n}$. Since $Z(u,v) = Z(u0, v0)
\sqcup Z(u1,v1)$, we have $\theta_n(u,v) = \theta_{n+1}(u0,v0) + \theta_{n+1}(u1, v1)$,
and so $A_n \subseteq A_{n+1}$. If we identify $M_{\{0,1\}^{n+1}}(\mathbb{C})$ with
$M_2(M_{\{0,1\}^n}(\mathbb{C}))$ via
\[
(a_{u,v})_{u,v \in \{0,1\}^{n+1}} \mapsto
    \big( (a_{wi, yj})_{w,y \in \{0,1\}^n}\big)_{i,j \in \{0,1\}},
\]
then the inclusion $A_n \to A_{n+1}$ is compatible with the canonical block-diagonal
inclusion $M_{\{0,1\}^n}(\mathbb{C}) \to M_{\{0,1\}^{n+1}}(\mathbb{C})$ with
multiplicity~2. So the uniqueness of $M_{2^\infty}$ shows that $\overline{\bigcup_n A_n}
\cong M_{2^\infty}$. A straightforward application of the Stone-Weierstrass theorem shows
that $C(Z(u,v)) \subseteq \overline{\bigcup_n A_n}$ for all $|u| = |v|$, and we deduce
that that $C_c(R_{2^\infty}) \subseteq \overline{\bigcup_n A_n}$. Hence
$C^*(R_{2^\infty}) \cong M_{2^\infty}(\mathbb{C})$.
\end{example}

\begin{example}
If $\mathcal{G}$ is the graph groupoid of a directed graph $E$ as in
Example~\ref{ex:graph gpd}, then the characteristic functions $\{1_{Z(v,v)} \mid v \in
E^0\}$ and $\{1_{Z(e, s(e))} \mid e \in E^1\}$ constitute a Cuntz--Krieger family for $E$
that generates $C^*(\mathcal{G})$. The homomorphism $C^*(E) \to C^*(\mathcal{G})$ induced
by this family is an isomorphism.
\end{example}

\begin{remark}[Unitary representations]
To make sense of a unitary representation of $\mathcal{G}$, we proceed, very roughly, as
follows. A \emph{unitary representation} of $\mathcal{G}$ is a triple $(\mathcal{H}, \mu,
U)$ where $\mu$ is a Borel measure on $\mathcal{G}^{(0)}$, $\mathcal{H} = \bigsqcup_{x
\in \mathcal{G}^{(0)}} \mathcal{H}_x$ is a $\mu$-measurable field of Hilbert spaces, and
$U = \{U_\gamma \mid \gamma \in \mathcal{G}\}$ is a family of unitary operators $U_\gamma
: \mathcal{H}_{s(\gamma)} \to \mathcal{H}_{r(\gamma)}$ satisfying $U_\alpha U_\beta =
U_{\alpha\beta}$, $U_{\alpha^{-1}} = U_\alpha^*$, and $\gamma \mapsto \big(U_\gamma
\xi(s(\gamma)) \mid \eta(r(\gamma))\big)$ is measurable for each pair $\xi,\eta$ of
measurable sections of $\mathcal{H}$. Every unitary representation $(\mathcal{H}, \mu,
U)$ of $\mathcal{G}$ induces a $^*$-representation $\pi_{(\mathcal{H},\mu,U)}$ of
$C_c(\mathcal{G})$ on the direct integral $\int^{\oplus}_{\mathcal{G}^{(0)}}
\mathcal{H}_x\,d\mu(x)$ characterised by
\[
\big(\pi_{(\mathcal{H}, \mu, U)}(f) \xi \mid \eta\big)
    = \int_{\mathcal{G}^{(0)}} \sum_{\gamma \in \mathcal{G}_x} \big(f(\gamma)U_\gamma \xi(x) \mid \xi(r(\gamma))\big)\,d\mu(x).
\]
This representation is called the \emph{integrated form} of $(\mathcal{H}, \mu, U)$.
Renault's \emph{Disintegration Theorem} \cite[Theorem~II.1.21]{Ren80} says that every
representation is unitarily equivalent to the integrated form of a unitary
representation.
\end{remark}

\section{The reduced \texorpdfstring{$C^*$}{C*}-algebra}

To show that the map $\pi_{\max} : C_c(\mathcal{G}) \to C^*(\mathcal{G})$ of
Theorem~\ref{thm:C*(G)} is injective, we need to construct a representation that is
injective on $C_c(\mathcal{G})$.

If $\mathcal{G}$ were a group, we would use the regular representation of $\mathcal{G}$.
We aim to do the same thing for groupoids, but there are multiple regular representations
to consider.

\begin{proposition}\label{prp:reg reps}
Let $\mathcal{G}$ be a second-countable locally compact Hausdorff \'etale groupoid. For
each $x \in \mathcal{G}^{(0)}$, there is a $^*$-representation $\pi_x : C_c(\mathcal{G})
\to \mathcal{B}(\ell^2(\mathcal{G}_x))$ such that
\[
\pi_x(f)\delta_\gamma = \sum_{\alpha \in \mathcal{G}_{r(\gamma)}} f(\alpha)\delta_{\alpha\gamma}.
\]
This $\pi_x$ is called the \emph{regular representation} of $C_c(\mathcal{G})$ associated
to $x$.

For each $\eta \in \mathcal{G}$ the map $U_\eta : \ell^2(\mathcal{G}_{s(\eta)}) \to
\ell^2(\mathcal{G}_{r(\eta)})$ given by $U_\eta \delta_\gamma = \delta_{\gamma\eta^{-1}}$
is a unitary operator and we have $\pi_{r(\eta)} = U_\eta \pi_{s(\eta)} U^*_\eta$.
\end{proposition}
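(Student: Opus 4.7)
\smallskip

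My plan is to define $\pi_x$ first on the dense subspace $\operatorname{span}\{\delta_\gamma\mid \gamma\in\mathcal{G}_x\}\subseteq \ell^2(\mathcal{G}_x)$ by the given formula, check it extends to a bounded $^*$-representation, and then verify the intertwining property on basis vectors. For each $f\in C_c(\mathcal{G})$ and $\gamma\in\mathcal{G}_x$, the sum $\sum_{\alpha\in\mathcal{G}^{r(\gamma)}}f(\alpha)\delta_{\alpha\gamma}$ is well-defined because $\operatorname{supp}(f)$ is compact and, by Corollary~\ref{cor:fibres discrete}, $\mathcal{G}^{r(\gamma)}$ is discrete, so only finitely many $\alpha$ contribute. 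The map $f\mapsto\pi_x(f)$ on the dense subspace is linear by construction. Multiplicativity reduces to computing
\[
\pi_x(f)\pi_x(g)\delta_\gamma
   =\sum_{\beta\in\mathcal{G}^{r(\gamma)}}\sum_{\alpha\in\mathcal{G}^{r(\beta\gamma)}}f(\alpha)g(\beta)\,\delta_{\alpha\beta\gamma}
\]
and reindexing by $\zeta=\alpha\beta$ using that for each $\zeta\in\mathcal{G}^{r(\gamma)}$ the pairs $(\alpha,\beta)$ with $\alpha\beta=\zeta$ are in bijection with $\mathcal{G}^{r(\zeta)}$; this recovers $\pi_x(f*g)\delta_\gamma$. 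The involution identity $\pi_x(f^*)=\pi_x(f)^*$ follows from a direct check that $(\pi_x(f)\delta_\gamma\mid\delta_\eta)=f(\eta\gamma^{-1})=\overline{f^*(\gamma\eta^{-1})}=(\delta_\gamma\mid\pi_x(f^*)\delta_\eta)$, where the inner products pick out exactly one term by the cancellation result Lemma~\ref{lem:gpd cancellation}.

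The key obstacle is boundedness; this is where the \'etale hypothesis is essential. I handle it by reducing to functions supported on an open bisection, in the spirit of Proposition~\ref{prp:rep bound}. If $f$ is supported on an open bisection $B$, then for each $\gamma\in\mathcal{G}_x$ there is at most one $\alpha\in B$ with $s(\alpha)=r(\gamma)$, because $s|_B$ is injective; so $\pi_x(f)\delta_\gamma$ equals $f(\alpha)\delta_{\alpha\gamma}$ if such an $\alpha$ exists, and $0$ otherwise. The map $\gamma\mapsto\alpha\gamma$ on the set of $\gamma$ where this is nonzero is injective by cancellation, so $\pi_x(f)$ sends distinct basis vectors to scalar multiples of distinct basis vectors, giving $\|\pi_x(f)\xi\|_2^2\le\|f\|_\infty^2\|\xi\|_2^2$ on finite linear combinations and hence $\|\pi_x(f)\|\le\|f\|_\infty$. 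For general $f\in C_c(\mathcal{G})$, write $f=\sum_{i=1}^n f_i$ with each $f_i$ supported on an open bisection via Lemma~\ref{lem:bisection span}, and apply the triangle inequality to bound $\|\pi_x(f)\|\le\sum_{i=1}^n\|f_i\|_\infty$. Boundedness lets us extend $\pi_x$ continuously to all of $\ell^2(\mathcal{G}_x)$, and the $^*$-algebra identities persist by density.

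For the second assertion, $\gamma\mapsto\gamma\eta^{-1}$ is a bijection from $\mathcal{G}_{s(\eta)}$ onto $\mathcal{G}_{r(\eta)}$ with inverse $\gamma'\mapsto\gamma'\eta$, so $U_\eta$ permutes orthonormal bases and is therefore unitary with $U_\eta^*\delta_{\gamma'}=\delta_{\gamma'\eta}$. To verify the intertwining relation, it suffices to evaluate both sides on an arbitrary basis vector $\delta_{\gamma'}\in\ell^2(\mathcal{G}_{r(\eta)})$:
\[
U_\eta\pi_{s(\eta)}(f)U_\eta^*\delta_{\gamma'}
   =U_\eta\sum_{\alpha\in\mathcal{G}^{r(\gamma')}}f(\alpha)\delta_{\alpha\gamma'\eta}
   =\sum_{\alpha\in\mathcal{G}^{r(\gamma')}}f(\alpha)\delta_{\alpha\gamma'}
   =\pi_{r(\eta)}(f)\delta_{\gamma'},
\]
where we used that $r(\gamma'\eta)=r(\gamma')$ and $(\alpha\gamma'\eta)\eta^{-1}=\alpha\gamma'$ by associativity. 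Since the two bounded operators agree on a total set, they coincide, proving $\pi_{r(\eta)}=U_\eta\pi_{s(\eta)}U_\eta^*$.
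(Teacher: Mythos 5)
Your proof is correct and follows essentially the same route as the paper: the paper dispatches the first assertion as a straightforward check that the formula is just the convolution $f*\delta_\gamma$ (your bisection argument for boundedness and the reindexing for multiplicativity are exactly the details it leaves implicit), and your intertwining computation for $U_\eta$ is identical to the paper's. One notational slip: in several sums you index over $\mathcal{G}^{r(\gamma)}$ (elements with \emph{range} $r(\gamma)$) where the products $\alpha\gamma$ require $\alpha \in \mathcal{G}_{r(\gamma)}$ (elements with \emph{source} $r(\gamma)$), as in the statement; since your verbal descriptions (``at most one $\alpha \in B$ with $s(\alpha) = r(\gamma)$'') show you mean the latter, this is cosmetic rather than a gap.
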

\begin{proof}
The first assertion is relatively straightforward to check using that the formula given
for $\pi_x(f)\delta_\gamma$ is really just the formula for $f * \delta_\gamma$ if the
convolution product is extended to not-necessarily-continuous functions.

The operators $U_\eta$ are certainly unitary, and for $f \in C_c(\mathcal{G})$,
\begin{align*}
U_\eta \pi_{s(\eta)}(f) U^*_\eta \delta_\gamma
    &= U_\eta \pi_{s(\eta)}(f) \delta_{\gamma\eta}\\
    &= \sum_{\alpha \in \mathcal{G}_{r(\gamma\eta)}} f(\alpha) U_\eta \delta_{\alpha\gamma\eta}
    = \sum_{\alpha \in \mathcal{G}_{r(\gamma\eta)}} f(\alpha) \delta_{\alpha\gamma}
    = \pi_{r(\eta)}(f) \delta_\gamma.\qedhere
\end{align*}
\end{proof}

\begin{definition}
Let $\mathcal{G}$ be a second-countable locally compact Hausdorff \'etale groupoid. The
\emph{reduced $C^*$-algebra}, denoted $C^*_r(\mathcal{G})$, of $\mathcal{G}$ is the
completion of
\[
\Big(\bigoplus_{x \in \mathcal{G}^{(0)}} \pi_x\Big)(C_c(\mathcal{G})) \subseteq \bigoplus_{x \in \mathcal{G}^{(0)}} \mathcal{B}(\ell^2(\mathcal{G}_x)).
\]
We write $\|\cdot\|_r$ for the $C^*$-norm on $C^*_r(\mathcal{G})$.
\end{definition}

The universal property of $C^*(\mathcal{G})$ yields a homomorphism, which we denote
$\pi_r$, from $C^*(\mathcal{G})$ to $C^*_r(\mathcal{G})$ such that $\pi_r \circ
\pi_{\max} = \bigoplus_x \pi_x$. In particular, $\|\cdot\|_r \le \|\cdot\|$ on
$C_c(\mathcal{G})$.

\begin{proposition}\label{prp:j map}
Let $\mathcal{G}$ be a second-countable locally compact Hausdorff \'etale groupoid. There
is an injective, norm-decreasing map $j : C^*_r(\mathcal{G}) \to C_0(\mathcal{G})$ such
that
\[
    j(a)(\gamma) = \big(\pi_{s(\gamma)}(a) \delta_{s(\gamma)} \mid \delta_\gamma\big)
\]
for all $a \in C^*_r(\mathcal{G})$ and $\gamma \in \mathcal{G}$. For $f \in
C_c(\mathcal{G})$, we have $j(f) = f$.
\end{proposition}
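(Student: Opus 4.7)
The plan is to establish the four claims in order: the agreement $j(f)=f$ on $C_c(\mathcal{G})$, the norm bound $\|j(a)\|_\infty\le\|a\|_r$, the range being in $C_0(\mathcal{G})$, and finally injectivity.

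First I would verify the formula on $C_c(\mathcal{G})$ by a direct computation. For $f\in C_c(\mathcal{G})$ and $\gamma\in\mathcal{G}$, set $x=s(\gamma)$; since $x$ is a unit we have $r(x)=x$, and if $\alpha\in\mathcal{G}_x$ then $\alpha x=\alpha$. So the formula in Proposition~\ref{prp:reg reps} gives $\pi_x(f)\delta_x=\sum_{\alpha\in\mathcal{G}_x}f(\alpha)\delta_\alpha$, and taking the inner product with $\delta_\gamma$ selects the single term $f(\gamma)$. Thus $j(f)=f$ pointwise.

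Next, I would extend $j$ to $C^*_r(\mathcal{G})$. Cauchy--Schwarz gives $|j(a)(\gamma)|\le\|\pi_{s(\gamma)}(a)\|\,\|\delta_{s(\gamma)}\|\,\|\delta_\gamma\|\le\|a\|_r$, so $j$ is a well-defined, norm-decreasing map from $C^*_r(\mathcal{G})$ into $\ell^\infty(\mathcal{G})$. To see $j(a)\in C_0(\mathcal{G})$, choose a sequence $f_n\in C_c(\mathcal{G})$ with $\pi_r(f_n)\to a$ in $\|\cdot\|_r$. Linearity together with the first step gives $j(a)-f_n=j(a-\pi_r(f_n))$, and so $\|j(a)-f_n\|_\infty\le\|a-\pi_r(f_n)\|_r\to 0$. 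Hence $j(a)$ is a uniform limit of elements of $C_c(\mathcal{G})$, and therefore lies in $C_0(\mathcal{G})$.

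The main work is injectivity, which will use the unitary intertwiners $U_\eta$ from Proposition~\ref{prp:reg reps} to upgrade a ``diagonal'' vanishing to full vanishing. Suppose $j(a)=0$. For each fixed $x\in\mathcal{G}^{(0)}$, the hypothesis says $(\pi_x(a)\delta_x\mid\delta_\gamma)=0$ for every $\gamma\in\mathcal{G}_x$; since $\{\delta_\gamma:\gamma\in\mathcal{G}_x\}$ is an orthonormal basis of $\ell^2(\mathcal{G}_x)$, this forces $\pi_x(a)\delta_x=0$ for every $x$. To conclude that $\pi_x(a)$ annihilates every basis vector $\delta_\gamma$ with $\gamma\in\mathcal{G}_x$, take $\eta=\gamma$ in Proposition~\ref{prp:reg reps}: then $U_\gamma\delta_\gamma=\delta_{r(\gamma)}$ and $U_\gamma\pi_{s(\gamma)}(a)=\pi_{r(\gamma)}(a)U_\gamma$, so
\[
U_\gamma\pi_x(a)\delta_\gamma=\pi_{r(\gamma)}(a)U_\gamma\delta_\gamma=\pi_{r(\gamma)}(a)\delta_{r(\gamma)}=0,
\]
and unitarity of $U_\gamma$ gives $\pi_x(a)\delta_\gamma=0$. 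Hence $\pi_x(a)=0$ for every $x\in\mathcal{G}^{(0)}$. Since $C^*_r(\mathcal{G})$ is by construction a subalgebra of $\bigoplus_x\mathcal{B}(\ell^2(\mathcal{G}_x))$ and the $\pi_x$ are the coordinate projections, we conclude $a=0$.

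The step I expect to be the main obstacle is the last one: the passage from $\pi_x(a)\delta_x=0$ to $\pi_x(a)=0$ is not automatic, and the cleanest route is to exploit the unitary equivalence of the regular representations given by the operators $U_\eta$, which translates the ``diagonal'' datum captured by $j$ into information on every basis vector.
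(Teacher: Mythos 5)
Your proposal is correct and follows essentially the same route as the paper: the pointwise computation on $C_c(\mathcal{G})$, Cauchy--Schwarz for the norm bound, uniform approximation by elements of $C_c(\mathcal{G})$ for the $C_0$ claim, and the intertwiners $U_\eta$ from Proposition~\ref{prp:reg reps} for injectivity. Your injectivity argument is just the contrapositive form of the paper's (the paper starts from $a\neq 0$ and exhibits $\gamma=\beta\alpha^{-1}$ with $j(a)(\gamma)\neq 0$), but the underlying unitary-equivalence computation is identical.
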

\begin{proof}
For $f \in C_c(\mathcal{G})$, we have
\[
j(f)(\gamma)
    = \Big(\sum_{\alpha \in \mathcal{G}_{r(\gamma)}} f(\alpha) \delta_{\alpha s(\gamma)} \mid \delta_\gamma\Big)
    = f(\gamma).
\]
The Cauchy--Schwarz inequality shows that $j$ is norm-decreasing from $\|\cdot\|_r$ to
$\|\cdot\|_\infty$. An $\varepsilon/3$-argument shows that each $j(a)$ belongs to
$C_0(\mathcal{G})$. If $a \in C^*_r(\mathcal{G})$ is nonzero, then there exists $x \in
\mathcal{G}^{(0)}$ such that $\pi_x(a) \not= 0$, and then there exist $\alpha,\beta \in
\mathcal{G}_x$ such that $\big(\pi_x(a) \delta_\alpha \mid \delta_\beta\big) \not= 0$.
Now Proposition~\ref{prp:reg reps} gives
\[
j(a)(\beta\alpha^{-1})
    = \big(\pi_{r(\alpha)}(a) \delta_{r(\alpha)} \mid \delta_{\beta\alpha^{-1}}\big)
    = \big(U^*_\alpha \pi_{r(\alpha)}(a) U_\alpha \delta_{\alpha} \mid \delta_{\beta}\big)
    = \big(\pi_{s(\alpha)} \delta_\alpha \mid \delta_\beta\big) \not= 0.
\]
Hence $j$ is injective.
\end{proof}

\begin{corollary}\label{cor:norm relation}
Suppose that $\mathcal{G}$ is a second-countable locally compact Hausdorff \'etale
groupoid. Then the homomorphisms $\pi_{\max} : C_c(\mathcal{G}) \to C^*(\mathcal{G})$ and
$\bigoplus_x \pi_x : C_c(\mathcal{G}) \to C^*_r(\mathcal{G})$ are both injective. For $f
\in C_c(\mathcal{G})$, we have $\|f\|_\infty \le \|f\|_r \le \|f\|$. If $f$ is supported
on a bisection, then we have equality throughout.
\end{corollary}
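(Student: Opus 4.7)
The plan is to leverage Proposition~\ref{prp:j map} and Proposition~\ref{prp:rep bound} as black boxes; essentially all the work has been done and this corollary is mostly bookkeeping.

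First I would establish injectivity of $\bigoplus_x \pi_x$ on $C_c(\mathcal{G})$. The map $j : C^*_r(\mathcal{G}) \to C_0(\mathcal{G})$ from Proposition~\ref{prp:j map} is injective and satisfies $j(f) = f$ for $f \in C_c(\mathcal{G})$. Hence if $(\bigoplus_x \pi_x)(f) = 0$, applying $j$ gives $f = 0$. Injectivity of $\pi_{\max}$ then follows immediately from the factorisation $\bigoplus_x \pi_x = \pi_r \circ \pi_{\max}$.

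Next I would handle the norm inequalities $\|f\|_\infty \le \|f\|_r \le \|f\|$. The second inequality is automatic: $\pi_r$ is a $C^*$-homomorphism, so $\|f\|_r = \|\pi_r(\pi_{\max}(f))\| \le \|\pi_{\max}(f)\| = \|f\|$. For the first inequality, the same Proposition~\ref{prp:j map} tells us that $j$ is norm-decreasing from $\|\cdot\|_r$ to $\|\cdot\|_\infty$, so $\|f\|_\infty = \|j(f)\|_\infty \le \|f\|_r$.

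Finally, for $f$ supported on a bisection, Proposition~\ref{prp:rep bound} gives $\|\pi(f)\| \le \|f\|_\infty$ for every $^*$-representation $\pi$. Taking the supremum over all $^*$-representations yields $\|f\| \le \|f\|_\infty$, and combining with the chain already established forces $\|f\|_\infty = \|f\|_r = \|f\|$.

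I do not expect any genuine obstacles: all of the substantial analytic content has already been absorbed into Propositions~\ref{prp:rep bound} and~\ref{prp:j map}, and the corollary is essentially a four-line assembly of those ingredients together with the universal property of $C^*(\mathcal{G})$.
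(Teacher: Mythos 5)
Your proof is correct and follows essentially the same route as the paper: injectivity and the inequality $\|f\|_\infty \le \|f\|_r$ both come from Proposition~\ref{prp:j map}, the inequality $\|f\|_r \le \|f\|$ from the universal property, and the bisection case collapses the chain from above. The only cosmetic difference is that for the bisection case you cite the bound $\|\pi(f)\|\le\|f\|_\infty$ from Proposition~\ref{prp:rep bound} directly, whereas the paper re-runs the underlying computation ($\|f\|^2=\|f^**f\|=\|f^**f\|_\infty=\|f\|_\infty^2$ via Lemma~\ref{lem:bisection conv}); these are the same argument.
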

\begin{proof}
That $\|f\|_r \le \|f\|$ is by definition of the universal norm. That $\|f\|_\infty \le
\|f\|_r$ follows from Proposition~\ref{prp:j map}. Since $j(f) = f$ for all $f \in
C_c(\mathcal{G})$, it follows that $\pi_{\max}$ and $\bigoplus_x \pi_x$ are injective on
$C_c(\mathcal{G})$. If $f$ is supported on a bisection, then we have $\|f\|^2 = \|f^*
* f\|$. Since $f^* * f \in C_c(\mathcal{G}^{(0)})$, the uniqueness of the $^*$-algebra norm on
$C_c(\mathcal{G}^{(0)})$ gives $\|f\|^2 = \|f^* * f\|_\infty$, and this is precisely
$\|f\|_\infty^2$ by Lemma~\ref{lem:bisection conv}.
\end{proof}

Using the above Corollary, we see that we can apply the Stone--Weierstrass theorem to
establish surjectivity of a homomorphism into either $C^*(\mathcal{G})$ or
$C^*_r(\mathcal{G})$.

\begin{corollary}
Suppose that $\mathcal{G}$ is a second-countable locally compact Hausdorff \'etale
groupoid. Let $A$ be a $C^*$-algebra, and suppose that $\pi : A \to C^*(G)$ (or $\pi : A
\to C^*_r(\mathcal{G})$) is a homomorphism. Suppose that for each open bisection $U
\subseteq G$ and each pair of distinct points $\beta,\gamma \in U$, there exists $a \in
A$ such that $\pi(a) \in C_0(U)$, $\pi(a)(\beta) = 0$ and $\pi(a)(\gamma) = 1$. Then
$\pi$ is surjective.
\end{corollary}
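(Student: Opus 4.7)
The plan is to apply Stone--Weierstrass in two stages. First I will show that $\pi(A)$ contains $C_0(\mathcal{G}^{(0)})$; then I will promote this, via the $C_0(\mathcal{G}^{(0)})$-module structure on $C_0(U)$, to $C_0(U) \subseteq \pi(A)$ for every open bisection $U$. Combined with Lemma~\ref{lem:bisection span} and the fact that $\pi(A)$ is automatically closed (as the image of a $^*$-homomorphism between $C^*$-algebras), this will establish surjectivity.

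For the first stage, note that $\mathcal{G}^{(0)}$ is open in $\mathcal{G}$ (Lemma~\ref{lem:go open}) and is itself an open bisection, and that by Lemma~\ref{lem:bisection conv} the convolution and involution on $C_0(\mathcal{G}^{(0)}) \subseteq C^*(\mathcal{G})$ agree with pointwise multiplication and complex conjugation. Hence $B_0 \coloneq \pi(A) \cap C_0(\mathcal{G}^{(0)})$ is a closed $^*$-subalgebra of the commutative $C^*$-algebra $C_0(\mathcal{G}^{(0)})$. Applying the hypothesis to $U = \mathcal{G}^{(0)}$, and swapping the roles of $\beta$ and $\gamma$ to obtain a function equal to $1$ at any prescribed point, shows that $B_0$ separates points of $\mathcal{G}^{(0)}$ and vanishes nowhere; Stone--Weierstrass then gives $B_0 = C_0(\mathcal{G}^{(0)})$.

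For the second stage, fix an open bisection $U$ and $f \in C_c(U)$. The key observation, again from Lemma~\ref{lem:bisection conv}, is that for $h \in C_0(\mathcal{G}^{(0)})$ and $g \in C_0(U)$, the convolution $h * g$ lies in $C_0(U)$ and satisfies $(h*g)(\eta) = h(r(\eta))g(\eta)$; so $C_0(\mathcal{G}^{(0)})$ acts on $C_0(U)$ by the pointwise pre-multiplication $g \mapsto (h \circ r)\,g$. The hypothesis supplies, for each $\gamma \in \operatorname{supp}(f)$, an $a_\gamma \in A$ with $\pi(a_\gamma) \in C_0(U)$ and $\pi(a_\gamma)(\gamma) = 1$, hence nonvanishing on some neighbourhood $V_\gamma \subseteq U$ of $\gamma$. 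I will cover $\operatorname{supp}(f)$ by finitely many such $V_{\gamma_i}$, take a subordinate partition of unity $\{\phi_i\}$, and on each piece write $\phi_i f = h_i * \pi(a_{\gamma_i})$, where $h_i \in C_0(\mathcal{G}^{(0)}) \subseteq \pi(A)$ is obtained by dividing $\phi_i f$ by $\pi(a_{\gamma_i})$ on $V_{\gamma_i}$, pulling back via the homeomorphism $r : V_{\gamma_i} \to r(V_{\gamma_i})$, and extending by zero. Summing gives $f \in \pi(A)$.

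The main obstacle is that one cannot apply Stone--Weierstrass directly to $C_0(U)$: the convolution on $C^*(\mathcal{G})$ sends $C_0(U) \times C_0(U)$ into $C_0(UU)$ rather than back into $C_0(U)$, so $C_0(U)$ is not a $^*$-subalgebra of $C^*(\mathcal{G})$. The resolution is to apply Stone--Weierstrass only on the unit space and then exploit the bisection-level convolution formula of Lemma~\ref{lem:bisection conv} to propagate the conclusion along $U$ via the module action of $C_0(\mathcal{G}^{(0)})$; the same argument works verbatim for $\pi : A \to C^*_r(\mathcal{G})$, since by Corollary~\ref{cor:norm relation} the embedding $C_0(U) \hookrightarrow C^*_r(\mathcal{G})$ is isometric and $C_c(\mathcal{G})$ is dense in $C^*_r(\mathcal{G})$.
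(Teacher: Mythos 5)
Your proof is correct. The first stage (Stone--Weierstrass applied to $\pi(A) \cap C_0(\mathcal{G}^{(0)})$) and the conclusion (Lemma~\ref{lem:bisection span} plus the fact that a $^*$-homomorphism has closed range) are exactly the paper's; the difference lies in how you pass from $C_0(\mathcal{G}^{(0)}) \subseteq \pi(A)$ to $C_c(U) \subseteq \pi(A)$ for an open bisection $U$. The paper applies Stone--Weierstrass a second time: using the same identity $(h*g)(\eta) = h(r(\eta))g(\eta)$ from Lemma~\ref{lem:bisection conv}, it notes that once $C_0(\mathcal{G}^{(0)}) \subseteq \pi(A)$ is known, the set $\pi(A) \cap C_0(U)$ is closed under \emph{pointwise} multiplication (because $f \cdot g = (f \circ r^{-1}) * g$), so the separation hypothesis together with the coincidence of $\|\cdot\|_{C^*(\mathcal{G})}$ and $\|\cdot\|_\infty$ on $C_c(U)$ gives $C_c(U) \subseteq \pi(A)$ by density. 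You instead produce each $f \in C_c(U)$ \emph{exactly}, as a finite sum $\sum_i h_i * \pi(a_{\gamma_i})$ obtained from a partition of unity and local division by the nonvanishing functions $\pi(a_{\gamma_i})$. Both routes turn on the same module-action formula, but yours replaces a density argument by an explicit algebraic identity: you need neither the norm identification on $C_c(U)$ at this step nor closure of $\pi(A) \cap C_0(U)$ under pointwise complex conjugation (a point the paper's second Stone--Weierstrass application quietly relies on), at the price of some covering bookkeeping and the check that the locally defined quotients extend continuously by zero---which you carry out correctly, using that $r$ is injective on the bisection $U$ so that $\eta \mapsto h_i(r(\eta))$ vanishes on $U \setminus V_{\gamma_i}$.
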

\begin{proof}
A straightforward application of the Stone--Weierstrass theorem shows that $\pi(A)$
contains $C_0(\mathcal{G}^{(0)})$. Using this and the convolution formula in
Lemma~\ref{lem:bisection conv}, we see that for $U \subseteq \mathcal{G}$ an open
bisection, the set $\pi(A) \cap C_0(U)$ is closed under pointwise multiplication
(identify $C_0(U)$ with $C_0(r(U))$, and then note that if $f,g \in \pi(A) \cap C_0(U)$,
then $f \circ r^{-1} \in C_0(r(U)) \subseteq C_0(\mathcal{G}) \subseteq \pi(A)$, and $f
\cdot g = (f \circ r^{-1}) * g$. So another application of the Stone--Weierstrass theorem
combined with the fact that $\|\cdot\|_{C^*(\mathcal{G})}$ agrees with $\|\cdot\|_\infty$
on $C_c(U)$ shows that $C_c(U) \subseteq \pi(A)$. Now Lemma~\ref{lem:bisection span}
shows that $C_c(\mathcal{G}) \subseteq \pi(A)$. Since $\pi$ is a $C^*$-homomorphism, it
has closed range, and we deduce that $C^*(\mathcal{G}) \subseteq \pi(A)$.
\end{proof}

\begin{example}
If $\mathcal{G}$ is a group, then there is just one unit $e$, and the regular
representation $\pi_e$ is the usual left-regular representation of the group algebra. So
$C^*_r(\mathcal{G})$ is the usual reduced $C^*$-algebra. In particular,
$C^*_r(\mathcal{G}) = C^*(\mathcal{G})$ if and only if $\mathcal{G}$ is amenable.
\end{example}

\begin{example}
Let $R$ be a discrete equivalence relation. For distinct equivalence classes $S, T
\subseteq R^{(0)}$ under $R$, we have $C_c(S \times S) \perp C_c(T \times T)$ inside
$C_c(R)$, and so $C_c(R)$ is the direct sum of the $^*$-subalgebras $C_c(S)$. It follows
that the completions of these $^*$-subalgebras are direct summands in each of $C^*(R)$
and $C^*_r(R)$. For a fixed $S$, the elements $\{1_{(s, s')} \mid s,s' \in S\}$ form a
complete set of nonzero matrix units indexed by $S$, and generate both $C^*(S \times S)$
and $C^*_r(S \times S)$ as $C^*$-algebras. Since $\mathcal{K}(\ell^2(S))$ is the unique
$C^*$-algebra generated by a family of nonzero matrix units indexed by $S$, we deduce
that $C^*_r(S \times S) \cong C^*(S \times S) \cong \mathcal{K}(\ell^2(S))$.
\end{example}

\begin{example}
If $\mathcal{G}$ is the transformation groupoid for an action of a discrete group
$\Gamma$ on a compact Hausdorff space $X$, then the unit space $\mathcal{G}^{(0)}$ is
just $X$. Fix $x \in X = \mathcal{G}^{(0)}$. The isomorphism $\omega : C_c(\mathcal{G})
\to C_c(\Gamma, C(X))$ of Example~\ref{eg:trans gpd conv} intertwines the regular
representation $\pi_x$ of $C_c(\mathcal{G})$ with the induced representation of
$C_c(\Gamma, C(X))$ associated the character of $C(X)$ given by evaluation at $x$,
denoted $\operatorname{Ind} \operatorname{ev}_x$ in \cite[Example~2.4.3]{WilliamsNotes}.
So as discussed in that example, since $\bigoplus_x \operatorname{ev}_x$ is a faithful
representation of $C_0(X)$, we see that $C^*_r(\mathcal{G})$ is isomorphic to the reduced
crossed product $C(X) \times_{\alpha, r} \Gamma$.
\end{example}

\begin{remark}
There is an alternative, slightly slicker, approach to defining $C^*_r(\mathcal{G})$.
Define $\langle \cdot, \cdot \rangle_{C_0(\mathcal{G}^{(0)})} : C_c(\mathcal{G}) \times
C_c(\mathcal{G}) \to C_0(\mathcal{G}^{(0)})$ by $\langle f,
g\rangle_{C_0(\mathcal{G}^{(0)})} \coloneq (f^* * g)|_{\mathcal{G}^{(0)}}$. It is
straightforward to check that this is a $C_0(\mathcal{G}^{(0)})$-valued inner-product (in
particular, positive definite) on $C_c(\mathcal{G})$, so we can form the corresponding
Hilbert-module completion $X_\mathcal{G}$. Now the action of $C_c(\mathcal{G})$ on itself
by left multiplication extends to a homomorphism $L : C_c(\mathcal{G}) \to
\mathcal{L}(X_\mathcal{G})$, the $C^*$-algebra of adjointable operators on
$X_\mathcal{G}$. It is a fairly straightforward exercise, if you are familiar with
Hilbert modules, to verify that $\|L(f)\| = \|f\|_r$ for all $f \in C_c(\mathcal{G})$. So
the completion of $L(C_c(\mathcal{G}))$ in $\mathcal{L}(X_\mathcal{G})$ is isomorphic to
$C^*_r(\mathcal{G})$; indeed, we could have taken this as the definition of
$C^*_r(\mathcal{G})$.
\end{remark}

\section{Equivalence of groupoids}

Renault's notion of equivalence of groupoids closely reflects Morita equivalence for
$C^*$-algebras.

\begin{definition}
Suppose that $\mathcal{G}$ is an \'etale groupoid. A \emph{left $\mathcal{G}$-space} is a
locally compact Hausdorff space $X$ endowed with a continuous map $r : X \to
\mathcal{G}^{(0)}$ and a continuous map $\cdot : \mathcal{G} * X \coloneq \{(\gamma, \xi)
\in \mathcal{G} \times X \mid s(\gamma) = r(\xi)\} \to X$ such that
\begin{enumerate}
\item $r(\gamma \cdot \xi) = r(\gamma)$ for all $(\gamma,\xi) \in \mathcal{G} * X$;
\item $r(\xi)\cdot \xi = \xi$ for all $\xi \in X$; and
\item $\alpha \cdot (\beta \cdot \xi) = (\alpha\beta)\cdot \xi$ for all $(\beta,\xi)
    \in \mathcal{G} * X$ and $\alpha \in \mathcal{G}_{r(\beta)}$.
\end{enumerate}
A \emph{right $\mathcal{G}$-space} is defined similarly, but with a map $s : X \to
\mathcal{G}^{(0)}$ and the roles of $s$ and $r$ reversed in (1)--(3).

We say that $X$ is a \emph{proper} left $\mathcal{G}$-space if the map $(\gamma, \xi)
\mapsto (\gamma\cdot \xi, \xi)$ from $\mathcal{G} * X \to X \times X$ is a proper map. It
is \emph{free} if $\gamma \cdot \xi = \xi$ implies $\gamma = r(\xi)$.

If $\mathcal{G}$ and $\mathcal{H}$ are two \'etale groupoids, then a
$\mathcal{G}$--$\mathcal{H}$-equivalence is a locally compact Hausdorff space $X$ that is
simultaneously a free and proper left $\mathcal{G}$-space and a free and proper right
$\mathcal{H}$ space such that the left and right actions commute, and such that $r : X
\to \mathcal{G}^{(0)}$ and $s : X \to \mathcal{H}^{(0)}$ are open maps and induce
homeomorphisms $\tilde{r} : X/\mathcal{H} \cong \mathcal{G}^{(0)}$, and $\tilde{s} :
\mathcal{G}\backslash X \to \mathcal{H}^{(0)}$.
\end{definition}

\begin{lemma}
Suppose that $\mathcal{G}, \mathcal{H}$ are second-countable locally compact Hausdorff
\'etale groupoids. Let $X$ be a $\mathcal{G}$--$\mathcal{H}$-equivalence. If $\xi,\eta
\in X$ satisfy $r(\xi) = r(\eta)$, then there is a unique element $[\xi,
\eta]_\mathcal{H} \in \mathcal{H}$ such that $\xi \cdot [\xi,\eta]_\mathcal{H} = \eta$.
Likewise, if $s(\xi) = s(\eta)$, then there is a unique ${_\mathcal{G}[\xi,\eta]} \in
\mathcal{G}$ such that ${_\mathcal{G}[\xi,\eta]} \cdot \eta = \xi$.
\end{lemma}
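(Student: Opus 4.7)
The plan is to exploit the two defining properties of the right $\mathcal{H}$-action on $X$: the induced bijection $\tilde{s} : \mathcal{G}\backslash X \to \mathcal{H}^{(0)}$ — and by symmetry, the induced bijection $\tilde{r} : X/\mathcal{H} \to \mathcal{G}^{(0)}$ — handle existence, while freeness handles uniqueness. The left-module statement then follows by the obvious symmetric argument.

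For existence, suppose $\xi, \eta \in X$ satisfy $r(\xi) = r(\eta)$. Since $\tilde{r} : X/\mathcal{H} \to \mathcal{G}^{(0)}$ is a (bijective) homeomorphism, $\xi$ and $\eta$ lie in the same $\mathcal{H}$-orbit, so there exists $h \in \mathcal{H}$ with $s(\xi) = r(h)$ and $\xi \cdot h = \eta$. Take $[\xi,\eta]_\mathcal{H} \coloneq h$; note that automatically $s(h) = s(\xi \cdot h) = s(\eta)$.

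For uniqueness, suppose $h_1, h_2 \in \mathcal{H}$ both satisfy $\xi \cdot h_i = \eta$. Taking sources gives $s(h_1) = s(\eta) = s(h_2)$ and $r(h_1) = s(\xi) = r(h_2)$, so the product $h_1 h_2^{-1}$ is defined and has source and range both equal to $s(\xi) = r(h_1)$. Using associativity and the axiom $h_2 h_2^{-1} = r(h_2) = s(\xi)$ of the $\mathcal{H}$-action, we compute
\[
\xi \cdot (h_1 h_2^{-1}) = (\xi \cdot h_1) \cdot h_2^{-1} = \eta \cdot h_2^{-1} = (\xi \cdot h_2) \cdot h_2^{-1} = \xi \cdot (h_2 h_2^{-1}) = \xi \cdot s(\xi) = \xi.
\]
Freeness of the right $\mathcal{H}$-action now forces $h_1 h_2^{-1} = s(\xi) = r(h_2)$, and multiplying on the right by $h_2$ yields $h_1 = h_2$.

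The main subtlety — and really the only one — is keeping track of which source/range identifications are needed to make each product in the uniqueness calculation legal. Once those are assembled, the computation is forced. The dual statement, with $s(\xi) = s(\eta)$ giving a unique ${_\mathcal{G}[\xi,\eta]} \in \mathcal{G}$ with ${_\mathcal{G}[\xi,\eta]} \cdot \eta = \xi$, is proved identically using the induced bijection $\tilde{s} : \mathcal{G}\backslash X \to \mathcal{H}^{(0)}$ for existence and freeness of the left $\mathcal{G}$-action for uniqueness.
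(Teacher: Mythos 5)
Your proof is correct and follows essentially the same route as the paper: existence comes from the fact that $\tilde{r} : X/\mathcal{H} \to \mathcal{G}^{(0)}$ is a bijection (so $r(\xi)=r(\eta)$ forces $\xi$ and $\eta$ into the same $\mathcal{H}$-orbit), and uniqueness comes from freeness. The paper simply states ``freeness shows that this $\lambda$ is unique,'' whereas you spell out the cancellation $\xi\cdot(h_1h_2^{-1})=\xi \Rightarrow h_1h_2^{-1}=s(\xi)$ explicitly; that extra detail is accurate and harmless.
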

\begin{proof}
It suffices to prove the first statement; the second is symmetric. That $r$ descends to a
homeomorphism $X/\mathcal{H} \to \mathcal{G}^{(0)}$ shows that there exists an element
$\lambda \in \mathcal{H}$ such that $\xi \cdot \lambda = \eta$. Freeness shows that this
$\lambda$ is unique.
\end{proof}

Given a $\mathcal{G}$--$\mathcal{H}$-equivalence $X$, there is a corresponding
$\mathcal{H}$--$\mathcal{G}$-equivalence $X^* \coloneq \{\xi^* \mid \xi \in X\}$ defined
by $r(\xi^*) = s(\xi)$, $s(\xi^*) = r(\xi)$, $\lambda \cdot \xi^* \coloneq (\xi \cdot
\lambda^{-1})^*$ and $\xi^* \cdot \gamma = (\gamma^{-1}\cdot x)^*$. Clearly $X^{**} \cong
X$ via the map $\xi^{**} \mapsto \xi$.

\begin{proposition}\label{prp:linking gpd}
Suppose that $\mathcal{G}$ and $\mathcal{H}$ are second-countable locally compact
Hausdorff \'etale groupoids. Let $X$ be a $\mathcal{G}$--$\mathcal{H}$-equivalence. Let
$L \coloneq \mathcal{G} \sqcup X \sqcup X^* \sqcup \mathcal{H}$, with the relative
topology. Then $L$ is an \'etale groupoid with
\begin{itemize}
\item $L^{(0)} = \mathcal{G}^{(0)} \sqcup \mathcal{H}^{(0)}$,
\item range and source maps inherited from those on $\mathcal{G}, X, X^*,
    \mathcal{H}$,
\item multiplication inherited from multiplication in $\mathcal{G}$ and
    $\mathcal{H}$, the actions of $\mathcal{G}$ and $\mathcal{H}$ on $X$ and $X^*$
    and with $\xi^* \eta \coloneq [\xi, \eta]_\mathcal{H}$ for $\xi,\eta \in X$ with
    $r(\xi) = r(\eta)$, and $\xi \eta^* \coloneq {_\mathcal{G}[\xi,\eta]}$ for
    $\xi,\eta \in X$ with $s(\xi) = s(\eta)$, and
\item $\xi^{-1} = \xi^*$ and $(\xi^*)^{-1} = \xi$ for $\xi \in X$.
\end{itemize}
We have $\mathcal{G}^{(0)} L \mathcal{G}^{(0)} = \mathcal{G}$ and $\mathcal{H}^{(0)} L
\mathcal{H}^{(0)} = \mathcal{H}$.
\end{proposition}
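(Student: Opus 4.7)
The plan is to check, in sequence: (a) the algebraic groupoid axioms on $L$; (b) continuity of inversion and multiplication; (c) the local-homeomorphism property of the range map; and (d) the two final identities. Most of this is a matter of packaging the structure already present on $\mathcal{G}$, $\mathcal{H}$, and $X$ in a uniform way; the only genuinely delicate step is verifying the étale condition at points of $X$ and $X^*$.

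For the algebra, I would use the (G1)--(G6) formulation from Remark~\ref{rmk:gpd verbose axioms}. The compositions break into cases according to which of the four summands the two factors lie in. The ``old'' cases are inherited either from multiplication in $\mathcal{G}$ and $\mathcal{H}$ or from the action axioms for $X$ and $X^*$. The only ``new'' pieces are the brackets $\xi^*\eta = [\xi,\eta]_{\mathcal{H}}$ and $\xi\eta^* = {_\mathcal{G}[\xi,\eta]}$, whose properties reduce to the defining uniqueness of the brackets from the preceding lemma. Associativity in the mixed cases collapses similarly; for instance, for $\xi,\eta,\zeta \in X$ with $s(\xi) = s(\eta)$ and $r(\eta) = r(\zeta)$, one has $\xi(\eta^*\zeta) = \xi\cdot[\eta,\zeta]_{\mathcal{H}}$ and $(\xi\eta^*)\zeta = {_\mathcal{G}[\xi,\eta]}\cdot\zeta$, and these agree because ${_\mathcal{G}[\xi,\eta]}\cdot\eta = \xi$ and the $\mathcal{G}$- and $\mathcal{H}$-actions commute. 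The inverse identities $\xi\xi^* = r(\xi)$ and $\xi^*\xi = s(\xi)$ follow by the same uniqueness.

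Topologically, $L$ is locally compact Hausdorff as a disjoint union of locally compact Hausdorff spaces, with each summand clopen. Continuity of inversion is immediate, and continuity of the ``old'' multiplications is built into the definitions of topological groupoid and of left/right $\mathcal{G}$-space. For the bracket multiplications, consider
\[
\Phi \colon X * \mathcal{H} \to X \times_{\mathcal{G}^{(0)}} X, \qquad (\xi, \lambda) \mapsto (\xi, \xi \cdot \lambda).
\]
This is a continuous bijection, since the right $\mathcal{H}$-action is free and $X/\mathcal{H} \cong \mathcal{G}^{(0)}$, and it is proper because the action is proper; so $\Phi$ is a homeomorphism, and $[\cdot,\cdot]_{\mathcal{H}}$ is simply $\pi_2 \circ \Phi^{-1}$. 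Continuity of ${_\mathcal{G}[\cdot,\cdot]}$ is symmetric.

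The étale condition is the main obstacle. On $\mathcal{G}$ and on $\mathcal{H}$ the range map is a local homeomorphism by hypothesis, so I need only handle $X$ (and symmetrically $X^*$). Openness of $r|_X$ is part of the definition of equivalence, so it suffices to produce, around each $\xi \in X$, an open neighbourhood on which $r$ is injective. Since $\mathcal{H}$ is étale, $\mathcal{H}^{(0)}$ is open in $\mathcal{H}$ by Lemma~\ref{lem:go open}, so $X * \mathcal{H}^{(0)}$ is open in $X * \mathcal{H}$; pushing forward under the homeomorphism $\Phi$ above, the diagonal $\{(\eta,\eta) : \eta \in X\}$ is open in $X \times_{\mathcal{G}^{(0)}} X$. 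A basic product neighbourhood $W \times W$ of $(\xi,\xi)$ contained in the diagonal gives an open $W \ni \xi$ on which $r$ is injective, as required. Finally, an element $\lambda \in L$ lies in $\mathcal{G}^{(0)} L \mathcal{G}^{(0)}$ iff $r(\lambda), s(\lambda) \in \mathcal{G}^{(0)}$; inspection of $r$ and $s$ on the four pieces of $L$ shows this forces $\lambda \in \mathcal{G}$, and the converse is trivial, so $\mathcal{G}^{(0)} L \mathcal{G}^{(0)} = \mathcal{G}$. The identity $\mathcal{H}^{(0)} L \mathcal{H}^{(0)} = \mathcal{H}$ is symmetric.
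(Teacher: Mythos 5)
Your proof is correct, and its core is the same as the paper's: the \'etale condition at points of $X$ reduces to openness of $r|_X$ (part of the definition of equivalence) plus local injectivity, and local injectivity comes from the fact that $[\xi,\eta]_{\mathcal H}\in\mathcal H^{(0)}$ exactly when $\xi=\eta$, combined with Lemma~\ref{lem:go open}. Where you differ is in execution, and your version is in one respect more complete. The paper dismisses the topological-groupoid verification as ``routine but tedious'' and then, for local injectivity, runs a sequential argument: if $\xi_n,\xi'_n\to\xi$ with $r(\xi_n)=r(\xi'_n)$, then $[\xi_n,\xi'_n]_{\mathcal H}\to[\xi,\xi]_{\mathcal H}=s(\xi)$, so the brackets eventually land in the open set $\mathcal H^{(0)}$ and hence $\xi_n=\xi'_n$ eventually. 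That argument silently uses continuity of $(\xi,\eta)\mapsto[\xi,\eta]_{\mathcal H}$, which is exactly the point you make explicit: your map $\Phi\colon(\xi,\lambda)\mapsto(\xi,\xi\cdot\lambda)$ is a continuous proper bijection onto $X\times_{\mathcal G^{(0)}}X$ (surjectivity by the lemma producing the brackets, injectivity by freeness, properness by definition of a proper action), hence closed, hence a homeomorphism, so the bracket $\pi_2\circ\Phi^{-1}$ is continuous. Pushing $X*\mathcal H^{(0)}$ forward under $\Phi$ to see that the diagonal is open in $X\times_{\mathcal G^{(0)}}X$ then gives local injectivity by a direct open-set argument rather than by sequences, avoiding any appeal to first countability. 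The trade-off is purely one of economy versus completeness: the paper's sequential phrasing is shorter given second countability, while your route supplies the continuity of the brackets that the paper's proof takes for granted and would in any case be needed to check continuity of the multiplications $\xi^*\eta$ and $\xi\eta^*$ on $L$.
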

\begin{proof}
The proof that $L$ is a topological groupoid is routine but tedious. To see that it is
\'etale, we show that $r : X \to \mathcal{G}^{(0)}$ is a local homeomorphism; that $s$ is
a local homeomorphism follows from a similar argument.

We already know that $r$ is an open map, so we need only show that it is locally
injective. So fix $\xi \in X$ and sequences $\xi_n, \xi'_n \to \xi$ such that $r(\xi_n) =
r(\xi'_n)$ for all $n$. We must show that $\xi_n = \xi'_n$ for large $n$. We have
$[\xi_n, \xi'_n]_\mathcal{H} \to [\xi, \xi]_\mathcal{H} = s(\xi)$. Since $s(\xi) \in
\mathcal{H}^{(0)}$ and since $\mathcal{H}^{(0)}$ is open, we therefore have $[\xi_n,
\xi'_n]_\mathcal{H} \in \mathcal{H}^{(0)}$ for large $n$; that is $[\xi_n,
\xi'_n]_\mathcal{H} = s(\xi_n)$ for large $n$, and therefore $\xi'_n = \xi_n \cdot
s(\xi_n) = \xi_n$ for large $n$.
\end{proof}

We call the groupoid of Proposition~\ref{prp:linking gpd} the \emph{linking groupoid} of
$X$.

\begin{theorem}
Suppose that $\mathcal{G}, \mathcal{H}$ are second-countable locally compact Hausdorff
\'etale groupoids. Let $X$ be a $\mathcal{G}$--$\mathcal{H}$-equivalence. Let $L$ be the
linking groupoid of $X$. Then $P \coloneq 1_{\mathcal{G}^{(0)}}$ and $Q \coloneq
1_{\mathcal{H}^{(0)}}$ belong to $\mathcal{M}(C^*(L))$ and to $\mathcal{M}(C^*_r(L))$,
and are complementary full projections. We have $P C^*(L) P \cong C^*(\mathcal{G})$ and
$Q C^*(L) Q \cong C^*(\mathcal{H})$, and similarly $P C^*_r(L) P \cong
C^*_r(\mathcal{G})$ and $Q C^*_r(L) Q \cong C^*_r(\mathcal{H})$. In particular,
$C^*(\mathcal{G})$ and $C^*(\mathcal{H})$ are Morita equivalent via the imprimitivity
bimodule $P C^*(L) Q$ and likewise $C^*_r(\mathcal{G})$ and $C^*_r(\mathcal{H})$ are
Morita equivalent via the imprimitivity bimodule $P C^*_r(L) Q$.
\end{theorem}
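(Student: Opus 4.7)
The plan is to exploit the linking groupoid $L$ as a single object whose algebra contains all four pieces $C^*(\mathcal{G}), C^*(\mathcal{H}), C^*(L)P, P C^*(L)Q$ simultaneously, and then verify that $P$ and $Q$ behave formally like the identity projections on the two corners. I would structure the argument in four steps.

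First I would show that the indicator functions $P = 1_{\mathcal{G}^{(0)}}$ and $Q = 1_{\mathcal{H}^{(0)}}$ define multipliers of $C^*(L)$ and $C^*_r(L)$. Even though $P$ and $Q$ need not lie in $C_c(L)$, the convolution formula (Lemma~\ref{lem:bisection conv}) gives, for $f \in C_c(L)$, that $(P*f)(\gamma) = 1_{\mathcal{G}^{(0)}}(r(\gamma)) f(\gamma)$ and $(f*P)(\gamma) = f(\gamma) 1_{\mathcal{G}^{(0)}}(s(\gamma))$, so $P$ acts by cutting to the subset $\{r(\gamma) \in \mathcal{G}^{(0)}\}$, which is bounded on $C_c(L)$ and extends to a double centraliser on each completion. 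Both $P$ and $Q$ are then idempotent and self-adjoint, and $P+Q$ acts as the identity on $C_c(L)$ because $L^{(0)} = \mathcal{G}^{(0)} \sqcup \mathcal{H}^{(0)}$, so $P$ and $Q$ are complementary projections in $\mathcal{M}(C^*(L))$ and $\mathcal{M}(C^*_r(L))$.

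Next I would identify the corners. From the description of the multiplier action, $P C_c(L) P$ consists of functions supported on $\mathcal{G}^{(0)} L \mathcal{G}^{(0)} = \mathcal{G}$, which is precisely $C_c(\mathcal{G})$. Since $\mathcal{G}$ is open in $L$ (being $r^{-1}(\mathcal{G}^{(0)}) \cap s^{-1}(\mathcal{G}^{(0)})$ with $\mathcal{G}^{(0)}$ open in $L^{(0)}$, hence open in $L$ by Lemma~\ref{lem:go open}), the inclusion $C_c(\mathcal{G}) \hookrightarrow C_c(L)$ is a $*$-algebra embedding, and the resulting map $C^*(\mathcal{G}) \to P C^*(L) P$ is surjective with dense range. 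Injectivity on $C_c(\mathcal{G})$ plus the universal property yields a $*$-homomorphism $C^*(\mathcal{G}) \to P C^*(L) P$; to see it is isometric, I would invoke the fact that every $*$-representation of $C_c(\mathcal{G})$ extends (via the standard ``induction from a corner'' construction) to a $*$-representation of $C_c(L)$, so the universal norm on $C_c(\mathcal{G})$ coincides with the norm it inherits as a subalgebra of $C^*(L)$. The same argument works for $Q C^*(L) Q \cong C^*(\mathcal{H})$, and the reduced versions follow from constructing the regular representations of $\mathcal{G}$ as compressions of those of $L$ (using that $L_x = \mathcal{G}_x \sqcup X^*_x$ for $x \in \mathcal{G}^{(0)}$, and similarly on the other side).

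For fullness, I would show that the closed ideal $J$ generated by $P$ contains all of $C_c(L)$. Given any $\xi \in X$ and an open bisection $U$ around $\xi$ in $L$, and $f \in C_c(L)$ supported in $U$, the product $P * f = 0$ but $f * P^* * f^* = f * f^* \in C_c(r(U)) \subseteq P C^*(L) P$, so $f \in J$ since $f = (f f^*)^{1/2} v$ for a partial isometry via polar decomposition in $C^*(L)$. A cleaner route: for $\xi \in X$ pick $f \in C_c(X) \subseteq C_c(L)$ with $f(\xi) \neq 0$; then $f^* * f \in C_c(\mathcal{H}^{(0)}) = Q C_c(L)Q$ is nonzero, and $f * f^* \in C_c(\mathcal{G}^{(0)}) = P C_c(L) P$ is nonzero, so $f \in P C^*(L) Q \subseteq J$. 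Since $\mathcal{G}^{(0)} \sqcup X \sqcup X^* \sqcup \mathcal{H}^{(0)}$ is the support structure and everything in $C_c(\mathcal{H})$ factors through elements with source in $s(X) = \mathcal{H}^{(0)}$, a similar factorisation $g = \xi^* * (\xi * g)$ for $\xi \in X$ with $s(\xi) = r(g)$ shows that $C_c(\mathcal{H}) \subseteq J$ as well. The main obstacle is making this ``factorisation'' argument rigorous uniformly across all of $C_c(\mathcal{H})$; a partition-of-unity argument based on Lemma~\ref{lem:bisection base}, together with the fact that $s : X \to \mathcal{H}^{(0)}$ is surjective and open, should handle this. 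The analogous argument shows $Q$ is full.

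Finally, once $P, Q$ are established as complementary full projections in $\mathcal{M}(C^*(L))$ (resp.\ $\mathcal{M}(C^*_r(L))$), the general theory of Morita equivalence via complementary full projections yields that $P C^*(L) Q$ is an imprimitivity bimodule between $P C^*(L) P \cong C^*(\mathcal{G})$ and $Q C^*(L) Q \cong C^*(\mathcal{H})$, and analogously for the reduced algebras, completing the proof.
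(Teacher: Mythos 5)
Your proposal follows essentially the same route as the paper: realise $P$ and $Q$ as complementary multiplier projections, identify the corners by showing that the inclusion $C_c(\mathcal{G}) \hookrightarrow C_c(L)$ is isometric for both norms (compression of regular representations for the reduced norm, an induced-representation argument for the universal norm), and then invoke the standard Morita-equivalence machinery for complementary full projections. Two points deserve attention. First, your opening fullness computation is wrong: for $f \in C_c(L)$ supported on $X$ we have $r(\operatorname{supp} f) \subseteq \mathcal{G}^{(0)}$, so $P * f = f$ (not $0$) and $f * P = 0$; fortunately your ``cleaner route'' supersedes this and is correct --- indeed it supplies a fullness argument that the paper's own proof leaves implicit, so this part is a genuine improvement in explicitness, modulo the small correction that $f^* * f$ lands in $C_c(\mathcal{H}^{(0)})$ only when $f$ is supported on a bisection of $L$. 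Second, the reduced-norm identification as you state it only yields $\|f\|_{C^*_r(\mathcal{G})} \le \|f\|_{C^*_r(L)}$: the reduced norm on $C^*_r(L)$ is a supremum over regular representations $\pi^L_y$ at \emph{all} units $y \in L^{(0)} = \mathcal{G}^{(0)} \sqcup \mathcal{H}^{(0)}$, and for $y \in \mathcal{H}^{(0)}$ the compression argument does not apply directly. The paper closes this by observing that for such $y$ one can choose $\eta \in X$ with $s(\eta) = y$ (surjectivity of $s : X \to \mathcal{H}^{(0)}$), whence $\pi^L_y$ is unitarily equivalent to $\pi^L_{r(\eta)}$ with $r(\eta) \in \mathcal{G}^{(0)}$ by the $U_\eta$ of Proposition~\ref{prp:reg reps}; you need this step to get the reverse inequality and hence an isometric identification of the corner.
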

\begin{proof}
Fix an increasing sequence $K_n$ of compact subsets of $\mathcal{G}^{(0)}$ with
$\bigcup_n K_n = \mathcal{G}^{(0)}$ and choose functions $e_n \le 1$ in
$C_0(\mathcal{G}^{(0)})$ with $e_n \equiv 1$ on $K_n$. For $f \in C_c(L)$, we have $e_n f
= f|_{\mathcal{G}^{(0)} L}$ for large $n$ (just take $n$ large enough so that
$r(\operatorname{supp}(f)) \subseteq K_n$). So the $e_n$ converge strictly to a
multiplier projection $P$ with the property that $Pf = 1_{\mathcal{G}^{(0)}} * f$ for $f
\in C_c(\mathcal{G})$. A similar argument gives $Q$, and it is clear that $P + Q = 1$.
The map $\pi_r$ is clearly nondegenerate, and its extension to
$\mathcal{M}(C^*(\mathcal{G}))$ takes $P$ and $Q$ to projections in
$\mathcal{M}(C^*_r(\mathcal{G}))$ with the same properties, and which we continue to call
$P$ and $Q$.

To see that $P C^*_r(L) P \cong C^*_r(\mathcal{G})$, first note that since $\mathcal{G}$
is an open subgroupoid of $L$, there is an inclusion of $C_c(\mathcal{G})$ in $C_c(L)$
that extends a function $f \in C_c(\mathcal{G})$ to $L$ by defining $f(\eta) = 0$ for
$\eta \not\in L$. We just have to show that this inclusion is isometric for the reduced
norms. For a fixed $x \in \mathcal{G}^{(0)}$, consider the regular representation
$\pi^L_x$ of $C_c(L)$ on $\ell^2(L_x)$. Let $R \in \mathcal{B}(\ell^2(L_x))$ be the
orthogonal projection onto $\overline{\operatorname{span}}\{\delta_\eta \mid r(\eta) \in
\mathcal{G}^{(0)}\}$. We have $R = \tilde{\pi}^L_x(P)$, and since $P f = fP = f$ for $f
\in C_c(\mathcal{G})$, we see that $\pi^L_x(f) = R \pi^L_x(f) R$ for $f \in
C_c(\mathcal{G})$. Since $x \in \mathcal{G}^{(0)}$, the set $\{\eta \in L_x \mid r(\eta)
\in \mathcal{G}^{(0)}\}$ is precisely $\mathcal{G}_x$. Using this, it is easy to see that
$R \pi^L_x|_{C_c(\mathcal{G})} R$ is a copy of the regular representation
$\pi^\mathcal{G}_x$ of $C_c(\mathcal{G})$. So $\|\pi^\mathcal{G}_x(f)\| = \|R \pi^L_x(f)
R\| \le \|\pi^L_x(f)\|$ for all $f \in C_c(\mathcal{G})$. This immediately shows that for
$f \in C_c(\mathcal{G})$ we have $\|f\|_{C^*_r(\mathcal{G})} \le \|f\|_{C^*_r(L)}$. For
the reverse inequality, it suffices to show that if $y \in L^{(0)} \setminus
\mathcal{G}^{(0)}$ then there exists $x \in \mathcal{G}^{(0)}$ such that $\|\pi^L_y(f)\|
= \|\pi^L_x(f)\|$ for all $f \in C_c(\mathcal{G})$. For this, first note that since $s :
X \to \mathcal{H}^{(0)}$ induces a homeomorphism of $\mathcal{G}\backslash X$ onto
$\mathcal{H}^{(0)}$, it is surjective, so we can fix $\eta \in X$ with $s(\eta) = y$. By
Proposition~\ref{prp:reg reps}, the representation $\pi^L_y$ is unitarily equivalent to
$\pi^L_{r(\eta)}$, and so $x \coloneq r(\eta) \in \mathcal{G}^{(0)}$ has the desired
property. An identical argument shows that $Q C^*(L) Q \cong C^*_r(\mathcal{H})$.

We now turn to universal $C^*$-algebras; again, it suffices to show that the inclusion
$C_c(\mathcal{G}) \hookrightarrow C_c(L)$ is isometric for the universal norm. Certainly
the inclusion $C_c(\mathcal{G}) \hookrightarrow C_c(L) \hookrightarrow C^*(L)$ determines
a $^*$-representation of $C_c(\mathcal{G})$, and so the universal property of
$C^*(\mathcal{G})$ shows that there is a homomorphism $C^*(\mathcal{G}) \to C^*(L)$ that
agrees with the canonical inclusion of $C_c(\mathcal{G})$. Thus the inclusion is
norm-decreasing. For the reverse inequality, the rough idea is to use that the pair
$\big(C_c(L \mathcal{G}^{(0)}), C_c(\mathcal{G}^{(0)} L)\big)$ is a $^*$-Morita context
in the sense of Ara \cite{Ara} between $C_c(L)$ and $C_c(\mathcal{G})$. Fix a
nondegenerate $^*$-representation $\pi$ of $C_c(\mathcal{G})$ in
$\mathcal{B}(\mathcal{H})$. It suffices to show that there is a $^*$-representation
$\tilde\pi$ of $C_c(L)$ such that
\begin{equation}\label{eq:norm preserved}
    \|\tilde\pi(f)\| \ge \|\pi(f)\|\quad\text{ for all $f \in C_c(\mathcal{G})$.}
\end{equation}
For this, define a positive semidefinite sesquilinear form on $C_c(L \mathcal{G}^{(0)})
\odot \mathcal{H}$ by $\big(f \odot h \mid g \odot k) = \big(h \mid \pi(f^*g) k)$. Let
$\widetilde{\mathcal{H}}$ denote the Hilbert space obtained by quotienting out the space
of vectors satisfying $\big(\xi \mid \xi\big) = 0$ and then completing in the norm coming
from the inner-product. For $f \in C_c(L \mathcal{G}^{(0)})$ and $h \in \mathcal{H}$, we
write $f \otimes_{C_c(\mathcal{G})} h$ for the image of $f \odot h$ in
$\widetilde{\mathcal{H}}$. It's not hard to check that $\tilde{\pi}(f)(g
\otimes_{C_c(\mathcal{G})} h) \coloneq (fg) \otimes_{C_c(\mathcal{G})} h$ defines a
$^*$-representation of $C_c(L)$ on $\widetilde{\mathcal{H}}$. To establish~\eqref{eq:norm
preserved}, fix $f \in C_c(\mathcal{G})$ and $\varepsilon > 0$, and choose $h \in
\mathcal{H}$ such that $\|h\| = 1$ and $\|\pi(f)h\| > \|\pi(f)\| - \varepsilon$. Fix an
increasing approximate identity $e_n$ for $C_0(\mathcal{G}^{(0)})$ in
$C_c(\mathcal{G}^{(0)})$ as in the first paragraph of the proof. Then $\pi(e_n) h \to h$
because $\pi$ is nondegenerate. It then follows from the definition of the inner product
on $\widetilde{\mathcal{H}}$ that $e_n \otimes_{C_c(\mathcal{G})} h$ is Cauchy in
$\widetilde{\mathcal{H}}$ and so converges to some $\tilde{h}$. Since
\[
\big(e_n \otimes_{C_c(\mathcal{G})} h \mid e_n \otimes_{C_c(\mathcal{G})} h\big) = \big(h \mid \pi(e_n^*e_n) h\big) \to 1,
\]
we have $\|\tilde{h}\| = 1$. Since $fe_n = f$ for large $n$, we have
\begin{align*}
\textstyle\|\tilde{\pi}(f)\tilde{h}\|^2
    &\textstyle= \lim_n \|\pi(fe_n) \otimes_{C_c(\mathcal{G})} h\|^2\\
    &\textstyle= \lim_n \big(h \mid \pi((fe_n e^*_n f^*))h\big)
    = \big(h \mid \pi(ff^*)h\big)
    = \|\pi(f)h\|^2
\end{align*}
So $\|\tilde{\pi}(f)\tilde{h}\| = \|\pi(f)h\| > \|\pi(f)\| - \varepsilon$, and since
$\|\tilde{h}\| = 1$, we deduce that $\|\tilde{\pi}(f)\| > \|\pi(f)\| - \varepsilon$.
Letting $\varepsilon \to 0$ gives~\eqref{eq:norm preserved}.
\end{proof}

\begin{remark}
Recall that, by Brown's theorem \cite{Brown}, if $A$ is a $\sigma$-unital $C^*$-algebra,
and $P$ is a multiplier projection of $A$, then $PAP \otimes \mathcal{K} \cong APA
\otimes \mathcal{K}$, and then the Brown--Green--Rieffel theorem \cite{BGR} says that
$\sigma$-unital $C^*$-algebras $A$ and $B$ are Morita equivalent if and only if they are
stably isomorphic. We now have a version of equivalence for groupoids, and we know that
the (discrete) full equivalence relation $R_\mathbb{N} \coloneq \mathbb{N} \times
\mathbb{N}$ has $C^*$-algebra isomorphic to $\mathcal{K}$. It is also not too difficult
to see, using universal properties, that if $\mathcal{G}$ and $\mathcal{H}$ are \'etale
groupoids, then $\mathcal{G} \times \mathcal{H}$ is a groupoid, and that $C^*(\mathcal{G}
\times \mathcal{H}) \cong C^*(\mathcal{G}) \otimes C^*(\mathcal{H})$. So it's not
unreasonable to ask whether Brown's theorem and the Brown--Green--Rieffel theorem could
carry over to groupoids. The answer is a qualified yes. Specifically, if $\mathcal{G}$
and $\mathcal{H}$ are \'etale groupoids such that $\mathcal{G}^{(0)}$ and
$\mathcal{H}^{(0)}$ are totally disconnected as topological spaces, then for every
compact open $K \subseteq \mathcal{G}^{(0)}$, we have $K \mathcal{G} K \times
R_\mathbb{N} \cong \mathcal{G} K \mathcal{G} \times R_\mathbb{N}$; and $\mathcal{G}$ and
$\mathcal{H}$ are equivalent if and only if $\mathcal{G} \times R_\mathbb{N} \cong
\mathcal{H} \times R_\mathbb{N}$; the proof follows, almost exactly, the proofs of
Brown's theorem and the Brown--Green--Rieffel theorem \cite{CRS}.
\end{remark}

\chapter{Fundamental structure theory}\label{ch:structure}

In this section, we will discuss the structural properties of $C^*(\mathcal{G})$. When is
it nuclear and when does it satisfy the UCT? When is it/isn't it simple, and more
generally what is its ideal structure? When is it purely infinite?

\section{Amenability, nuclearity and the UCT}

The theory of amenability for groupoids is complicated; it could easily be a five-hour
course all by itself. So we are going to skate over the top of it here. Most of what
appears here is taken from \cite{AR}.

Recall that a discrete group $\Gamma$ is \emph{amenable} if it admits a finitely additive
probability measure $\mu$ with the property that $\mu(g A) = \mu(A)$ for all $A \subseteq
\Gamma$ and $g \in \Gamma$. A discrete group $\Gamma$ is amenable if and only if
$C^*(\Gamma) = C^*_r(\Gamma)$.

Amenability for groupoids is intended as an analogue of amenability for groups, but
unfortunately, the analogies are not so well behaved as we might like. There are a number
of notions of amenability for groupoids, two of the most prominent being measurewise
amenability and topological amenability. These two coincide for \'etale groupoids, by
results of Anantharaman-Delaroche--Renault \cite{AR}, but as we shall see, they are not
equivalent to coincidence of the full and reduced $C^*$-algebras, even for group bundles.

We need some set-up. If $\mathcal{G}$ is an \'etale groupoid, then a \emph{continuous
system of probability measures for $\mathcal{G}$} is a system $\{\lambda^x \mid x \in
\mathcal{G}^{(0)}\}$ of Radon probability measures $\lambda^x$ on $\mathcal{G}$ with the
support of $\lambda^x$ contained in $\mathcal{G}^x$ such that for $f \in
C_c(\mathcal{G})$, the function $x \mapsto \int_\mathcal{G} f\,d\lambda^x$ is continuous.

\begin{remark}
Since each $\mathcal{G}^x$ is discrete, a Radon probability measure $\lambda^x$ on
$\mathcal{G}^x$ amounts to a function $\lambda^x : \mathcal{G}^x \to [0, \infty)$ with
$\sum_\gamma w^x(\gamma) = 1$.
\end{remark}

An \emph{approximate invariant continuous mean} for $\mathcal{G}$ is a net $\lambda_i$ of
continuous systems of probability measures for $\mathcal{G}$ such that the net $\big(M_i
: \gamma \mapsto \|\lambda_i^{r(\gamma)}(\gamma \cdot) -
\lambda_i^{s(\gamma)}(\cdot)\|_1\big)_i$ of functions from $\mathcal{G}$ to $[0,\infty)$
has the property that $M_i|_K \to 0$ uniformly for every compact $K \subseteq
\mathcal{G}$.

\begin{definition}
Let $\mathcal{G}$ be an \'etale groupoid. We say that $\mathcal{G}$ is
\emph{(topologically) amenable} if $\mathcal{G}$ admits an approximate invariant
continuous mean.
\end{definition}

There are a number of equivalent formulations of amenability, particularly in the setting
of \'etale groupoids. Perhaps one of the most useful is the following:

\begin{lemma}[{\cite[Proposition~2.2.13]{AR}}]
Let $\mathcal{G}$ be an \'etale groupoid. Then $\mathcal{G}$ is amenable if and only if
there is a sequence $(h_i)^\infty_{i=1}$ in $C_c(\mathcal{G})$ such that
\begin{enumerate}
\item the maps $x \mapsto \sum_{\gamma \in \mathcal{G}^x} |h_i(\gamma)|^2$ (indexed
    by $i$) converge uniformly to 1 on every compact subset of $\mathcal{G}^{(0)}$;
    and
\item the maps $\alpha \mapsto \sum_{\gamma \in \mathcal{G}^{r(\alpha)}}
    \big|h_i(\alpha^{-1}\gamma) - h_i(\gamma)\big|$ (indexed by $i$) converge
    uniformly to 0 on every compact subset of $\mathcal{G}$.
\end{enumerate}
\end{lemma}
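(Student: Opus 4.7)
The plan is to prove each implication separately. The functions $h_i$ and the approximate invariant continuous mean $(\lambda_i)$ are linked, heuristically, by $|h_i|^2 \sim \lambda_i^{r(\cdot)}(\cdot)$, so the analytic content of the equivalence is the passage between an $\ell^1$ Reiter-type condition on probability measures and the $\ell^1/\ell^2$ hybrid condition on functions. The bridge consists of the pointwise identity $|a-b| = |\sqrt{a}-\sqrt{b}|(\sqrt{a}+\sqrt{b})$ and the Powers--Stormer inequality $|\sqrt{a}-\sqrt{b}|^2 \le |a-b|$ for $a,b\ge 0$, combined with Cauchy--Schwarz.

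For the ``if'' direction, suppose we are given $(h_i)$. Set $c_i(x) \coloneq \sum_{\gamma \in \mathcal{G}^x} |h_i(\gamma)|^2$, which is continuous (being the restriction to $\mathcal{G}^{(0)}$ of the convolution $h_i * h_i^*$) and, by (1), converges to $1$ uniformly on compacts of $\mathcal{G}^{(0)}$. After passing to a tail of the sequence, define
\[
\lambda_i^x(\gamma) \coloneq |h_i(\gamma)|^2 / c_i(x)
\]
wherever $c_i(x) > 0$, and any fixed continuous probability system elsewhere. Cauchy--Schwarz applied to $|a - b| = (\sqrt{a}+\sqrt{b})|\sqrt{a}-\sqrt{b}|$ bounds $\|\lambda_i^{r(\alpha)}(\alpha \cdot) - \lambda_i^{s(\alpha)}(\cdot)\|_1$ by a constant multiple of $\|\sqrt{\lambda_i^{r(\alpha)}(\alpha \cdot)} - \sqrt{\lambda_i^{s(\alpha)}(\cdot)}\|_2$; once the normalisations by $c_i$ are absorbed (using $c_i \to 1$ uniformly on compacts), this is controlled by $\|h_i(\alpha^{-1}\cdot) - h_i(\cdot)\|_2$. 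The estimate $\|f\|_2^2 \le \|f\|_\infty \|f\|_1$ together with the eventual uniform bound $\|h_i\|_\infty^2 \le \sup_x c_i(x)$ coming from (1) then converts the $\ell^1$-convergence in (2) into the $\ell^1$-convergence of the measures required by the approximate-invariant-mean condition.

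For the ``only if'' direction, given an approximate invariant continuous mean $(\lambda_i)$, first truncate each $\lambda_i$ to a compact subset of $\mathcal{G}$ and renormalise (a small perturbation that preserves the approximate invariance in the limit), and then set $h_i(\gamma) \coloneq \sqrt{\lambda_i^{r(\gamma)}(\gamma)}$. Continuity of $h_i$ on an open bisection $B$ reduces to continuity of $x \mapsto \lambda_i^x(\beta_x)$, where $\beta_x$ is the unique point of $B \cap \mathcal{G}^x$, which follows from the continuous-system property applied to functions in $C_c(\mathcal{G})$ that approximate $1_B$. Condition (1) is then immediate from $\sum_\gamma \lambda_i^{r(\gamma)}(\gamma) = 1$. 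For (2), the Powers--Stormer inequality gives uniform convergence $\sum_\eta|h_i(\eta) - h_i(\alpha\eta)|^2 \to 0$ on compact subsets of $\alpha \in \mathcal{G}$, and Cauchy--Schwarz upgrades this to the $\ell^1$-convergence of (2) via the observation $|\operatorname{supp}(h_i) \cap \mathcal{G}^x| \le N_i$, where $N_i$ is the size of a cover of $\operatorname{supp}(h_i)$ by open bisections (each bisection meets each fibre in at most one point).

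The main obstacle is this last Cauchy--Schwarz upgrade in the ``only if'' direction: the factor $\sqrt{N_i}$ may grow with $i$, so the truncation, renormalisation, and (if necessary) a passage to a subsequence must be chosen so that the $\ell^1$-error from the approximate-invariant mean decays faster than $1/N_i$. Balancing these three choices is the technical heart of the argument; everything else is a careful assembly of standard inequalities.
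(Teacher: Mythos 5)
First, a point of comparison: the notes do not prove this lemma at all---it is quoted directly from \cite[Proposition~2.2.13]{AR}---so there is no in-text argument to measure yours against, and the proposal must be judged on its own terms. Your toolkit is the right one (Powers--St{\o}rmer, the identity $|a-b|=|\sqrt a-\sqrt b|(\sqrt a+\sqrt b)$, Cauchy--Schwarz), but the step you yourself flag as ``the technical heart''---upgrading the $\ell^2$ estimate $\sum_\eta|h_i(\eta)-h_i(\alpha\eta)|^2\to 0$ to the $\ell^1$ estimate of condition~(2) at the cost of a factor $\sqrt{N_i}$---is a genuine gap, and no balancing of truncations, renormalisations and subsequences can close it. The statement as printed, with the unsquared $\sum_{\gamma}|h_i(\alpha^{-1}\gamma)-h_i(\gamma)|$ in~(2) set against the $\ell^2$ normalisation $\sum_\gamma|h_i(\gamma)|^2\to 1$ in~(1), is false already for the amenable group $\mathcal{G}=\mathbb{Z}^2$: the discrete $L^1$-Sobolev (isoperimetric) inequality provides a constant $C$ with
\[
\Big(\sum_{\gamma\in\mathbb{Z}^2}|h(\gamma)|^2\Big)^{1/2}\;\le\; C\sum_{j=1,2}\;\sum_{\gamma\in\mathbb{Z}^2}\big|h(\gamma-e_j)-h(\gamma)\big|
\]
for every finitely supported $h$, so no sequence can satisfy (1) and (2) simultaneously. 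The statement you should be proving---the one actually in \cite[Proposition~2.2.13]{AR}---has $\big|h_i(\alpha^{-1}\gamma)-h_i(\gamma)\big|^2$ in condition~(2) (equivalently, one can drop the square in~(1) and take $h_i\ge 0$, which is the $\ell^1$ Reiter form); the exponent mismatch in the notes is a typo.

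With the square restored your architecture goes through and, crucially, no $\sqrt{N_i}$ ever appears: for ``only if'' take $h_i=\sqrt{g_i}$ with $(g_i)$ satisfying the $\ell^1$ Reiter condition and apply Powers--St{\o}rmer fibrewise; for ``if'' set $g_i=|h_i|^2$ and use $\big||a|^2-|b|^2\big|\le(|a|+|b|)|a-b|$ with Cauchy--Schwarz, which costs only the factor $\big(2\sum(|a|^2+|b|^2)\big)^{1/2}$ controlled by~(1). Two smaller repairs are still needed. First, condition~(1) controls $c_i(x)=\sum_{\gamma\in\mathcal{G}^x}|h_i(\gamma)|^2$ only on each \emph{fixed} compact set, while $r(\operatorname{supp}(h_i))$ grows with $i$, so the ``eventual uniform bound'' on $\|h_i\|_\infty$ you invoke is not automatic; replace $h_i$ by $h_i/\max(1,\sqrt{c_i\circ r})$ first. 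Second, patching $\lambda_i^x$ by ``any fixed continuous probability system'' off $\{c_i>0\}$ will generally break continuity of $x\mapsto\int f\,d\lambda_i^x$ at the boundary of that set; use instead $\lambda_i^x=\big(|h_i|^2\big|_{\mathcal{G}^x}+(1-c_i(x))^+\,\delta_x\big)/\max(c_i(x),1)$, which is a continuous system of probability measures outright.
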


The key point of amenability is the following:

\begin{theorem}[{\cite[Proposition~6.1.8]{AR}}]
Let $\mathcal{G}$ be an \'etale groupoid, and suppose that $\mathcal{G}$ is amenable.
Then $\pi_r : C^*(\mathcal{G}) \to C^*_r(\mathcal{G})$ is injective.
\end{theorem}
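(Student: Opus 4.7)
The plan is to show that the canonical map $\pi_r$ is isometric on $C_c(\mathcal{G})$; combined with Corollary~\ref{cor:norm relation}, which already gives $\|f\|_r \le \|f\|_{C^*(\mathcal{G})}$, this forces $\pi_r$ to be injective on the dense subalgebra $\pi_{\max}(C_c(\mathcal{G}))$ and hence on all of $C^*(\mathcal{G})$. So the goal is to prove $\|f\|_{C^*(\mathcal{G})} \le \|f\|_r$ for every $f \in C_c(\mathcal{G})$.

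To leverage amenability, I would first invoke the characterization from the lemma just above to fix a sequence $(h_i) \subseteq C_c(\mathcal{G})$ satisfying conditions~(1) and~(2). From each $h_i$ build the positive-type function
\[
\phi_i(\alpha) \coloneq \sum_{\gamma \in \mathcal{G}^{r(\alpha)}} \overline{h_i(\gamma)}\, h_i(\alpha^{-1}\gamma),
\]
which is a continuous function on $\mathcal{G}$. Condition~(1) gives $\phi_i|_{\mathcal{G}^{(0)}} \to 1$ uniformly on compacts of $\mathcal{G}^{(0)}$, while Cauchy-Schwarz together with condition~(2) gives $|\phi_i(\alpha) - \phi_i(r(\alpha))| \to 0$ uniformly on compacts of $\mathcal{G}$; together these imply $\phi_i \to 1$ uniformly on every compact subset of $\mathcal{G}$. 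Then define the pointwise multiplication maps $m_i : C_c(\mathcal{G}) \to C_c(\mathcal{G})$, $m_i(f) \coloneq \phi_i \cdot f$.

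The two properties of $m_i$ I would need are: (A) a norm bound $\|m_i(f)\|_{C^*(\mathcal{G})} \le c_i \|f\|_r$ with $c_i \to 1$; and (B) $m_i(f) \to f$ in $\|\cdot\|_{C^*(\mathcal{G})}$ for each $f \in C_c(\mathcal{G})$. Granted these, one immediately concludes
\[
\|f\|_{C^*(\mathcal{G})} = \lim_i \|m_i(f)\|_{C^*(\mathcal{G})} \le (\limsup_i c_i)\, \|f\|_r = \|f\|_r.
\]
Property (B) is the easy step: since $\phi_i \to 1$ uniformly on $\operatorname{supp}(f)$, we have $\phi_i f \to f$ in the inductive-limit topology on $C_c(\mathcal{G})$, and the inductive-limit continuity of every $^*$-representation (proved above) promotes this to convergence in the universal norm.

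The main obstacle is (A), which is a Fell-absorption computation adapted to the groupoid setting. The plan is to view each $h_i$ as a vector in the Hilbert $C_0(\mathcal{G}^{(0)})$-module $X_\mathcal{G}$ from the remark following Corollary~\ref{cor:norm relation}, on which $C^*_r(\mathcal{G})$ acts faithfully by adjointable operators $L(\cdot)$. A direct calculation with the convolution formula shows that for $f \in C_c(\mathcal{G})$,
\[
\langle h_i,\, L(f)\, h_i \rangle_{C_0(\mathcal{G}^{(0)})}\,\text{-realises the operator $m_i(f)$}
\]
in the sense that $m_i(f)$ factors as a compression of $L(f)$ through the rank-one-like map $\xi \mapsto h_i \otimes \xi$. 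This compression descends to a completely positive map from $C^*_r(\mathcal{G})$ into $C^*(\mathcal{G})$ with norm controlled by $\|h_i\|_{X_\mathcal{G}}^2 = \|\langle h_i, h_i\rangle\|_\infty = \|\phi_i|_{\mathcal{G}^{(0)}}\|_\infty$, and condition~(1) gives $\|\phi_i|_{\mathcal{G}^{(0)}}\|_\infty \to 1$ (more precisely, one works on compact subsets large enough to contain supports of the $f$ at hand). The technical care needed is in checking that the compression genuinely lands in $C^*(\mathcal{G})$ rather than merely in $\mathcal{B}(X_\mathcal{G})$---this is where one uses that $h_i \in C_c(\mathcal{G})$ has compact support and that multiplication by $\phi_i$ preserves $C_c(\mathcal{G})$, together with the fact that $\pi_{\max}$ is injective on $C_c(\mathcal{G})$ so the formal calculation done inside $C_c(\mathcal{G})$ transfers unambiguously to $C^*(\mathcal{G})$.
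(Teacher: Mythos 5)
The paper does not actually prove this statement: it is quoted as \cite[Proposition~6.1.8]{AR} and explicitly treated as part of the ``black box'' of amenability theory, so there is no in-paper proof to compare against. Your strategy is nonetheless the standard one for this result (positive-type multipliers built from the functions $h_i$, plus a Fell-absorption argument), and several pieces of it are genuinely complete: the verification that $\phi_i \to 1$ uniformly on compacta, step (B) via inductive-limit continuity of all $^*$-representations (which the paper does prove), and the final limiting inequality are all fine.

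The gap is in step (A), and it is exactly where the mathematical content of the theorem lives. The compression you describe, through $\xi \mapsto h_i \otimes \xi$ with $\xi$ in the coefficient space, computes $\xi \mapsto \langle h_i, f * h_i\rangle_{C_0(\mathcal{G}^{(0)})}\,\xi = \big((h_i^* * f * h_i)|_{\mathcal{G}^{(0)}}\big)\xi$, which is a $C_0(\mathcal{G}^{(0)})$-valued quantity and is not the multiplier $m_i(f) = \phi_i \cdot f$. In the group case the correct intertwiner is $V\xi = \sum_t h_i(t)\,\delta_t \otimes \pi(u_t)^*\xi$ on $\ell^2(\Gamma)\otimes\mathcal{H}_\pi$ for an arbitrary representation $\pi$: the essential feature is the twist by $\pi(u_t)^*$ in the second leg, which is what makes $V^*(\lambda_s\otimes 1)V = \phi_i(s)\pi(u_s)$ rather than a unit-space-valued expression. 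For a groupoid there are no unitaries $u_\gamma$ for individual elements, so constructing the analogous $V_i$ requires either decomposing $h_i$ over open bisections and using the associated partial-isometry-like elements of $C_c(\mathcal{G})$ (checking independence of the decomposition), or invoking disintegration---and then one must verify $V_i^*(L(f)\otimes 1)V_i = \pi(\phi_i\cdot f)$ and bound $\|V_i\|^2$ by $\sup_x\sum_{\gamma\in\mathcal{G}^x}|h_i(\gamma)|^2$. None of this is routine, and it is precisely the step you have labelled ``a Fell-absorption computation adapted to the groupoid setting'' without carrying it out. A secondary, fixable issue: the norm of the compression is controlled by the \emph{global} supremum $\sup_x\sum_{\gamma\in\mathcal{G}^x}|h_i(\gamma)|^2$, whereas the cited lemma only gives convergence to $1$ uniformly on compacta; when $\mathcal{G}^{(0)}$ is noncompact you need to arrange (as one can, since the $\lambda_i^x$ in the definition of an approximate invariant mean are probability measures) that this quantity is bounded by $1$ everywhere, not just on the compact sets relevant to a fixed $f$.
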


From \cite[Theorem~3.1.1]{WilliamsNotes}, we know that if $\Gamma$ is a discrete amenable
group acting on a locally compact Hausdorff space $X$ and $\mathcal{G}$ is the
transformation groupoid, then $C^*(\mathcal{G}) \cong C_0(X) \rtimes \Gamma$ and
$C^*_r(\mathcal{G}) \cong C_0(X) \rtimes_r \Gamma$ coincide. Indeed, if $\Gamma$ if
amenable, then so is $\mathcal{G}$: just pull back a mean on $\Gamma$ to each
$\mathcal{G}^x \cong \{x\} \times \Gamma$ to obtain a (constant) approximately invariant
continuous mean. It is possible for $\mathcal{G}$ to be amenable even when $\Gamma$ is
not: for example, the transformation groupoid of the free group acting on its boundary.

Amenability also has some other very important consequences.

\begin{theorem}[{\cite[Corollary~6.2.14 and Theorem~3.3.7]{AR}}]\label{thm:amen-nuc}
If $\mathcal{G}$ is an \'etale groupoid, then the following are equivalent: $\mathcal{G}$
is amenable; $C^*(\mathcal{G})$ is nuclear; $C^*_r(\mathcal{G})$ is nuclear.
\end{theorem}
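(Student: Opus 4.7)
The plan is to establish the cycle of implications: (i)~$\mathcal{G}$ amenable $\Rightarrow$ $C^*(\mathcal{G})$ nuclear; (ii)~$C^*(\mathcal{G})$ nuclear $\Rightarrow$ $C^*_r(\mathcal{G})$ nuclear; (iii)~$C^*_r(\mathcal{G})$ nuclear $\Rightarrow$ $\mathcal{G}$ amenable. Step~(ii) is essentially formal, step~(i) uses the net furnished by the preceding lemma, and step~(iii) is the real work.

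For (i), I would feed in the sequence $(h_i)$ in $C_c(\mathcal{G})$ provided by the equivalent formulation of amenability. The preceding theorem already tells us that $\pi_r : C^*(\mathcal{G}) \to C^*_r(\mathcal{G})$ is injective under amenability, so it suffices to show $C^*_r(\mathcal{G})$ is nuclear. The strategy is to construct a completely positive approximation of the identity on $C^*_r(\mathcal{G})$ that factors through the commutative (hence nuclear) $C^*$-algebra $C_0(\mathcal{G}^{(0)})$. Viewing each $h_i$ as an element of the Hilbert $C_0(\mathcal{G}^{(0)})$-module $X_\mathcal{G}$ introduced in the last remark of the reduced-$C^*$-algebra section, one sets $\phi_i(a) := \langle h_i, a \cdot h_i \rangle_{C_0(\mathcal{G}^{(0)})}$ (which is completely positive into $C_0(\mathcal{G}^{(0)})$) and $\psi_i(g) := h_i^* \cdot g \cdot h_i$ in the appropriate bimodule sense. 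Condition~(1) of the lemma ensures that $(\psi_i \circ \phi_i)(f) \to f$ for each $f \in C_c(\mathcal{G})$ supported in a bisection (so on a dense subalgebra, by Lemma~\ref{lem:bisection span}), while condition~(2) controls the error that arises from the failure of $\phi_i$ to be exactly multiplicative on elements implemented by arrows translating across $\mathcal{G}$. Nuclearity of $C_0(\mathcal{G}^{(0)})$ together with the completely-positive approximation property then yields nuclearity of $C^*_r(\mathcal{G})$, and hence of $C^*(\mathcal{G})$.

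Step~(ii) is immediate: $C^*_r(\mathcal{G})$ is a quotient of $C^*(\mathcal{G})$ via the canonical surjection $\pi_r$, and nuclearity passes to quotients by a standard result (Choi--Effros). No groupoid-specific input is needed.

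The main obstacle is step~(iii). The approach is to route through von Neumann algebra injectivity. Nuclearity of $C^*_r(\mathcal{G})$ implies, by Choi--Effros combined with Connes's characterisation, that the bidual $C^*_r(\mathcal{G})^{**}$ is an injective von Neumann algebra. For each quasi-invariant Borel probability measure $\mu$ on $\mathcal{G}^{(0)}$, the integrated form of the regular representation produces a von Neumann algebra $M_\mu$ (the \emph{groupoid von Neumann algebra} of the measured groupoid $(\mathcal{G},\mu)$) that is a weak-operator quotient of $C^*_r(\mathcal{G})^{**}$, and hence inherits injectivity. By Anantharaman-Delaroche's characterisation, injectivity of $M_\mu$ for every quasi-invariant $\mu$ is equivalent to \emph{measurewise amenability} of $\mathcal{G}$. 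Finally, I would invoke the theorem of Anantharaman-Delaroche--Renault that for second-countable locally compact Hausdorff groupoids with open range map (certainly \'etale ones), measurewise amenability is equivalent to topological amenability in the sense used here. This last equivalence is the most delicate ingredient; it is where the \'etale hypothesis genuinely earns its keep, and it is the one step that I would not attempt to prove from scratch but rather cite as \cite[Theorem~3.3.7]{AR}.
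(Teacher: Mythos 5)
The paper offers no proof of this theorem at all: it is stated as a black box with a citation to \cite{AR}, which is exactly the reference you fall back on for your hardest step. So the comparison is really between your sketch and the argument in \cite{AR}. Your overall architecture---(i) amenability gives completely positive approximations of the identity on $C^*_r(\mathcal{G})$ built from the functions $h_i$, plus injectivity of $\pi_r$; (ii) nuclearity passes to quotients; (iii) nuclearity of $C^*_r(\mathcal{G})$ forces the von Neumann algebras of the regular representations associated to quasi-invariant measures to be injective, hence measurewise amenability, hence topological amenability by \cite[Theorem~3.3.7]{AR}---is the right shape, and steps (ii) and (iii) are sound modulo the standard facts and citations you flag.

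Step (i), however, has a genuine gap as written. The composite $\psi_i \circ \phi_i : a \mapsto h_i^*\,\langle h_i, a h_i\rangle\, h_i$ factors through $C_0(\mathcal{G}^{(0)})$ by way of a \emph{single} vector $h_i$, so it is a rank-one map in the Hilbert-module sense, and such maps cannot converge point-norm to the identity of a noncommutative $C^*$-algebra. The failure is already visible for $\mathcal{G} = \mathbb{Z}/2\mathbb{Z}$: there $C_0(\mathcal{G}^{(0)}) = \mathbb{C}$, so $\psi_i(\phi_i(a)) = \tau(h_i^* a h_i)\, h_i^* h_i$ is a scalar multiple of the fixed element $h_i^* h_i$; each $\psi_i \circ \phi_i$ has one-dimensional range and the identity of $C^*_r(\mathbb{Z}/2\mathbb{Z}) \cong \mathbb{C}^2$ is not a limit of rank-one maps. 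More generally, no net of completely positive maps factoring through the commutative algebra $C_0(\mathcal{G}^{(0)})$ itself can implement the completely positive approximation property for a noncommutative groupoid $C^*$-algebra. The repair---and this is what \cite{AR} actually does---is to factor through \emph{matrix algebras over} $C_0(\mathcal{G}^{(0)})$, or equivalently through the type~I subalgebras obtained by restricting to compact subsets of $\mathcal{G}$ covered by finitely many bisections: one uses $h_i$ not as a single vector but to form the positive-type function $k_i(\gamma) = \sum_{\alpha \in \mathcal{G}^{r(\gamma)}} \overline{h_i(\alpha)}\, h_i(\gamma^{-1}\alpha)$, whose associated completely positive multiplier $f \mapsto k_i\cdot f$ converges to the identity by conditions (1)~and~(2) of the lemma and has compactly supported symbol, hence factors through a nuclear algebra of the above form. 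Your conditions (1) and (2) are indeed the right inputs, but they must be fed into this multiplier construction rather than into a single rank-one compression.
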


It is also possible to study the nuclear dimension of a groupoid $C^*$-algebra in terms
of dynamical properties of the groupoid. For example, if $\mathcal{G}$ is a
transformation groupoid, then finite Rokhlin dimension of the action as discussed in
\cite[Section~5.1]{SzaboNotes} implies finite nuclear dimension for $C^*(\mathcal{G})$.
For more general \'etale groupoids there is a generalisation of Rokhlin dimension, called
\emph{dynamic asymptotic dimension} \cite{GWY} (see \cite[Section~6.2]{SzaboNotes})
which, for principal groupoids, guarantees finite nuclear dimension for the associated
$C^*$-algebra.

\begin{corollary}
If $\mathcal{G}$ and $\mathcal{H}$ are equivalent \'etale groupoids, then $\mathcal{G}$
is amenable if and only if $\mathcal{H}$ is.
\end{corollary}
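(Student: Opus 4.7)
The plan is to reduce the question about amenability to a question about nuclearity of the associated $C^*$-algebras, where we already have the machinery set up to transport properties across an equivalence.

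First I would invoke Theorem~\ref{thm:amen-nuc}, which says that for an étale groupoid $\mathcal{G}$, amenability of $\mathcal{G}$ is equivalent to nuclearity of $C^*(\mathcal{G})$ (equivalently of $C^*_r(\mathcal{G})$). So it suffices to show that $C^*(\mathcal{G})$ is nuclear if and only if $C^*(\mathcal{H})$ is nuclear whenever $\mathcal{G}$ and $\mathcal{H}$ are equivalent.

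Next I would apply the equivalence theorem proved just above: from a $\mathcal{G}$--$\mathcal{H}$-equivalence $X$ we obtain the linking groupoid $L$, complementary full projections $P, Q \in \mathcal{M}(C^*(L))$, and isomorphisms $P C^*(L) P \cong C^*(\mathcal{G})$ and $Q C^*(L) Q \cong C^*(\mathcal{H})$, together with a $C^*(\mathcal{G})$--$C^*(\mathcal{H})$ imprimitivity bimodule $P C^*(L) Q$. So $C^*(\mathcal{G})$ and $C^*(\mathcal{H})$ are Morita equivalent (and these groupoids are second countable, so these $C^*$-algebras are $\sigma$-unital).

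Finally I would appeal to the standard fact that nuclearity is a Morita-equivalence invariant for $\sigma$-unital $C^*$-algebras: by the Brown--Green--Rieffel theorem, Morita equivalent $\sigma$-unital $C^*$-algebras are stably isomorphic, and nuclearity passes through tensoring by $\mathcal{K}$ and through isomorphism. Combining this with the first step, amenability of $\mathcal{G}$ is equivalent to nuclearity of $C^*(\mathcal{G})$, hence of $C^*(\mathcal{H})$, hence to amenability of $\mathcal{H}$.

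The only subtle step is the Morita-invariance of nuclearity; the potential ``obstacle'' is just citing this cleanly rather than reproving it, since it is not established earlier in the excerpt. A self-contained alternative would be to work directly with the linking groupoid $L$: since both $P C^*(L) P$ and $Q C^*(L) Q$ are full corners of $C^*(L)$, one could observe that $C^*(L)$ is nuclear iff either of these corners is nuclear (nuclearity passes to hereditary subalgebras and, via fullness, back up), so $C^*(\mathcal{G})$ nuclear iff $C^*(L)$ nuclear iff $C^*(\mathcal{H})$ nuclear, and then finish with Theorem~\ref{thm:amen-nuc}.
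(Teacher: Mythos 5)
Your proposal is correct and follows exactly the paper's own argument: reduce amenability to nuclearity via Theorem~\ref{thm:amen-nuc}, transport nuclearity across the Morita equivalence of $C^*(\mathcal{G})$ and $C^*(\mathcal{H})$ furnished by the linking-groupoid theorem, and conclude. Your extra remarks on how to justify Morita-invariance of nuclearity (stable isomorphism, or full corners of $C^*(L)$) are a reasonable filling-in of a step the paper takes as standard.
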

\begin{proof}
A direct groupoid-theoretic proof of this can be found in \cite{AR}, but we will take a
shortcut: since $\mathcal{G}$ and $\mathcal{H}$ are equivalent, $C^*(\mathcal{G})$ and
$C^*(\mathcal{H})$ are Morita equivalent, and therefore $C^*(\mathcal{G})$ is nuclear if
and only if $C^*(\mathcal{H})$ is nuclear, so the result follows from
Theorem~\ref{thm:amen-nuc}.
\end{proof}

A beautiful result of Tu also relates amenability to the UCT.

\begin{theorem}\label{thm:Tu UCT}
Let $\mathcal{G}$ be an \'etale groupoid. If $\mathcal{G}$ is amenable, then
$C^*_r(\mathcal{G})$ satisfies the UCT.
\end{theorem}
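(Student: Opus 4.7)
The plan is to invoke Tu's Dirac/dual-Dirac machinery in the groupoid-equivariant setting. The goal is to exhibit $C^*_r(\mathcal{G})$ as $KK$-equivalent to a $C^*$-algebra in the Rosenberg--Schochet bootstrap class $\mathcal{N}$; since $\mathcal{N}$ is closed under $KK$-equivalence and every algebra in $\mathcal{N}$ satisfies the UCT by construction, this would suffice.

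First I would promote amenability to a proper negative-type condition (the groupoid analogue of the Haagerup property): an amenable \'etale groupoid admits a continuous, conditionally negative-type, proper function $\psi:\mathcal{G}\to[0,\infty)$, meaning $\psi$ vanishes on $\mathcal{G}^{(0)}$, is inversion-invariant, satisfies the usual negative-definiteness condition on each fibre $\mathcal{G}^x$, and has $\psi^{-1}([0,R])$ relatively compact in $\mathcal{G}$ for every $R\ge 0$. One extracts such a $\psi$ from the approximately invariant continuous means guaranteed by amenability: the uniform asymptotic invariance on compacta lets one build fibrewise positive-type functions via GNS-type averaging, and then apply Schoenberg's theorem to pass to a negative-type function; a partition-of-unity argument over $\mathcal{G}^{(0)}$ glues these together into a global continuous $\psi$, and a further rescaling ensures properness.

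Second, I would use $\psi$ to produce a continuous proper affine isometric action of $\mathcal{G}$ on a continuous field of infinite-dimensional Hilbert spaces $\mathcal{H}\to\mathcal{G}^{(0)}$, via a fibrewise equivariant GNS construction. From this action, and mirroring Higson--Kasparov's infinite-dimensional Bott-periodicity argument in the groupoid-equivariant setting, I would construct a Bott-type class $\beta\in KK^\mathcal{G}\bigl(C_0(\mathcal{G}^{(0)}),\mathcal{A}(\mathcal{H})\bigr)$ in the $C^*$-algebra $\mathcal{A}(\mathcal{H})$ of the Clifford-algebra bundle of $\mathcal{H}$, a dual-Dirac class $\alpha$ in the opposite direction, and verify the identities $\alpha\otimes\beta=1_{C_0(\mathcal{G}^{(0)})}$ (using properness of $\psi$) and $\beta\otimes\alpha=1$ (using amenability to carry out a rotation-trick homotopy). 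This exhibits $C_0(\mathcal{G}^{(0)})$ and $\mathcal{A}(\mathcal{H})$ as $\mathcal{G}$-equivariantly $KK$-equivalent.

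Finally, I would apply reduced descent. Amenability gives $C^*(\mathcal{G})=C^*_r(\mathcal{G})$ and makes the descent functor $KK^\mathcal{G}\to KK$ well-behaved, so the equivalence above descends to a $KK$-equivalence between $C^*_r(\mathcal{G})$ and $\mathcal{A}(\mathcal{H})\rtimes_r\mathcal{G}$. The latter is an inductive limit of type~I algebras (built from finite-dimensional Clifford summands and crossed products by sub-bundles with compact support), so lies in $\mathcal{N}$; closure of $\mathcal{N}$ under $KK$-equivalence then forces $C^*_r(\mathcal{G})\in\mathcal{N}$, which is the UCT. The main obstacle is the second step: the construction of the Bott and dual-Dirac classes in $KK^\mathcal{G}$ and the verification of the composition identities require a substantial groupoid-equivariant generalisation of Higson--Kasparov's infinite-dimensional Bott periodicity, handling the continuous field $\mathcal{H}$ over $\mathcal{G}^{(0)}$ with uniform control. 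This is the technical heart of Tu's argument, and at the level of these notes I would simply cite the result rather than attempt to reproduce it.
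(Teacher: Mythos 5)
The paper gives no proof of this theorem at all: it is stated as a quoted result of Tu (the reference \cite{Tu-UCT}), so the ``paper's approach'' is simply citation. Your sketch is an accurate roadmap of Tu's actual argument --- amenability implies the Haagerup property for the groupoid, a proper affine isometric action on a field of Hilbert spaces yields Dirac and dual-Dirac elements whose products are the identity in $KK^{\mathcal{G}}$, and descent plus the bootstrap membership of the proper crossed product gives the UCT --- and since you explicitly defer the equivariant infinite-dimensional Bott periodicity to the citation, your proposal is in substance the same as what the notes do, with the added value of explaining what is being cited.
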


\begin{remark}
It is a very important question whether every nuclear $C^*$-algebra satisfies the UCT.
The previous two theorems say that every nuclear groupoid $C^*$-algebra (associated to an
\'etale groupoid) satisfies the UCT. In fact, results of Barlak and Li \cite{BL} show
that this result extends---significantly---to twisted groupoid $C^*$-algebras. Moreover,
results of Renault \cite{Ren08} show that we can characterise twisted $C^*$-algebras
associated to \'etale effective groupoids amongst arbitrary $C^*$-algebras in purely
$C^*$-algebraic terms: they are the ones that admit a \emph{Cartan subalgebra}. We will
discuss this further in Chapter~\ref{ch:Weyl gpd}.
\end{remark}

Generally speaking, checking amenability using the definition is hard work. Fortunately,
there is a fairly extensive bag of tricks available, and usually the best approach is to
see if any of them apply or can be adapted to the example at hand.
Theorem~\ref{thm:amen-nuc} certainly belongs to this bag; we'll list a few more that come
up particularly frequently.

\begin{proposition}
If $\mathcal{G}$ is a principal \'etale groupoid and is an $F_\sigma$ set in
$\mathcal{G}^{(0)} \times \mathcal{G}^{(0)}$, then the orbit space
$\mathcal{G}^{(0)}/\mathcal{G}$ is a T$_0$ space if and only if each orbit $[x] \coloneq
\{r(\gamma) \mid s(\gamma) = x\}$ is locally closed (that is, each $[x]$ is relatively
open in its closure), and these equivalent conditions imply that $\mathcal{G}$ is
amenable.
\end{proposition}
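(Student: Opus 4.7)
The plan is to prove the equivalence of T$_0$ and local closedness of orbits first, and then deduce amenability separately from the locally-closed condition.

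For the easy direction (locally closed orbits $\Rightarrow$ T$_0$ orbit space): given distinct orbits $[x]\neq[y]$, I would split into cases. If $[x]\not\subseteq\overline{[y]}$, the saturation of $\mathcal{G}^{(0)}\setminus\overline{[y]}$ (which is open because $r$ is open in an \'etale groupoid) separates $[x]$ from $[y]$; similarly if the roles are reversed. In the remaining case $\overline{[x]}=\overline{[y]}$, local closedness of $[x]$ gives an open set $W\subseteq\mathcal{G}^{(0)}$ with $W\cap\overline{[x]}=[x]$. Its saturation $\mathcal{G}W$ is an open invariant set containing $x$; and any orbit meeting $\mathcal{G}W$ must meet $W\cap\overline{[x]}=[x]$, so $[y]\cap\mathcal{G}W=\emptyset$. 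This distinguishes the orbits in the quotient.

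For the hard direction (T$_0$ $\Rightarrow$ locally closed), the $F_\sigma$ hypothesis is essential, via a classical Baire-category argument in the spirit of Effros' theorem on Borel equivalence relations. Each orbit $[x]=r(s^{-1}(x))$ is an $F_\sigma$ subset of $\mathcal{G}^{(0)}$ (since $\mathcal{G}$ itself is $F_\sigma$, and second-countability pushes through). Suppose for contradiction some $[x]$ fails to be open in $\overline{[x]}$; then $\overline{[x]}\setminus[x]$ is dense in the locally compact Hausdorff (hence Baire) space $\overline{[x]}$. Pick $y\in\overline{[x]}\setminus[x]$, so $[y]\subseteq\overline{[x]}$. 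Write $[x]=\bigcup_n K_n$ as a countable union of closed-in-$\overline{[x]}$ sets; by Baire each $K_n$ has empty interior, so $[x]$ is meager in $\overline{[x]}$. A standard argument then shows that any saturated open set meeting $[x]$ must also meet $[y]$ (and vice versa), contradicting T$_0$. The main obstacle is massaging this Baire-category argument to use only the $F_\sigma$-ness of the relation—this is exactly where the hypothesis is needed.

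For the amenability conclusion, I would use transfinite induction on open invariant subsets of $\mathcal{G}^{(0)}$. Let $\mathcal{F}$ be the collection of open invariant $U\subseteq\mathcal{G}^{(0)}$ for which $\mathcal{G}|_U$ is amenable. Since countable increasing unions of open invariant amenable subgroupoids remain amenable (build the approximate invariant mean by a diagonal argument, using second countability), $\mathcal{F}$ is closed under such unions. Given $U\in\mathcal{F}$ with $U\neq\mathcal{G}^{(0)}$, pick any orbit $[y]\subseteq\mathcal{G}^{(0)}\setminus U$; local closedness of $[y]$ in $\mathcal{G}^{(0)}$ (hence in $\mathcal{G}^{(0)}\setminus U$) gives a saturated relatively open set $W$ in $\mathcal{G}^{(0)}\setminus U$ whose only orbit is $[y]$. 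The subgroupoid $\mathcal{G}|_{[y]}$ is principal and transitive, hence algebraically isomorphic to $[y]\times[y]$, which is amenable (the constant-weight systems give an approximate invariant mean). Using that amenability is preserved under extensions of open invariant subgroupoids by closed invariant subgroupoids with amenable restrictions (a standard \cite{AR}-style patching), I would enlarge $U$ to $U\cup\mathcal{G}W$, still in $\mathcal{F}$. By second countability this exhausts $\mathcal{G}^{(0)}$ after countably many steps, so $\mathcal{G}$ itself is amenable. The hardest technical point is the extension/patching step for amenability; the alternative shortcut is to invoke Theorem~\ref{thm:amen-nuc} after first showing $C^*(\mathcal{G})$ is Type~I via its composition series of matrix-bundle subquotients.
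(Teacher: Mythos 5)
The paper does not prove this proposition from scratch: it observes that principality plus the $F_\sigma$ hypothesis puts $R(\mathcal{G})$ in the setting of the Ramsay--Mackey--Glimm dichotomy \cite[Theorem~2.1]{Ramsay}, reads off the equivalence \mbox{T$_0$ $\iff$ locally closed orbits} from conditions (4) and (5) of that theorem, and then cites \cite[Examples~2.1.4(2), Examples~3.2.2(2) and Theorem~3.3.7]{AR} to conclude that $\mathcal{G}$ is a proper Borel groupoid and hence amenable. You instead attempt to reprove the dichotomy and the amenability assertion directly, and your easy direction (locally closed $\Rightarrow$ T$_0$) is essentially correct. But the two places you flag as ``standard'' or ``hardest'' are precisely where the proof is missing, and they are the entire content of the cited theorems.

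Concretely: in the direction T$_0$ $\Rightarrow$ locally closed, after establishing that $[x]$ is meager in $\overline{[x]}$ you assert that ``any saturated open set meeting $[x]$ must also meet $[y]$.'' This does not follow from what you have. A saturated open $U \supseteq [x]$ gives a dense open subset $U \cap \overline{[x]}$ of $\overline{[x]}$, and a dense open set need not meet the (generally non-open) orbit $[y]$; for all you have shown, $[y]$ could itself be locally closed and separated from $[x]$. Producing a genuinely inseparable pair of orbits from the failure of local closedness is the Effros/Glimm argument, and it requires more than Baire category applied to a single orbit (one also needs the homogeneity argument showing that failure of interiority at one point of $[x]$ propagates to all points, which you have not supplied). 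Similarly, in the amenability step, the ``extension/patching'' claim --- that amenability of $\mathcal{G}|_U$ for $U$ open invariant together with amenability of the restriction to a further invariant piece yields amenability of the union --- is a substantive theorem of \cite{AR}, not a routine diagonal argument; and your assertion that second countability lets the induction terminate ``after countably many steps'' is unjustified (the Mackey--Glimm machinery produces a transfinite, ordinal-indexed composition series, and local closedness of $[y]$ gives $[y]$ open in $\overline{[y]}$, not a saturated relatively open set whose only orbit is $[y]$). As written, the proposal is a plausible roadmap for reproving Ramsay's theorem and the Anantharaman-Delaroche--Renault properness results, but both of its load-bearing steps are gaps.
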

\begin{proof}
Since $\mathcal{G}$ is principal, it is algebraically isomorphic to
$\mathcal{R}(\mathcal{G})$, and so $\mathcal{R}(\mathcal{G})$ is an $F_\sigma$ in
$\mathcal{G}^{(0)} \times \mathcal{G}^{(0)}$. So all of the conditions (1)--(14) in the
Ramsay--Mackey--Glimm dichotomy \cite[Theorem~2.1]{Ramsay} are equivalent, and in
particular \mbox{(4)\;$\iff$\;(5)} of that theorem shows that
$\mathcal{G}^{(0)}/\mathcal{G}$ is T$_0$ if and only if each $[x]$ is locally closed. It
then follows from \cite[Examples~2.1.4(2)]{AR} that $\mathcal{G}$ is a proper Borel
groupoid, and therefore amenable by \cite[Examples~3.2.2(2) and Theorem~3.3.7]{AR}.
\end{proof}

It follows, in particular, that every discrete equivalence relation is amenable (though
we could also deduce this from nuclearity of $\mathcal{K}$).

\begin{proposition}[{\cite[Proposition~5.3.37]{AR}}]
Let $\mathcal{G}$ be an \'etale groupoid, and suppose that for each $n \in \mathbb{N}$,
$\mathcal{G}_n$ is a closed subgroupoid of $\mathcal{G}$ with $\mathcal{G}^{(0)}
\subseteq \mathcal{G}_n \subseteq \mathcal{G}_{n+1}$ for all $n$. Further suppose that
each $\mathcal{G}_{n+1}$ is a proper $\mathcal{G}_n$-space, and that $\mathcal{G} =
\bigcup_n \mathcal{G}_n$. If each $\mathcal{G}_n$ is amenable, then $\mathcal{G}$ is
amenable.
\end{proposition}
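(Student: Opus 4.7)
The plan is to verify the sequential characterization of amenability from the preceding lemma by constructing, for each compact $K \subseteq \mathcal{G}$ and tolerance $\varepsilon > 0$, a function $h \in C_c(\mathcal{G})$ whose square-sum normalisation and invariance defect satisfy the required estimates on $K$. Since $\mathcal{G}$ is $\sigma$-compact, a diagonal choice across a compact exhaustion $K_1 \subseteq K_2 \subseteq \cdots$ will then yield the desired witness sequence.

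The core technical step is a one-level transport of approximate means up the tower. Given the properness of $\mathcal{G}_{k+1}$ as a $\mathcal{G}_k$-space, a standard partition-of-unity argument on the orbit space yields a continuous cutoff $c_k \in C_c(\mathcal{G}_{k+1})$ with
\[
\sum_{\delta \in \mathcal{G}_k^{r(\beta)}} c_k(\delta^{-1}\beta)^2 = 1 \quad\text{for all } \beta \in \mathcal{G}_{k+1}.
\]
Given $f \in C_c(\mathcal{G}_k)$, define $\tilde{f} \in C_c(\mathcal{G}_{k+1})$ by
\[
\tilde{f}(\beta) \coloneq \sum_{\delta \in \mathcal{G}_k^{r(\beta)}} f(\delta)\, c_k(\delta^{-1}\beta).
\]
A direct bookkeeping calculation, using the normalisation of $c_k$, shows that $\tilde{f}$ inherits the square-sum normalisation of $f$ exactly, and that the invariance defect of $\tilde{f}$ at $\alpha \in \mathcal{G}_{k+1}$ is bounded by the invariance defect of $f$ along reachable $\mathcal{G}_k$-orbits plus a term controlled by the variation of $c_k$ under the left $\mathcal{G}_k$-action, which can be made arbitrarily small by refining $c_k$.

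To assemble $h_m$ for a given $K_m$, use $\mathcal{G} = \bigcup_n \mathcal{G}_n$ together with compactness of $K_m$ and discreteness of the fibres (Corollary~\ref{cor:fibres discrete}) to pick indices $n(m)$ and $N(m)$ such that the relevant orbital sums on $K_m$ are computed within $\mathcal{G}_{n(m)+N(m)}$ up to negligible error. Amenability of $\mathcal{G}_{n(m)}$ supplies a starting function $h_{n(m),i}$; iterate the transport step $N(m)$ times, choosing the cutoffs $c_k$ with small enough variation and the index $i=i(m)$ large enough that the accumulated error on $K_m$ stays below $1/m$. The main obstacle is twofold: controlling the geometric amplification of the invariance defect across $N(m)$ successive transport steps, which demands a careful diagonal choice of the parameters $i(m)$, $n(m)$, $N(m)$ and the cutoffs $c_k$; and arranging that the resulting function extends by zero to a continuous element of $C_c(\mathcal{G})$, which requires its support to sit in the interior of $\mathcal{G}_{n(m)+N(m)}$ away from the boundary of this closed subgroupoid in $\mathcal{G}$, a point where the interplay between the tower hypothesis $\mathcal{G}_n \subseteq \mathcal{G}_{n+1}$ and the shrinking supports of the cutoffs must be exploited.
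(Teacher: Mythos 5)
The paper does not actually prove this proposition: it is quoted directly from \cite[Proposition~5.3.37]{AR}, so there is no in-text argument to measure yours against. Your overall strategy---transport approximate invariant means up the tower using cutoff functions supplied by properness, then diagonalise over a compact exhaustion---is the natural one and is in the spirit of the argument in \cite{AR}. But as written the sketch breaks down at exactly the points where the content lies.

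The central claim, that $\tilde f$ ``inherits the square-sum normalisation of $f$ exactly,'' is false. Expanding $\sum_{\beta \in \mathcal{G}_{k+1}^x}|\tilde f(\beta)|^2$ produces cross terms $f(\delta)\overline{f(\delta')}\sum_{\beta} c_k(\delta^{-1}\beta)\,c_k(\delta'^{-1}\beta)$ for $\delta \neq \delta'$, and the translates of $c_k$ by distinct elements of $\mathcal{G}_k^x$ are not orthogonal. Worse, even the diagonal terms fail: your normalisation of $c_k$ sums over left $\mathcal{G}_k$-orbits, which live inside source fibres of $\mathcal{G}_{k+1}$, whereas the amenability criterion sums over range fibres $\mathcal{G}_{k+1}^x$; a range fibre meets a given orbit in a set in bijection with $(\mathcal{G}_k)^x_{r(\gamma)}$ rather than in a single point, so $\sum_{\gamma \in \mathcal{G}_{k+1}^{s(\delta)}} c_k(\gamma)^2$ is not $1$. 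Second, the assertion that the variation of $c_k$ under the $\mathcal{G}_k$-action ``can be made arbitrarily small by refining $c_k$'' is unsupported: properness yields an exactly equivariant continuous system of probability measures on orbits, not a function that is almost invariant under translation, and the estimate you need is precisely the one that has to be proved. Third, the extension problem is more serious than you allow: the $\mathcal{G}_n$ are only assumed \emph{closed}, so a function in $C_c(\mathcal{G}_{n+N})$ need not extend by zero to a continuous function on $\mathcal{G}$, and demanding that its support lie in the interior of $\mathcal{G}_{n+N}$ is not generally achievable, since that interior may be no larger than $\mathcal{G}^{(0)}$; one must instead extend (e.g.\ by Tietze) and then control both the extra mass and the extra invariance defect contributed by the extension. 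Until these three points are repaired, the argument does not go through.
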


As an example of this, consider the groupoid $\mathcal{R}_{2^\infty}$ of
Example~\ref{ex:2infty relation}. For each $n$, let $\mathcal{G}_n \coloneq \{(v x, w x)
\mid |v| = |w| = n, x \in X\}$. Then each $\mathcal{G}_n$ is closed (in fact clopen) in
$\mathcal{G}$ and contains $\mathcal{G}^{(0)}$, and the $\mathcal{G}_n$ are nested. In a
fixed $\mathcal{G}_n$ each orbit is finite, of size $2^n$, and so
$\mathcal{G}^{(0)}/\mathcal{G}_n$ is a standard Borel space. So \cite[Examples~2.1.4(2),
Examples~3.2.2(2) and Theorem~3.3.7]{AR} above show that each $\mathcal{G}_n$ is
amenable. Since $\mathcal{G} = \bigcup_n \mathcal{G}_n$, we deduce that $\mathcal{G}$ is
amenable.

\begin{proposition}[{\cite[Corollary~4.5]{RenWil}}]
Suppose that $\mathcal{G}$ is an \'etale groupoid and $c : \mathcal{G} \to \Gamma$ is a
continuous homomorphism into a discrete amenable group. If the clopen subgroupoid
$\ker(c) \subseteq \mathcal{G}$ is amenable, then $\mathcal{G}$ is amenable.
\end{proposition}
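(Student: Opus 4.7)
The plan is to verify amenability of $\mathcal{G}$ via the $h_i$-characterisation given in the lemma preceding this statement: produce a sequence $(H_j) \subseteq C_c(\mathcal{G})$ such that $x \mapsto \sum_{\gamma \in \mathcal{G}^x}|H_j(\gamma)|^2$ tends to $1$ uniformly on compacts of $\mathcal{G}^{(0)}$, and $\alpha \mapsto \sum_{\gamma \in \mathcal{G}^{r(\alpha)}}|H_j(\alpha^{-1}\gamma) - H_j(\gamma)|$ tends to $0$ uniformly on compacts of $\mathcal{G}$. The ingredients are an approximate invariant continuous mean $(h_i) \subseteq C_c(\ker(c))$ from the amenability of $\ker(c)$, together with a F\o lner sequence $(F_n)$ of finite subsets of $\Gamma$ from the amenability of the discrete group $\Gamma$; set $\xi_n \coloneq |F_n|^{-1/2}\mathbf{1}_{F_n} \in \ell^2(\Gamma)$, so that $\|\xi_n\|_2 = 1$ and $\|\lambda_g \xi_n - \xi_n\|_2 \to 0$ for each $g \in \Gamma$.

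Because $c$ is continuous into the discrete group $\Gamma$, each $c^{-1}(g)$ is clopen in $\mathcal{G}$. The idea is to build $H_{i,n} \in C_c(\mathcal{G})$ so that its restriction to each slice $c^{-1}(g)$ (for $g\in F_n$) is a left-translate of the $\xi_n(g)$-scaled function $h_i$ from $\ker(c)$ onto $c^{-1}(g)$. Concretely, I would cover a compact neighbourhood of the relevant support by finitely many open bisections $B_k \subseteq c^{-1}(g_k)$ with $g_k \in F_n$; on each cylinder $\ker(c)\cdot B_k$, declare $H_{i,n}(\eta\beta) = \xi_n(g_k) h_i(\eta)$ for $\eta \in \ker(c)$ and $\beta \in B_k$; and glue these local definitions using a partition of unity subordinate to the $B_k$ to produce a single element of $C_c(\mathcal{G})$.

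Verifying the two conditions of the lemma is then a direct computation. For the first, decomposing $\mathcal{G}^x = \bigsqcup_g (c^{-1}(g))^x$ and using that left multiplication by any element of $(c^{-1}(g))^x$ bijects $\ker(c)^{\,\cdot}$ onto $(c^{-1}(g))^x$, the sum $\sum_{\gamma \in \mathcal{G}^x}|H_{i,n}(\gamma)|^2$ splits as a sum over $g \in F_n$ of quantities that converge uniformly to $|\xi_n(g)|^2$ (using that $\sum_\eta |h_i(\eta)|^2 \to 1$ for $\ker(c)$), giving a total limit of $1$. For the second, if $\alpha \in \mathcal{G}$ with $c(\alpha) = g$, then $c(\alpha^{-1}\gamma) = g^{-1}c(\gamma)$, so left multiplication by $\alpha^{-1}$ shifts the $\Gamma$-label by $g^{-1}$ and acts by a left $\ker(c)$-translation within each fibre. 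The $\Gamma$-shift contribution is bounded in $\ell^1$ by a multiple of $\|\lambda_{g^{-1}}\xi_n - \xi_n\|_1 \to 0$ (F\o lner), while the fibrewise contribution is bounded by the approximate $\ker(c)$-invariance of $h_i$.

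The main obstacle is that $c$ admits no global continuous section, so the local ``translates'' used to define $H_{i,n}$ on each slice $c^{-1}(g)$ depend on choices of local lift, and gluing them into a single continuous function on $\mathcal{G}$ is the delicate point. The essential observation that saves the argument is that any two local lifts differ by a left $\ker(c)$-translation, so their contributions to $H_{i,n}$ differ by an error controlled, in the relevant norms, by the approximate $\ker(c)$-invariance of $h_i$; consequently whatever ambiguity the partition-of-unity gluing introduces is absorbed into the errors that already vanish in the limits $i \to \infty$ and $n \to \infty$.
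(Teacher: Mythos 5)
The paper offers no proof of this proposition: it is quoted directly from \cite[Corollary~4.5]{RenWil}, and the surrounding discussion stresses that the result is genuinely nontrivial --- Spielberg's earlier proof (for abelian $\Gamma$) had to detour through nuclearity of $C^*(\mathcal{G})$, and the Renault--Williams proof, while purely groupoid-theoretic, is a structural argument (built around the skew-product groupoid $\mathcal{G}\times_c\Gamma$ and permanence properties of amenability) rather than a direct F\o lner construction. So your proposal has to stand on its own, and I do not think it does.

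The fatal problem is not the gluing issue you flag but the normalisation in the $\Gamma$-direction. Your $H_{i,n}$ weights the slice $c^{-1}(g)$ by $\xi_n(g)=|F_n|^{-1/2}\mathbf{1}_{F_n}(g)$, so that, granting every gluing and lifting issue, the best you can hope for in condition (1) is
\[
\sum_{\gamma\in\mathcal{G}^x}|H_{i,n}(\gamma)|^2\;\approx\;\sum_{g\in F_n\cap c(\mathcal{G}^x)}|F_n|^{-1}\;=\;\frac{|F_n\cap c(\mathcal{G}^x)|}{|F_n|},
\]
which is near $1$ only if $c(\mathcal{G}^x)$ essentially contains $F_n$. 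But $c(\mathcal{G}^x)=\{c(\gamma)\mid r(\gamma)=x\}$ is in general just some subset of $\Gamma$ containing $e$: it is not a subgroup, not a coset, and it can be finite. Already $\mathcal{G}=R_2$ with $c(i,j)=i-j\in\mathbb{Z}$ satisfies all the hypotheses ($\ker(c)=\mathcal{G}^{(0)}$ is amenable, $\mathbb{Z}$ is amenable), yet each $c(\mathcal{G}^x)$ has exactly two elements, so $\sum_{\gamma\in\mathcal{G}^x}|H_{i,n}(\gamma)|^2\le 2/|F_n|\to 0$ and condition (1) fails outright. There is no easy repair: renormalising fibre by fibre destroys continuity (the set $\{x\mid g\in c(\mathcal{G}^x)\}=r(c^{-1}(g))$ is open but need not be closed), and the sets $c(\mathcal{G}^x)$ carry no group structure on which a F\o lner condition could even be formulated. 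This failure of the cocycle to be ``fibrewise surjective'' in any uniform sense is precisely why the proposition is a theorem and not an exercise. Two secondary problems for completeness: your invariance estimate invokes $\|\lambda_{g^{-1}}\xi_n-\xi_n\|_1$ for an $\ell^2$-normalised $\xi_n$, and $|F_n|^{-1/2}|gF_n\bigtriangleup F_n|$ need not tend to $0$; and the partition-of-unity gluing interacts badly with the quadratic quantity in condition (1), since $\bigl|\sum_k\lambda_k v_k\bigr|^2\neq\sum_k\lambda_k|v_k|^2$ when the local lifts genuinely disagree. But the normalisation problem alone already sinks the argument.
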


The above result for $\Gamma$ a discrete \emph{abelian} group was first proved by
Spielberg in \cite[Proposition~9.3]{Spielberg}. Spielberg's proof passed through
$C^*$-algebra theory, proving that $C^*(\mathcal{G})$ is nuclear, and deducing that
$\mathcal{G}$ is amenable from Theorem~\ref{thm:amen-nuc}. The Renault--Williams proof,
by contrast, is entirely groupoid theoretic. Moreover, the Renault--Williams result is
more general even than the one stated above: see \cite[Theorem~4.2]{RenWil}.

\begin{example}
Deaconu--Renault groupoids for actions of $\mathbb{N}^k$ are amenable: if $\mathcal{G}$
is a Deaconu--Renault groupoid over $\mathbb{N}^k$, then the map $c(x, p-q, y) \coloneq
p-q$ is a continuous cocycle into the abelian, and hence amenable, group $\Gamma$. An
argument very similar to the one used above to see that the $\mathcal{G}_n$ in
Example~\ref{ex:2infty relation} are amenable, shows that for each $n \in \mathbb{N}^k$
the subgroupoid $\mathcal{G}_n \coloneq \{(x, 0, y) \mid T^n x = T^n y\}$ is a proper
Borel groupoid, and then that $\ker(c) = \bigcup \mathcal{G}_n$ is amenable (see
\cite{SimWil2} for details).
\end{example}

\begin{example}
It follows that every graph groupoid is amenable, since it is the Deaconu--Renault
groupoid of the shift map on $E^\infty$.
\end{example}

\begin{proposition}
If $\mathcal{G}$ is an amenable \'etale groupoid and $\mathcal{H}$ is an open or a closed
subgroupoid of $\mathcal{G}$, then $\mathcal{H}$ is also amenable.
\end{proposition}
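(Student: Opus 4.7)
The plan is to invoke the functional characterization of amenability from the preceding Lemma: I need to exhibit a sequence $(k_i) \subseteq C_c(\mathcal{H})$ satisfying its conditions (1) and (2) for $\mathcal{H}$. Starting from a witness $(h_i) \subseteq C_c(\mathcal{G})$ for amenability of $\mathcal{G}$, I would construct the $k_i$ by a restriction–truncation procedure, handling the open and closed cases slightly differently.

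For $\mathcal{H}$ closed in $\mathcal{G}$, the restriction map $h_i \mapsto h_i|_\mathcal{H}$ lands in $C_c(\mathcal{H})$ directly, since $\operatorname{supp}(h_i) \cap \mathcal{H}$ is closed in the compact set $\operatorname{supp}(h_i)$ and hence compact. For $\mathcal{H}$ open, I would use the inclusion $C_c(\mathcal{H}) \hookrightarrow C_c(\mathcal{G})$ by extension by zero together with a family of cutoff functions $\chi_n \in C_c(\mathcal{H})$ that exhaust $\mathcal{H}$ by compact sets, forming the pointwise products $h_i \chi_n \in C_c(\mathcal{H})$ and then passing to a diagonal sequence $k_n = h_{i(n)}\chi_n$ with $i(n)$ chosen rapidly enough. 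In both cases, condition (2) transfers comparatively painlessly: for $\alpha \in \mathcal{H}$, the sum $\sum_{\gamma \in \mathcal{H}^{r(\alpha)}}|k_i(\alpha^{-1}\gamma) - k_i(\gamma)|$ is dominated (up to a controllable cutoff error in the open case) by the corresponding sum over $\mathcal{G}^{r(\alpha)}$, which tends to $0$ uniformly on compacta of $\mathcal{G}$ by hypothesis.

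The main obstacle is condition (1). After restriction or truncation, the $\ell^2$-mass $\sum_{\gamma \in \mathcal{H}^x}|k_i(\gamma)|^2$ is only bounded above by $1$, and can be strictly smaller, since mass of $h_i$ may concentrate outside $\mathcal{H}$. The remedy is to renormalize $k_i$ by dividing by $\bigl(\sum_{\gamma \in \mathcal{H}^x}|h_i(\gamma)|^2\bigr)^{1/2}$, provided this denominator is uniformly bounded below on compact subsets of $\mathcal{H}^{(0)}$. Establishing that lower bound—so that the renormalization makes sense, is continuous, and introduces only vanishing errors in (2)—is where the real work lies; it uses the approximate invariance of $(h_i)$ together with the fact that $\mathcal{H}^{(0)}$ is open (respectively closed) in $\mathcal{G}^{(0)}$ in a nontrivial way. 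An alternative route, which sidesteps some of these technicalities, is to pass through measurewise amenability via the equivalence for \'etale groupoids proved in \cite{AR}, restrict a $\mathcal{G}$-invariant mean to the closed (or open) subgroupoid, and then translate back. Either way, the delicate point is manufacturing the normalization (1), not the invariance (2).
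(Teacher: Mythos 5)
You have correctly located the hard point (the normalisation in condition (1) of the $\ell^2$ characterisation), but the remedy you propose does not work, so there is a genuine gap. The quantity $\sum_{\gamma \in \mathcal{H}^x} |h_i(\gamma)|^2$ admits no uniform positive lower bound in general, and approximate invariance of $(h_i)$ does nothing to produce one. Already for groups this fails: take $\mathcal{G} = \mathbb{Z}^2$ and $\mathcal{H} = \mathbb{Z} \times \{0\}$, and let $h_i$ be the normalised indicator of the F{\o}lner set $[0,i] \times [1, i+1]$. These $h_i$ witness amenability of $\mathbb{Z}^2$, yet $\sum_{\gamma \in \mathcal{H}} |h_i(\gamma)|^2 = 0$ for every $i$, so your renormalisation divides by zero; perturbing the example makes the denominator positive but arbitrarily small, which then destroys condition (2) after rescaling. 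The same phenomenon afflicts the ``restrict the mean'' route: the restriction of a probability measure on $\mathcal{G}^x$ to $\mathcal{H}^x$ is only a sub-probability measure, and its total mass can be made as small as you like.

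The missing idea is the one from the classical proof that subgroups of amenable groups are amenable: rather than restricting $h_i$ to $\mathcal{H}$, one must \emph{aggregate} its $\ell^2$-mass over the right $\mathcal{H}$-cosets in each fibre $\mathcal{G}^x$. Concretely, choosing a (Borel) transversal $T_x$ for $\mathcal{H}^x \backslash \mathcal{G}^x$ and setting $k_i(\eta) \coloneq \big(\sum_{t \in T_{s(\eta)}} |h_i(\eta t)|^2\big)^{1/2}$ recovers total mass $1$ on each $\mathcal{H}^x$ while the triangle inequality in $\ell^2$ transfers the invariance estimate; making this continuous and compactly supported in the topological setting is exactly the content of \cite[Proposition~5.1.1]{AR}, which is the result you should be quoting. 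To be fair, the paper itself offers only the one-sentence heuristic that the mean ``restricts,'' which glosses over precisely the same difficulty; but your write-up commits to a specific mechanism (a pointwise lower bound on the restricted mass) that is demonstrably false, so as it stands the argument cannot be completed along the lines you describe.
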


The rough idea here is to verify that an approximate invariant continuous mean for
$\mathcal{G}$ restricts to one for $\mathcal{H}$.

We finish the section with an example due to Willett that shows that, unlike the
situation for groups, in the setting of groupoids it is not the case that amenability is
equivalent to coincidence of the full and reduced $C^*$-algebras.

\begin{example}[{\cite[Lemma~2.8]{Willett}}]\label{eg:Willett}
Let $\mathbb{F}_2$ denote the free group on two generators. For $n \in \mathbb{N}$, let
$K_n$ denote the intersection of all the normal subgroups of $\mathbb{F}_2$ that have
index at most $n$ in $\mathbb{F}_2$. Willett shows that $\mathbb{F}_2$ is the infinite
union of the $K_n$. For each $n$, let $\Gamma_n \coloneq \mathbb{F}_2/K_n$, and let
$\Gamma_\infty = \mathbb{F}_2$. Let $\mathcal{G}^{(0)} \coloneq \mathbb{N} \cup
\{\infty\}$, the 1-point compactification of $\mathbb{N}$, and let $\mathcal{G}$ be the
group bundle $\bigcup_{x \in \mathcal{G}^{(0)}} \Gamma_x \times \{x\}$. For each $\gamma
\in \mathcal{G}_\infty = \mathbb{F}_2$, and each $n \in \mathbb{N}$, let $W(\gamma, n)
\coloneq \{(\pi_m(\gamma), m) \mid m \ge n\}$. Then
\[
\{W(\gamma, n) \mid \gamma \in \mathbb{F}_2, n \in \mathbb{N}\} \cup \big\{\{(\gamma, n)\} \mid n \in \mathbb{N}, \gamma \in \Gamma_n\big\}
\]
is a basis for a locally compact Hausdorff topology on $\mathcal{G}$ under which it is
\'etale. (Each fibre $\mathcal{G}_x$ is discrete in the relative topology.) This groupoid
is not amenable, because an approximate invariant mean on $\mathcal{G}$ would restrict to
an invariant mean on $\mathbb{F}_2$. However, Willet proves that $C^*(\mathcal{G}) =
C^*_r(\mathcal{G})$ by showing that the universal norm of $f \in C_c(\mathcal{G})$ is
given by $\sup_{n \in \mathbb{N}} \big\|f|_{\Gamma_n}\big\|$.
\end{example}

It is still not known, for example, whether a minimal, or even transitive, action for
which the full and reduced $C^*$-algebras coincide must have an amenable transformation
groupoid.

\section{Effective groupoids and uniqueness}

Recall that an action of a discrete group $\Gamma$ on a locally compact Hausdorff space
$X$ is \emph{effective} if, for each $g \in \Gamma$, the set $\{x \in X \mid g\cdot x =
x\}$ has empty interior.

In the corresponding transformation groupoid, the basic open sets are the bisections of
the form $\{g\} \times U$ where $U$ ranges over a base for the topology on
$\mathcal{G}^{(0)}$. So we can reinterpret effectiveness of an action in terms of the
transformation groupoid as follows: the action of $\Gamma$ on $X$ is effective if and
only if the interior of the isotropy in $\mathcal{G}$ is equal to $\mathcal{G}^{(0)}$.
This leads us to a definition.

\begin{definition}
Let $\mathcal{G}$ be an \'etale groupoid. We say that $\mathcal{G}$ is \emph{effective}
if $\operatorname{Iso}(\mathcal{G})^\circ = \mathcal{G}^{(0)}$.
\end{definition}

A Baire category argument shows that an action of a countable discrete group is effective
if and only if the points in $X$ at which the isotropy is trivial are dense in $X$. We
will need the equivalent for second-countable \'etale groupoids. We first need a
technical lemma that will come up again later.

\begin{lemma}\label{lem:separate}
Let $\mathcal{G}$ be an \'etale groupoid, and suppose that $\gamma \in \mathcal{G}$
satisfies $r(\gamma) \not= s(\gamma)$ and that $U$ is a bisection containing $\gamma$.
Then there is an open neighbourhood $V$ of $s(\gamma)$ such that $r(UV) \cap V =
\emptyset$.
\end{lemma}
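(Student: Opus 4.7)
The plan is to use the fact that on the bisection $U$ the source map $s|_U : U \to s(U)$ is a homeomorphism, so we can transfer the problem to the unit space, where Hausdorffness gives us what we need.

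First I would define $\phi \colon s(U) \to r(U)$ by $\phi \coloneq r \circ (s|_U)^{-1}$; since $U$ is a bisection, both $s|_U$ and $r|_U$ are homeomorphisms onto open subsets of $\mathcal{G}^{(0)}$, so $\phi$ is a homeomorphism of open subsets of $\mathcal{G}^{(0)}$, and by construction $\phi(s(\gamma)) = r(\gamma)$. Since we have assumed throughout these notes that $\mathcal{G}$ is Hausdorff, $\mathcal{G}^{(0)}$ is Hausdorff as well, and so, using $r(\gamma) \neq s(\gamma)$, I can choose disjoint open sets $V_1 \ni s(\gamma)$ and $W_1 \ni r(\gamma)$ in $\mathcal{G}^{(0)}$. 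Then $V \coloneq V_1 \cap s(U) \cap \phi^{-1}(W_1)$ is an open neighbourhood of $s(\gamma)$ with $\phi(V) \subseteq W_1$ and $V \cap \phi(V) = \emptyset$.

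Next I would unpack $UV$. Since $V \subseteq \mathcal{G}^{(0)}$, an element $\beta \in V$ satisfies $r(\beta) = \beta = s(\beta)$, so a product $\alpha\beta$ with $\alpha \in U, \beta \in V$ is defined precisely when $s(\alpha) = \beta \in V$, and in that case $\alpha\beta = \alpha s(\alpha) = \alpha$ by Lemma~\ref{lem:unique inverse}. Hence $UV = \{\alpha \in U \mid s(\alpha) \in V\} = (s|_U)^{-1}(V)$, and therefore $r(UV) = \phi(V) \subseteq W_1$. Since $V \subseteq V_1$ and $V_1 \cap W_1 = \emptyset$, we get $r(UV) \cap V = \emptyset$, as required.

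No step looks like a serious obstacle: the only thing to be careful about is the computation of $UV$ for $V$ sitting inside the unit space, which uses that $r(\beta) = s(\beta) = \beta$ for units together with the absorption identity $\alpha s(\alpha) = \alpha$. Everything else is a standard application of the bisection property plus Hausdorffness of $\mathcal{G}^{(0)}$.
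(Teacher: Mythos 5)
Your proof is correct, but it takes a different route from the one in the notes. You argue directly: you transport everything to the unit space via the homeomorphism $\phi = r \circ (s|_U)^{-1}$, separate $s(\gamma)$ from $r(\gamma)$ by disjoint open sets $V_1, W_1$ using Hausdorffness of $\mathcal{G}^{(0)}$, and take $V = V_1 \cap s(U) \cap \phi^{-1}(W_1)$; the computation $UV = (s|_U)^{-1}(V)$ and hence $r(UV) = \phi(V) \subseteq W_1$ then finishes the job. The notes instead prove the contrapositive by a net argument: assuming every neighbourhood $V$ of $s(\gamma)$ meets $r(UV)$, one picks $\gamma_i \in UV_i$ with $r(\gamma_i) \in V_i$ along a descending neighbourhood base $V_i$, observes $\gamma_i \to \gamma$ so $r(\gamma_i) \to r(\gamma)$, while $r(\gamma_i) \in V_i$ forces $r(\gamma_i) \to s(\gamma)$, whence $r(\gamma) = s(\gamma)$. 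Your version is constructive (it exhibits the separating $V$ explicitly and in fact pins $r(UV)$ inside any chosen neighbourhood of $r(\gamma)$) and isolates exactly what is used, namely Hausdorffness of $\mathcal{G}^{(0)}$ in the relative topology, which is part of the definition of a topological groupoid even when $\mathcal{G}$ itself is not Hausdorff; the notes' version avoids introducing the map $\phi$ at the cost of a limit argument. One small point of hygiene, which the paper's own proof also glosses over: a bisection as defined in these notes need not be open, so you should first replace $U$ by the open set containing it on which $r$ and $s$ are homeomorphisms (shrinking $U$ only shrinks $UV$, so this is harmless).
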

\begin{proof}
We prove the contrapositive. That is, we suppose that $U$ is an open bisection and that
$\gamma \in U$, and that for every neighbourhood $V$ of $s(\gamma)$, we have $r(UV) \cap
V \not= \emptyset$. Choose a descending neighbourhood base $V_i$ at $s(\gamma)$ with each
$V_i \subseteq s(U)$. Since each $r(UV_i) \cap V_i$ is nonempty, for each $i$ we can
choose $\gamma_i \in UV_i$ with $r(\gamma_i) \in V_i$. Since $s|_U$ is a homeomorphism,
the $UV_i$ form a neighbourhood base at $\gamma$, and so $\gamma_i \to \gamma$. In
particular, $r(\gamma) = \lim_i r(\gamma_i)$. Since each $r(\gamma_i) \in V_i$ and the
$V_i$ are a neighbourhood base at $s(\gamma)$, we deduce that $r(\gamma_i) \to
s(\gamma)$; so $s(\gamma) = r(\gamma)$.
\end{proof}

\begin{lemma}[{\cite[Proposition~3.6]{Ren08}}]\label{lem:effective<->top prin}
Let $\mathcal{G}$ be a second-countable \'etale groupoid. Then $\mathcal{G}$ is effective
if and only if $\{x \mid \mathcal{G}^x_x = \{x\}\}$ is dense in $\mathcal{G}^{(0)}$.
\end{lemma}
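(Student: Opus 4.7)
The plan is to handle the two implications separately, with the reverse being a one-bisection argument and the forward requiring a Baire-category step. Write $D := \{x \in \mathcal{G}^{(0)} \mid \mathcal{G}^x_x = \{x\}\}$. For the reverse direction, assume $D$ is dense. Lemma~\ref{lem:go open} already gives $\mathcal{G}^{(0)} \subseteq \operatorname{Iso}(\mathcal{G})^\circ$, so I only need the reverse inclusion. If $\gamma \in \operatorname{Iso}(\mathcal{G})^\circ \setminus \mathcal{G}^{(0)}$, then, using Lemma~\ref{lem:go closed} to see that $\mathcal{G} \setminus \mathcal{G}^{(0)}$ is open and Lemma~\ref{lem:bisection base} to get a base of open bisections, I can find an open bisection $B$ with $\gamma \in B \subseteq \operatorname{Iso}(\mathcal{G}) \setminus \mathcal{G}^{(0)}$. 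For each $x \in s(B)$ the unique $\eta \in B$ with $s(\eta) = x$ lies in $\mathcal{G}^x_x \setminus \{x\}$, so $s(B)$ is a nonempty open subset of $\mathcal{G}^{(0)}$ disjoint from $D$, contradicting density.

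For the forward direction I argue the contrapositive: assume a nonempty open $U \subseteq \mathcal{G}^{(0)}$ is disjoint from $D$, and aim to produce an open subset of $\operatorname{Iso}(\mathcal{G})$ disjoint from $\mathcal{G}^{(0)}$. Choose a countable base $\{B_n\}$ of open bisections for the open set $\mathcal{G} \setminus \mathcal{G}^{(0)}$, and for each $n$ set $\phi_n := r \circ (s|_{B_n})^{-1} : s(B_n) \to \mathcal{G}^{(0)}$. Hausdorffness of $\mathcal{G}^{(0)}$ makes the fixed-point set $F_n := \{x \in s(B_n) \mid \phi_n(x) = x\}$ closed in $s(B_n)$. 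For each $x \in U$, any $\gamma \in \mathcal{G}^x_x \setminus \{x\}$ lies in $\mathcal{G} \setminus \mathcal{G}^{(0)}$ and hence in some $B_n$, placing $x$ in $U \cap F_n$. Thus $U = \bigcup_n (U \cap F_n)$. Since $U$ is locally compact Hausdorff it is Baire, so some $U \cap F_n$ is not nowhere dense: there is a nonempty open $W \subseteq U$ with $W \subseteq \overline{U \cap F_n}$. The intersection $W' := W \cap s(B_n)$ is then a nonempty open set in which $U \cap F_n$ is dense; since $F_n$ is relatively closed in $s(B_n) \supseteq W'$, density forces $W' \subseteq F_n$. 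Hence $(s|_{B_n})^{-1}(W')$ is an open subset of $B_n \cap \operatorname{Iso}(\mathcal{G})$ disjoint from $\mathcal{G}^{(0)}$, so $\operatorname{Iso}(\mathcal{G})^\circ \supsetneq \mathcal{G}^{(0)}$; that is, $\mathcal{G}$ is not effective.

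The hard part will be the forward direction: merely knowing that each $x \in U$ admits some isotropy element does not immediately package that data into a single open bisection, and Baire category plus continuity of the $\phi_n$ (itself a consequence of Hausdorffness of $\mathcal{G}^{(0)}$) is what allows pointwise isotropy witnesses to be upgraded to a bisection's worth of isotropy on an open set. Choosing the $B_n$ inside $\mathcal{G} \setminus \mathcal{G}^{(0)}$ from the start, which is possible thanks to Lemma~\ref{lem:go closed}, is a small technical convenience that ensures the output bisection automatically witnesses non-effectiveness.
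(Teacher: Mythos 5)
Your proof is correct. The easy direction (density implies effectiveness) is the same as the paper's, except that the paper works with an arbitrary nonempty open subset of $\operatorname{Iso}(\mathcal{G})\setminus\mathcal{G}^{(0)}$ where you shrink to a bisection; this is immaterial. For the hard direction both arguments rest on the same two ingredients---a countable family of open bisections covering $\mathcal{G}\setminus\mathcal{G}^{(0)}$ and the Baire category theorem---but you package them dually. The paper argues directly: for each bisection $U_i$ in the cover it shows via a net argument (Lemma~\ref{lem:separate}) that $U_i\setminus\operatorname{Iso}(\mathcal{G})$ is open, hence dense in $U_i$ by effectiveness, deduces that the set $A_{U_i}$ of units whose isotropy avoids $U_i$ is open and dense, and concludes that $\bigcap_i A_{U_i}\subseteq\{x\mid\mathcal{G}^x_x=\{x\}\}$ is dense. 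You argue the contrapositive, writing a putative open set $U$ of units with nontrivial isotropy as the countable union of the relatively closed fixed-point sets $U\cap F_n$ and using the ``some piece is somewhere dense'' form of Baire to extract an open bisection consisting of isotropy. The key topological fact is literally the same in both proofs once transported along $s|_{B_n}$: your statement that $F_n$ is closed in $s(B_n)$ is exactly the statement that $B_n\cap\operatorname{Iso}(\mathcal{G})$ is closed in $B_n$. Your derivation of it from the closed diagonal of the Hausdorff space $\mathcal{G}^{(0)}$ is cleaner than the paper's net argument, though the paper needs the stronger conclusion of Lemma~\ref{lem:separate} (the separating neighbourhood $V$ with $r(UV)\cap V=\emptyset$) again later in Lemma~\ref{lem:cutdown}, so it cannot be dispensed with globally. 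One small point you should make explicit: $W'=W\cap s(B_n)$ is nonempty because the nonempty open set $W\subseteq\overline{U\cap F_n}$ must meet $U\cap F_n\subseteq s(B_n)$.
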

\begin{proof}
First suppose that $\operatorname{Iso}(\mathcal{G})^\circ \not= \mathcal{G}^{(0)}$. Then
there is an open $U \subseteq \operatorname{Iso}(\mathcal{G})$ that is not contained in
$\mathcal{G}^{(0)}$. Since $\mathcal{G}^{(0)}$ is closed, $U \setminus \mathcal{G}^{(0)}$
is open and nonempty, so we can assume that $U$ has trivial intersection with
$\mathcal{G}^{(0)}$. Since $s$ is an open map, $s(U)$ is an open subset of
$\mathcal{G}^{(0)}$ such that $\mathcal{G}^x_x \setminus \{x\} \supseteq U x$ is nonempty
for all $x \in U$. So $\{x \mid \mathcal{G}^x_x = \{x\}\}$ is not dense in
$\mathcal{G}^{(0)}$.

Now suppose that $\operatorname{Iso}(\mathcal{G})^\circ = \mathcal{G}^{(0)}$. Let $U$ be
an open bisection that does not intersect $\mathcal{G}^{(0)}$. We claim that $U \setminus
\operatorname{Iso}(\mathcal{G})$ is open. To see this, suppose that $\gamma \in U
\setminus \operatorname{Iso}(\mathcal{G})$. We have $r(\gamma) \not= s(\gamma)$, and so
Lemma~\ref{lem:separate} shows that there is an open neighbourhood $V$ of $s(\gamma)$
contained in $s(U)$ such that $r(UV) \cap V$ is empty. Now $UV$ is an open neighbourhood
of $\gamma$ in $U \setminus \operatorname{Iso}(\mathcal{G})$. Since $U \cap
\operatorname{Iso}(\mathcal{G})$ has empty interior, we see that $U \setminus
\operatorname{Iso}(\mathcal{G})$ is dense in $U$. It follows that $A_U \coloneq r(U
\setminus \operatorname{Iso}(\mathcal{G})) \cup (\mathcal{G}^{(0)} \setminus
\overline{r(U)})$ is an open dense subset of $\mathcal{G}^{(0)}$. By definition, we have
$A_U \subseteq \mathcal{G}^{(0)} \setminus \{x \in \mathcal{G}^{(0)} \mid \mathcal{G}^x_x
\cap U \not= \emptyset\}$.

Now take a countable cover $\{U_i\}$ of $\mathcal{G} \setminus \mathcal{G}^{(0)}$ by open
bisections. By the preceding paragraph, the sets $A_{U_i} \subseteq \mathcal{G}^{(0)}$
are open dense sets. So the Baire category theorem implies that $\bigcap_i A_{U_i}$ is
dense in $\mathcal{G}^{(0)}$. By construction of the $A_{U_i}$, we have $\mathcal{G}^x_x
\cap U_j = \emptyset$ for every $x \in \bigcap_i A_{U_i}$ and every $j \in \mathbb{N}$.
Since the $U_i$ cover $\mathcal{G} \setminus \mathcal{G}^{(0)}$, it follows that
$\bigcap_i A_i \subseteq \{x \in \mathcal{G}^{(0)} \mid \mathcal{G}^x_x = \{x\}\}$, and
so the latter is dense as claimed.
\end{proof}

\begin{remark}
In the literature, the condition that $\{x \in \mathcal{G}^{(0)} \mid \mathcal{G}^x_x =
\{x\}\}$ is dense has gone by many names, including ``topologically principal," and
``topologically free;" but both of these terms have also been used elsewhere for
different concepts. So one has to be careful with these terms in the literature: in any
given article, check what definition is being used.
\end{remark}

Since, in an \'etale groupoid, the unit space is a clopen subset of $\mathcal{G}$, the
map $f \mapsto f|_{\mathcal{G}^{(0)}}$ is a map from $C_c(\mathcal{G})$ to
$C_c(\mathcal{G}^{(0)})$. We regard $C_c(\mathcal{G}^{(0)})$ as an abelian subalgebra of
$C_c(\mathcal{G})$. We will see later that this restriction map extends to a faithful
conditional expectation of $C^*_r(\mathcal{G})$ onto $C_0(\mathcal{G}^{(0)})$. But to
exploit this, we need some preliminary work.

\begin{lemma}\label{lem:cutdown}
Let $\mathcal{G}$ be an effective \'etale groupoid, and suppose that $\pi :
C_c(\mathcal{G}) \to \mathcal{B}(\mathcal{H})$ is a $^*$-representation that is injective
on $C_c(\mathcal{G}^{(0)})$. For each $f \in C_c(\mathcal{G})$ and each $\varepsilon >
0$, there exists $h \in C_c(\mathcal{G}^{(0)})$ such that $\|h\| = 1$, $hfh = h
f|_{\mathcal{G}^{(0)}} h \in C_c(\mathcal{G}^{(0)})$ and $\|\pi(hfh)\| \ge
\big\|f|_{\mathcal{G}^{(0)}}\big\| - \varepsilon$. In particular,
$\big\|\pi(f|_{\mathcal{G}^{(0)}})\big\| \le \|\pi(f)\|$.
\end{lemma}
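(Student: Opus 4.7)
The plan is to find $h$ supported in a small neighborhood of a carefully chosen unit $x_0$, so that convolving $f$ by $h$ on both sides annihilates every piece of $f$ whose support lies off the unit space, while still retaining enough of $f|_{\mathcal{G}^{(0)}}$ to realise the desired norm. Two preliminary observations set the stage. First, since $\mathcal{G}^{(0)}$ is clopen in $\mathcal{G}$ (Lemmas~\ref{lem:go closed}~and~\ref{lem:go open}), refining the partition-of-unity argument of Lemma~\ref{lem:bisection span} yields a decomposition $f = f^0 + \sum_{j=1}^{n} f_j$ in which $f^0 \coloneq f|_{\mathcal{G}^{(0)}}$ and each $f_j$ is supported on an open bisection $U_j \subseteq \mathcal{G}\setminus \mathcal{G}^{(0)}$; set $K_j \coloneq \operatorname{supp}(f_j)$. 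Second, $\pi$ is isometric on $C_c(\mathcal{G}^{(0)})$: Proposition~\ref{prp:rep bound} makes $\pi|_{C_c(\mathcal{G}^{(0)})}$ contractive in $\|\cdot\|_\infty$, so it extends to a $C^*$-homomorphism of $C_0(\mathcal{G}^{(0)})$ whose kernel must be $C_0(U)$ for some open $U$; injectivity on $C_c(\mathcal{G}^{(0)})$ forces $U = \emptyset$, hence the extension is isometric.

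Next I would pick $x_0 \in \mathcal{G}^{(0)}$ satisfying $|f^0(x_0)| \ge \|f^0\|_\infty - \varepsilon$ and $\mathcal{G}^{x_0}_{x_0} = \{x_0\}$. The first condition cuts out a nonempty open subset of $\mathcal{G}^{(0)}$ (vacuous if $f^0 = 0$), and effectiveness combined with Lemma~\ref{lem:effective<->top prin} makes the trivial-isotropy locus dense, so such an $x_0$ exists. For each $j$ I then construct a neighborhood $V_j$ of $x_0$ ensuring $h * f_j * h = 0$ for every $h \in C_c(\mathcal{G}^{(0)})$ with $\operatorname{supp}(h) \subseteq V_j$. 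If $x_0 \notin s(K_j)$, take $V_j$ to be any neighborhood of $x_0$ disjoint from the closed set $s(K_j)$. Otherwise, the bisection property of $U_j$ gives a unique $\gamma_j \in U_j$ with $s(\gamma_j) = x_0$; since $U_j \cap \mathcal{G}^{(0)} = \emptyset$, the identity $\mathcal{G}^{x_0}_{x_0} = \{x_0\}$ forces $r(\gamma_j) \ne s(\gamma_j)$, and Lemma~\ref{lem:separate} supplies $V_j$ with $r(U_j V_j) \cap V_j = \emptyset$. Any $\eta \in K_j$ with $s(\eta) \in V_j$ must then lie in $U_j V_j$, so $r(\eta) \notin V_j$; Lemma~\ref{lem:bisection conv} then gives $(h * f_j * h)(\eta) = h(r(\eta)) f_j(\eta) h(s(\eta)) = 0$.

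Finally set $V \coloneq \bigcap_j V_j$ and use Urysohn to choose $h \in C_c(\mathcal{G}^{(0)})$ with $0 \le h \le 1$, $h(x_0) = 1$, and $\operatorname{supp}(h) \subseteq V$; then $\|h\| = \|h\|_\infty = 1$. By construction $hfh = h f^0 h$, which is the pointwise product $h \cdot f^0 \cdot h \in C_c(\mathcal{G}^{(0)})$, so isometry of $\pi$ on $C_c(\mathcal{G}^{(0)})$ gives
\[
\|\pi(hfh)\| = \|h f^0 h\|_\infty \ge h(x_0)^2 |f^0(x_0)| \ge \|f^0\|_\infty - \varepsilon = \|\pi(f^0)\| - \varepsilon.
\]
The ``in particular'' clause follows from $\|\pi(hfh)\| \le \|\pi(h)\|\,\|\pi(f)\|\,\|\pi(h)\| \le \|\pi(f)\|$ and letting $\varepsilon \to 0$. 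The main obstacle is the simultaneous cutdown across all $n$ bisection summands at once; trivial isotropy at $x_0$ is precisely the device that reduces every nontrivial case to a single application of Lemma~\ref{lem:separate}.
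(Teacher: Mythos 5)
Your proposal is correct and follows essentially the same route as the paper's proof: decompose $f - f|_{\mathcal{G}^{(0)}}$ into pieces supported on bisections disjoint from $\mathcal{G}^{(0)}$, choose a unit $x_0$ with trivial isotropy where $|f|_{\mathcal{G}^{(0)}}|$ is nearly maximal (via Lemma~\ref{lem:effective<->top prin}), kill each off-diagonal piece using Lemma~\ref{lem:separate}, and cut down by a Urysohn function supported in the intersection of the resulting neighbourhoods. Your explicit justification that $\pi$ is isometric on $C_c(\mathcal{G}^{(0)})$ is a welcome elaboration of a step the paper leaves implicit, but the argument is otherwise the same.
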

\begin{proof}
Fix $f \in C_c(\mathcal{G})$, and $\varepsilon > 0$. It suffices to show that $\|\pi(f)\|
\ge \big\|f|_{\mathcal{G}^{(0)}}\big\| - \varepsilon$.

Since $\pi$ is injective on $C_c(\mathcal{G}^{(0)})$, we have
$\big\|\pi(f|_{\mathcal{G}^{(0)}})\big\| = \|f_{\mathcal{G}^{(0)}}\|_\infty$. Thus, since
$\mathcal{G}$ is effective, Lemma~\ref{lem:effective<->top prin} shows that there exists
$x \in \mathcal{G}^{(0)}$ such that $f(x) \ge \big\|f|_{\mathcal{G}^{(0)}}\big\| -
\varepsilon$ and $\mathcal{G}^x_x = \{x\}$.

Let $f_0 \coloneq f|_{\mathcal{G}^{(0)}}$. Then $f - f_0 \in C_c(\mathcal{G})$, and so
Lemma~\ref{lem:bisection span}, shows that we can write $f - f_0 = \sum^n_{i=1} f_i$
where each $f_i$ is supported on a bisection $U_i$ that does not intersect
$\mathcal{G}^{(0)}$. For each $i \not= 0$ such that $x \not\in
s(\operatorname{supp}(f_i))$, choose an open neighbourhood $V_i$ of $x$ in
$\mathcal{G}^{(0)}$ such that $V_i \cap s(\operatorname{supp}(f_i)) = \emptyset$. Then
for any $h \in C_c(V_i)$ we have $h f_i h = 0$. For each $i \not= 0$ such that $x \in
s(\operatorname{supp}(f_i))$, the unique element $\gamma$ of $U_i x$ belongs to
$\mathcal{G}_x \setminus \{x\}$. Since $\mathcal{G}^x_x = \{x\}$, we deduce that
$r(\gamma) \not= x$. By Lemma~\ref{lem:separate}, we can choose a neighbourhood $V_i$ of
$x$ with $V_i \subseteq s(U_i)$ and $r(U_i V_i) \cap V_i = \emptyset$. In particular,
$r(\operatorname{supp}(f_i)V_i) \cap V_i = \emptyset$. Hence Lemma~\ref{lem:bisection
conv} shows that for any $h \in C_c(V_i)$ we have $h f_i h = 0$.

Now let $V \coloneq \bigcap^n_{i=1} V_i$. Then $V$ is an open neighbourhood of $x$ and so
we can choose $h \in C_c(V)$ such that $h(x) = 1$ and $\|h\|_\infty = 1$. Since $\pi$ is
injective on $C_c(\mathcal{G}^{(0)})$, it is isometric on $C_c(\mathcal{G}^{(0)})$. We
have $h \in C_c(V_i)$ for each $i$, and so $h f_i h = 0$ for $i \not= 0$ by choice of the
$V_i$. Hence $hfh = hf_0h$, giving
\[
\|\pi(f)\|
    \ge \|\pi(hfh)\|
    = \|\pi(h f_0 h)\|
    = \|h f_0 h\|
    \ge (hf_0h)(x)
    = f(x)
    \ge \|\pi(f_0)\| - \varepsilon. \qedhere
\]
\end{proof}

\begin{proposition}\label{prp:FCE}
Let $\mathcal{G}$ be an \'etale groupoid. There is a faithful conditional expectation
$\Phi : C^*_r(\mathcal{G}^{(0)}) \to C_0(\mathcal{G}^{(0)})$ such that $\Phi(f) =
f|_{\mathcal{G}^{(0)}}$ for all $f \in C_c(\mathcal{G})$. We have $j(\Phi(a)) =
j(a)|_{\mathcal{G}^{(0)}}$ for all $a \in C^*_r(\mathcal{G})$.
\end{proposition}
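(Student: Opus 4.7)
The natural plan is to define $\Phi$ directly through the injection $j : C^*_r(\mathcal{G}) \to C_0(\mathcal{G})$ from Proposition~\ref{prp:j map}, by setting $\Phi(a) \coloneq j(a)|_{\mathcal{G}^{(0)}}$, viewed as an element of $C_0(\mathcal{G}^{(0)})$. Since $\mathcal{G}^{(0)}$ is clopen in $\mathcal{G}$ (open by Lemma~\ref{lem:go open}, closed by Lemma~\ref{lem:go closed}), extending by zero embeds $C_0(\mathcal{G}^{(0)})$ as a $C^*$-subalgebra of $C^*_r(\mathcal{G})$, so $\Phi$ can be regarded as a map into $C^*_r(\mathcal{G})$. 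Linearity is obvious; $\|\Phi(a)\|_\infty \le \|j(a)\|_\infty \le \|a\|_r$ shows $\Phi$ is contractive; and since $j(f) = f$ for $f \in C_c(\mathcal{G})$, we get $\Phi(f) = f|_{\mathcal{G}^{(0)}}$ on the dense subalgebra. On $C_c(\mathcal{G}^{(0)}) \subseteq C_0(\mathcal{G}^{(0)})$ this gives $\Phi|_{C_0(\mathcal{G}^{(0)})} = \operatorname{id}$, so $\Phi$ is a norm-one idempotent.

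To upgrade $\Phi$ to a conditional expectation, I would either invoke Tomiyama's theorem (a contractive idempotent onto a $C^*$-subalgebra is automatically a positive $B$-bimodule map), or verify the bimodule property directly on $C_c(\mathcal{G})$ using Lemma~\ref{lem:bisection conv}: for $h_1, h_2 \in C_c(\mathcal{G}^{(0)})$ and $f \in C_c(\mathcal{G})$, the formulas $(h_1*f)(\gamma) = h_1(r(\gamma))f(\gamma)$ and $(f*h_2)(\gamma) = f(\gamma)h_2(s(\gamma))$ give $(h_1*f*h_2)(x) = h_1(x)f(x)h_2(x) = (h_1 \Phi(f) h_2)(x)$ for $x \in \mathcal{G}^{(0)}$, and the bimodule identity then extends by continuity. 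The final claim $j(\Phi(a)) = j(a)|_{\mathcal{G}^{(0)}}$ is then immediate: on $C_0(\mathcal{G}^{(0)}) \subseteq C^*_r(\mathcal{G})$, $j$ agrees with the extension-by-zero inclusion into $C_0(\mathcal{G})$.

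The main substantive point is faithfulness. Suppose $a \in C^*_r(\mathcal{G})_+$ with $\Phi(a) = 0$; write $a = b^*b$. For any $x \in \mathcal{G}^{(0)}$, the defining formula in Proposition~\ref{prp:j map} gives
\[
0 = \Phi(a)(x) = j(b^*b)(x) = \bigl(\pi_x(b^*b)\delta_x \bigm| \delta_x\bigr) = \|\pi_x(b)\delta_x\|^2,
\]
so $\pi_x(b)\delta_x = 0$ for every unit $x$. To promote this to $\pi_x(b) = 0$, I use the intertwining unitaries from Proposition~\ref{prp:reg reps}: for $\gamma \in \mathcal{G}_x$ one has $\delta_\gamma = U_{\gamma^{-1}}\delta_{r(\gamma)}$ and $\pi_x = U_{\gamma^{-1}}\pi_{r(\gamma)}U_{\gamma^{-1}}^*$, whence
\[
\pi_x(b)\delta_\gamma = U_{\gamma^{-1}}\pi_{r(\gamma)}(b)\delta_{r(\gamma)} = 0.
\]
Since the $\delta_\gamma$ form an orthonormal basis of $\ell^2(\mathcal{G}_x)$, this forces $\pi_x(b) = 0$ for every $x$; hence $\bigoplus_x \pi_x(b) = 0$ and, by definition of $C^*_r(\mathcal{G})$, $b = 0$ and so $a = 0$.

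I do not expect any serious obstacle here: the injection $j$ does essentially all the work, with the intertwining step in the faithfulness argument being the only place where something beyond bookkeeping is needed. The only minor care needed is to read $C^*_r(\mathcal{G}^{(0)})$ in the statement as the intended $C^*_r(\mathcal{G})$.
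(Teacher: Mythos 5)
Your proof is correct and follows essentially the same route as the paper's: the expectation is restriction to the (clopen) unit space, realised via the injection $j$ of Proposition~\ref{prp:j map}, upgraded to a conditional expectation by Tomiyama's theorem (the paper cites \cite[Theorem~II.6.10.2]{Blackadar}), with faithfulness obtained by using the intertwining unitaries of Proposition~\ref{prp:reg reps} to relate diagonal matrix coefficients of $\pi_x(a^*a)$ at arbitrary $\delta_\gamma$ to values of $j(a^*a)$ at units. The only differences are cosmetic: the paper defines $\Phi$ on $C_c(\mathcal{G})$ first and extends by continuity, and states the faithfulness argument contrapositively.
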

\begin{proof}
Proposition~\ref{prp:j map} shows that $\big\|f|_{\mathcal{G}^{(0)}}\big\|_\infty \le
\|f\|_r$ for every $f \in C_c(\mathcal{G})$. By Corollary~\ref{cor:norm relation}, we
have $\big\|f|_{\mathcal{G}^{(0)}}\big\|_\infty = \big\|f|_{\mathcal{G}^{(0)}}\big\|_r$,
and we deduce that $f \mapsto f|_{\mathcal{G}^{(0)}}$ is a norm-decreasing idempotent
linear map from $C_c(\mathcal{G})$ to $C_0(\mathcal{G}^{(0)})$. It therefore extends to a
idempotent linear map $\Phi$ of norm~1 of $C^*_r(\mathcal{G})$ onto
$C_0(\mathcal{G}^{(0)})$. By \cite[Theorem~II.6.10.2]{Blackadar}, this $\Phi$ is a
conditional expectation. Since $j(\Phi(f)) = j(f|_{\mathcal{G}^{(0)}})$ for $f \in
C_c(\mathcal{G})$, continuity of $j$ and $\Phi$ give $j(\Phi(a)) =
j(a)|_{\mathcal{G}^{(0)}}$ for all $a \in C^*_r(\mathcal{G})$.

To see that $\Phi$ is faithful, suppose that $a \not= 0$. Then there exist $x \in
\mathcal{G}^{(0)}$ and $\gamma \in \mathcal{G}_x$ such that
$\big(\pi_x(a^*a)\delta_\gamma \mid \delta_\gamma\big) \not= 0$. Applying the unitary
equivalence between $\pi_x$ and $\pi_{r(\gamma)}$ obtained in Proposition~\ref{prp:reg
reps}, we see that $\big(\pi_{r(\gamma)}(a^*a)\delta_{r(\gamma)} \mid
\delta_{r(\gamma)}\big) \not= 0$. That is, $j(a^*a)(r(\gamma)) \not= 0$. Hence
$j(\Phi(a^*a)) = \Phi(a^*a)|_{\mathcal{G}^{(0)}} \not= 0$, and we conclude that
$\Phi(a^*a) \not= 0$.
\end{proof}

We can now prove our main theorem for this section.

\begin{theorem}\label{thm:uniqueness}
Let $\mathcal{G}$ be an effective \'etale groupoid. If $\phi : C^*_r(\mathcal{G}) \to A$
is a $C^*$-homomorphism that is injective on $C_0(\mathcal{G}^{(0)})$, then it is
injective on all of $C^*_r(\mathcal{G})$.
\end{theorem}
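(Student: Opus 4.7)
The plan is to show that $\|\Phi(b)\| \le \|\phi(b)\|$ for every positive $b \in C^*_r(\mathcal{G})$, where $\Phi$ is the faithful conditional expectation from Proposition~\ref{prp:FCE}. Granting this, suppose $\phi(a) = 0$; then $\phi(a^*a) = 0$, so $\|\Phi(a^*a)\| \le 0$ and hence $\Phi(a^*a) = 0$. Since $a^*a \ge 0$ and $\Phi$ is faithful, this forces $a^*a = 0$ and therefore $a = 0$.

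To establish the key inequality, I would first reduce to the Hilbert-space setting of Lemma~\ref{lem:cutdown}: embed $A$ faithfully in some $\mathcal{B}(\mathcal{H})$, so that composing with $\phi$ gives a $^*$-representation of $C^*_r(\mathcal{G})$, whose restriction to $C_c(\mathcal{G})$ is a $^*$-representation whose further restriction to $C_c(\mathcal{G}^{(0)})$ is injective (because $\phi$ is injective on $C_0(\mathcal{G}^{(0)})$). Now fix a positive $b$ and $\varepsilon > 0$, and choose a self-adjoint $f \in C_c(\mathcal{G})$ with $\|b - f\| < \varepsilon$ (pick any $g \in C_c(\mathcal{G})$ within $\varepsilon/2$ of $b$ and set $f = (g + g^*)/2$). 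Lemma~\ref{lem:cutdown} then produces $h \in C_c(\mathcal{G}^{(0)})$ with $\|h\|_\infty = 1$ and
\[
\|\phi(hfh)\| \ge \|f|_{\mathcal{G}^{(0)}}\|_\infty - \varepsilon = \|\Phi(f)\| - \varepsilon.
\]
Since $\|hbh - hfh\| \le \|b - f\| < \varepsilon$ and $\|\Phi(b) - \Phi(f)\| \le \|b - f\| < \varepsilon$ (as $\Phi$ is contractive), two applications of the triangle inequality give $\|\phi(hbh)\| \ge \|\Phi(b)\| - 3\varepsilon$, while on the other hand $\|\phi(hbh)\| \le \|h\|^2 \|\phi(b)\| = \|\phi(b)\|$. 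Letting $\varepsilon \to 0$ yields $\|\Phi(b)\| \le \|\phi(b)\|$.

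The main obstacle is already packaged into Lemma~\ref{lem:cutdown}, where effectiveness is really doing its job: Lemma~\ref{lem:effective<->top prin} supplies a unit $x$ with trivial isotropy where $|f|_{\mathcal{G}^{(0)}}|$ is nearly maximal, and Lemma~\ref{lem:separate} lets one pick a neighbourhood $V$ of $x$ so that compressing by any $h \in C_c(V)$ annihilates all the off-diagonal bisection pieces in a decomposition of $f - f|_{\mathcal{G}^{(0)}}$, leaving only the diagonal part to witness the norm. After that preparatory work, the argument sketched above is a clean density/approximation step using only that $\Phi$ is contractive and that $\|\phi(hbh)\|$ is controlled by $\|\phi(b)\|$.
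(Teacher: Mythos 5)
Your proof is correct and follows essentially the same route as the paper: both arguments rest on Lemma~\ref{lem:cutdown} together with the faithful conditional expectation $\Phi$ of Proposition~\ref{prp:FCE}. The only difference is cosmetic---the paper uses the inequality $\|\phi(f)\| \ge \|\phi(f|_{\mathcal{G}^{(0)}})\|$ to build an induced map $\Psi$ on $\phi(C^*_r(\mathcal{G}))$ satisfying $\Psi \circ \phi = \phi \circ \Phi$ and then runs the standard chain of implications, whereas you inline that step into the direct norm estimate $\|\Phi(b)\| \le \|\phi(b)\|$ via an explicit $\varepsilon$-approximation of $b$ by elements of $C_c(\mathcal{G})$, which is a perfectly sound (and arguably slightly more self-contained) way to package the same idea.
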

\begin{proof}
By Lemma~\ref{lem:cutdown}, we have $\|\pi(f)\| \ge
\big\|\pi(f|_{\mathcal{G}^{(0)}})\big\|$ for all $f \in C_c(\mathcal{G})$, and so there
is a well-defined linear map $\Psi : \pi(C^*_r(\mathcal{G})) \to
\pi(C_0(\mathcal{G}^{(0)}))$ such that $\Psi(\pi(f)) = \pi(f|_{\mathcal{G}^{(0)}})$ for
$f \in C_c(\mathcal{G})$. It follows from continuity that the faithful conditional
expectation $\Phi$ of Proposition~\ref{prp:FCE} satisfies $\Psi \circ \pi = \pi \circ
\Phi$. Now we follow the standard argument, using injectivity of $\phi$ on
$C_0(\mathcal{G}^{(0)})$ at the third implication:
\[
\phi(a) = 0 \Rightarrow \Psi(\phi(a^*a)) = 0 \Rightarrow \phi(\Phi(a^*a)) = 0 \Rightarrow \Phi(a^*a) = 0 \Rightarrow a = 0.\qedhere
\]
\end{proof}

\begin{remark}
An equivalent restatement of Theorem~\ref{thm:uniqueness} is that if $\mathcal{G}$ is a
effective \'etale groupoid, then every nontrivial ideal of $C^*_r(\mathcal{G})$ has
nonzero intersection with $C_0(\mathcal{G}^{(0)})$.
\end{remark}

\section{Invariant sets, ideals, and simplicity}

Our aim in this section is to shed some light on the ideal structure of
$C^*(\mathcal{G})$, and to characterise simplicity of $C^*(\mathcal{G})$ when
$\mathcal{G}$ is an amenable \'etale groupoid. When $\mathcal{G}$ is not assumed
amenable, things become more complicated: most of the statements in the section remain
true for either the full or the reduced $C^*$-algebra, but typically not both, and care
is required.

We will say that a subset $U$ of $\mathcal{G}^{(0)}$ is \emph{invariant} if
$r(\mathcal{G} U) \subseteq U\}$. Observe that if $\mathcal{G}$ is a transformation
groupoid, then an open $U \subseteq \mathcal{G}^{(0)}$ is invariant precisely if $C_0(U)
\subseteq C_0(\mathcal{G}^{(0)})$ is an invariant ideal as in
\cite[Section~3.1.2]{WilliamsNotes}.

\begin{lemma}\label{lem:I gives inv set}
Let $\mathcal{G}$ be an \'etale groupoid, and let $I$ be an ideal of $C^*(\mathcal{G})$.
Then there is an open invariant subset $\operatorname{supp}(I) \subseteq
\mathcal{G}^{(0)}$ such that $I \cap C_0(\mathcal{G}^{(0)}) = \{f \in
C_0(\mathcal{G}^{(0)}) \mid f(x) = 0 \text{ for all } x \in \mathcal{G}^{(0)} \setminus
\operatorname{supp}(I)\} \cong C_0(\operatorname{supp}(I)$.
\end{lemma}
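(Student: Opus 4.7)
The plan is to extract $\operatorname{supp}(I)$ from the intersection $I\cap C_0(\mathcal{G}^{(0)})$, using the Gelfand correspondence, and then verify invariance by a direct computation with functions supported on bisections.

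First I would note that $I\cap C_0(\mathcal{G}^{(0)})$ is a closed two-sided ideal of the commutative $C^*$-algebra $C_0(\mathcal{G}^{(0)})$ (it is norm-closed as the intersection of two closed subspaces, and is an ideal of $C_0(\mathcal{G}^{(0)})$ because $C_0(\mathcal{G}^{(0)})$ is commutative and $I$ is an ideal of $C^*(\mathcal{G})$). By the Gelfand correspondence between closed ideals of $C_0(X)$ and open subsets of $X$, there is a unique open $V\subseteq \mathcal{G}^{(0)}$ with $I\cap C_0(\mathcal{G}^{(0)}) = \{f\in C_0(\mathcal{G}^{(0)}) \mid f \equiv 0 \text{ on } \mathcal{G}^{(0)}\setminus V\} \cong C_0(V)$. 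Define $\operatorname{supp}(I) \coloneq V$; this gives the desired identification and leaves only invariance to check.

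The key calculation for invariance is the following. Suppose $\gamma\in \mathcal{G}$ with $s(\gamma)\in \operatorname{supp}(I)$; we want $r(\gamma)\in \operatorname{supp}(I)$. Choose an open bisection $U$ containing $\gamma$ (using Lemma~\ref{lem:bisection base}) and a function $f\in C_c(U)\subseteq C_c(\mathcal{G})$ with $f(\gamma)\neq 0$ (by Urysohn, since $\mathcal{G}$ is locally compact Hausdorff). Since $s(\gamma)\in\operatorname{supp}(I)$ and $I\cap C_0(\mathcal{G}^{(0)})\cong C_0(\operatorname{supp}(I))$, pick $h\in C_c(\operatorname{supp}(I))\subseteq I$ with $h(s(\gamma))\neq 0$. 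Because $I$ is a two-sided ideal in $C^*(\mathcal{G})$, the element $f*h*f^*$ belongs to $I$. Using Lemma~\ref{lem:bisection conv} twice ($f*h$ is supported on $U$ with $(f*h)(\alpha) = f(\alpha)h(s(\alpha))$, and then $(f*h)*f^*$ is supported on $U U^{-1}\subseteq \mathcal{G}^{(0)}$), one computes that $f*h*f^*\in C_c(\mathcal{G}^{(0)})$ is supported in $r(U)$ and satisfies
\[
(f*h*f^*)(r(\alpha)) = |f(\alpha)|^2\, h(s(\alpha)) \qquad\text{for all } \alpha\in U.
\]
Evaluating at $\alpha=\gamma$ gives $(f*h*f^*)(r(\gamma)) = |f(\gamma)|^2 h(s(\gamma))\neq 0$. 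Since $f*h*f^*\in I\cap C_0(\mathcal{G}^{(0)}) = C_0(\operatorname{supp}(I))$, we conclude $r(\gamma)\in \operatorname{supp}(I)$, proving invariance.

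The only mildly delicate point is ensuring that the product $f*h*f^*$, computed inside $C^*(\mathcal{G})$, really is the convolution product in $C_c(\mathcal{G})$ and really does land in $C_c(\mathcal{G}^{(0)})$; but this is immediate from Corollary~\ref{cor:norm relation}, which identifies $C_c(\mathcal{G})$ with a subalgebra of $C^*(\mathcal{G})$, together with the bisection convolution formula of Lemma~\ref{lem:bisection conv}. No deep $C^*$-algebraic input is needed beyond these, so the main obstacle is purely bookkeeping: tracking supports through the two applications of the bisection convolution formula to see that $f*h*f^*$ is a continuous function on $\mathcal{G}^{(0)}$ that is nonzero at $r(\gamma)$.
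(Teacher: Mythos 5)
Your proposal is correct and follows essentially the same route as the paper: identify $I\cap C_0(\mathcal{G}^{(0)})$ with $C_0$ of an open set via the Gelfand correspondence, then prove invariance by conjugating an element of $I\cap C_0(\mathcal{G}^{(0)})$ that is nonzero at $s(\gamma)$ by a bisection-supported function through $\gamma$ and applying the convolution formula of Lemma~\ref{lem:bisection conv} to see the result lies in $I\cap C_0(\mathcal{G}^{(0)})$ and is nonzero at $r(\gamma)$. The only difference is notational (the paper normalises the bisection function to equal $1$ at $\gamma$, you keep $|f(\gamma)|^2$), which changes nothing.
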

\begin{proof}
Since $I \cap C_0(\mathcal{G}^{(0)})$ is an ideal of a commutative $C^*$-algebra, it has
the form $I \cap C_0(\mathcal{G}^{(0)}) = C_0(\operatorname{supp}(I))$ for an open set
$\operatorname{supp}(I) \subseteq \mathcal{G}^{(0)}$. We just have to show that this set
is invariant. For this, suppose that $x \in \operatorname{supp}(I)$. Choose $f \in I \cap
C_0(\mathcal{G}^{(0)})$ such that $f(x) \not= 0$. Fix $\gamma \in G_x$; we must show that
$r(\gamma) \in \operatorname{supp}(I)$. For this, take an open bisection $U$ containing
$\gamma$ and fix $h \in C_c(U)$ with $h(\gamma) = 1$. Lemma~\ref{lem:bisection conv}
shows that $hfh^*$ is supported on $U \mathcal{G}^{(0)} U^{-1} = r(U) \subseteq
\mathcal{G}^{(0)}$. So $hfh^* \in I \cap C_0(\mathcal{G}^{(0)})$. Since $(hfh)(r(\gamma))
= h(\gamma) f(x) h^*(\gamma^{-1}) = f(x) \not= 0$, we deduce that $r(\gamma) \in
\operatorname{supp}(I)$.
\end{proof}

If $U$ is an open invariant subset of $\mathcal{G}^{(0)}$, then $\mathcal{G} U$ is an
open subgroupoid of $\mathcal{G}$ and so a locally compact Hausdorff \'etale groupoid in
the relative topology. Similarly, $\mathcal{G} \setminus \mathcal{G} U$ is a closed
subgroupoid of $\mathcal{G}$, and hence again a locally compact \'etale groupoid in the
subspace topology.

\begin{proposition}\label{prp:exact seq}
Let $\mathcal{G}$ be an \'etale groupoid, and let $U$ be an open invariant subset of
$\mathcal{G}^{(0)}$. Define $W \coloneq \mathcal{G}^{(0)} \setminus U$. The inclusion
$C_c(\mathcal{G} U) \hookrightarrow C_c(\mathcal{G})$ extends to an injective
$C^*$-homomorphism $i_U : C^*(\mathcal{G} U) \hookrightarrow C^*(\mathcal{G})$. The image
$I_U \coloneq i_U(C^*(\mathcal{G} U))$ is an ideal of $C^*(\mathcal{G})$, and is
generated as an ideal by $C_c(U) \subseteq C_0(\mathcal{G}^{(0)})$. There is a
homomorphism $\pi_U : C^*(\mathcal{G}) \to C^*(\mathcal{G} W)$ satisfying $\pi_U(f) =
f|_{\mathcal{G} W}$ for all $f \in C_c(\mathcal{G})$. Moreover the sequence
\[
0 \longrightarrow C^*(\mathcal{G} U) \stackrel{i_U}{\longrightarrow} C^*(\mathcal{G}) \stackrel{\pi_U}{\longrightarrow} C^*(\mathcal{G} W) \longrightarrow 0
\]
is exact.
\end{proposition}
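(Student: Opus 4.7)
The plan is to verify the basic $*$-algebraic structure and extend $\pi_U$ and $i_U$ to $C^*$-algebra maps, then establish exactness at $C^*(\mathcal{G})$, and finally tackle the main obstacle: injectivity of $i_U$. The key groupoid-theoretic observation is that invariance of $U$ is self-dual: $r(\gamma) \in U$ if and only if $s(\gamma) \in U$, so $W$ is also invariant, $\mathcal{G} U = r^{-1}(U) = s^{-1}(U)$ is open in $\mathcal{G}$, and $\mathcal{G} W$ is closed; moreover any term $f(\alpha)g(\beta)$ contributing to $(f*g)(\gamma)$ forces $\alpha,\beta,\gamma$ into the same piece of the partition $\mathcal{G} = \mathcal{G} U \sqcup \mathcal{G} W$. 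Hence extension by zero gives a $*$-homomorphism $C_c(\mathcal{G} U) \hookrightarrow C_c(\mathcal{G})$ and restriction gives a $*$-homomorphism $C_c(\mathcal{G}) \to C_c(\mathcal{G} W)$. Any $*$-representation of $C_c(\mathcal{G})$ restricts to one of $C_c(\mathcal{G} U)$, and any $*$-representation of $C_c(\mathcal{G} W)$ pulls back through restriction to one of $C_c(\mathcal{G})$, so the universal property of $C^*(\mathcal{G})$ yields the $C^*$-algebra maps $i_U$ and $\pi_U$. Tietze extension combined with bump functions in $C_c(\mathcal{G})$ shows the restriction $C_c(\mathcal{G}) \to C_c(\mathcal{G} W)$ is surjective, so $\pi_U$ has dense and hence closed image, i.e.\ is surjective. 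Using bump functions in $C_c(U)$ and Lemma~\ref{lem:bisection conv}, one identifies $C_c(\mathcal{G} U) = C_c(U)*C_c(\mathcal{G})$ inside $C_c(\mathcal{G})$, so $I_U := i_U(C^*(\mathcal{G} U))$ is the closed ideal of $C^*(\mathcal{G})$ generated by $C_0(U)$.

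For exactness at $C^*(\mathcal{G})$, the inclusion $I_U \subseteq \ker \pi_U$ is clear from $\pi_U(f) = 0$ for $f \in C_c(\mathcal{G} U)$. For the reverse, I would build a splitting $\phi : C^*(\mathcal{G} W) \to C^*(\mathcal{G})/I_U$ by $\phi(g) := q(\tilde g)$ for any lift $\tilde g \in C_c(\mathcal{G})$ of $g \in C_c(\mathcal{G} W)$, with $q$ the quotient map. This is well-defined modulo $I_U$ since two lifts differ by an element of $C_c(\mathcal{G} U) \subseteq I_U$, and it is a $*$-homomorphism because restriction is. It extends to $C^*(\mathcal{G} W)$ by composing with a faithful representation of $C^*(\mathcal{G})/I_U$ and invoking universality. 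Since $\phi \circ \pi_U = q$ on $C_c(\mathcal{G})$, the equality holds on all of $C^*(\mathcal{G})$, so $\ker \pi_U \subseteq \ker q = I_U$.

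The main obstacle is injectivity of $i_U$, equivalently the bound $\|f\|_{C^*(\mathcal{G} U)} \le \|f\|_{C^*(\mathcal{G})}$ for $f \in C_c(\mathcal{G} U)$. The plan is to show that every nondegenerate $*$-representation $\pi : C^*(\mathcal{G} U) \to \mathcal{B}(\mathcal{H})$ extends to a $*$-representation $\tilde\pi$ of $C_c(\mathcal{G})$ agreeing with $\pi$ on $C_c(\mathcal{G} U)$. Define $\tilde\pi(f)\big(\sum_i \pi(g_i)\xi_i\big) := \sum_i \pi(f*g_i)\xi_i$ on the dense subspace $\pi(C_c(\mathcal{G} U))\mathcal{H}$. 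For $f$ supported on a bisection, set $h := f^*f \in C_c(\mathcal{G}^{(0)})$, so that $\|h\|_\infty = \|f\|_\infty^2$; the crucial identity $\pi(hg) = \bar\pi(h|_U)\pi(g)$ for $g \in C_c(\mathcal{G} U)$, where $\bar\pi$ denotes the canonical extension of $\pi|_{C_0(U)}$ to $M(C_0(U)) \supseteq C_b(U)$, follows from the strict convergence $e_\lambda h \to h|_U$ along an approximate identity $(e_\lambda) \subseteq C_c(U)$ for $C_0(U)$. Applying this identity with $v := \sum_i \pi(g_i)\xi_i$ yields
$\big\|\sum_i \pi(f*g_i)\xi_i\big\|^2 = \langle \bar\pi(h|_U) v, v\rangle \le \|f\|_\infty^2 \|v\|^2$,
which simultaneously establishes well-definedness of $\tilde\pi(f)$ (the left side vanishes when $v=0$) and the bound $\|\tilde\pi(f)\| \le \|f\|_\infty$. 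For arbitrary $f \in C_c(\mathcal{G})$ one decomposes via Lemma~\ref{lem:bisection span} and extends by linearity; verification of the $*$-algebra relations and agreement with $\pi$ on $C_c(\mathcal{G} U)$ is routine. Taking the supremum over $\pi$ then gives $\|f\|_{C^*(\mathcal{G} U)} \le \|f\|_{C^*(\mathcal{G})}$, so $i_U$ is isometric, and in particular injective.
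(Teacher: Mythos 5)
Your proposal is correct and its overall skeleton coincides with the paper's: both obtain $i_U$ and $\pi_U$ from the universal property, both prove exactness in the middle by constructing the splitting $\phi : C^*(\mathcal{G} W) \to C^*(\mathcal{G})/I_U$ from lifts, and both identify $I_U$ as the ideal generated by $C_0(U)$ via the factorisation $C_c(\mathcal{G} U) = C_c(U) * C_c(\mathcal{G} U)$. The one place where your route genuinely differs is the injectivity of $i_U$. The paper views $C^*(\mathcal{G} U)$ as a Hilbert module over itself with $\langle a,b\rangle = a^*b$, checks that left convolution by $f \in C_c(\mathcal{G})$ is adjointable, and gets the lower bound in one line from $\|\pi(f)(f^*/\|f\|)\| = \|f\|$; you instead extend each nondegenerate representation $\pi$ of $C^*(\mathcal{G} U)$ to $C_c(\mathcal{G})$ on the same Hilbert space, controlling $\|\tilde\pi(f)\|$ for $f$ on a bisection via $h = f^*f \in C_c(\mathcal{G}^{(0)})$ and the multiplier $\bar\pi(h|_U)$. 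These are two faces of the same idea (your $\tilde\pi$ is the localisation of the paper's module representation at $\pi$), but your version is more elementary in that it avoids Hilbert modules, at the cost of doing explicitly the boundedness estimate that the paper leaves implicit in the assertion that its convolution operators are adjointable on the completed module; conversely the paper's version packages the whole family of representations into a single $^*$-homomorphism. Two small points of hygiene: the difference of two lifts of $g \in C_c(\mathcal{G} W)$ vanishes on the closed set $\mathcal{G} W$ but need not have compact support inside the open set $\mathcal{G} U$, so it lies in $I_U$ only after a routine truncation argument (replace $k$ by $\operatorname{sgn}(k)\max(|k|-1/n,0)$ on each bisection), and in your norm estimate you should note that $\pi|_{C_0(U)}$ is nondegenerate (because $C_c(U)$ contains an approximate identity for $C^*(\mathcal{G} U)$) so that the extension $\bar\pi$ to $M(C_0(U))$ exists. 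Neither affects the validity of the argument.
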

\begin{proof}
The inclusion $C_c(\mathcal{G} U) \hookrightarrow C_c(\mathcal{G}) \hookrightarrow
C^*(\mathcal{G})$ is a $^*$-homomorphism, so the universal property of $C^*(\mathcal{G})$
shows that there is a homomorphism $i_U : C^*(\mathcal{G} U) \to C^*(\mathcal{G})$ as
required.

To see that the image of $i_U$ is an ideal, observe that if $f \in C_c(\mathcal{G}_U)$,
and if $g \in C_c(\mathcal{G})$ is supported on a bisection, then $\operatorname{supp}(g
* f) \subseteq \operatorname{supp}(g)\operatorname{supp}(f) \subseteq \mathcal{G} (\mathcal{G} U) = \mathcal{G} U$;
similarly (or by taking adjoints) we have $\operatorname{supp}(f * g) \subseteq
\mathcal{G}_U$. So $i_U(C_c(\mathcal{G} U))$ is an algebraic 2-sided ideal of
$C^*(\mathcal{G})$, and hence $I_U$ is an ideal by continuity. Since $C_c(U)$ contains an
approximate identity for $C^*(\mathcal{G} U)$, the ideal $I_U$ is generated as an ideal
by $C_c(U)$.

Since every $^*$-homomorphism $\pi$ of $C_c(\mathcal{G})$ into a $C^*$-algebra $B$ can be
composed with a faithful representation of $B$ to obtain a $^*$-representation of
$C_c(\mathcal{G})$ that achieves the same norm on every element, it suffices to show that
there is a $^*$-homomorphism $\pi : C_c(\mathcal{G}) \to B$ for some $C^*$-algebra $B$
such that $\|\pi(f)\| \ge \|f\|_{C^*(\mathcal{G} U)}$ for every $f \in
C_c(\mathcal{G}_u)$. To see this observe that $C_c(\mathcal{G} U)$ is an algebraic ideal
of $C_c(\mathcal{G})$, and so for each $f \in C_c(\mathcal{G})$, there is a linear map
$\pi(f) : C_c(\mathcal{G} U) \to C_c(\mathcal{G} U)$ given by $\pi(f) g = f * g$. If we
regard $C^*(\mathcal{G} U)$ as a Hilbert bimodule over itself with inner product $\langle
a, b\rangle_{C^*(\mathcal{G} U)} = a^*b$, then $\langle \pi(f) a,
b\rangle_{C^*(\mathcal{G} U)} = a^* \pi(f^*) b = \langle a, \pi(f^*)
b\rangle_{C^*(\mathcal{G} U)}$; so $\pi(f)$ is an adjointable operator on
$C^*(\mathcal{G} U)_{C^*(\mathcal{G} U)}$. From this we see that $\pi$ is a
$^*$-homomorphism into the algebra $\mathcal{L}(C^*(\mathcal{G} U)_{C^*(\mathcal{G} U)})$
of adjointable operators on $C^*(\mathcal{G} U)_{C^*(\mathcal{G} U)}$. For $f \in
C_c(\mathcal{G} U)$, we have
\[\textstyle
\|\pi(f)\| \ge \big\|\pi(f) \frac{f^*}{\|f\|}\big\| = \|f\|
\]
as required.

The map $f \mapsto f|_W$ is a $^*$-homomorphism of $C_c(\mathcal{G})$ onto
$C_c(\mathcal{G} W)$ and hence determines a $^*$-homomorphism from $C_c(\mathcal{G})$ to
$C^*(\mathcal{G} W)$; so once again the universal property gives a homomorphism $\pi :
C^*(\mathcal{G}) \to C^*(\mathcal{G}_W)$ that extends restriction of functions. Clearly
$\ker\pi$ contains $C_c(\mathcal{G} U)$ and hence the image of $i_U$. In particular,
$\pi$ induces a homomorphism $\tilde\pi : C^*(\mathcal{G})/I_U \to C^*(\mathcal{G} W)$.
To see that this homomorphism is injective, observe that since $C_c(\mathcal{G} U)
\subseteq I_U$, if $f,g \in C_c(\mathcal{G})$ satisfy $f|_{\mathcal{G} W} =
g|_{\mathcal{G} W}$, then $f - g \in I_U$. Hence there is a well-defined
$^*$-homomorphism $\phi : C_c(\mathcal{G} W) \to C^*(\mathcal{G})/I_U$ such that
$\phi(f|_W) = f + I_U$ for all $f \in C_c(\mathcal{G})$. The universal property of
$C^*(\mathcal{G} W)$ shows that $\phi$ extends to a homomorphism $\phi : C^*(\mathcal{G}
W) \to C^*(\mathcal{G})/I_U$. The image of $\phi$ contains the image of
$C_c(\mathcal{G})$ in the quotient and so $\phi$ is surjective. Since $\tilde{\pi}_U
\circ \phi$ is the identity map on $C_c(\mathcal{G} W)$ we see that $\tilde{\pi}_U \circ
\phi$ is the identity homomorphism; so surjectivity of $\phi$ ensures that
$\tilde{\pi}_U$ is injective, and therefore that $\ker(\pi_U) = I_U$.
\end{proof}

The preceding proposition holds for general groupoids, but the proof requires the
Disintegration Theorem. For reduced $C^*$-algebras, the corresponding maps $i^r_U$ and
$\pi^r_U$ between reduced $C^*$-algebras exist, $i^r_U$ is injective, $\pi^r_U$ is
surjective, and $\ker\pi^r_U$ contains the image of $i^r_U$ (these statements are all
relatively easy to prove using the properties of regular representations). But the
sequence is not necessarily exact. The first example of this was given by Skandalis
\cite[Appendix, p.~35]{Ren:ideals}, but Willet's example, Example~\ref{eg:Willett}, also
gives an instance of the failure of exactness: let $\mathcal{G}$ be Willett's groupoid.
Since $\mathcal{G}$ is a group bundle, the set $\mathbb{N} \subseteq \mathbb{N} \cup
\{\infty\} = \mathcal{G}^{(0)}$ is an open invariant subset. Since $\mathcal{G}
\mathbb{N}$ is a (discrete) bundle of finite groups, it is amenable (just take normalised
counting measure on each fibre), so $C^*(\mathcal{G} \mathbb{N}) = C^*_r(\mathcal{G}
\mathbb{N})$, and Willett's result says that $C^*(\mathcal{G}) = C^*_r(\mathcal{G})$. So
Proposition~\ref{prp:exact seq} shows that the sequence
\[
    0 \to C^*_r(\mathcal{G} \mathbb{N}) \longrightarrow C^*_r(\mathcal{G}) \longrightarrow C^*(\mathbb{F}_2) \to 0
\]
is exact. Since $C^*_r(\mathbb{F}_2)$ is a proper quotient of $C^*(\mathbb{F}_2)$, we
deduce that
\[
    0 \to C^*_r(\mathcal{G} \mathbb{N}) \longrightarrow C^*_r(\mathcal{G}) \longrightarrow C_r^*(\mathbb{F}_2) \to 0
\]
is not exact.

\smallskip

We will say that $\mathcal{G}$ is \emph{strongly effective} if $\mathcal{G} W$ is
effective for every closed invariant subset of $\mathcal{G}^{(0)}$. This is a strictly
stronger condition than effectiveness: Consider the action of $\mathbb{Z}$ on its own
1-point compactification $\mathbb{Z} \cup \{\infty\}$ determined by continuous extension
of the translation action of $\mathbb{Z}$ on itself. The resulting transformation
groupoid $\mathcal{G}$ is effective because the only point with nontrivial isotropy is
$\infty$; but $\{\infty\}$ is a closed invariant subset of $\mathcal{G}^{(0)}$ and
clearly $\mathcal{G} \{\infty\} \cong \mathbb{Z}$ is not effective.

\begin{theorem}\label{thm:ideals}
Let $\mathcal{G}$ be an amenable \'etale groupoid. The map $U \mapsto I_U$ is an
injection from the set of open invariant subsets of $\mathcal{G}^{(0)}$ to the set of
ideals of $C^*(\mathcal{G})$. It is bijective if and only if $\mathcal{G}$ is strongly
effective.
\end{theorem}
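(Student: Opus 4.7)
The injectivity of $U \mapsto I_U$ is established by showing $\operatorname{supp}(I_U) = U$ for every open invariant $U$, after which the bijectivity claim reduces, via the exact sequence of Proposition~\ref{prp:exact seq}, to an application of Theorem~\ref{thm:uniqueness} in the quotient.

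\emph{Injectivity.} Fix an open invariant $U$ and let $W = \mathcal{G}^{(0)} \setminus U$. By Proposition~\ref{prp:exact seq}, $I_U = \ker(\pi_U)$, where $\pi_U : C^*(\mathcal{G}) \to C^*(\mathcal{G} W)$ extends restriction of functions. Restricted to $C_0(\mathcal{G}^{(0)})$, the map $\pi_U$ is the composition of $f \mapsto f|_W$ with the inclusion $C_0(W) \hookrightarrow C^*(\mathcal{G} W)$, which is isometric by Corollary~\ref{cor:norm relation}. Hence $I_U \cap C_0(\mathcal{G}^{(0)}) = C_0(U)$, so $\operatorname{supp}(I_U) = U$ by Lemma~\ref{lem:I gives inv set}. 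Consequently $U \neq V$ forces $I_U \neq I_V$.

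\emph{Strong effectiveness implies bijectivity.} Let $I$ be any ideal of $C^*(\mathcal{G})$, set $U = \operatorname{supp}(I)$, and $W = \mathcal{G}^{(0)} \setminus U$. Since $C_0(U) = I \cap C_0(\mathcal{G}^{(0)}) \subseteq I$ and $C_0(U)$ generates $I_U$ as an ideal (Proposition~\ref{prp:exact seq}), we have $I_U \subseteq I$. I claim $I = I_U$. The quotient $C^*(\mathcal{G})/I_U \cong C^*(\mathcal{G} W)$ coincides with $C^*_r(\mathcal{G} W)$ because $\mathcal{G} W$, being a closed subgroupoid of the amenable groupoid $\mathcal{G}$, is itself amenable. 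Strong effectiveness makes $\mathcal{G} W$ effective, so the remark after Theorem~\ref{thm:uniqueness} tells us that every nonzero ideal of $C^*_r(\mathcal{G} W)$ meets $C_0(W)$ nontrivially. Were $I/I_U$ nonzero, one could pick nonzero $g \in (I/I_U) \cap C_0(W)$, lift it by Tietze extension to $\tilde g \in C_0(\mathcal{G}^{(0)})$, and correct by an element of $\ker(\pi_U) = I_U \subseteq I$ so that the result lies in $I$; this places $\tilde g$ in $I \cap C_0(\mathcal{G}^{(0)}) = C_0(U)$, contradicting $\tilde g|_W = g \neq 0$. So $I = I_U$.

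\emph{Bijectivity implies strong effectiveness.} We argue by contrapositive. Suppose some closed invariant $W \subseteq \mathcal{G}^{(0)}$ has $\mathcal{H} := \mathcal{G} W$ non-effective (it is again amenable as a closed subgroupoid). The plan is to construct a nonzero ideal $J \triangleleft C^*(\mathcal{H}) = C^*_r(\mathcal{H})$ with $J \cap C_0(W) = \{0\}$; then with $U = \mathcal{G}^{(0)}\setminus W$, the ideal $\pi_U^{-1}(J)$ of $C^*(\mathcal{G})$ has the same support $U$ as $I_U$ (since $\pi_U^{-1}(J) \cap C_0(\mathcal{G}^{(0)}) = \{f : f|_W \in J \cap C_0(W) = 0\} = C_0(U)$), yet strictly contains $I_U$ because $\pi_U^{-1}(J)/I_U \cong J \neq 0$; by the already-established injectivity, this ideal is not in the image of $V \mapsto I_V$. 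To construct $J$, choose $\gamma \in \operatorname{Iso}(\mathcal{H})^\circ \setminus \mathcal{H}^{(0)}$ and an open bisection $B \ni \gamma$ contained in $\operatorname{Iso}(\mathcal{H})^\circ$; then $s|_B = r|_B$ and each fibre group $\mathcal{H}^{s(\gamma)}_{s(\gamma)}$ is nontrivial. Use a character of the abelianisation of this fibre group that separates $\gamma$ from the unit to induce a representation of $C^*(\mathcal{H})$ which is faithful on $C_0(W)$ (since characters of groups are faithful on scalars) but whose kernel contains a nonzero element built from the difference between $1_B$ and the corresponding scalar multiple of $1_{s(B)}$; take $J$ to be this kernel.

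The main obstacle is this last construction of $J$: producing a concrete representation of $C^*(\mathcal{H})$ that is faithful on $C_0(\mathcal{H}^{(0)})$ but detects the nontrivial isotropy in $\operatorname{Iso}(\mathcal{H})^\circ \setminus \mathcal{H}^{(0)}$. In the literature this is typically achieved through the structure theory of the isotropy group bundle $C^*(\operatorname{Iso}(\mathcal{H})^\circ)$ and its characters; alternatively, in the transformation-groupoid case, it follows from well-known facts about ideals of crossed products by non-effective actions.
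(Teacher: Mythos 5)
Your first two steps are correct and essentially coincide with the paper's argument: injectivity follows from $I_U \cap C_0(\mathcal{G}^{(0)}) = C_0(U)$, and the forward implication is exactly the paper's use of Proposition~\ref{prp:exact seq} to identify $C^*(\mathcal{G})/I_{\operatorname{supp}(I)}$ with $C^*(\mathcal{G}W) = C^*_r(\mathcal{G}W)$ (amenability passing to the closed subgroupoid) followed by Theorem~\ref{thm:uniqueness}; your lift-and-correct phrasing is just the contrapositive form of that uniqueness theorem and is fine.

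The genuine gap is in the converse, and it is exactly the step you flag as ``the main obstacle'': you never actually produce the ideal $J \triangleleft C^*(\mathcal{H})$ with $J \neq 0$ and $J \cap C_0(W) = \{0\}$. Worse, the route you sketch cannot work in general: you propose to separate $\gamma \in \operatorname{Iso}(\mathcal{H})^{\circ} \setminus \mathcal{H}^{(0)}$ from the unit using a character of the abelianisation of the fibre group $\mathcal{H}^{s(\gamma)}_{s(\gamma)}$, but that abelianisation may be trivial (take $\mathcal{H}$ to be a bundle of perfect groups over $W$ --- a non-effective groupoid with no nontrivial characters on any fibre); and even when characters exist, a character chosen at one unit need not cohere across the bisection $B$ to give a well-defined representation of $C_c(\mathcal{H})$. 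The paper's construction goes the opposite way: rather than separating $\gamma$ from the unit, it \emph{collapses} the isotropy. For each $x$ one defines $\epsilon_x : C_c(\mathcal{H}) \to \mathcal{B}(\ell^2([x]))$ by $\epsilon_x(f)\delta_y = \sum_{\gamma \in \mathcal{H}_y} f(\gamma)\delta_{r(\gamma)}$ (the representation induced from the \emph{trivial} representation of the isotropy, which always exists), checks directly that this is a $^*$-representation, and sets $\psi = \bigoplus_x \epsilon_x$, which is manifestly faithful on $C_0(W)$. Non-effectiveness gives an open bisection $U \subseteq \operatorname{Iso}(\mathcal{H}) \setminus \mathcal{H}^{(0)}$, and for $f \in C_c(r(U))$ the function $\tilde f = f \circ r \in C_c(U)$ satisfies $\epsilon_x(\tilde f) = \epsilon_x(f)$ for every $x$, so $f - \tilde f$ is an explicit nonzero element of $J := \ker\psi$. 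Your final reduction (that $\pi_U^{-1}(J)$ has support $U$ yet differs from $I_U$, hence is not in the range) is correct once $J$ is in hand, and matches the paper's conclusion; but as written the proof of the converse direction is incomplete.
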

\begin{proof}
If $U,V$ are distinct open invariant sets, then $I_U$ and $I_V$ are distinct because $I_U
\cap C_0(\mathcal{G}^{(0)}) = C_0(U) \not= C_0(V) = I_V \cap C_0(\mathcal{G}^{(0)})$. So
$U \mapsto I_U$ is injective.

First suppose that $\mathcal{G}$ is strongly effective. Fix an ideal $I$ of
$C^*(\mathcal{G})$. We must show that $I = I_{\operatorname{supp}(I)}$. To see this,
first observe that Proposition~\ref{prp:exact seq} shows that
$I_{\operatorname{supp}(I)}$ is generated as an ideal by $C_c(\operatorname{supp}(I))$,
which is a subset of $I \cap C_0(\mathcal{G}^{(0)})$ by definition. So
$I_{\operatorname{supp}(I)} \subseteq I$. Thus the quotient map induces a homomorphism
$\tilde{q} : C^*(\mathcal{G})/I_{\operatorname{supp}(I)} \to C^*(\mathcal{G})/I$. Let $W
\coloneq \mathcal{G}^{(0)} \setminus \operatorname{supp}(I)$. Proposition~\ref{prp:exact
seq} gives an isomorphism $\tilde{\pi} : C^*(\mathcal{G})/I_{\operatorname{supp}(I)}
\cong C^*(\mathcal{G} W)$ extending restriction of functions on $C_c(\mathcal{G})$. Since
$I \cap C_0(\mathcal{G}^{(0)}) = C_0(\operatorname{supp}(I))$, we see that
$\tilde{q}\circ \tilde{\pi}$ is injective on $C_0(W)$. Since $\mathcal{G}$ is strongly
effective, $\mathcal{G} W$ is effective, and it then follows from
Theorem~\ref{thm:uniqueness} that $\tilde{\pi}$ is injective. So $I =
I_{\operatorname{supp}{I}}$.

Now suppose that $\mathcal{G}$ is not strongly effective. Fix a closed invariant set $W
\subseteq \mathcal{G}^{(0)}$ such that $\mathcal{H} \coloneq \mathcal{G} W$ is not
effective. We will construct a representation $\psi$ of $C^*(\mathcal{H})$ such that
$\psi$ is faithful on $C_0(\mathcal{H}^{(0)})$ but is not faithful on $C^*(\mathcal{H})$.

Recall that the \emph{orbit} $[x]$ of $x \in \mathcal{H}^{(0)}$ is the set $\{r(\gamma)
\mid \gamma \in \mathcal{H}_x\}$. For each $x \in \mathcal{H}^{(0)}$, there is a linear
map $\epsilon_x : C_c(\mathcal{H}) \to \mathcal{B}(\ell^2([x]))$ given by
$\epsilon_x(f)\delta_y = \sum_{\gamma \in \mathcal{H}_y} f(\gamma) \delta_{r(\gamma)}$.
For $f,g \in C_c(\mathcal{H})$ and $y,z \in [x]$, we have
\[
\big(\epsilon_x(f)\epsilon_x(g)\delta_y \mid \delta_z\big)
    = \sum_{\beta \in \mathcal{H}_y} g(\beta) \big(\epsilon_x(f)\delta_{r(\alpha)} \mid \delta_z\big)
    = \sum_{\beta \in \mathcal{H}_y, \alpha \in \mathcal{H}_{r(\beta)}^z} f(\alpha)g(\beta).
\]
On the other hand,
\[
\big(\epsilon_x(f * g)\delta_y \mid \delta_z\big)
    = \sum_{\gamma \in \mathcal{H}^z_y} (f * g)(\gamma)
    = \sum_{\gamma \in \mathcal{H}^z_y} \sum_{\alpha\beta = \gamma} f(\alpha)g(\beta)
    = \sum_{\gamma \in \mathcal{H}^z_y, \alpha \in \mathcal{H}^z} f(\alpha)g(\alpha^{-1}\gamma).
\]
The map $(\alpha, \beta) \mapsto (\alpha, \alpha\beta)$ is a bijection from $\{(\alpha,
\beta) \mid \beta \in \mathcal{H}_y, \alpha\in \mathcal{H}_{r(\beta)}^z\}$ to $\{(\alpha,
\gamma) \mid \gamma \in \mathcal{H}^z_y, \alpha \in \mathcal{H}^z\}$ (the inverse is
$(\alpha,\gamma) \mapsto (\alpha, \alpha^{-1}\gamma)$). So the two sums are equal, and
therefore $\epsilon_x$ is multiplicative. Likewise,
\[
\big(\epsilon_x(f^*) \delta_y \mid \delta_z\big)
    = \sum_{\gamma \in \mathcal{H}^z_y} f^*(\gamma)
    = \sum_{\gamma \in \mathcal{H}^z_y} \overline{f(\gamma^{-1})}
    = \sum_{\eta \in \mathcal{H}^y_z} \overline{f(\eta)}
    = \big(\delta_y \mid \epsilon_x(g)\delta_z\big).
\]
So $\epsilon_x$ is a $^*$-representation of $C_c(\mathcal{H})$. The universal property of
$C^*(\mathcal{H})$ therefore shows that $\epsilon_x$ extends to a representation of
$C^*(\mathcal{H})$. Let $\psi \coloneq \bigoplus_{x \in \mathcal{H}^{(0)}} \epsilon_x$.
If $f \in C_0(\mathcal{H}^{(0)})\setminus \{0\}$, say $f(x) \not= 0$, then $\epsilon_x(f)
\delta_x = f(x)\delta_x \not= 0$, so $\psi$ is faithful on $C_0(\mathcal{H}^{(0)})$.

Since $\mathcal{H}$ is not effective, there is an open bisection $U$ contained in
$\operatorname{Iso}(\mathcal{H}) \setminus \mathcal{H}^{(0)}$. Fix $f \in C_c(r(U))$, and
define $\tilde{f} \in C_c(U)$ by
\[
\tilde{f}(\gamma) = f(r(\gamma))\quad\text{ for all $\gamma \in U$.}
\]
Since $U$ and $r(U)$ are open in $\mathcal{H}$, we can regard $f$ and $\tilde{f}$ as
elements of $C_c(\mathcal{H})$. Since $U$ is contained in
$\operatorname{Iso}(\mathcal{H})$, for $x \in \mathcal{H}^{(0)}$ and $y \in [x]$, we have
\begin{align*}
\epsilon_x(\tilde{f}) \delta_y
    &= \begin{cases}
        \tilde{f}(Uy) \delta_y &\text{ if $y \in r(U)$} \\
        0 &\text{ otherwise}
    \end{cases} \\
    &= \begin{cases}
        f(y) \delta_y &\text{ if $y \in r(U)$}\\
        0 &\text{ otherwise}
    \end{cases}\\
    &= \epsilon_x(f) \delta_y.
\end{align*}
So $f - \tilde{f} \in \ker(\psi)$, and $f - \tilde{f} \not= 0$ because
$\operatorname{supp}(f) \subseteq \mathcal{G}^{(0)}$ and $\operatorname{supp}(\tilde{f})
\subseteq U \subseteq \mathcal{G} \setminus \mathcal{G}^{(0)}$.

Now consider the ideals $I_{\mathcal{G}^{(0)} \setminus W}$ and $\ker(\psi \circ
\pi_{\mathcal{G}^{(0)} \setminus W})$. We have just seen that they have identical
intersection with $C_0(\mathcal{G}^{(0)})$ (namely $C_0(\mathcal{G}^{(0)} \setminus W)$),
but are not equal. So $U \mapsto I_U$ is not a bijection.
\end{proof}

\begin{remark}
\begin{itemize}
\item Whether or not $\mathcal{G}$ is amenable or strongly effective, the map $U
    \mapsto I_U$ is an injection from the collection of open invariant sets of
    $\mathcal{G}^{(0)}$ to the space of ideals of $C^*(\mathcal{G})$.
\item We could replace amenability of $\mathcal{G}$ with the requirement that
    $C^*(\mathcal{G} W) = C^*_r(\mathcal{G} W)$ for every closed invariant $W$ in
    both Proposition~\ref{prp:exact seq} and Theorem~\ref{thm:ideals}. Conversely, if
    there exists a closed invariant set $W$ such that $C^*(\mathcal{G} W) \not=
    C^*_r(\mathcal{G} W)$, then $U \mapsto I_U$ is not surjective because the kernel
    of $\pi_r^{\mathcal{G} W} \circ \pi_{\mathcal{G}^{(0)} \setminus W}$ is not in
    its range.
\item In the non-amenable case, the map $U \mapsto \ker(\pi^r_U)$ remains an
    injection from open invariant sets to ideals of the reduced $C^*$-algebra. It is
    possible for this map to be bijective even if $\mathcal{G}$ is not effective: for
    example, the reduced $C^*$-algebra of the free group is simple.
\end{itemize}
\end{remark}

We say that a groupoid $\mathcal{G}$ is \emph{minimal} if for every $x \in
\mathcal{G}^{(0)}$ the orbit $[x]$ is dense in $\mathcal{G}^{(0)}$.

\begin{lemma}\label{lem:minimal}
Let $\mathcal{G}$ be a topological groupoid. Then $\mathcal{G}$ is minimal if and only if
the only open invariant subsets of $\mathcal{G}^{(0)}$ are $\emptyset$ and
$\mathcal{G}^{(0)}$.
\end{lemma}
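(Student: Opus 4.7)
The plan is to exploit the observation that invariance of a subset $W \subseteq \mathcal{G}^{(0)}$ is preserved under complementation: an arrow $\gamma$ witnesses failure of invariance of $W$ (meaning $s(\gamma) \in W$ but $r(\gamma) \notin W$) if and only if $\gamma^{-1}$ witnesses failure of invariance of $\mathcal{G}^{(0)} \setminus W$. So open-invariant and closed-invariant sets are complementary, and the lemma becomes a standard connectedness-style argument.

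For the forward direction, assume $\mathcal{G}$ is minimal and let $U \subseteq \mathcal{G}^{(0)}$ be a nonempty open invariant subset; put $V := \mathcal{G}^{(0)} \setminus U$, which is closed and invariant by the symmetry above. If $V$ were nonempty, I would pick $y \in V$: invariance of $V$ forces the orbit $[y]$ to lie in $V$, and since $V$ is closed while $[y]$ is dense by minimality, I would conclude $V \supseteq \overline{[y]} = \mathcal{G}^{(0)}$, contradicting $U \neq \emptyset$. Hence $V = \emptyset$ and $U = \mathcal{G}^{(0)}$.

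For the reverse direction, assume the only open invariant subsets are $\emptyset$ and $\mathcal{G}^{(0)}$, and fix $x \in \mathcal{G}^{(0)}$. I want $\overline{[x]} = \mathcal{G}^{(0)}$. Setting $V := \overline{[x]}$, if I can show $V$ is invariant, then $\mathcal{G}^{(0)} \setminus V$ is open and invariant, and since $x \in [x] \subseteq V$ this complement is a proper subset of $\mathcal{G}^{(0)}$; the hypothesis then forces it to be empty, giving $V = \mathcal{G}^{(0)}$ as desired.

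The crux, and the only real obstacle, is proving that $V = \overline{[x]}$ is invariant. While $[x]$ itself is trivially invariant, extending invariance to the closure requires a touch of openness of the structure maps. Given $\gamma \in \mathcal{G}$ with $s(\gamma) \in \overline{[x]}$ and an open neighbourhood $N$ of $r(\gamma)$, I would consider the open set $r^{-1}(N) \subseteq \mathcal{G}$, which contains $\gamma$; its image $s(r^{-1}(N))$ is an open neighbourhood of $s(\gamma)$ provided $s$ is an open map. Since $s(\gamma) \in \overline{[x]}$, this neighbourhood meets $[x]$, yielding $\gamma' \in r^{-1}(N)$ with $s(\gamma') \in [x]$; then $r(\gamma') \in N$ and, since orbits are equivalence classes, $r(\gamma') \in [s(\gamma')] = [x]$, so $N \cap [x] \neq \emptyset$. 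This proves $r(\gamma) \in \overline{[x]}$. The appeal to openness of $s$ is automatic in the étale framework the paper otherwise assumes throughout, since $s$ is then a local homeomorphism; I would invoke this standing convention rather than attempt a proof valid for arbitrary topological groupoids.
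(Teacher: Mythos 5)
Your proof is correct. The forward implication is exactly the paper's argument: the complement of a nonempty open invariant set is a nonempty closed invariant set, hence contains the closure of the orbit of any of its points, which is incompatible with minimality. Where you genuinely differ is in the converse. The printed proof dispatches it in one line, but read literally both sentences of the printed proof establish only the implication ``minimal $\Rightarrow$ no nontrivial open invariant sets'' (each produces a non-dense orbit from a nontrivial invariant set); the converse ``only trivial open invariant sets $\Rightarrow$ minimal'' is precisely what your last paragraph supplies, via the key claim that $\overline{[x]}$ is invariant, so that its complement is a proper open invariant set whenever $[x]$ fails to be dense. You are right that this claim uses openness of $s$ (equivalently of $r$), which is not part of the paper's definition of a topological groupoid but is automatic for \'etale groupoids, and the lemma is only ever applied to \'etale groupoids (in Theorem~\ref{thm:simple}). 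An equivalent route with the same openness requirement is to saturate $U = \mathcal{G}^{(0)} \setminus \overline{[x]}$ directly: $r(s^{-1}(U))$ is invariant, is open when $r$ is open, contains $U$, and misses $x$. So your treatment buys a complete two-way proof at the cost of an openness hypothesis that is harmless in context, whereas the paper's version is shorter but leaves the direction you worked hardest on essentially unargued.
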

\begin{proof}
If $\emptyset \not= W \subsetneq \mathcal{G}^{(0)}$ is a nontrivial open invariant set,
then for any $x \in W$ we have $\overline{[x]} \subseteq W \not= \mathcal{G}^{(0)}$, and
so $\mathcal{G}$ is not minimal.

If $\mathcal{G}$ is minimal, then the only nonempty closed invariant subset of
$\mathcal{G}^{(0)}$ is $\mathcal{G}^{(0)}$. Hence the only open invariant subsets of
$\mathcal{G}^{(0)}$ are $\emptyset$ and $\mathcal{G}^{(0)}$.
\end{proof}

We therefore obtain the following characterisation of simplicity \cite{BCFS}.

\begin{theorem}\label{thm:simple}
Let $\mathcal{G}$ be an amenable \'etale groupoid. Then $C^*(\mathcal{G})$ is simple if
and only if $\mathcal{G}$ is effective and minimal.
\end{theorem}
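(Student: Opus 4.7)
The plan is to read off simplicity as the degenerate case of the ideal correspondence in Theorem~\ref{thm:ideals}, combined with the orbit characterisation of minimality in Lemma~\ref{lem:minimal}. Note first a convenient observation: if $\mathcal{G}$ is minimal then by Lemma~\ref{lem:minimal} the only open invariant subsets of $\mathcal{G}^{(0)}$ are $\emptyset$ and $\mathcal{G}^{(0)}$, so by taking complements the only closed invariant subsets are also $\emptyset$ and $\mathcal{G}^{(0)}$. In this situation, ``strongly effective'' reduces to ``effective'': the only nontrivial $\mathcal{G} W$ one has to test is $\mathcal{G}$ itself.

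For the easy direction (effective and minimal $\Rightarrow$ simple): by the remark above, the hypotheses make $\mathcal{G}$ strongly effective, so Theorem~\ref{thm:ideals} gives a bijection $U \mapsto I_U$ from open invariant subsets of $\mathcal{G}^{(0)}$ onto the set of ideals of $C^*(\mathcal{G})$. Minimality restricts the domain to $\{\emptyset, \mathcal{G}^{(0)}\}$, yielding $I_\emptyset = 0$ and $I_{\mathcal{G}^{(0)}} = C^*(\mathcal{G})$ as the only ideals.

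For the converse, I would argue the contrapositive by considering the two failure modes. First, if $\mathcal{G}$ is not minimal, pick a proper nonempty open invariant $U \subsetneq \mathcal{G}^{(0)}$. Then $I_U \cap C_0(\mathcal{G}^{(0)}) = C_0(U)$, by Lemma~\ref{lem:I gives inv set} (or Proposition~\ref{prp:exact seq}), is neither zero nor all of $C_0(\mathcal{G}^{(0)})$, so $I_U$ is a proper nonzero ideal and $C^*(\mathcal{G})$ is not simple. Second, if $\mathcal{G}$ is minimal but not effective, then as noted $\mathcal{G}$ is not strongly effective, so the injection $U \mapsto I_U$ of Theorem~\ref{thm:ideals} fails to be surjective. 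But under minimality the image of this injection is only $\{0, C^*(\mathcal{G})\}$, so any ideal not of the form $I_U$ is automatically proper and nonzero, contradicting simplicity.

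The only genuine work is in the minimal-but-not-effective case, and I would avoid reproving it by citing Theorem~\ref{thm:ideals} directly; the required non-$I_U$ ideal is precisely the kernel of the orbit representation $\psi = \bigoplus_x \epsilon_x$ constructed in the proof of that theorem, which was shown to be faithful on $C_0(\mathcal{G}^{(0)})$ yet to contain the nonzero element $f - \tilde{f}$ built from an open bisection $U \subseteq \operatorname{Iso}(\mathcal{G}) \setminus \mathcal{G}^{(0)}$. Apart from this appeal, the argument is purely bookkeeping on the lattice of invariant sets, and amenability of $\mathcal{G}$ enters only through its role in Theorem~\ref{thm:ideals} (ensuring the exact sequence of Proposition~\ref{prp:exact seq} and the uniqueness theorem Theorem~\ref{thm:uniqueness} apply to the full $C^*$-algebra).
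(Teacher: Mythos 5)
Your proposal is correct and follows essentially the same route as the paper: both directions are read off from the bijectivity statement of Theorem~\ref{thm:ideals} together with Lemma~\ref{lem:minimal}, using that minimality collapses ``strongly effective'' to ``effective'' and that any ideal outside the range of $U \mapsto I_U$ must be nontrivial since $I_\emptyset = \{0\}$ and $I_{\mathcal{G}^{(0)}} = C^*(\mathcal{G})$ are already in that range. The extra details you supply (complements of invariant sets are invariant; $I_U \cap C_0(\mathcal{G}^{(0)}) = C_0(U)$ witnesses properness) are correct and only make explicit what the paper leaves implicit.
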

\begin{proof}
First suppose that $\mathcal{G}$ is effective and minimal. Since it is minimal, the only
nonempty closed invariant set is $\mathcal{G}^{(0)}$ and so $\mathcal{G}$ is (trivially)
strongly effective. So Theorem~\ref{thm:ideals} and Lemma~\ref{lem:minimal} show that the
only ideals of $C^*(\mathcal{G})$ are $I_\emptyset = \{0\}$ and $I_{\mathcal{G}^{(0)}} =
C^*(\mathcal{G})$.

If $\mathcal{G}$ is not minimal, then Lemma~\ref{lem:minimal} gives a nontrivial open
invariant subset of $\mathcal{G}^{(0)}$ and so Theorem~\ref{thm:ideals} gives a
nontrivial ideal. Likewise, if $\mathcal{G}$ is not minimal, then $U \mapsto I_U$ is
injective but not bijective by Theorem~\ref{thm:ideals}; so it is not surjective. Since
$\{0\} = I_\emptyset$ and $C^*(\mathcal{G}) = I_{\mathcal{G}^{(0)}}$ are in its range, it
follows that $C^*(\mathcal{G})$ has a nontrivial ideal.
\end{proof}

We also obtain a sufficient condition for reduced $C^*$-algebras.

\begin{proposition}
If $\mathcal{G}$ is an effective, minimal, \'etale groupoid, then $C^*_r(\mathcal{G})$ is
simple.
\end{proposition}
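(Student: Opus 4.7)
The plan is to combine Theorem~\ref{thm:uniqueness} with the invariant-set argument that underlies Lemma~\ref{lem:I gives inv set} and Theorem~\ref{thm:ideals}, now carried out inside $C^*_r(\mathcal{G})$. Let $I \trianglelefteq C^*_r(\mathcal{G})$ be a nonzero ideal and let $q : C^*_r(\mathcal{G}) \to C^*_r(\mathcal{G})/I$ be the quotient. Suppose for contradiction that $q$ were injective on $C_0(\mathcal{G}^{(0)})$. Then by effectiveness and Theorem~\ref{thm:uniqueness}, $q$ would be injective, contradicting $I \ne 0$. Hence $I \cap C_0(\mathcal{G}^{(0)})$ is a nonzero ideal of the abelian algebra $C_0(\mathcal{G}^{(0)})$, and so equals $C_0(U)$ for some nonempty open $U \subseteq \mathcal{G}^{(0)}$.

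Next I would check that $U$ is invariant, by repeating the bisection trick from the proof of Lemma~\ref{lem:I gives inv set} inside $C^*_r(\mathcal{G})$ (which is purely algebraic and works verbatim for the reduced algebra). Namely, given $x \in U$, $f \in C_0(U)$ with $f(x) \ne 0$, and $\gamma \in \mathcal{G}_x$, pick an open bisection $B$ containing $\gamma$ and $h \in C_c(B)$ with $h(\gamma) = 1$. Then $h f h^* \in I$, and Lemma~\ref{lem:bisection conv} shows that $h f h^*$ is supported in $\mathcal{G}^{(0)}$ with $(h f h^*)(r(\gamma)) = h(\gamma) f(s(\gamma)) \overline{h(\gamma)} = f(x) \ne 0$, so $r(\gamma) \in U$.

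Now minimality enters: by Lemma~\ref{lem:minimal}, the only nonempty open invariant subset of $\mathcal{G}^{(0)}$ is $\mathcal{G}^{(0)}$ itself, so $U = \mathcal{G}^{(0)}$ and $C_0(\mathcal{G}^{(0)}) \subseteq I$. To close the argument I would show that $C_0(\mathcal{G}^{(0)})$ generates $C^*_r(\mathcal{G})$ as an ideal: for any $f \in C_c(\mathcal{G})$, choose $h \in C_c(\mathcal{G}^{(0)})$ with $h \equiv 1$ on the compact set $s(\operatorname{supp}(f))$; the last formula of Lemma~\ref{lem:bisection conv} then gives $f * h = f$, so $f \in I$. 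Since $C_c(\mathcal{G})$ is dense in $C^*_r(\mathcal{G})$, we conclude $I = C^*_r(\mathcal{G})$.

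There is no real obstacle here; the heavy lifting has already been done in Theorem~\ref{thm:uniqueness}. The only thing to be slightly careful about is that both the ``invariance of $\operatorname{supp}(I)$'' argument of Lemma~\ref{lem:I gives inv set} and the ``$C_0(\mathcal{G}^{(0)})$ generates the ambient algebra'' argument were originally written for $C^*(\mathcal{G})$, so I would want to flag explicitly that they translate without change to $C^*_r(\mathcal{G})$ because they rely only on convolution in $C_c(\mathcal{G})$ together with norm density.
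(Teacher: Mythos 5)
Your proof is correct, and the overall strategy is the one the paper uses: Theorem~\ref{thm:uniqueness} forces $I \cap C_0(\mathcal{G}^{(0)}) \neq \{0\}$, this intersection is $C_0(U)$ for an open \emph{invariant} $U$, minimality and Lemma~\ref{lem:minimal} force $U = \mathcal{G}^{(0)}$, and then $I$ is everything. The one point where you diverge is the routing of the invariance step: the paper avoids re-proving anything in the reduced setting by pulling the ideal back through the canonical surjection $\pi_r : C^*(\mathcal{G}) \to C^*_r(\mathcal{G})$ --- it sets $J = \ker(q_I \circ \pi_r) \cap C_0(\mathcal{G}^{(0)})$ and applies Lemma~\ref{lem:I gives inv set} verbatim in the full algebra, then pushes the conclusion $C_0(\mathcal{G}^{(0)}) \subseteq J$ back down via $\pi_r$. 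You instead observe, correctly, that the bisection computation in Lemma~\ref{lem:I gives inv set} lives entirely in $C_c(\mathcal{G})$ and so transfers unchanged to $C^*_r(\mathcal{G})$. Both routes are fine; the paper's buys brevity by reusing a stated lemma, yours stays inside the reduced algebra and is arguably cleaner, and you also spell out the final step (that $C_0(\mathcal{G}^{(0)}) \subseteq I$ forces $I = C^*_r(\mathcal{G})$ via $f * h = f$) which the paper leaves implicit. The only cosmetic quibble is that in the invariance step you should take $f \in C_c(U)$ rather than $C_0(U)$ so that the convolution formula of Lemma~\ref{lem:bisection conv} applies directly, exactly as in the paper's own lemma; density then handles the general case.
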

\begin{proof}
Let $I$ be a nonzero ideal of $C^*_r(\mathcal{G})$; we must show that $I =
C^*_r(\mathcal{G})$. By Theorem~\ref{thm:uniqueness}, we have $I \cap
C_0(\mathcal{G}^{(0)}) \not= \{0\}$. Now if $q_I : C^*_r(\mathcal{G}) \to
C^*_r(\mathcal{G})/I$ is the quotient map, and $\pi_r : C^*(\mathcal{G}) \to
C^*_r(\mathcal{G})$ is the canonical surjection, then $J \coloneq \ker(q_I \circ \pi_r)
\cap C_0(\mathcal{G}^{(0)}) \supseteq I \cap C_0(\mathcal{G}^{(0)})$ is nonzero. So
Lemma~\ref{lem:I gives inv set} shows that $\operatorname{supp}(J)$ is a nonempty open
invariant set. Since $\mathcal{G}$ is minimal, it follows that $\operatorname{supp}(J) =
\mathcal{G}^{(0)}$ and so $C_0(\mathcal{G}^{(0)}) \subseteq J$. So
$\pi_r(C_0(\mathcal{G}^{(0)})) = C_0(\mathcal{G}^{(0)})$ is contained in $I$, and we
deduce that $I = C^*_r(\mathcal{G})$.
\end{proof}

\section{Pure infiniteness}

In this section we briefly discuss a result of Anantharaman-Delaroche giving a sufficient
condition for $C^*(\mathcal{G})$ to be purely infinite. There is no improving on her
argument, so the treatment here is more or less exactly the same as in \cite{A-D}.

\begin{definition}[{\cite[Definition~2.1]{A-D}}]
Let $\mathcal{G}$ be an \'etale groupoid. We say that $\mathcal{G}$ is \emph{locally
contracting at $x \in \mathcal{G}^{(0)}$} if for every open neighbourhood $V$ of $x$,
there is an open set $W \subseteq V$ and an open bisection $U$ such that $\overline{W}
\subseteq s(U)$ and $r(U\overline{W}) \subsetneq W$. We say that $\mathcal{G}$ is
\emph{locally contracting} if it is locally contracting at $x$ for every $x \in
\mathcal{G}^{(0)}$.
\end{definition}

\begin{theorem}[{\cite[Proposition~2.4]{A-D}}]
Suppose that $\mathcal{G}$ is an effective, locally contracting, \'etale groupoid. Then
$C^*_r(\mathcal{G})$ is purely infinite.
\end{theorem}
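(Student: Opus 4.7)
The plan is to show that every nonzero positive $a \in C^*_r(\mathcal{G})$ is properly infinite (so $a \oplus a$ is Cuntz-subequivalent to $a$), which gives pure infiniteness by the Kirchberg--R\o rdam criterion. First I reduce the problem to a positive function on the unit space: using Lemma~\ref{lem:effective<->top prin} and the faithful conditional expectation $\Phi$ of Proposition~\ref{prp:FCE}, choose a unit $x_0$ of trivial isotropy with $\Phi(a)(x_0) > 0$. Approximating $a$ in norm by a self-adjoint $f \in C_c(\mathcal{G})$ and applying Lemma~\ref{lem:cutdown}, obtain $h \in C_c(\mathcal{G}^{(0)})_+$ of norm $1$, supported in any prescribed neighbourhood of $x_0$, such that $hah$ is close in norm to the positive function $b \coloneq h\Phi(a)h \in C_c(\mathcal{G}^{(0)})_+$. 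By a hereditary-subalgebra argument (using that $hah \in \overline{aC^*_r(\mathcal{G})a}$), it suffices to show that $b$ is properly infinite in $C^*_r(\mathcal{G})$.

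Second, apply locally contracting at $x_0$ to a neighbourhood containing $\operatorname{supp}(b)$: obtain an open set $W$ with $x_0 \in W$ and an open bisection $U$ with $\overline{W} \subseteq s(U)$ and $\phi_U(\overline{W}) \subsetneq W$, where $\phi_U \coloneq r \circ (s|_U)^{-1}$. After shrinking $h$ we may assume $\operatorname{supp}(b) \subseteq W$. Pick $v \in C_c(U)$ with $0 \leq v \leq 1$ and $v \equiv 1$ on the compact set $s|_U^{-1}(\overline{W})$. Lemma~\ref{lem:bisection conv} gives $v^*v \equiv 1$ on $\overline{W}$, so $(vb^{1/2})^*(vb^{1/2}) = b$ while $(vb^{1/2})(vb^{1/2})^* = vbv^*$ is supported in $\phi_U(\overline{W}) \subsetneq W$. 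This exhibits the Cuntz-equivalence $b \sim vbv^*$ and encodes the key structural fact: $b$ is Cuntz-equivalent to a ``shifted copy'' of itself living in a proper subset of $W$.

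Third, use the strict containment $\phi_U(\overline{W}) \subsetneq W$ together with iterates of the contraction to construct a Kirchberg--R\o rdam witness for proper infiniteness of $b$. The iterates $\phi_U^n(\overline{W})$ form a strictly decreasing sequence of compact subsets of $W$, and the annular regions between consecutive iterates provide disjoint ``slots'' into which Cuntz-equivalent copies of $b$ can be placed. By combining the convolution powers $v^n \in C^*_r(\mathcal{G})$ for suitable distinct $n$ with functional calculus on $b$, one produces, for each $\varepsilon > 0$, elements $r, s \in C^*_r(\mathcal{G})$ such that $r^*br$ and $s^*bs$ are each Cuntz-equivalent to $(b-\varepsilon)_+$ and $r^*bs = 0$. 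This is exactly the Kirchberg--R\o rdam witness that $b$ is properly infinite.

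The main obstacle is the explicit construction of the witness pair $(r,s)$. The naive choices $r = f_\varepsilon(b)^{1/2}$ and $s = v f_\varepsilon(b)^{1/2}$ fail: a direct calculation using Lemma~\ref{lem:bisection conv} shows $v^*b = 0$ whenever $\operatorname{supp}(b) \cap \phi_U(\overline{W}) = \emptyset$ (the very disjointness needed to force $r^*bs = 0$), and the same orthogonality then gives $s^*bs = 0$ rather than $(b-\varepsilon)_+$. Anantharaman-Delaroche's resolution exploits the iterated structure of the contraction: by letting $s$ involve $v^n$ for $n \geq 2$ and using the annular decomposition of $W$ provided by the iterates $\phi_U^n(\overline{W})$, one achieves simultaneously the orthogonality $r^*bs = 0$ and the required Cuntz-equivalences for $r^*br$ and $s^*bs$. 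The careful bookkeeping through these iterated bisections is the technical heart of the argument.
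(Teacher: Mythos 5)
Your first two steps track the paper's argument closely: the reduction via Lemma~\ref{lem:effective<->top prin}, Lemma~\ref{lem:cutdown} and Proposition~\ref{prp:FCE} to a positive $b \in C_c(\mathcal{G}^{(0)})$ supported near a trivially-isotropic unit, and the use of local contraction to produce $v \in C_c(U)$ with $v^*v \equiv 1$ on $\operatorname{supp}(b)$ and $vbv^*$ supported in $r(U\overline{W}) \subsetneq W$, are exactly the construction of the scaling element $x$ in the paper's proof (your $v$ is its $x$, satisfying $x^*xx = x$ with $x^*x \neq xx^*$). But from there you pivot to the wrong target and then stop before the real work. First, the theorem (following Anantharaman-Delaroche) asserts pure infiniteness in the sense that every nonzero hereditary subalgebra contains an infinite projection, whereas you aim for the Kirchberg--R{\o}rdam criterion that every nonzero positive element is properly infinite; these notions need not coincide outside the simple case, and your reduction is invalid for the one you chose. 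Knowing that $b = h\Phi(a)h$ is properly infinite does not make $a$ properly infinite: $b$ is a cut-down of $a$ to a small neighbourhood of a single unit, so $b$ is Cuntz-subequivalent to $a$ but $(a-\varepsilon)_+$ is certainly not Cuntz-subequivalent to $b$, which is what you would need to push proper infiniteness back up to $a$. (For the hereditary-subalgebra formulation the reduction is harmless, since an infinite projection in the hereditary subalgebra generated by $b$ can be transferred into the one generated by $a$; that is precisely why the paper works with that notion.)

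Second, the step you yourself call ``the technical heart'' is simply absent. The iterates $\phi_U^n(\overline{W})$ are nested, not disjoint, and $v^nb(v^n)^*$ is supported on all of $\phi_U^n(\overline{W})$ rather than on an annulus, so the proposed ``slots'' do not produce the orthogonality $r^*bs = 0$; you observe that the naive choices fail and then attribute an unspecified fix to Anantharaman-Delaroche, but her argument does nothing of the kind. What she (and the paper) actually does with the element $x$ is the Blackadar--Cuntz trick: $v = x + (1-x^*x)^{1/2}$ is an isometry in the minimal unitisation, $p = 1 - vv^*$ is a projection, nonzero because $\Phi(p) = x^*x - xx^* \neq 0$, and it lies in the hereditary subalgebra generated by $b$; one then repeats the scaling-element construction inside $pC^*_r(\mathcal{G})p$, starting from a nonempty open subset of $\operatorname{supp}(x^*x)\setminus\operatorname{supp}(xx^*)$, to exhibit a subprojection of $p$ equivalent to $p$ and strictly below it. No witness pair for proper infiniteness is ever needed. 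To repair your write-up, either carry out this projection construction, or, if you insist on the Kirchberg--R{\o}rdam route, you must supply both the missing comparison of $(a-\varepsilon)_+$ with $b$ (which fails here) and an actual construction of the orthogonal pair $(r,s)$ --- neither of which follows from the ingredients you have assembled.
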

\begin{proof}
It suffices to show that for every $a \in A_+$, the hereditary subalgebra generated by
$a$ contains an infinite projection. So fix $a \in A_+$. We may assume without loss of
generality that the faithful conditional expectation $\Phi : C^*_r(\mathcal{G}) \to
C_0(\mathcal{G}^{(0)})$ of Proposition~\ref{prp:FCE} satisfies $\|\Phi(a)\| = 1$. Since
$C_c(\mathcal{G})$ is dense in $C^*(\mathcal{G})$, we can choose $b \in C_c(\mathcal{G})
\cap A^+$ such that $b \le a$ and $\|a - b\| < \frac{1}{4}$. Since $\Phi$ is norm
decreasing we deduce that $\|\Phi(a) - \Phi(b)\| < \frac{1}{4}$, and so $\|\Phi(b)\| >
\frac{3}{4}$. Lemma~\ref{lem:cutdown} with $\varepsilon = (\|\Phi(b)\| - 3/4)/2$ gives a
function $h \in C_c(\mathcal{G}^{(0)})$ such that $\|h\| = 1$, $h b h = h \Phi(b) h \in
C_0(\mathcal{G}^{(0)})$, and $\|h b h\| > \frac{3}{4}$. Since $hbh \le b \le a$, it
suffices to find an infinite projection $p$ and a partial isometry $w$ in
$C^*(\mathcal{G})$ such that $w p w^* \le h \Phi(b) h =: b_0$.

Using that $\mathcal{G}$ is locally contracting, we choose an open $V$ with $\overline{V}
\subseteq \{x \in \mathcal{G}^{(0)} \mid b_0(x) > 3/4\}$ and a bisection $B$ with
$\overline{V} \subseteq s(B)$ and $r(B \overline{V}) \subsetneq V$. Let $T_B : s(B) \to
r(B)$ be the homeomorphism $T_B(s(\gamma)) = r(\gamma)$ for $\gamma \in B$. Then
$T_B(\overline{V}) = r(B \overline{V})$ is a compact subset of $V$ and is not all of $V$.
So we can choose $k \in C_c(V)$ such that $k$ is identically~1 on $T_B(\overline{V})$.
Define $x \in C_c(BV)$ by $x(\gamma) = k(s(\gamma))$ for $\gamma \in BV$. We have $x^*x =
k^2$, and in particular $x^*x$ is identically~1 on $r(B \overline{V}) \supseteq
r(\operatorname{supp} x)$. Hence $x^*x x = x$.

So $x$ is a scaling element. There is a standard trick for constructing a projection from
such an element: Define $v$ in the minimal unitisation of $C^*_r(\mathcal{G})$ by $v = x
+ (1 - x^*x)^{1/2}$. We have
\[
v^*v = x^*x + x^*(1 - x^*x)^{1/2} + (1 - x^*x)^{1/2} x + (1 - x^*x).
\]
Since $(1 - x^*x) x = 0$, every $(1 - x^*x)^n x = 0$ and then by continuity $f(1 - x^*x)
x = 0$ for every $f \in C(\sigma(1 - x^*x))$. In particular, $(1 - x^*x)^{1/2} x = 0 =
x^*(1 - x^*x)^{1/2}$, and so $v^*v = 1$. Consequently, $vv^*$ is a projection. We compute
\begin{align*}
vv^* &= xx^* + x(1-x^*x)^{1/2} + (1-x^*x)^{1/2}x^* + (1 - x^*x)\\
    &= 1 + xx^* - x^*x + x(1-x^*x)^{1/2} + (1-x^*x)^{1/2}x^*\\
    &= 1 - (x^*x - xx^* - x(1-x^*x)^{1/2} - (1-x^*x)^{1/2}x^*).
\end{align*}
So $p \coloneq x^*x - xx^* - x(1-x^*x)^{1/2} - (1-x^*x)^{1/2}x^* = 1 - vv^*$ is a
projection in $C^*_r(\mathcal{G})$. We have $\Phi(p) = x^*x - xx^*$ which is nonzero, and
so $p$ is nonzero. Also, since $r(\operatorname{supp}(x)) \subseteq
s(\operatorname{supp}(x)) \subseteq V \subseteq \{z \in \mathcal{G}^{(0)} \mid b_0(z) >
3/4\}$, we see that $p$ belongs to the hereditary subalgebra generated by $b$.

To see that $p$ is an infinite projection, argue exactly as above, but with $V$ replaced
by a nonempty open subset of $\operatorname{supp}(x^*x) \setminus
\operatorname{supp}(xx^*)$ to obtain a scaling element $y$ in $p C^*(\mathcal{G}) p$.
Then the calculations we performed above for $v$ show that $w \coloneq y + (p -
y^*y)^{1/2}$ is a partial isometry with $w^*w = p$ and $ww^* < p$. So $p$ is infinite as
required.
\end{proof}

When $\mathcal{G}$ is also minimal, we can verify that $\mathcal{G}$ is locally
contracting by verifying it at any one unit $x$.

\begin{lemma}
Let $\mathcal{G}$ be an \'etale groupoid. If $\mathcal{G}$ is minimal, then $\mathcal{G}$
is locally contracting at some point $x$ if and only if $\mathcal{G}$ is locally
contracting.
\end{lemma}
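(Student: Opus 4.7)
The $(\Leftarrow)$ direction is trivial by taking $x$ to be any unit, so the content is the forward implication. The plan is to fix $y\in\mathcal{G}^{(0)}$ and an open neighbourhood $V$ of $y$, and to manufacture a contracting bisection near $y$ by ``translating'' a contracting bisection near $x$ along an arrow from $x$ to a point of $V$. Minimality is what guarantees that such an arrow exists.

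First I would use density of the orbit $[x]$ to pick $\gamma\in\mathcal{G}$ with $s(\gamma)=x$ and $r(\gamma)\in V$, then choose an open bisection $A$ containing $\gamma$, shrunk so that $r(A)\subseteq V$. Then $s(A)$ is an open neighbourhood of $x$, and by local compactness and Hausdorffness I would shrink further to an open $V_0\ni x$ with $\overline{V_0}$ compact and contained in $s(A)$. Applying local contractivity of $\mathcal{G}$ at $x$ to $V_0$ produces an open $W'\subseteq V_0$ and an open bisection $U'$ with $\overline{W'}\subseteq s(U')$ and $r(U'\overline{W'})\subsetneq W'$; note that both $\overline{W'}$ and $r(U'\overline{W'})$ lie in $s(A)$.

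Write $T_A\colon s(A)\to r(A)$ for the homeomorphism $T_A(s(\alpha))=r(\alpha)$ ($\alpha\in A$) induced by the bisection, and define $W\coloneq T_A(W')\subseteq V$. Since $\overline{W'}\subseteq s(A)$ is compact, $T_A(\overline{W'})$ is compact, hence closed, and equals $\overline{W}$. Shrink $U'$ to the open bisection $U'_0\coloneq U'\cap s^{-1}(s(A))\cap r^{-1}(s(A))$; since the sources and ranges of the elements of $U'$ that touch $\overline{W'}$ already lie in $s(A)$, this shrinking preserves the contracting property $r(U'_0\overline{W'})\subsetneq W'$. Now define
\[
U\coloneq A\cdot U'_0\cdot A^{-1}=\{\alpha\beta\alpha'^{-1}:\alpha,\alpha'\in A,\ \beta\in U'_0,\ s(\alpha)=r(\beta),\ s(\beta)=s(\alpha')\}.
\]
Because multiplication in an \'etale groupoid is open (Lemma at the end of Section on \'etale groupoids) and $A$, $U'_0$, $A^{-1}$ are open bisections, $U$ is an open bisection; each element of $U$ with source in $\overline{W}$ has the unique form $\alpha_{r(\beta)}\,\beta\,\alpha_{s(\beta)}^{-1}$ with $\beta\in U'_0$ and $s(\beta)\in\overline{W'}$, where $\alpha_p\coloneq (s|_A)^{-1}(p)$.

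Finally I would verify the two required containments. The bijective correspondence above shows $\overline{W}\subseteq s(U)$, and it identifies $r(U\overline{W})$ with $T_A(r(U'_0\overline{W'}))$. Since $r(U'_0\overline{W'})\subsetneq W'$ and $T_A$ is a homeomorphism on $s(A)$, this gives $r(U\overline{W})\subsetneq T_A(W')=W$, exactly the local contracting condition at $y$. The only real obstacle is bookkeeping: making sure the conjugated bisection $AU'_0A^{-1}$ is well-defined and open, which is why I put $\overline{V_0}\subseteq s(A)$ up front and shrink $U'$ to $U'_0$ so that every conjugation that needs to be performed actually lives inside $s(A)$.
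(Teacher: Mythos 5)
Your argument is correct and is essentially the paper's own proof: both use minimality to produce an open bisection $A$ (the paper's $B$) with $x\in s(A)$ and $r(A)\subseteq V$, apply local contractivity at $x$ inside a precompact neighbourhood whose closure lies in $s(A)$, and then conjugate the resulting pair $(W',U')$ by $A$ to obtain $W=T_A(W')$ and $U=AU'A^{-1}$. Your extra bookkeeping (checking $T_A(\overline{W'})=\overline{W}$ via compactness, and shrinking $U'$ to $U'_0$ so the triple product is well defined) just makes explicit steps the paper elides.
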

\begin{proof}
The ``if'' implication is trivial.

For the ``only if'', suppose that $\mathcal{G}$ is locally contracting at $x$ and fix $y
\in \mathcal{G}^{(0)}$. Fix an open neighbourhood $V$ of $y$. Since $\mathcal{G}$ is
minimal, there is an open bisection $B$ such that $r(B) \subseteq V$ and $x \in s(B)$.
Since $\mathcal{G}$ is locally contracting at $x$, there is an open $W$ containing $x$
with $\overline{W} \subseteq s(B)$ and an open bisection $U$ such that $\overline{W}
\subseteq s(U)$ and $r(U\overline{W}) \subsetneq W$. Now $r(BW) = B W B^{-1}$ is an open
neighbourhood of $y$ with $\overline{r(BW)} \subseteq V$, and $B U B^{-1}$ is a bisection
satisfying
\[
r(B U B^{-1} \overline{r(BW)}) = r(B U B^{-1} B \overline{W} B^{-1}) = r(B U\overline{W} B^{-1}) = r(B r(U\overline{W}))
    \subsetneq r(BW).\qedhere
\]
\end{proof}

\begin{remark}
Brown, Clark and Sierakowski \cite{BCS} have proved that if $\mathcal{G}$ is an \'etale,
effective, minimal groupoid, then $C^*_r(\mathcal{G})$ is purely infinite if and only if
every element of $C_0(\mathcal{G}^{(0)})$ is infinite in $C^*_r(\mathcal{G})$.
\end{remark}

\chapter{Cartan pairs, and Dixmier--Douady theory for Fell Algebras}\label{ch:Weyl gpd}

In this chapter we first discuss the beautiful reconstruction theorem of Renault
\cite{Ren08} that shows that an effective groupoid and twist can be recovered from the
associated twisted groupoid algebra. This builds on previous work of Kumjian
\cite{Kum86}, and develops ideas that go back to Feldman and Moore in the context of von
Neumann algebras \cite{FM1, FM2, FM3}. We will then discuss an application of this theory
to the classification of Fell algebras up to spectrum-preserving Morita equivalence
\cite{aHKS}.

\section{Kumjian--Renault theory}\label{sec:KR-theory}

The aim in this section is to outline Renault's construction for recovering an \'etale
groupoid from its reduced $C^*$-algebra together with the canonical abelian subalgebra
$C_0(\mathcal{G}^{(0)})$. This is a $C^*$-algebraic analogue of Feldman--Moore theory for
von Neumann algebras of Borel equivalence relations.

We will omit almost all of the proofs in this section. The details are due to Kumjian and
then Renault and can be found in \cite{Kum86, Ren08}.

To get the most out of this theory, we need to introduce twisted groupoid $C^*$-algebras.
In Renault's original work \cite{Ren80}, twisted groupoid $C^*$-algebras were determined
by continuous normalised $2$-cocycles on $\mathcal{G}$; that is, continuous maps $\sigma
: \mathcal{G}^{(2)} \to \mathbb{T}$ satisfying $\sigma(r(\gamma), \gamma) = 1 =
\sigma(\gamma, s(\gamma))$ for all $\gamma$ and satisfying the cocycle identity
$\sigma(\alpha,\beta)\sigma(\alpha\beta, \gamma) = \sigma(\beta,\gamma)\sigma(\alpha,
\beta\gamma)$ for every composable triple $(\alpha, \beta, \gamma)$.

The twisted convolution algebra is defined as $C_c(\mathcal{G}, \sigma) =
C_c(\mathcal{G})$ as a vector space, but with multiplication and involution given by $(f
* g)(\gamma) = \sum_{\alpha\beta = \gamma} \sigma(\alpha,\beta) f(\alpha)g(\beta)$, and
$f^*(\gamma) = \sigma(\gamma^{-1}, \gamma) \overline{f(\gamma^{-1})}$. However, Kumjian
subsequently observed that the notion of a twisted groupoid $C^*$-algebra that most
naturally leads to an analogue of Feldman--Moore theory comes from a \emph{twist}.

\begin{definition}
Let $\mathcal{G}$ be an \'etale groupoid. A \emph{twist} over $\mathcal{G}$ is a sequence
$\mathcal{G}^{(0)} \times \mathbb{T} \stackrel{i}{\longrightarrow} \mathcal{E}
\stackrel{\pi}{\longrightarrow} \mathcal{G}$, where $\mathcal{G}^{(0)} \times \mathbb{T}$
is regarded as a trivial group bundle with fibres $\mathbb{T}$, $\mathcal{E}$ is a
locally compact Hausdorff groupoid, and $i$ and $\pi$ are continuous groupoid
homomorphisms that restrict to homeomorphisms of unit spaces (we identify
$\mathcal{E}^{(0)}$ with $\mathcal{G}^{(0)}$ via $i$) such that
\begin{itemize}
\item  $i$ is injective,
\item $\mathcal{E}$ is a locally trivial $\mathcal{G}$-bundle in the sense that every
    point $\alpha \in \mathcal{G}$ has a bisection neighbourhood $U$ on which there
    exists a continuous section $S : U \to \mathcal{E}$ satisfying $\pi \circ S =
    \operatorname{id}_U$, and such that the map $(\alpha, z) \mapsto i(r(\alpha), z)
    S(\alpha)$ is a homeomorphism of $U \times \mathbb{T}$ onto $\pi^{-1}(U)$;
\item $i(\mathcal{G}^{(0)} \times \mathbb{T})$ is central in $\mathcal{E}$ in the
    sense that $i(r(\varepsilon),z) \varepsilon = \varepsilon i(s(\varepsilon), z)$
    for all $\varepsilon \in \mathcal{E}$ and $z \in \mathbb{T}$; and
\item $\pi^{-1}(\mathcal{G}^{(0)}) = i(\mathcal{G}^{(0)} \times \mathbb{T})$.
\end{itemize}
\end{definition}

If $\mathcal{G} = \Gamma$ is a discrete group, then a twist over $\mathcal{G}$ as defined
above is precisely a central extension of $\Gamma$.

\begin{notation}
If $\mathcal{E}$ is a twist over $\mathcal{G}$, $\varepsilon \in \mathcal{E}$ and $z \in
\mathbb{T}$, we will write $z \cdot \varepsilon \coloneq i(r(\varepsilon),
z)\varepsilon$, and $\varepsilon \cdot z = \varepsilon i(s(\varepsilon), z)$; so $z \cdot
\varepsilon = \varepsilon \cdot z$ because $i(\mathcal{G}^{(0)} \times \mathbb{T})$ is
central in $\mathcal{E}$.
\end{notation}

\begin{lemma}\label{lem:T-space}
If $\mathcal{E} \to \mathcal{G}$ is a twist, and $\varepsilon, \delta \in \mathcal{E}$
satisfy $\pi(\varepsilon) = \pi(\delta)$, then there is a unique $z \in \mathbb{T}$ such
that $z \cdot \varepsilon = \delta$.
\end{lemma}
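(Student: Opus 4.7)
The plan is to use the hypotheses $\pi^{-1}(\mathcal{G}^{(0)}) = i(\mathcal{G}^{(0)} \times \mathbb{T})$ and the injectivity of $i$ to pin down the element $\delta\varepsilon^{-1}$ as a unique element of the form $i(x,z)$. First I need to check that the product $\delta\varepsilon^{-1}$ makes sense, which amounts to verifying $s(\delta) = s(\varepsilon)$. Since $\pi$ is a groupoid homomorphism, $\pi(s(\delta)) = s(\pi(\delta)) = s(\pi(\varepsilon)) = \pi(s(\varepsilon))$, and since $\pi$ restricts to a homeomorphism (in particular, an injection) on unit spaces, this gives $s(\delta) = s(\varepsilon)$. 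The same argument gives $r(\delta) = r(\varepsilon)$.

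Next I would compute $\pi(\delta\varepsilon^{-1}) = \pi(\delta)\pi(\varepsilon)^{-1} = \pi(\varepsilon)\pi(\varepsilon)^{-1} = r(\pi(\varepsilon)) \in \mathcal{G}^{(0)}$, so $\delta\varepsilon^{-1} \in \pi^{-1}(\mathcal{G}^{(0)}) = i(\mathcal{G}^{(0)} \times \mathbb{T})$. Hence there exist $x \in \mathcal{G}^{(0)}$ and $z \in \mathbb{T}$ with $\delta\varepsilon^{-1} = i(x,z)$. Applying $r$ and using that $i$ intertwines ranges gives $i(x,1) = r(i(x,z)) = r(\delta\varepsilon^{-1}) = r(\delta) = r(\varepsilon)$; after the identification of $\mathcal{E}^{(0)}$ with $\mathcal{G}^{(0)}$ via $i$, this forces $x = r(\varepsilon)$. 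Right-multiplying by $\varepsilon$ then yields $\delta = i(r(\varepsilon),z)\varepsilon = z\cdot\varepsilon$, proving existence.

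For uniqueness, suppose $z\cdot\varepsilon = z'\cdot\varepsilon$, i.e.\ $i(r(\varepsilon),z)\varepsilon = i(r(\varepsilon),z')\varepsilon$. Cancellation in the groupoid $\mathcal{E}$ (Lemma~\ref{lem:gpd cancellation}) gives $i(r(\varepsilon),z) = i(r(\varepsilon),z')$, and then injectivity of $i$ forces $z = z'$.

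No step here is really hard; the only place one has to be a little careful is in the bookkeeping of range and source maps to see that $\delta\varepsilon^{-1}$ is a well-defined composable product and that the unit component of the resulting element of $i(\mathcal{G}^{(0)}\times\mathbb{T})$ is $r(\varepsilon)$ rather than something else. Everything else is a direct consequence of the defining properties of a twist.
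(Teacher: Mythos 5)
Your proof is correct and follows essentially the same route as the paper's: both arguments reduce to observing that the difference of $\delta$ and $\varepsilon$ (the paper uses $\varepsilon^{-1}\delta$, you use $\delta\varepsilon^{-1}$) lies in $\pi^{-1}(\mathcal{G}^{(0)}) = i(\mathcal{G}^{(0)}\times\mathbb{T})$ and then invoke injectivity of $i$ for uniqueness. Your version is slightly more explicit about composability and about identifying the unit component as $r(\varepsilon)$, but there is no substantive difference.
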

\begin{proof}
We have $\pi(\varepsilon^{-1} \delta) = s(\delta) \in \mathcal{G}^{(0)}$, so
$\varepsilon^{-1}\delta = i(\{s(\delta)\} \times z)$ for some $z \in \mathbb{T}$; there
is just one such $z$ as $i$ is injective. We then have $(z\cdot \varepsilon)^{-1}\delta =
(\varepsilon^{-1}\delta) i(s(\delta), \overline{z}) = s(\delta)$. Multiplying on the
right by $z\cdot\varepsilon$ gives the result.
\end{proof}

\begin{example}
The cartesian-product groupoid $\mathcal{G} \times \mathbb{T}$ is a twist over
$\mathcal{G}$ in the obvious way. This is called the trivial twist over $\mathcal{G}$.
\end{example}

\begin{example}\label{eg:cocycle from section}
More generally, if $\sigma$ is a continuous normalised $2$-cocycle on $\mathcal{G}$, then
$\mathcal{G} \times \mathbb{T}$ can be made into a groupoid $\mathcal{E}_\sigma$ with the
usual unit space and range and source maps, but with multiplication and inversion given
by $(\alpha,w)(\beta,z) = (\alpha\beta, \sigma(\alpha\beta)wz)$ and $(\alpha, w)^{-1} =
(\alpha, \overline{\sigma(\alpha^{-1}, \alpha) w})$. Since $\sigma(r(\gamma), \gamma)) =
1 = \sigma(\gamma, s(\gamma))$ for all $\gamma$, the set inclusion $\mathcal{G}^{(0)}
\times \mathbb{T} \hookrightarrow \mathcal{E}_\sigma$ is a groupoid homomorphism, as is
the projection $\pi : \mathcal{E}_\sigma \to \mathcal{G}$ given by $\pi(\gamma, z) =
\gamma$. It is routine to check that $\mathcal{E}_\sigma$ is then a twist over $\Gamma$
with respect to $i$ and $\pi$.
\end{example}

\begin{remark}
We can recover the cohomology class of $\sigma$ from the twist $\mathcal{E}_\sigma \to
\mathcal{G}$ as follows: choose any continuous section $S$ for $\sigma$. For
$(\alpha,\beta) \in \mathcal{G}^{(2)}$, we have
$\sigma\big(S(\alpha)S(\beta)S((\alpha\beta)^{-1}\big) = r(\alpha) \in
\mathcal{G}^{(0)}$, and so Lemma~\ref{lem:T-space} shows that there is a unique element
$\omega(\alpha,\beta) \in \mathbb{T}$ such that $S(\alpha)S(\beta)S(\alpha\beta)^{-1} =
(r(\alpha), \omega(\alpha,\beta))$. The map $\omega$ defined in this way is a continuous
2-cocycle. If $S'$ is another continuous section for $\sigma$, then $\omega^{-1}\omega'$
is equal to the 2-coboundary obtained from the 1-cochain $b$ determined by
$S(\alpha)^{-1}S'(\alpha) = (r(\alpha), b(\alpha))$. Thus the cocycles obtained from
distinct choices of $S$ are cohomologous. Taking $S(\gamma) = (\gamma, 1)$ for all
$\gamma$, yields $\omega = \sigma$, so the cohomology class $[\sigma]$ of $\sigma$ is
equal to that of any cocycle obtained from a continuous section $S : \mathcal{G} \to
\mathcal{E}_\sigma$.

More generally, if $\mathcal{E}$ is a twist over $\mathcal{G}$ that admits a continuous
section $S : \mathcal{G} \to \mathcal{E}$ for the quotient map, then there is a
$2$-cocycle $\sigma$ on $\mathcal{G}$ defined by $S(\alpha)S(\beta)S(\alpha\beta)^{-1} =
i(s(\alpha), \sigma(\alpha,\beta))$. There is then an isomorphism $\mathcal{E} \cong
\mathcal{E}_\sigma$ that is equivariant for $i$ and $q$. So $\mathcal{E}$ is isomorphic
to a twist coming from a cocycle. But it is not clear that every $\mathcal{E}$ admits a
continuous section, so the notion of a twist is formally more general than that of a
continuous 2-cocycle.
\end{remark}

\begin{definition}
If $\mathcal{E}$ is a twist over the \'etale groupoid $\mathcal{G}$, then we write
\[
\Sigma_c(\mathcal{G}; \mathcal{E}) \coloneq \{f \in C_c(\mathcal{E}) \mid f(z\cdot \varepsilon) = zf(\varepsilon)\text{ for all }\varepsilon \in \mathcal{E}\text{ and }z \in \mathbb{T}\}.
\]
\end{definition}

\begin{remark}\label{rmk:line bundle}
Each twist $\mathcal{E}$ over $\mathcal{G}$ determines a complex line bundle
$\widetilde{\mathcal{E}}$ over $\mathcal{G}$ as follows: Define an equivalence relation
$\sim$ on $\mathcal{E} \times \mathbb{C}$ by $(\delta, w) \sim (\varepsilon, z)$ if
$\pi(\delta) = \pi(\varepsilon)$, $|w| = |z|$ and either $w = z = 0$ or $(w/|w|) \cdot
\delta = (z/|z|)\cdot \varepsilon$. Then $\widetilde{\mathcal{E}} := \mathcal{E}/{\sim}$
is a line-bundle over $\mathcal{G}$ with respect to the fibre map $p :
\widetilde{\mathcal{E}} \to \mathcal{G}$ given by $p([\delta,w]) = \pi(\delta)$.
\end{remark}

For $\gamma \in \mathcal{G}$, any choice of $\delta \in \pi^{-1}(\gamma)$ determines a
homeomorphism $\mathbb{T} \cong \pi^{-1}(\gamma) \subseteq \mathcal{E}$ given by $z
\mapsto z\cdot \delta$. Since Haar measure on $\mathbb{T}$ is rotation invariant, the
measure on $\pi^{-1}(\gamma)$ obtained by pulling back Haar measure on $\mathbb{T}$ is
independent of our choice of $\delta \in \pi^{-1}(\gamma)$. We endow each $\mathcal{E}^x$
with the measure $\lambda^x$ that agrees with this pulled back copy of Haar measure on
$\pi^{-1}(\gamma)$ for each $\gamma \in \mathcal{G}^x$ (so each $\pi^{-1}(\gamma)$ has
measure 1).

\begin{lemma}
The space $\Sigma_c(\mathcal{G}; \mathcal{E})$ is a $^*$-algebra under the operations
\[
f * g(\varepsilon) = \int_{\mathcal{E}^{r(\varepsilon)}} f(\delta) g(\delta^{-1}\varepsilon) \,d\lambda^{r(\varepsilon)}
    \text{ and }
f^*(\varepsilon) = \overline{f(\varepsilon^{-1})}.
\]
For any $\varepsilon \in \mathcal{G}$, $f,g \in \Sigma_c(\mathcal{G}; \mathcal{E})$ and
any choice of (not necessarily continuous) section $\alpha \mapsto \tilde{\alpha}$ for
$\pi|_{\mathcal{G}^{r(\epsilon)}}$, we have
\begin{equation}\label{eq:representative}
f * g(\varepsilon) = \sum_{\beta \in \mathcal{G}^{r(\varepsilon)}} f(\tilde\beta) g(\tilde\beta^{-1}\varepsilon).
\end{equation}
There is an isomorphism
\[
C_c(\mathcal{G}^{(0)}) \cong D_0 \coloneq \{f \in \Sigma_c(\mathcal{G}; \mathcal{E}) \mid \operatorname{supp}(f) \subseteq i(\mathcal{G}^{(0)} \times \mathbb{T})\}
\]
that carries $f \in C_c(\mathcal{G}^{(0)})$ to the function $\tilde{f} : i(x, z) \mapsto
zf(x)$.
\end{lemma}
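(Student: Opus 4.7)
The plan is to prove the summation formula~\eqref{eq:representative} first, since both the $^*$-algebra structure and the description of $D_0$ then follow fairly directly from it. Since $\mathcal{G}$ is étale, Corollary~\ref{cor:fibres discrete} says $\mathcal{G}^{r(\varepsilon)}$ is discrete, so $\mathcal{E}^{r(\varepsilon)} = \pi^{-1}(\mathcal{G}^{r(\varepsilon)})$ decomposes as a disjoint union of the circles $\pi^{-1}(\beta)$ indexed by $\beta \in \mathcal{G}^{r(\varepsilon)}$. Given any section $\beta \mapsto \tilde\beta$, the map $z \mapsto z\cdot\tilde\beta$ is a bijection $\mathbb{T} \to \pi^{-1}(\beta)$ and transports normalised Haar measure to $\lambda^{r(\varepsilon)}|_{\pi^{-1}(\beta)}$ by construction. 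Using centrality of $i(\mathcal{G}^{(0)}\times\mathbb{T})$, one computes $(z\cdot\tilde\beta)^{-1} = \bar z\cdot\tilde\beta^{-1}$ and therefore $(z\cdot\tilde\beta)^{-1}\varepsilon = \bar z\cdot(\tilde\beta^{-1}\varepsilon)$. Then $\mathbb{T}$-equivariance of $f$ and $g$ gives
\[
f(z\cdot\tilde\beta)\,g\big((z\cdot\tilde\beta)^{-1}\varepsilon\big) = z\bar z\, f(\tilde\beta)\,g(\tilde\beta^{-1}\varepsilon) = f(\tilde\beta)\,g(\tilde\beta^{-1}\varepsilon),
\]
independent of $z$. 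Since Haar measure on $\mathbb{T}$ is a probability measure, the integral over the fibre $\pi^{-1}(\beta)$ equals this common value, and summing over $\beta$ yields~\eqref{eq:representative}. Only finitely many terms are nonzero because $\operatorname{supp}(f)$ and $\operatorname{supp}(g)$ are compact and the discrete sets $\mathcal{G}^{r(\varepsilon)}$, $\mathcal{G}_{s(\varepsilon)}$ meet their $\pi$-images finitely.

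The $^*$-algebra assertions then come out of~\eqref{eq:representative} by essentially the same manipulations as for untwisted $C_c(\mathcal{G})$. Compact support follows from $\operatorname{supp}(f*g)\subseteq \pi^{-1}\big(\pi(\operatorname{supp} f)\pi(\operatorname{supp} g)\big)$. For continuity at $\varepsilon$, use local triviality to pick an open bisection $B$ around $r(\varepsilon)$ and a continuous section over $B$; the sum is then locally a finite sum of continuous functions. Equivariance $(f*g)(w\cdot\varepsilon)=w(f*g)(\varepsilon)$ is immediate from the formula after noting $\tilde\beta^{-1}(w\cdot\varepsilon)=w\cdot(\tilde\beta^{-1}\varepsilon)$, and the analogous check for $f^*$ uses $(w\cdot\varepsilon)^{-1}=\bar w\cdot\varepsilon^{-1}$. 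Associativity and $(f*g)^* = g^**f^*$ follow from routine reindexing of double sums.

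For the description of $D_0$, I would define $\Phi: C_c(\mathcal{G}^{(0)}) \to \Sigma_c(\mathcal{G};\mathcal{E})$ by $\Phi(f)(i(x,z))\coloneq zf(x)$ and $\Phi(f)(\varepsilon)\coloneq 0$ for $\varepsilon \notin i(\mathcal{G}^{(0)}\times\mathbb{T})$. Since $\mathcal{G}^{(0)}$ is clopen in $\mathcal{G}$ (Lemmas~\ref{lem:go closed}~and~\ref{lem:go open}), the set $\pi^{-1}(\mathcal{G}^{(0)})=i(\mathcal{G}^{(0)}\times\mathbb{T})$ is clopen in $\mathcal{E}$, so $\Phi(f)$ is continuous; $\mathbb{T}$-equivariance of $\Phi(f)$ follows from $w\cdot i(x,z) = i(x,wz)$ (as $i$ is a groupoid homomorphism), and compact support from $\operatorname{supp}\Phi(f)\subseteq i(\operatorname{supp}(f)\times\mathbb{T})$. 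The map $\Phi$ is clearly linear and takes values in $D_0$; it is injective since $\Phi(f)(i(x,1))=f(x)$, and surjective because given $g\in D_0$, equivariance of $g$ forces $g(i(x,z))=zg(i(x,1))$, so $g=\Phi(f)$ for $f(x)\coloneq g(i(x,1))$. Finally, applying~\eqref{eq:representative} to $\Phi(f)$ and $\Phi(g)$ at a unit $x$, the only $\beta \in \mathcal{G}^x$ for which $\tilde\beta \in i(\mathcal{G}^{(0)}\times\mathbb{T}) = \pi^{-1}(\mathcal{G}^{(0)})$ is $\beta = x$, so the sum collapses to $\Phi(f)(x)\Phi(g)(x)=f(x)g(x)$, and $\Phi(f)^*(i(x,z))=\overline{\Phi(f)(\bar z\cdot i(x,1))}=z\overline{f(x)}=\Phi(\bar f)(i(x,z))$.

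The main obstacle is bookkeeping with the $\mathbb{T}$-action in the summation formula: getting the identity $(z\cdot\tilde\beta)^{-1}=\bar z\cdot\tilde\beta^{-1}$ right via centrality and verifying that the $z$-dependence from $f$ and from $g$ cancels, so that the integral truly reduces to a sum over $\beta$. Once this is cleanly in place, the rest of the lemma is mechanical.
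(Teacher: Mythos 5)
Your proposal is correct and follows essentially the same route as the paper: both proofs establish \eqref{eq:representative} by observing that $\mathbb{T}$-equivariance makes the integrand $f(\delta)g(\delta^{-1}\varepsilon)$ constant on each fibre $\pi^{-1}(\beta)$ (the $z$ from $f$ cancelling the $\bar z$ from $g$ via $(z\cdot\tilde\beta)^{-1}=\bar z\cdot\tilde\beta^{-1}$), so the integral collapses to a sum, and then derive the $^*$-algebra structure and the identification of $D_0$ from that formula. You simply spell out the routine verifications (continuity via local triviality, the explicit isomorphism $\Phi$ and its multiplicativity) that the paper declares analogous to the untwisted case, and these details are all correct.
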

\begin{proof}
We verify~\eqref{eq:representative}: if $\beta \in \mathcal{G}^{r(\varepsilon)}$ and
$\delta, \delta' \in \pi^{-1}(\beta)$, then $\delta' = z\delta$ for some $z \in
\mathbb{T}$, and hence
\[
f(\delta)g(\delta^{-1}\varepsilon)
    = f(\bar{z}\cdot \delta')g(z \cdot (\delta')^{-1}\varepsilon)
    = \bar{z} f(\delta') z g((\delta')^{-1}\varepsilon)
    = f(\delta') g((\delta')^{-1}\varepsilon).
\]
So each $\int_{\delta \in \pi^{-1}(\beta)} f(\delta)g(\delta^{-1}\varepsilon)
d\lambda^{r(\beta)}(\delta)$ collapses to $f(\tilde{\beta})
g(\tilde{\beta}^{-1}\varepsilon)$.

From here, that $\Sigma_c(\mathcal{G}; \mathcal{E})$ is a $^*$-algebra follows from
calculations similar to the ones that show that $C_c(\mathcal{G})$ is a $^*$-algebra.

Since $x \mapsto i(x,1)$ is a section for $\pi$ on $\mathcal{G}^{(0)}$, the final
assertion follows from~\eqref{eq:representative}.
\end{proof}

\begin{remark}
Kumjian points out that there is an isomorphism of $\Sigma_c(\mathcal{G}; \mathcal{E})$
with the space of compactly supported continuous sections of the complex line bundle
$\widetilde{\mathcal{E}}$ over $\mathcal{G}$ described in Remark~\ref{rmk:line bundle}.
This isomorphism carries $f \in \Sigma_c(\mathcal{G}; \mathcal{E})$ to the section
$\tilde{f}$ given by $\tilde{f}(\alpha) = [\tilde{\alpha}, f(\tilde{\alpha})]$ for any
choice of $\tilde{\alpha}$ in $\pi^{-1}(\alpha)$.
\end{remark}

We define the regular representations $\pi_x$, $x \in \mathcal{G}^{(0)}$ of
$C_c(\mathcal{G}; \mathcal{E})$ on the spaces $L^2(\mathcal{G}_x; \mathcal{E}_x)$ of
square-integrable $\mathbb{T}$-equivariant functions on $\mathcal{E}_x$ by extension of
the convolution formula. We define $C^*_r(\mathcal{G}; \mathcal{E})$ to be the completion
of the (injective) image of $\Sigma_c(\mathcal{G}; \mathcal{E})$ in the direct sum of
these representations, and $\|\cdot\|_r$ the $C^*$-norm in this $C^*$-algebra. Arguments
very similar to the ones for untwisted algebras give the following:

\begin{theorem}\label{thm:twisted csa}
For any $f \in \Sigma_c(\mathcal{G}; \mathcal{E})$, the set
\[
\big\{\|\pi(f)\| \mathbin{\big|} \pi\text{ is a $^*$-representation of }\Sigma_c(\mathcal{G}; \mathcal{E})\big\}
\]
is bounded above. Taking the supremum gives a pre-$C^*$-norm $\|\cdot\|$ on
$\Sigma_c(\mathcal{G}; \mathcal{E})$, and we define $C^*(\mathcal{G}; \mathcal{E})$ to be
the completion in this norm. We have $\|\cdot\|_\infty \le \|\cdot\|_r \le \|\cdot\|$ on
$\Sigma_c(\mathcal{G}, \mathcal{E})$, with equality on functions supported on
$\pi^{-1}(U)$ for any bisection $U$. If $\mathcal{G}$ is amenable, then $\|\cdot\|_r$ and
$\|\cdot\|$ agree. The map $f \mapsto f|_{\mathcal{E}^{(0)}}$ from $D_0$ to
$C_c(\mathcal{G}^{(0)})$ extends to an isomorphism of the completion of $D_0$, in either
norm, with $C_0(\mathcal{G}^{(0)})$.
\end{theorem}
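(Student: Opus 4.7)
The plan is to mirror the development of $C^*(\mathcal{G})$ and $C^*_r(\mathcal{G})$ in Sections~2.1--2.3, replacing open bisections $U \subseteq \mathcal{G}$ with their preimages $\pi^{-1}(U) \subseteq \mathcal{E}$, which by local triviality are homeomorphic to $U \times \mathbb{T}$. The pivotal fact I would prove first is the twisted analogue of Lemma~\ref{lem:bisection conv}: if $f \in \Sigma_c(\mathcal{G}; \mathcal{E})$ is supported on $\pi^{-1}(U)$ for an open bisection $U$, then $f^* * f \in D_0$, and the corresponding element of $C_c(\mathcal{G}^{(0)})$ sends $x$ to $|f(\tilde\gamma_x)|^2$, where $\gamma_x$ is the unique element of $U^{-1}$ with $r(\gamma_x) = x$ and $\tilde\gamma_x$ is any preimage (the value is independent of the choice by $\mathbb{T}$-equivariance of $f$). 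Applying~\eqref{eq:representative}, only a single term in each sum survives, and the equivariance of $f$ forces the support of $f^**f$ into $\pi^{-1}(\mathcal{G}^{(0)})$.

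Given this, the bound on $^*$-representations follows as in Proposition~\ref{prp:rep bound}: any $^*$-representation $\pi$ restricts to a $^*$-representation of the commutative $^*$-algebra $D_0 \cong C_c(\mathcal{G}^{(0)})$, so $\|\pi(h)\| \le \|h\|_\infty$ on $D_0$; then for $f$ supported on $\pi^{-1}(U)$ we have $\|\pi(f)\|^2 = \|\pi(f^**f)\| \le \|f^**f\|_\infty = \|f\|_\infty^2$. To reduce the general case to this one, I would push the compact set $\operatorname{supp}(f)$ forward through $\pi$, cover $\pi(\operatorname{supp}(f))$ by finitely many open bisections $U_1,\dots,U_n$ over which the twist trivialises, and use a partition of unity $\{h_i\}$ on $\mathcal{G}^{(0)}$ subordinate to this cover: then $f = \sum_i (h_i \circ r \circ \pi) \cdot f$ decomposes $f$ into pieces supported on the $\pi^{-1}(U_i)$, yielding $K_f := \sum_i \|(h_i \circ r \circ \pi) f\|_\infty$ as a universal bound. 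With $K_f$ in hand, the construction of $C^*(\mathcal{G}; \mathcal{E})$ is verbatim the construction of Theorem~\ref{thm:C*(G)}.

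The inequalities $\|f\|_\infty \le \|f\|_r \le \|f\|$ split into two pieces. The right-hand inequality is by definition. For the left, I would construct a twisted analogue of the map $j$ of Proposition~\ref{prp:j map}, sending $a \in C^*_r(\mathcal{G}; \mathcal{E})$ to the $\mathbb{T}$-equivariant function $\varepsilon \mapsto \bigl(\pi_{s(\varepsilon)}(a)\delta_{s(\varepsilon)} \mathbin{\big|} \delta_\varepsilon\bigr)$ on $\mathcal{E}$; Cauchy--Schwarz shows $j$ is norm-decreasing, and injectivity follows by the argument of Proposition~\ref{prp:j map} using the unitaries $U_\eta$ (which exist in the twisted setting because any two elements of $\pi^{-1}(\gamma)$ differ by an element of $\mathbb{T}$). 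Equality $\|f\|_\infty = \|f\|_r = \|f\|$ on functions supported in $\pi^{-1}(U)$ is then forced by the $C^*$-identity $\|f\|^2 = \|f^**f\|$ together with the uniqueness of the $C^*$-norm on $D_0$. The identification of the completion of $D_0$ with $C_0(\mathcal{G}^{(0)})$ drops out, since both norms agree with the sup-norm on $D_0 \cong C_c(\mathcal{G}^{(0)})$.

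The main obstacle is the amenability statement. The cleanest route is to pass from the twist to the full groupoid $\mathcal{E}$: the circle group acts on $C^*(\mathcal{E})$ and $C^*_r(\mathcal{E})$ via $z \cdot f(\varepsilon) := z^{-1} f(z\cdot\varepsilon)$, and an easy check (using~\eqref{eq:representative}) identifies the first spectral subspace of each with $C^*(\mathcal{G}; \mathcal{E})$ and $C^*_r(\mathcal{G}; \mathcal{E})$ respectively. Since $\mathbb{T}$ is amenable, $\mathcal{E}$ inherits amenability from $\mathcal{G}$, so $C^*(\mathcal{E}) = C^*_r(\mathcal{E})$; taking spectral subspaces then gives $C^*(\mathcal{G}; \mathcal{E}) = C^*_r(\mathcal{G}; \mathcal{E})$. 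The only delicate point here is verifying that the $\mathbb{T}$-action is strongly continuous and that the spectral subspace really is the closure of $\Sigma_c(\mathcal{G}; \mathcal{E})$, both of which follow from an approximation argument using bisection-supported functions and the bound established in the first step.
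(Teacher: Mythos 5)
The paper itself offers no proof of this theorem beyond the remark that ``arguments very similar to the ones for untwisted algebras'' establish it, so your proposal is really a fleshing-out of what the text leaves implicit, and for the most part it is the right fleshing-out: the twisted analogue of Lemma~\ref{lem:bisection conv} for functions supported on $\pi^{-1}(U)$, the resulting bound $\|\pi(f)\|\le\|f\|_\infty$ on such functions via the $C^*$-identity and the commutativity of $D_0$, the verbatim repetition of Theorem~\ref{thm:C*(G)}, and the twisted $j$-map for $\|\cdot\|_\infty\le\|\cdot\|_r$ (which the paper explicitly says ``carries across'' after Proposition~\ref{prp:twisted FCE}) all match the intended argument. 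Two points deserve comment. First, a fixable slip: your decomposition $f=\sum_i(h_i\circ r\circ\pi)\cdot f$ does not put the $i$-th piece inside $\pi^{-1}(U_i)$, because distinct bisections $U_i$ can have overlapping ranges, and a partition of unity on $\mathcal{G}^{(0)}$ cannot be subordinate to a cover of a subset of $\mathcal{G}$. As in Lemma~\ref{lem:bisection span}, the partition of unity $\{h_i\}$ must live on $\bigcup_i U_i\subseteq\mathcal{G}$, subordinate to the $U_i$ themselves, and then $f=\sum_i(h_i\circ\pi)\cdot f$ does what you want (each $h_i\circ\pi$ is $\mathbb{T}$-invariant, so the pieces stay in $\Sigma_c(\mathcal{G};\mathcal{E})$). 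Second, your route to the amenability statement is genuinely different from anything in the notes: you pass to the full groupoid $\mathcal{E}$, which is \emph{not} \'etale (its fibres are unions of circles), so $C^*(\mathcal{E})$, its reduced counterpart, and the implication ``amenable $\Rightarrow$ full $=$ reduced'' for $\mathcal{E}$ all require the general Haar-system machinery that these notes deliberately avoid, plus the fact that an extension of an amenable groupoid by a bundle of circle groups is amenable. The spectral-subspace identification itself is sound --- indeed, by averaging over the central $\mathbb{T}$ one sees the degree-$n$ equivariant functions form mutually orthogonal subalgebras whose closures are complemented ideals of $C^*(\mathcal{E})$, with the degree-one summand equal to $C^*(\mathcal{G};\mathcal{E})$ --- and this is a standard and correct argument in the literature. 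But since the untwisted amenability statement was already cited as a black box from \cite{AR}, the more economical move here is simply to cite the twisted version (e.g.\ \cite[Proposition~6.1.10]{AR} or the Fell-bundle formulations) rather than to rebuild $C^*(\mathcal{E})$ from scratch. What your approach buys is a self-contained reduction of the twisted statement to the untwisted one; what it costs is stepping outside the \'etale framework on which every other proof in these notes relies.
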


We will write $D$ for the completion in $C^*(\mathcal{E}; \mathcal{G})$ and $D_r$ for the
completion in $C^*_r(\mathcal{G})$.

\begin{remark}
For the trivial twist $\mathcal{G} \times \mathbb{T}$, the map $\gamma \mapsto
(\gamma,1)$ is a continuous section for $\pi : \mathcal{G} \times \mathbb{T} \to
\mathcal{G}$. The cocycle obtained from this section as in Example~\ref{eg:cocycle from
section} is the trivial one. So we can use the formula~\eqref{eq:representative} to see
that $C^*_r(\mathcal{G}; \mathcal{G} \times \mathbb{T}) \cong C^*_r(\mathcal{G})$ in the
canonical way.
\end{remark}

In this section, we are interested in $C^*_r(\mathcal{G}; \mathcal{E})$ and the
subalgebra $D_r$.

\begin{proposition}\label{prp:twisted FCE}
Let $\mathcal{G}$ be an effective \'etale groupoid and $\mathcal{E}$ a twist over
$\mathcal{G}$. There is a faithful conditional expectation $\Phi : C^*_r(\mathcal{G};
\mathcal{E}) \to D_r$ that extends restriction of functions in $\Sigma_c(\mathcal{G};
\mathcal{E})$ to $i(\mathcal{G}^{(0)} \times \mathbb{T})$. This is the only conditional
expectation from $C^*_r(\mathcal{G}; \mathcal{E})$ to $D_r$.
\end{proposition}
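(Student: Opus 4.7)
The plan is to mimic the untwisted strategy of Propositions~\ref{prp:j map} and~\ref{prp:FCE} closely, using the $\mathbb{T}$-equivariant structure to reduce most calculations to the ones already performed. First I would construct the expectation by producing a twisted analogue of the map $j$: for each $a \in C^*_r(\mathcal{G}; \mathcal{E})$ define $\tilde j(a) : \mathcal{E} \to \mathbb{C}$ by
\[
\tilde j(a)(\varepsilon) = \big(\pi_{s(\varepsilon)}(a)\, \delta_{i(s(\varepsilon),1)} \,\big|\, \delta_\varepsilon\big),
\]
where the inner product is taken in $L^2(\mathcal{E}_{s(\varepsilon)})$. A direct verification using the integrated form of convolution shows $\tilde j(f) = f$ for $f \in \Sigma_c(\mathcal{G}; \mathcal{E})$, Cauchy--Schwarz shows that $\tilde j$ is norm-decreasing to $\|\cdot\|_\infty$, and $\mathbb{T}$-equivariance of the regular representations gives $\tilde j(a)(z\cdot\varepsilon) = z\,\tilde j(a)(\varepsilon)$. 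Composing $\tilde j$ with restriction to $i(\mathcal{G}^{(0)} \times \mathbb{T})$ and using the identification of $D_r$ with $C_0(\mathcal{G}^{(0)})$ from Theorem~\ref{thm:twisted csa}, I obtain a norm-decreasing idempotent linear map $\Phi : C^*_r(\mathcal{G}; \mathcal{E}) \to D_r$ that extends restriction; Tomiyama's theorem then forces $\Phi$ to be a conditional expectation.

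Faithfulness is then a direct copy of the argument in Proposition~\ref{prp:FCE}: if $a \ne 0$ then some $\pi_x(a) \ne 0$, so there exist $\alpha,\beta \in \mathcal{E}_x$ with $(\pi_x(a)\delta_\alpha \mid \delta_\beta) \ne 0$, and the twisted analogue of Proposition~\ref{prp:reg reps} (unitary intertwiners $U_\eta$) rotates this into a non-vanishing value of $\tilde j(a^*a)$ at the unit $r(\pi(\alpha))$, so $\Phi(a^*a) \ne 0$.

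For uniqueness, let $E$ be any conditional expectation $C^*_r(\mathcal{G}; \mathcal{E}) \to D_r$. Since $E$ is automatically $D_r$-bilinear, and since $\Sigma_c(\mathcal{G};\mathcal{E})$ is dense and, by partition-of-unity arguments as in Lemma~\ref{lem:bisection span}, spanned by elements supported on preimages of open bisections, it suffices to show that $E(f) = 0$ whenever $f \in \Sigma_c(\mathcal{G}; \mathcal{E})$ has $\operatorname{supp}(f) \subseteq \pi^{-1}(U)$ for an open bisection $U$ with $U \cap \mathcal{G}^{(0)} = \emptyset$. Fix such $f$. Lemma~\ref{lem:effective<->top prin} gives a dense set of points $y \in \mathcal{G}^{(0)}$ with $\mathcal{G}^y_y = \{y\}$; for each such $y$, apply Lemma~\ref{lem:separate} to produce an open neighbourhood $V$ of $y$ with $r(UV) \cap V = \emptyset$ (taking $y \in s(U)$; the case $y \notin \overline{s(U)}$ is trivial). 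Choose $h \in C_c(V)$ with $h(y) = 1$; the twisted version of Lemma~\ref{lem:bisection conv} then gives $h * f * h = 0$ in $\Sigma_c(\mathcal{G}; \mathcal{E})$. Hence $h E(f) h = E(h*f*h) = 0$, so $E(f)(y) = 0$. By density and continuity $E(f) \equiv 0 = \Phi(f)$, and $E = \Phi$ follows.

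I expect the only mild obstacle to be bookkeeping with the $\mathbb{T}$-equivariance when translating the bisection-support computations and Proposition~\ref{prp:reg reps} to the twisted setting; everything else is a direct transcription of the untwisted proofs, since the effectiveness hypothesis interacts only with $\mathcal{G}$, not with the twist.
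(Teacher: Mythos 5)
Your proposal is correct and follows essentially the same route as the paper: existence and faithfulness by transporting the $j$-map argument of Propositions \ref{prp:j map} and~\ref{prp:FCE} to the $\mathbb{T}$-equivariant setting, and uniqueness by combining the $D_r$-bimodule property of an arbitrary expectation with the effectiveness/Lemma~\ref{lem:separate} construction of elements $h \in D_0$ satisfying $h * f * h = 0$. The only cosmetic difference is in how the local annihilation is globalised: the paper uses a partition of unity over the compact set $s(\operatorname{supp}(f))$ to get $\Psi(f) = \sum_i \Psi(h_i f h_i) = 0$ in one step, whereas you evaluate $E(f)$ at a dense set of units with trivial isotropy and invoke continuity; both work.
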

\begin{proof}[Proof sketch]
The proof of existence follows the outline of Proposition~\ref{prp:FCE}. To see that
$\Phi$ is the unique conditional expectation onto $D_r$, first observe that the
expectation property says that if $\Psi : C^*_r(\mathcal{G}; \mathcal{E}) \to D_r$ is a
conditional expectation, then for any $a \in C^*_r(\mathcal{G}; \mathcal{E})$ and any $b
\in D_r$, we have
\begin{equation}\label{eq:commute}
\Psi(ab) = \Psi(a)b = b \Psi(a) = \Psi(ba).
\end{equation}
Arguing as in Lemma~\ref{lem:cutdown}, we show that for each $a \in
\Sigma_c(\mathcal{G})$ whose support does not intersect $i(\mathcal{G}^{(0)} \times
\mathbb{T})$, and each unit $x \in \mathcal{G}^{(0)}$, we can find an element $h \in D_r$
such that $h a h = 0$ and $h(x)
> 0$. Using that $s(\operatorname{supp}(a))$ is compact and a partition-of-unity argument, we find
finitely many $h_i$ such that $a \sum_i h^2_i = a$ and $h_i a h_i = 0$ for all $i$. This
gives $\Psi(a) = \Psi(a \sum h^2_i) = \sum \Psi(h_i a h_i) = 0$. So $\Psi$ agrees with
$\Phi$ on the space $D^\perp_0$ of elements of $\Sigma_c(\mathcal{G}; \mathcal{E})$ whose
support does not intersect $i(\mathcal{G}^{(0)} \times \mathbb{T})$, and it agrees with
$\Phi$ on $D_0$ because every conditional expectation is the identity map on its range.
Since $\Sigma_c(\mathcal{G}; \mathcal{E}) = D_0 + D_0^\perp$, we deduce that $\Phi$ and
$\Psi$ agree on all of $\Sigma_c(\mathcal{G}; \mathcal{E})$, and so are equal.
\end{proof}

As in the untwisted case, each element of $C^*_r(\mathcal{G}; \mathcal{E})$ determines a
$\mathbb{T}$-equivariant function $j \in C_0(\mathcal{E})$. One way to see this is to fix
a section (we do not require continuity) $\gamma \mapsto \tilde\gamma$ for the map $\pi :
\mathcal{E} \to \mathcal{G}$, so that $\pi(\tilde{\gamma}) = \gamma$ for all $\gamma$. If
$\delta,\varepsilon \in \mathcal{E}$ satisfy $\pi(\delta) = \pi(\varepsilon)$, then there
is a unique $[\delta,\varepsilon] \in \mathbb{T}$ such that $\delta =
[\delta,\varepsilon]\cdot \varepsilon$. In particular, if $(\alpha,\beta) \in
\mathcal{G}^{(2)}$, then $\pi(\tilde{\alpha}\tilde{\beta}) = \alpha\beta =
\pi(\widetilde{\alpha\beta})$. It is not too hard to see that for each $x \in
\mathcal{G}^{(0)}$ there is a representation of $\Sigma_c(\mathcal{G}; \mathcal{E})$ on
$\ell^2(\mathcal{G}_x)$ satisfying
\[
\tilde\pi_x(f) \delta_\beta
    = \sum_{\alpha \in \mathcal{G}_{r(\beta)}} [\tilde{\alpha}\tilde{\beta}, \widetilde{\alpha\beta}] f(\tilde{\alpha}) \delta_{\alpha\beta},
\]
and this representation is unitarily equivalent to the regular representation $\pi_x$ of
$\Sigma_c(\mathcal{G}; \mathcal{E})$. With this representation in hand, the argument of
Proposition~\ref{prp:j map} carries across to the twisted setting.

If $A$ is a $C^*$-algebra and $B$ is a subalgebra of $A$, we shall say that $n \in A$ is
a \emph{normaliser} of $B$ if $nBn^* \cup n^*Bn \subseteq B$. We write $N(B)$ for the
collection of all normalisers of $B$. We say that $B$ is \emph{regular} in $A$ if $A$ is
generated as a $C^*$-algebra by $N(B)$.

\begin{proposition}\label{prp:D Cartan}
If $\mathcal{G}$ is an \'etale, effective groupoid, and $\mathcal{E}$ is a twist over
$\mathcal{G}$, then $D_r$ is a regular maximal abelian subalgebra of $C^*_r(\mathcal{G};
\mathcal{E})$ that contains an approximate unit for $C^*(\mathcal{G}; \mathcal{E})$.
\end{proposition}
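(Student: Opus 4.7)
The plan is to treat the three assertions of the proposition separately, in increasing order of difficulty: containing an approximate unit, regularity, and (the main obstacle) maximal abelianness.

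For the approximate identity, I would fix an approximate identity $(h_i)$ for $C_0(\mathcal{G}^{(0)})$ consisting of compactly supported functions with $0 \le h_i \le 1$, and let $\tilde{h}_i \in D_0$ be the corresponding elements. For $f \in \Sigma_c(\mathcal{G}; \mathcal{E})$ supported on $\pi^{-1}(B)$ for an open bisection $B$, the twisted convolution formula applied exactly as in Lemma~\ref{lem:bisection conv} gives $\tilde{h}_i * f(\varepsilon) = h_i(r(\varepsilon)) f(\varepsilon)$ and $f * \tilde{h}_i(\varepsilon) = f(\varepsilon) h_i(s(\varepsilon))$. Since $r(\operatorname{supp}(f))$ and $s(\operatorname{supp}(f))$ are compact, eventually $h_i \equiv 1$ on both, so $\tilde{h}_i * f = f = f * \tilde{h}_i$. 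Both sides have support in the fixed bisection $\pi^{-1}(B)$, so Theorem~\ref{thm:twisted csa} gives the convergence in the $C^*$-norm, not merely pointwise. Since bisection-supported functions span $\Sigma_c(\mathcal{G}; \mathcal{E})$, an $\varepsilon/3$-argument extends this to all of $C^*(\mathcal{G}; \mathcal{E})$.

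For regularity, I would show that every element $f \in \Sigma_c(\mathcal{G}; \mathcal{E})$ supported on a bisection preimage $\pi^{-1}(B)$ is a normaliser of $D_r$. Using the twisted convolution formula and the identity $\pi^{-1}(\mathcal{G}^{(0)}) = i(\mathcal{G}^{(0)} \times \mathbb{T})$, for such $f$ and for $\tilde{h} \in D_0$, the product $f * \tilde{h} * f^*$ is supported on $\pi^{-1}(B) \pi^{-1}(\mathcal{G}^{(0)}) \pi^{-1}(B^{-1}) \subseteq \pi^{-1}(r(B)) \subseteq i(\mathcal{G}^{(0)} \times \mathbb{T})$, so lies in $D_0 \subseteq D_r$; and symmetrically for $f^* * \tilde{h} * f$. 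By continuity the same holds for arbitrary $\tilde{h} \in D_r$. Since $\Sigma_c(\mathcal{G}; \mathcal{E})$ is spanned by such bisection-supported functions and is dense in $C^*_r(\mathcal{G}; \mathcal{E})$, the normalisers of $D_r$ generate.

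The main obstacle, maximal abelianness, uses the injective $j$-map. The key computation is that $j$ intertwines left and right multiplication by $D_r$ with pointwise multiplication by $h \circ r$ and $h \circ s$ respectively: for $h \in C_0(\mathcal{G}^{(0)}) \cong D_r$ and $a \in C^*_r(\mathcal{G}; \mathcal{E})$, approximating $a$ by $\Sigma_c$-elements and using the bisection convolution formula gives $j(\tilde{h} * a)(\varepsilon) = h(r(\varepsilon))\, j(a)(\varepsilon)$ and $j(a * \tilde{h})(\varepsilon) = j(a)(\varepsilon)\, h(s(\varepsilon))$. Now suppose $a$ commutes with $D_r$; I claim $j(a)$ is supported on $i(\mathcal{G}^{(0)} \times \mathbb{T})$, whence $a = \Phi(a) \in D_r$ by injectivity of $j$ and Proposition~\ref{prp:twisted FCE}. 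If instead $j(a)(\varepsilon_0) \neq 0$ for some $\varepsilon_0$ with $\pi(\varepsilon_0) \notin \mathcal{G}^{(0)}$, then by continuity $j(a)$ is nonzero on an open neighbourhood $W$ of $\varepsilon_0$ in $\mathcal{E}$; since $\pi$ is open (locally a projection $B \times \mathbb{T} \to B$), $\pi(W)$ is open in $\mathcal{G}$. There are two cases. If there is any $\varepsilon_1 \in W$ with $r(\pi(\varepsilon_1)) \neq s(\pi(\varepsilon_1))$, choose $h \in C_0(\mathcal{G}^{(0)})$ separating these two points to contradict $j(\tilde{h}*a - a*\tilde{h})(\varepsilon_1) = 0$. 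Otherwise $\pi(W) \subseteq \operatorname{Iso}(\mathcal{G})$, so $\pi(\varepsilon_0) \in \operatorname{Iso}(\mathcal{G})^{\circ} \setminus \mathcal{G}^{(0)}$, contradicting effectiveness of $\mathcal{G}$. This dichotomy, together with the previously established approximate-unit and regularity assertions and the uniqueness of $\Phi$, completes the proof.
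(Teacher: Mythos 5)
Your proposal is correct and follows essentially the same route as the paper's proof sketch: regularity via the support computation $\operatorname{supp}(n h n^*) \subseteq \pi^{-1}(BB^{-1}) \subseteq \pi^{-1}(\mathcal{G}^{(0)})$ for bisection-supported normalisers, and maximal abelianness via the $j$-map together with a function $h$ separating $r(\varepsilon)$ from $s(\varepsilon)$, with effectiveness ruling out the case that $j(a)$ is supported on $\pi^{-1}(\operatorname{Iso}(\mathcal{G}))$. You have merely filled in details that the paper's sketch compresses or omits, notably the approximate-unit claim and the explicit dichotomy invoking $\operatorname{Iso}(\mathcal{G})^{\circ} = \mathcal{G}^{(0)}$.
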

\begin{proof}[Proof sketch]
Clearly $D_r$ is an abelian algebra. To see that it is maximal abelian, suppose that $a$
belongs to its complement. Then $j(a)$ must be nonzero at some $\varepsilon \in
\mathcal{E} \setminus \pi^{-1}(\operatorname{Iso}(\mathcal{G})) =
\operatorname{Iso}(\mathcal{E})$. So we can choose $h \in D_0 \subseteq D_r$ such that
$h(r(\varepsilon)) = 1$ and $h(s(\varepsilon)) = 0$. Now $j(ah)(\varepsilon) = 0$ whereas
$j(ha)(\varepsilon) =a(\varepsilon) \not= 0$. To see that $D_r$ is regular, we use the
multiplication formula to see that if $n \in C_c(\pi^{-1}(U))$ for some bisection $U$ of
$\mathcal{G}$, then for $h \in C_c(\mathcal{G}^{(0)})$ (regarded as an element of $D_0$
using the isomorphism of Theorem~\ref{thm:twisted csa}) we have $\operatorname{supp}(n h
n^*) \subseteq \operatorname{supp}(n) \operatorname{supp}(h) \operatorname{supp}(n)^{-1}
\subseteq \pi^{-1}(UU^{-1}) \subseteq \pi^{-1}(\mathcal{G}^{(0)})$, and similarly for
$n^* h n$.
\end{proof}

Following Renault \cite{Ren08}, we shall say that a pair $(A, B)$ of $C^*$-algebras is a
\emph{Cartan pair} and say that $B$ is a \emph{Cartan subalgebra} of $A$ if $A$ is a
$C^*$-algebra, $B$ is a $C^*$-subalgebra of $A$ containing an approximate unit for $A$,
there is a faithful conditional expectation of $A$ onto $B$, $B$ is a maximal abelian
subalgebra of $A$, and $B$ is regular in $A$. We can reinterpret the preceding result as
saying that $(C^*(\mathcal{G}; \mathcal{E}), D_r)$ is a Cartan pair. Our main objective
here is to prove that every Cartan pair has this form.

Given a Cartan pair $(A, B)$, and given $n \in N(B)$, we write
\[
\operatorname{dom}(n) \coloneq \overline{\{\phi \in \widehat{B} \mid \phi(n^*n) > 0\}}
    \quad\text{ and }\quad
\operatorname{ran}(n) \coloneq \overline{\{\phi \in \widehat{B} \mid \phi(nn^*) > 0\}}.
\]
We have $\operatorname{ran}(n) = \operatorname{dom}(n^*)$.

\begin{proposition}
Let $(A, B)$ be a Cartan pair of $C^*$-algebras. For each $n \in N(B)$, there is a
homeomorphism $\alpha_n : \operatorname{dom}(n) \to \operatorname{ran}(n)$ satisfying
$\alpha_n(\phi)(n b n^*) = \phi(b n^*n)$. There is an equivalence relation $\sim$ on
$\{(n,\phi) \mid n \in N(B), \phi \in \operatorname{dom}(n)\}$ such that $(n,\phi) =
(m,\psi)$ if and only if $\phi = \psi$ and there is a neighbourhood $U$ of $\phi$ such
that $\alpha_n|_U = \alpha_m|_U$. The set
\[
    \mathcal{G}_{(A,B)} \coloneq \{[n,\phi] \mid n \in N(B), \phi \in \operatorname{dom}(n)\}
\]
is a groupoid with unit space
\[
\mathcal{G}^{(0)}_{(A,B)} = \{[b,\phi] \mid b \in B, \phi \in \operatorname{supp}(B)\},
\]
and groupoid structure given by
\begin{gather*}
r([n,\phi]) = [nn^*, \alpha_n(\phi)],\qquad s([n,\phi)] = [n^*n, \phi],\\
[m, \alpha_n(\phi)][n,\phi] = [mn,\phi]\quad\text{ and }\quad [n,\phi]^{-1} = [n^*,
\alpha_n(\phi)].
\end{gather*}
\end{proposition}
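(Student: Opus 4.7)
The plan is to construct each $\alpha_n$ from a partial $*$-isomorphism of hereditary subalgebras of $B$ induced by conjugation by $n$, and then to verify the groupoid structure by showing that $N(B)$ is closed under products and adjoints and that these operations interact correctly with the $\alpha_n$.

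For the first step, I would first observe that $n^*n$ and $nn^*$ both lie in $B$, using an approximate unit $(e_\lambda)$ for $B$ and computing $n^*n = \lim_\lambda n^* e_\lambda n \in B$ by the normaliser property; similarly for $nn^*$. I would then aim to produce a $*$-isomorphism $\beta_n : \overline{n^*n\, B\, n^*n} \to \overline{nn^*\, B\, nn^*}$ characterised by $\beta_n(b n^*n) = n b n^*$ for $b \in B$. The cleanest route goes via the polar decomposition $n = v|n|$ in $A^{**}$: conjugation by $v$ implements $\beta_n$, and one shows that this conjugation really lands in $B$ by approximating $v$ with the elements $n(n^*n + \varepsilon)^{-1/2} \in A$, which compress $B$ back into $B$ by the normaliser condition. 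Gelfand duality then converts $\beta_n$ into a homeomorphism of the open sets $\{\phi \in \widehat{B} : \phi(n^*n) > 0\} \to \{\phi \in \widehat{B} : \phi(nn^*) > 0\}$, and I would extend this to $\operatorname{dom}(n) \to \operatorname{ran}(n)$ by continuity. The characterising formula $\alpha_n(\phi)(nbn^*) = \phi(bn^*n)$ is then built into the construction.

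With the $\alpha_n$ in hand, the germ equivalence $\sim$ is tautologically an equivalence relation. For the groupoid structure I would verify three algebraic facts about normalisers: (i) $N(B)$ is closed under products and adjoints, with $(mn)B(mn)^* = m(nBn^*)m^* \subseteq B$ immediate; (ii) whenever both sides are defined one has $\alpha_{mn} = \alpha_m \circ \alpha_n$ on a neighbourhood of $\phi$, which follows from $(mn)b(mn)^* = m(nbn^*)m^*$ together with the characterising formula; and (iii) $\alpha_{n^*} = \alpha_n^{-1}$, by directly comparing the characterising formulas for $n$ and $n^*$. Each identity plainly respects germ equivalence, so the operations descend to well-defined operations on $\mathcal{G}_{(A,B)}$. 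For the unit space, every $b \in B$ is a normaliser with $\alpha_b = \operatorname{id}$ on $\operatorname{dom}(b)$, so $[b,\phi]$ is the germ of the identity map at $\phi$, yielding the identification $\mathcal{G}^{(0)}_{(A,B)} \cong \widehat{B}$ and the claimed formulas for $r, s$, multiplication, and inversion.

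The main obstacle is the first step, constructing $\beta_n$ and showing it really takes values in $B$ rather than only in $A$. The polar decomposition lives in $A^{**}$, so one must exploit the normaliser hypothesis to see that conjugation by $v$ preserves $B$ in the relevant hereditary sense; in practice this means working with the elements $n(n^*n+\varepsilon)^{-1/2} b (n^*n+\varepsilon)^{-1/2} n^*$, checking they lie in $B$ by normality, and passing to the limit $\varepsilon \to 0^+$ inside the hereditary subalgebra $\overline{nn^* B nn^*}$. Once this is settled, injectivity of $\alpha_n$ on its full domain follows from the symmetric construction applied to $n^*$, so $\alpha_n$ is a genuine homeomorphism and the groupoid inverse $[n,\phi]^{-1} = [n^*, \alpha_n(\phi)]$ is well defined.
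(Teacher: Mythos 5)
Your proposal is correct and follows essentially the same route as the paper's sketch: both pass through the polar decomposition $n = v|n|$ in $A^{**}$, show that conjugation lands in $B$ by restricting to functions supported where $|n|$ is bounded below (your $\varepsilon$-regularisation $n(n^*n+\varepsilon)^{-1/2}$ is the same device as the paper's factorisation $f = |n|g|n|$ on $C_c(\operatorname{dom}(n)^\circ)$), and then deduce the groupoid axioms from multiplicativity of the polar parts, i.e.\ the identity $\alpha_{mn} = \alpha_m \circ \alpha_n$ on common domains. The only point treated more explicitly in the paper is the domain bookkeeping for well-definedness of the product, handled there via the observation that $[n,\phi] = [nh,\phi]$ for $h$ with $\phi(h)\neq 0$.
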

\begin{proof}[Proof sketch]
Take the polar decomposition $n = v|n|$ of $n$ in $A^{**}$ and observe that if $f$
belongs to the dense subalgebra $C_c(\operatorname{dom}(n)^\circ)$ of $I_{n^*n} \coloneq
C_0(\operatorname{dom}(n)^\circ)$, then $|n|$ is invertible on $\operatorname{supp}(f)$,
and so we can write $f = |n| g |n|^*$ for some $g \in I_{n^*n}$. Hence $vfv^* = v |n| g
|n|^* v^* = a g a^* \in B$. Applying the same reasoning to $n^*$ we see that conjugation
by $v$ determines an isomorphism of commutative $C^*$-algebras $I_{n^*n} \cong I_{nn^*}$,
and therefore induces a homeomorphism $\alpha_n$ between their spectra. It is clear that
$\sim$ is an equivalence relation, and that $r,s$ are well defined. To check that the
multiplication is well defined, we use the observation that if $v$, $w$ are the partial
isometries appearing in the polar decompositions of $m,n$, and if $\operatorname{dom}(m)
= \operatorname{ran}(n)$, then $vw$ is the partial isometry appearing in the polar
decomposition of $mn$; and also that for $h \in C_c(\operatorname{supp}(n))$ and $\phi$
satisfying $\phi(h) \not= 0$, we have $[n, \phi] = [nh,\phi] = [(h\circ\alpha_{n}^{-1})
n, \phi]$. It is easy to check that inversion is well-defined using that if $n = v|n|$,
then $n^* = v^*|n^*|$. Associativity of multiplication comes from associativity of
multiplication in $A$, and the inverse property follows directly from the definitions of
$r$ and $s$.
\end{proof}

\begin{theorem}\label{thm:Weyl gpd}
The groupoid $\mathcal{G}_{(A, B)}$ becomes an \'etale groupoid under the topology with
basic open sets $Z(n,U) \coloneq \{[n,\phi] \mid \phi \in U\}$ indexed by elements $n \in
N(B)$ and open sets $U \subseteq \operatorname{dom}(n)$. If $A = C^*(\mathcal{G};
\mathcal{E})$ and $B = D_0$ for some twist $\mathcal{G}^{(0)} \times \mathbb{T} \to
\mathcal{E} \to \mathcal{G}$, then there is an isomorphism $\theta : \mathcal{G} \cong
\mathcal{G}_{(A, B)}$ such that for any $\gamma \in \mathcal{G}$, any open bisection $U$
containing $\gamma$ whose closure is a compact bisection and any $n \in
\Sigma_c(\mathcal{G}; \mathcal{E})$ that is nonzero everywhere on $\pi^{-1}(U)$, we have
$\theta(\gamma) = [n, \widehat{s(\gamma)}]$.
\end{theorem}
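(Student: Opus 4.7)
The plan is to first establish the topology on $\mathcal{G}_{(A,B)}$ and verify étaleness, then construct $\theta$ using that elements of $\Sigma_c(\mathcal{G};\mathcal{E})$ supported on bisections implement the groupoid maps.

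To see that $\{Z(n,U)\}$ is a base, suppose $[n_1,\phi] = [n_2,\phi] \in Z(n_1,U_1) \cap Z(n_2,U_2)$. The equivalence relation provides an open $V$ with $\phi \in V \subseteq U_1 \cap U_2$ on which $\alpha_{n_1}$ and $\alpha_{n_2}$ agree, so $Z(n_1,V) = Z(n_2,V)$ lies inside the intersection. Inversion is continuous because $Z(n,U)^{-1} = Z(n^*,\alpha_n(U))$, and multiplication is continuous because $Z(m,\alpha_n(U)) \cdot Z(n,U) \subseteq Z(mn,U)$. For étaleness, identify $\mathcal{G}^{(0)}_{(A,B)}$ with $\widehat{B}$ via $[b,\phi] \mapsto \phi$; then $r$ restricted to $Z(n,U)$ is the map $[n,\phi] \mapsto \alpha_n(\phi)$, a homeomorphism onto the open set $\alpha_n(U) \subseteq \widehat{B}$.

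For the isomorphism, with $A = C^*_r(\mathcal{G};\mathcal{E})$ and $B = D_r$, the key calculation is that for $n \in \Sigma_c(\mathcal{G};\mathcal{E})$ supported on $\pi^{-1}(U)$ with $U$ an open bisection, and for $\tilde{h} \in D_0$ corresponding to $h \in C_c(\mathcal{G}^{(0)})$, the twisted analogue of Lemma~\ref{lem:bisection conv} gives $(n\tilde{h}n^*)(r(\delta)) = |n(\tilde\delta)|^2 h(s(\delta))$ for $\delta \in U$. Under the identification $\widehat{B} \cong \mathcal{G}^{(0)}$, this shows that $\alpha_n$ coincides, on the open subset of $s(U)$ where $n^*n$ is nonzero, with the bisection homeomorphism $\varphi_U : s(U) \to r(U)$ sending $s(\delta) \mapsto r(\delta)$. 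Since any two bisections $U_1, U_2$ containing $\gamma$ induce the same homeomorphism on $s(U_1 \cap U_2)$, any two choices of $n$ satisfying the hypotheses of the theorem give the same equivalence class, so $\theta$ is well defined; the groupoid-homomorphism property then follows because the convolution of two such normalisers is supported on $\pi^{-1}(U_1 U_2)$ and is nonvanishing on $\pi^{-1}(\gamma_1 \gamma_2)$, matching $[n_1 n_2, \phi] = [n_1, \alpha_{n_2}(\phi)] \cdot [n_2, \phi]$.

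Injectivity uses effectiveness. If $\theta(\gamma_1) = \theta(\gamma_2)$, then $s(\gamma_1) = s(\gamma_2) = x$ and the bisection maps $\varphi_{U_i}$ agree near $x$; for $y$ close to $x$ with $\mathcal{G}^y_y = \{y\}$ (a dense set by Lemma~\ref{lem:effective<->top prin}), the elements $\xi_i(y) \in U_i$ with source $y$ have equal range, so $\xi_1(y)\xi_2(y)^{-1}$ lies in trivial isotropy and $\xi_1(y) = \xi_2(y)$; continuity of $\xi_i$ then forces $\gamma_1 = \gamma_2$. The main obstacle is surjectivity: given $[n,\phi]$ with $\phi = \widehat{x}$, I would first replace $n$ by $nh$ for $h \in D_0$ with $h(x) > 0$ and small support, so that $nh$ may be approximated within $\Sigma_c(\mathcal{G};\mathcal{E})$ and then, via Lemma~\ref{lem:bisection span} and a partition of unity, written as a finite sum $\sum_i m_i$ with $m_i$ supported on $\pi^{-1}(U_i)$ for bisections $U_i$. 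A cutdown in the spirit of Lemma~\ref{lem:cutdown}, using effectiveness to separate summands whose supports lie over distinct elements of $\mathcal{G}_x$, allows one to further shrink $h$ until $[n,\phi] = [m_i,\phi]$ for a single $i$; then the unique $\gamma \in U_i$ with $s(\gamma) = x$ satisfies $\theta(\gamma) = [n,\phi]$. Finally, $\theta$ is a homeomorphism because for any open bisection $V \subseteq \mathcal{G}$ and any $n \in \Sigma_c(\mathcal{G};\mathcal{E})$ nonvanishing on $\pi^{-1}(V)$, $\theta$ restricts to a bijection between $V$ and $Z(n, s(V))$, providing an explicit base-to-base correspondence in both directions.
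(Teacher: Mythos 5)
Most of your argument tracks the paper's (sketched) proof: checking that the $Z(n,U)$ form a base, identifying $\mathcal{G}^{(0)}_{(A,B)}$ with $\widehat{B}$ so that $r|_{Z(n,U)}$ becomes a homeomorphism onto $\alpha_n(U)$, the computation $(n\tilde h n^*)(r(\delta))=|n(\tilde\delta)|^2h(s(\delta))$ showing $\alpha_n(\widehat{s(\gamma)})=\widehat{r(\gamma)}$, and the continuity/openness of $\theta$ via the correspondence between open bisections $V$ and sets $Z(n,s(V))$ are exactly what the paper does or asserts. Your injectivity argument via the dense set of units with trivial isotropy is also sound (you want $\xi_2(y)^{-1}\xi_1(y)\in\mathcal{G}^y_y$ rather than $\xi_1(y)\xi_2(y)^{-1}$, which lies in the isotropy at $r(\xi_1(y))$, but that is cosmetic).

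The genuine gap is in surjectivity, which is the heart of the theorem and precisely the step the paper defers to Renault. Two problems. First, a normaliser $n\in N(D_r)$ is an element of the completed $C^*$-algebra, not of $\Sigma_c(\mathcal{G};\mathcal{E})$, so $nh$ cannot literally be written as a finite sum $\sum_i m_i$ via Lemma~\ref{lem:bisection span}; and replacing $nh$ by a norm-approximant in $\Sigma_c(\mathcal{G};\mathcal{E})$ is not harmless, because the class $[n,\phi]$ records the \emph{germ} of the homeomorphism $\alpha_n$ at $\phi$, which is not stable under small norm perturbations (nor need the approximant be a normaliser at all). Second, even granting $n\in\Sigma_c(\mathcal{G};\mathcal{E})$ and a decomposition $n=\sum_i m_i$ over bisections $U_i$, the proposed cutdown cannot isolate a single summand: every element of $U_i\cap\mathcal{G}_x$ has source $x$, so any $h\in D_0$ with $h(x)>0$ leaves every $m_ih$ alive there, while a two-sided cutdown $hnh$ presupposes knowing where $\alpha_n$ sends $\widehat{x}$ --- the very thing being determined. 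The missing mechanism, which your sketch gestures at but does not supply, is to use the normaliser identities $n^*D_rn\cup nD_rn^*\subseteq D_r$ together with effectiveness to prove that the open support of $j(n)\in C_0(\mathcal{E})$ projects to an open bisection of $\mathcal{G}$ (roughly: two distinct elements of the projected support sharing a source or a range would force $j(nhn^*)$ or $j(n^*hn)$ to be nonzero off $i(\mathcal{G}^{(0)}\times\mathbb{T})$ near units with trivial isotropy, contradicting membership in $D_r$). Once that is established, $\alpha_n$ is the partial homeomorphism implemented by that bisection and $[n,\widehat{x}]=\theta(\gamma)$ for the unique $\gamma$ in it with source $x$; without it, $\theta$ has not been shown to be onto.
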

\begin{proof}[Proof sketch]
It is fairly straightforward to check that the sets $Z(n, U)$ are a base for a locally
compact Hausdorff topology, and that $\mathcal{G}_{(A, B)}$ is \'etale in this topology.
If $A = C^*(\mathcal{G}; \mathcal{E})$ and $B = D_r$, then the defining property of
$\alpha_n$ (namely $\alpha_n(\phi)(n h n^*) = \phi(h n^*n)$) shows that if $n$ is nonzero
on all of $\pi^{-1}(U)$, then $\alpha_n(\widehat{s(\gamma)}) = \widehat{r(\gamma)}$ for
$\gamma \in U$. It follows from direct computations that $\theta$ is an algebraic
isomorphism. The pre-image of a given $Z(n, U)$ under $\theta$ is the open bisection
$\operatorname{supp}(n) U$, so $\theta$ is continuous. Moreover, for a given $\gamma \in
\mathcal{G}$ we can choose a compact bisection $K$ containing $\gamma$, an open bisection
$U$ containing $K$ and an element $n \in C_c(U)$ that is identically $1$ on $K$, and
$\theta$ is then a continuous bijection of the compact set $K$ onto $Z(n, K)$, and hence
a homeomorphism between these sets. Since the interiors of compact bisections form a base
for the topology on $\mathcal{G}$, it follows that $\theta$ is open.
\end{proof}

As in \cite{Ren08}, we call $\mathcal{G}_{(A,B)}$ the \emph{Weyl groupoid} of the Cartan
pair $(A,B)$.

\begin{proposition}
Let $(A, B)$ be a Cartan pair of $C^*$-algebras. There is an equivalence relation
$\approx$ on the set $\{(n, \phi) \in N(B) \times \widehat{B} \mid \psi(n^*n) > 0\}$ such
that $(m, \phi) \approx (n, \phi)$ if $\phi = \psi$, and there exist $b, b' \in B$ with
$\phi(b), \phi(b') > 0$ and $mb = nb'$. The set
\[
    \mathcal{E}_{(A,B)} \coloneq \{(n, \phi) \in N(B) \times \widehat{B} \mid \psi(n^*n) > 0\}/\approx
\]
is a groupoid with unit space $\{\llbracket h, \phi\rrbracket \mid h \in
C_0(\mathcal{G}^{(0)}), \phi \in \operatorname{dom}(h)\}$, range and source maps
$r(\llbracket n,\phi\rrbracket) = \llbracket nn^*, \phi\rrbracket$, $s(\llbracket
n,\phi\rrbracket) = \llbracket n^*n, \phi\rrbracket$, multiplication $\llbracket m,
\alpha_n(\phi)\rrbracket \llbracket n, \phi\rrbracket = \llbracket mn, \phi\rrbracket$,
and inversion $\llbracket n, \phi\rrbracket^{-1} = \llbracket n^*,
\alpha_n(\phi)\rrbracket$.
\end{proposition}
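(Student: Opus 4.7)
The plan is to verify in order that (a) $\approx$ is an equivalence relation, (b) $\approx$ is finer than the Weyl-groupoid relation $\sim$ from the previous proposition, (c) range, source, multiplication, and inversion descend to well-defined operations on $\approx$-classes, and (d) the groupoid axioms hold. Throughout, the main leverage is that $\phi$ is a character on the commutative $C^*$-algebra $B$, so $\phi(bb') = \phi(b)\phi(b')$ and $\phi(b^*) = \overline{\phi(b)}$, together with the normaliser identity $n^*Bn \subseteq B$.

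For (a), reflexivity holds via $(n,\phi) \approx (n,\phi)$ with $b = b' = n^*n$ (using the standing hypothesis $\phi(n^*n) > 0$), and symmetry is built in. For transitivity, given witnesses $mb_1 = nb_1'$ and $nb_2 = pb_2'$, commutativity of $B$ yields $m(b_1b_2) = n b_1' b_2 = n b_2 b_1' = p b_2' b_1'$, and multiplicativity of $\phi$ gives $\phi(b_1 b_2) = \phi(b_1)\phi(b_2) > 0$ and likewise for $\phi(b_2' b_1')$. For (b), on a neighbourhood $V$ of $\phi$ where both $b$ and $b'$ are nonvanishing, I would combine the defining property $\alpha_m(\psi)(m h m^*) = \psi(h m^*m)$ with the identity $mb = nb'$ (and its conjugated variants) to conclude $\alpha_m|_V = \alpha_n|_V$, hence $[m,\phi] = [n,\phi]$ in $\mathcal{G}_{(A,B)}$; this produces the canonical quotient map $\mathcal{E}_{(A,B)} \to \mathcal{G}_{(A,B)}$.

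For (c), range and source are quickest: from $mb = nb'$ the identity $(mb)(mb)^* = (nb')(nb')^*$ rearranges, via commutativity in $B$, to $mm^* \cdot bb^* = nn^* \cdot b'b'^*$, with $\phi(bb^*) = |\phi(b)|^2 > 0$; source is symmetric via $(mb)^*(mb) = (nb')^*(nb')$. For multiplication in the first argument, given $(m, \alpha_n(\phi)) \approx (m', \alpha_n(\phi))$ via $mb = m'b'$, the identity $bn = n(n^*bn)$ yields
\[
mn \cdot (n^*bn) = mbn = m'b'n = m'n \cdot (n^*b'n),
\]
and the positivity $\phi(n^*bn) = \alpha_n(\phi)(b \cdot nn^*) > 0$ follows from the defining formula for $\alpha_n$ together with $\alpha_n(\phi) \in \operatorname{dom}(n^*)$. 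Well-definedness in the right-hand factor is immediate from left-multiplying $nb = n'b'$ by $m$.

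The main obstacle is well-definedness of inversion. Given $mb = nb'$ (which, replacing the pair by $(bb^*, b'b^*)$, I may take with $b$ and $b'$ positive), the adjoint reads $b m^* = b' n^*$, which has $B$-factors on the \emph{left} of $m^*, n^*$, while the $\approx$-definition for the inverses requires $B$-factors on the right. The plan for bridging this gap is to extract the auxiliary $B$-identities $(m^*m) b = (m^*n) b'$ and $b (n^*n) = (n^*m) b'$ by left-multiplying $mb = nb'$ respectively by $m^*$ and by combining with its adjoint, and then to use step (b), which makes $\alpha_m$ and $\alpha_n$ agree near $\phi$, together with the polar decompositions $m = v_m|m|$ and $n = v_n|n|$ in $A^{**}$, to conclude that $v_m$ and $v_n$ agree on a neighbourhood of $\phi$ up to a unit phase taken from the local function-algebra picture of $B$. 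Passing to adjoints reverses this relationship between $v_m^*$ and $v_n^*$ near $\alpha_m(\phi) = \alpha_n(\phi)$, and from this one reads off explicit $c, c' \in B$ satisfying $m^* c = n^* c'$ with $\alpha_m(\phi)(c), \alpha_m(\phi)(c') > 0$. With well-definedness of all four operations in hand, the groupoid axioms descend directly from the associativity and $*$-structure of $A$; the only subtlety, in the absence of a unit in $B$, is to provide explicit $\approx$-witnesses for identities like $\llbracket n, \phi\rrbracket \cdot s(\llbracket n, \phi\rrbracket) = \llbracket n, \phi\rrbracket$, which is achieved by the equality $n \cdot ((n^*n) b') = (n \cdot n^*n) \cdot b'$ for any $b' \in B$ with $\phi(b') > 0$.
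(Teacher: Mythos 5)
Your overall architecture (check $\approx$ is an equivalence relation, compare with $\sim$, verify well-definedness of the four operations, then the axioms) is the right one, and your treatment of reflexivity/transitivity and of the right-hand factor of the product is fine. But two of the concrete identities you lean on are false. First, $(mb)(mb)^* = m(bb^*)m^*$ does not ``rearrange to $mm^*\cdot bb^*$'': the element $bb^*$ lies in $B$ but need not commute with $m^*$ (in $M_2(\mathbb{C})$ with $B$ the diagonal, $m = e_{12}$ and $b = e_{22}$ give $m bb^* m^* = e_{11}$ but $mm^*bb^* = 0$). Fortunately well-definedness of $r$ and $s$ does not need this: any $h,k \in B$ with $\psi(h),\psi(k) > 0$ satisfy $h\cdot k = k\cdot h$, so $(h,\psi)\approx(k,\psi)$ automatically and the range and source classes depend only on the relevant character. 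Second, $bn = n(n^*bn)$ is also false in general --- the right-hand side is $nn^*bn$, which differs from $bn$ by the factor $nn^*$, and $nn^*$ is not a unit (take $n = 2e_{12}$ above). So the first and last equalities in your chain $mn(n^*bn) = mbn = m'b'n = m'n(n^*b'n)$ fail. The outer identity you actually need is nevertheless true, but for a different reason: $(mn)(n^*bn) = m(nn^*\,b)n = m(b\,nn^*)n = (mb)(nn^*n)$, using only that $nn^*$ and $b$ commute in $B$, and likewise for the primed terms, so the equality follows from $mb = m'b'$. You should substitute this computation for your justification.

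The genuine gap is inversion, which you rightly flag as the main obstacle but then only gesture at: concluding that the polar-decomposition partial isometries $v_m$ and $v_n$ ``agree near $\phi$ up to a unit phase taken from the local function-algebra picture of $B$'' is not an argument --- the phase ambiguity between $v_m$ and $v_n$ is precisely the data the twist is built to record, so making this precise is as hard as the proposition itself. The tool that dispatches it, and which the paper isolates as the key point for well-definedness of multiplication, is the commutation relation $bn = n(b\circ\alpha_n)$, valid for $b \in B$ with $\operatorname{supp}(b) \subseteq \operatorname{ran}(n)$; after multiplying $b, b'$ by a positive $h \in B$ with $\phi(h)>0$ supported in $\operatorname{dom}(m)^\circ\cap\operatorname{dom}(n)^\circ$ one may always arrange the support condition. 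Applying it to $m^*$ and $n^*$: taking adjoints of $mb = nb'$ gives $b^*m^* = b'^*n^*$, and then $b^*m^* = m^*(b^*\circ\alpha_m^{-1})$ and $b'^*n^* = n^*(b'^*\circ\alpha_n^{-1})$, so $c \coloneq b^*\circ\alpha_m^{-1}$ and $c' \coloneq b'^*\circ\alpha_n^{-1}$ are witnesses for $(m^*,\alpha_m(\phi)) \approx (n^*,\alpha_n(\phi))$ with $\alpha_m(\phi)(c) = \overline{\phi(b)} = \phi(b) > 0$. I recommend proving $bn = n(b\circ\alpha_n)$ as a lemma (via the polar decomposition $n = v|n|$, where $b\circ\alpha_n = v^*bv \in B$) and deriving both multiplication and inversion from it, which is the route the paper takes.
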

\begin{proof}[Proof sketch]
This is relatively straightforward; the only potential sticking point is well-definedness
of multiplication, but this follows from the fact that if $(n,\phi) = (n',\phi)$ and $(m,
\alpha_n(\phi)) = (m',\alpha_n(\phi))$, say $nc = n'c'$ and $mb = m'b'$, then we can
assume (by multiplying by some $h > 0$ supported on $\alpha_n(\operatorname{supp}(c) \cap
\operatorname{supp}(c'))$) that $\operatorname{supp}(b) = \operatorname{supp}(b')
\subseteq \operatorname{ran}(n)$, and then note that $b n = n (b\circ\alpha_n)$ and $b'n'
= n' (b'\circ\alpha_{n'})$, so that $mn (b\circ\alpha_n c) = m'n'
(b'\circ\alpha_{n'}c')$.
\end{proof}

\begin{proposition}
Let $(A, B)$ be a Cartan pair. There is a locally compact Hausdorff topology on
$\mathcal{E}_{(A,B)}$ with basic open sets $\widetilde{Z}(n, U) = \{\llbracket n,
\phi\rrbracket \mid \phi \in U\}$ indexed by $n \in N(B)$ and open sets $U \subseteq
\operatorname{dom}(n)$. The groupoid $\mathcal{E}_{(A, B)}$ is a topological groupoid in
this topology. There is an injective continuous groupoid homomorphism $i_{(A, B)} :
\widehat{B} \times \mathbb{T} \to \mathcal{E}_{(A, B)}$ given by $i_{(A, B)}(\phi, z) =
\llbracket b, \phi\rrbracket$ for any $b \in B$ such that $\phi(b) = z$, and there is
continuous surjective groupoid homomorphism $\pi_{(A,B)} : \mathcal{E}_{(A,B)} \to
\mathcal{G}_{(A,B)}$ such that $\pi(\llbracket n, \phi\rrbracket) = [n,\phi]$. The
sequence $\widehat{B} \times \mathbb{T} \to \mathcal{E}_{(A, B)} \to \mathcal{G}_{(A,
B)}$ is a twist over $\mathcal{G}_{(A, B)}$.
\end{proposition}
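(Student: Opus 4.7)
The plan is to mirror the proof of Theorem~\ref{thm:Weyl gpd} as closely as possible, since $\mathcal{E}_{(A,B)}$ is designed to be a $\mathbb{T}$-equivariant refinement of $\mathcal{G}_{(A,B)}$. First I would check that the sets $\widetilde{Z}(n, U)$ form a base for a topology: the key point is that if $\llbracket n, \phi\rrbracket = \llbracket m, \phi\rrbracket$ then by definition there exist $b, b' \in B$ with $\phi(b), \phi(b') > 0$ and $nb = mb'$, and a standard open-support argument then yields a neighbourhood $W$ of $\phi$ on which every $\psi \in W$ satisfies the same relation, so $\widetilde{Z}(n, W) = \widetilde{Z}(m, W)$ locally at $\phi$. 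Intersecting $\widetilde{Z}(n, U) \cap \widetilde{Z}(m, V)$ then reduces to a union of basic sets on such a common-equivalence neighbourhood. Hausdorffness and local compactness follow by essentially the same arguments as for $\mathcal{G}_{(A,B)}$: a basic set $\widetilde{Z}(n, U)$ with $\overline{U}$ a compact subset of $\operatorname{dom}(n)$ has compact closure, since $\llbracket n, \cdot \rrbracket$ is a bijection onto its image whose inverse is the composition of the source map with the identification of the unit space with $\widehat{B}$.

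Next I would verify that the groupoid operations are continuous with respect to this topology. Inversion sends $\widetilde{Z}(n, U)$ to $\widetilde{Z}(n^*, \alpha_n(U))$, which is open because $\alpha_n$ is a homeomorphism by the preceding proposition. For multiplication, I would fix a product $\llbracket m, \alpha_n(\phi)\rrbracket \llbracket n, \phi\rrbracket = \llbracket mn, \phi\rrbracket$ in $\widetilde{Z}(mn, W)$, and use that $N(B)$ is closed under products and that $\alpha_{mn} = \alpha_m \circ \alpha_n$ on the overlap to produce basic open neighbourhoods of each factor whose product lies in $\widetilde{Z}(mn, W)$. The homomorphism $\pi_{(A,B)} : \llbracket n, \phi\rrbracket \mapsto [n, \phi]$ is well defined because the equivalence relation $\approx$ manifestly refines $\sim$; continuity and surjectivity then follow because $\pi_{(A,B)}^{-1}(Z(n, U)) = \widetilde{Z}(n, U)$.

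To handle $i_{(A, B)}$, I would first check well-definedness: if $\phi(b) = \phi(b') = z \in \mathbb{T}$, then choosing a positive $h \in B$ with $\phi(h) > 0$ and supported on a neighbourhood of $\phi$ where $b$ and $b'$ are both invertible and close to $z$, one has $b(b'^* b' h) = b'(b^* b' h)$ after a small rearrangement using commutativity of $B$, producing the required positive witnesses. Continuity of $i_{(A, B)}$ reduces to the observation that the pre-image of $\widetilde{Z}(n, U)$ is described in terms of the continuous map $\phi \mapsto \phi(n)$. Injectivity of $i_{(A, B)}$ follows because $s(\llbracket b, \phi\rrbracket) = \phi$ recovers the first coordinate, and then the second coordinate is recovered as $\phi(b)$ (the value being well defined by the well-definedness argument above).

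The main obstacle, and the last step, is verifying the twist axioms, especially local triviality and that $\pi_{(A, B)}^{-1}(\mathcal{G}^{(0)}_{(A,B)}) = i_{(A, B)}(\widehat{B} \times \mathbb{T})$. For local triviality, given an open bisection $U \subseteq \mathcal{G}_{(A, B)}$ of the form $Z(n, V)$, the map $S : Z(n, V) \to \mathcal{E}_{(A, B)}$, $S([n, \phi]) = \llbracket n, \phi \rrbracket$ is the required continuous section, and $(\alpha, z) \mapsto i_{(A,B)}(r(\alpha), z) \cdot S(\alpha)$ becomes a homeomorphism onto $\pi_{(A, B)}^{-1}(Z(n, V))$ once one knows that every lift of $[n, \phi]$ differs from $S([n, \phi])$ by a unique scalar in $\mathbb{T}$; this uniqueness is exactly the content of the kernel computation. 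For that, if $[n, \phi]$ is a unit in $\mathcal{G}_{(A, B)}$ then local arguments on the partial isometry in the polar decomposition of $n$ (as in the previous proposition) show that $n$ agrees locally at $\phi$ with an element of $B$, reducing $\llbracket n, \phi\rrbracket$ to an element of the form $\llbracket b, \phi \rrbracket$ with $|\phi(b)| = 1$, which is in the image of $i_{(A, B)}$. Centrality of $i_{(A, B)}(\widehat{B} \times \mathbb{T})$ in $\mathcal{E}_{(A,B)}$ is then a direct consequence of the definitions and of $B$ being abelian, since $b \cdot n = n \cdot (b \circ \alpha_n)$ inside $A$ for $b \in B$ and $n \in N(B)$, so the classes $\llbracket b n, \phi\rrbracket$ and $\llbracket n b', \phi \rrbracket$ coincide for the appropriate $b'$.
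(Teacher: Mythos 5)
Your overall strategy matches the paper's, which itself only sketches this proposition and isolates two key points: that $\approx$ refines $\sim$ (so that $\pi_{(A,B)}$ is well defined), via the observation that $mb = nb'$ forces $\alpha_m = \alpha_{mb} = \alpha_{nb'} = \alpha_n$ on the open set where $b$ and $b'$ are both nonvanishing; and the characterisation of when $\llbracket b, \phi\rrbracket = \llbracket c, \phi\rrbracket$ for $b, c \in B$, which simultaneously gives well-definedness and injectivity of $i_{(A,B)}$ and identifies the kernel of $\pi_{(A,B)}$. You touch on both, and your treatment of the topology, the groupoid operations, local triviality, and the kernel computation is consistent with what the paper leaves to the reader.

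One concrete step fails as written, though. For well-definedness of $i_{(A,B)}$ you propose the witnesses $b'^*b'h$ and $b^*b'h$ and assert $b(b'^*b'h) = b'(b^*b'h)$; by commutativity of $B$ the left-hand side is $bb'^*b'h$ and the right-hand side is $b^*b'b'h$, and these agree only if $b\overline{b'} = \overline{b}b'$ pointwise on $\widehat{B}$, which is not implied by the single equality $\phi(b) = \phi(b')$. The correct (and simpler) witnesses, as in the paper, are $c = \bar{z}b'$ and $c' = \bar{z}b$ with $z = \phi(b) = \phi(b') \in \mathbb{T}$: then $\phi(c) = \phi(c') = 1 > 0$ and $bc = \bar{z}bb' = \bar{z}b'b = b'c'$ by commutativity. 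Relatedly, in your injectivity argument the value $\phi(b)$ is not a well-defined function of the class $\llbracket b, \phi\rrbracket$: if $bd = b'd'$ with $\phi(d), \phi(d') > 0$, then $\phi(b)$ and $\phi(b')$ need only share a phase, not a modulus. Since $i_{(A,B)}$ only takes arguments $z \in \mathbb{T}$, recovering the phase suffices, so this second point is a harmless imprecision rather than a gap; but the displayed identity above does need the repair indicated.
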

\begin{proof}[Proof sketch]
This is largely a matter of straightforward checking of details. The key point is that if
$(m,\phi) \approx (n,\phi)$, say $mb = nb'$ where $b, b'\in B$, then by multiplying
$b,b'$ by some positive $h$ with $\phi(h) = 1$, we can assume that
$\operatorname{supp}(b) = \operatorname{supp}(b') = U$, say, and then $\alpha_m|_U =
\alpha_{mb}|_U = \alpha_{nb'}|_U = \alpha_n|_U$. This shows that the map $\llbracket
n,\phi\rrbracket \to [n,\phi]$ makes sense and is well defined. To see why $i_{(A,B)}$ is
injective, and its image is precisely the kernel of $\pi_{(A, B)}$, observe that for any
$b,c \in C_0(\mathcal{G}^{(0)})$ with $\phi(b), \phi(b') \not= 0$, we have $[b,\phi] =
[c,\phi]$, but we have $\llbracket b, \phi\rrbracket = \llbracket c, \phi\rrbracket$ if
and only if $\phi(b)/|\phi(b)| = \phi(c)/|\phi(c)|$: for if so, then $z =
\phi(b)/|\phi(b)|$ satisfies $\phi(\bar{z} b), \phi(\bar{z}c) > 0$ and $b (\bar{z} c) =
c(\bar{z} b)$.
\end{proof}

The following theorem, which is the desired Feldman--Moore-type theorem, in our setting,
is due to Renault. For $\mathcal{G}$ principal, it was first proved by Kumjian
\cite{Kum86}.

\begin{theorem}[{Renault, \cite[Theorem~5.9]{Ren08}}]\label{thm:Renault-Feldman-Moore}
Suppose that $\mathcal{G}$ is an effective \'etale groupoid. Suppose that $\mathcal{E}$
is a twist over $\mathcal{G}$. Then there is an isomorphism $\zeta :
\mathcal{E}_{C^*_r(\mathcal{G}; \mathcal{E}), D_r} \to \mathcal{E}$ such that the diagram
\[
\begin{tikzpicture}[>=latex]
    \node (tl) at (0,1.5) {$\widehat{D_r} \times \mathbb{T}$};
    \node (tm) at (5,1.5) {$\mathcal{E}_{C^*_r(\mathcal{G}; \mathcal{E}), D_r}$};
    \node (tr) at (10,1.5) {$\mathcal{G}_{C^*_r(\mathcal{G}; \mathcal{E}), D_r}$};
    \node (bl) at (0,0) {$\mathcal{G}^{(0)} \times \mathbb{T}$};
    \node (bm) at (5,0) {$\mathcal{E}$};
    \node (br) at (10,0) {$\mathcal{G}$};
    \draw[->] (tl) to node[pos=0.5, above] {$\scriptstyle i_{(C^*_r(\mathcal{G}; \mathcal{E}), D_r)}$} (tm);
    \draw[->] (tm) to node[pos=0.5, above] {$\scriptstyle \pi_{(C^*_r(\mathcal{G}; \mathcal{E}), D_r)}$} (tr);
    \draw[->] (bl) to node[pos=0.5, above] {$\scriptstyle i$} (bm);
    \draw[->] (bm) to node[pos=0.5, above] {$\scriptstyle \pi$} (br);
    \draw[->] (tl) to node[pos=0.5, right] {$\scriptstyle \cong$} (bl);
    \draw[->] (tm) to node[pos=0.5, right] {$\scriptstyle \zeta$} (bm);
    \draw[->] (tr) to node[pos=0.5, right] {$\scriptstyle \theta$} (br);
\end{tikzpicture}
\]
commutes. In particular, the map
\[
\big(\mathcal{G}^{(0)} \times \mathbb{T} \to \mathcal{E} \to \mathcal{G}\big) \longmapsto \big(C^*_r(\mathcal{G}; \mathcal{E}), D_r\big)
\]
induces a bijection between isomorphism classes of twists and isomorphism classes of
Cartan pairs of $C^*$-algebras.
\end{theorem}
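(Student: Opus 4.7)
The plan is to produce the isomorphism $\zeta$ first by exploiting the fact that elements of $\Sigma_c(\mathcal{G};\mathcal{E})$ are $\mathbb{T}$-equivariant functions on $\mathcal{E}$, so a normalizer $n$ naturally selects a distinguished element of each fibre of $\mathcal{E} \to \mathcal{G}$ on which it is nonzero. Specifically, given a class $\llbracket n,\phi\rrbracket \in \mathcal{E}_{(C^*_r(\mathcal{G};\mathcal{E}),D_r)}$, I would first shrink $n$ to lie in $\Sigma_c(\mathcal{G};\mathcal{E})$ supported on $\pi^{-1}(U)$ for some bisection $U$ using regularity of $D_r$ and the fact (from Theorem~\ref{thm:Weyl gpd}) that such elements exhaust classes in $\mathcal{G}_{(C^*_r(\mathcal{G};\mathcal{E}),D_r)}$. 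Letting $\gamma \coloneq \theta([n,\phi])$, the $\mathbb{T}$-equivariance $n(z\cdot\varepsilon) = z\,n(\varepsilon)$ combined with $\phi(n^*n) > 0$ guarantees that there is a unique $\varepsilon_0 \in \pi^{-1}(\gamma)$ with $n(\varepsilon_0) \in \mathbb{R}_{>0}$. Define $\zeta(\llbracket n,\phi\rrbracket) \coloneq \varepsilon_0$.

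Well-definedness on $\approx$-classes is the first check: if $nb = mb'$ with $\phi(b), \phi(b') > 0$, then evaluating both sides at any $\varepsilon \in \pi^{-1}(\gamma)$ gives $n(\varepsilon)b(s(\gamma)) = m(\varepsilon)b'(s(\gamma))$, and since $b(s(\gamma))/b'(s(\gamma))$ is a positive real, $n(\varepsilon)$ and $m(\varepsilon)$ share the same argument, so the distinguished fibre element is the same. That $\zeta$ intertwines $\pi_{(A,B)}$ with $\theta$ is then immediate from $\pi(\varepsilon_0) = \gamma$. To see $\zeta$ is a groupoid homomorphism, I would apply the convolution formula~\eqref{eq:representative} to a representative product $\llbracket m,\alpha_n(\phi)\rrbracket\llbracket n,\phi\rrbracket = \llbracket mn,\phi\rrbracket$: a single term in the sum survives at the distinguished representative $\varepsilon_0\delta_0$, whose positivity can be read off from that of the factors. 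The restriction to $\widehat{D_r}\times\mathbb{T}$ recovers $i$ using the isomorphism $D_r \cong C_0(\mathcal{G}^{(0)})$ of Theorem~\ref{thm:twisted csa}. Bijectivity holds fibrewise over $\theta$, which is already known to be bijective; continuity and openness follow from matching the basic open sets $\widetilde{Z}(n,U)$ to local trivializations of the bundle $\mathcal{E} \to \mathcal{G}$ using functions supported on $\pi^{-1}(U)$.

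The real obstacle is the second assertion: that every Cartan pair $(A,B)$ is isomorphic to $(C^*_r(\mathcal{G}_{(A,B)};\mathcal{E}_{(A,B)}), D_r)$ for its Weyl twist. Here the plan is to first verify that $\mathcal{G}_{(A,B)}$ is an effective \'etale groupoid and that the Weyl twist $\mathcal{E}_{(A,B)}$ is Hausdorff (the Hausdorff part uses that the faithful conditional expectation is unique, as in Proposition~\ref{prp:twisted FCE}, and requires careful use of regularity of $B$). Then one builds an explicit $^*$-homomorphism $\Psi : A \to C^*_r(\mathcal{G}_{(A,B)};\mathcal{E}_{(A,B)})$ by assigning to each $a \in A$ the continuous $\mathbb{T}$-equivariant function
\[
\Psi(a)(\llbracket n,\phi\rrbracket) \coloneq \phi\bigl(\Phi(n^*a)\bigr)\phi(n^*n)^{-1/2}
\]
on the subset where $\phi(n^*n) > 0$ (and zero on units), checking independence of representative and continuous extension to all of $\mathcal{E}_{(A,B)}$. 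Verifying that $\Psi$ is a $^*$-homomorphism reduces to the algebraic identity $\Phi(m^*a) \Phi(n^*b) = \Phi((mn)^*(ab))$ on appropriate products of normalizers, which follows from the bimodule property of $\Phi$ over $B$ and the fact that $mn \in N(B)$ when $m,n \in N(B)$.

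Injectivity of $\Psi$ on $B$ is immediate, and injectivity on $A$ then follows from Theorem~\ref{thm:uniqueness} applied to $\mathcal{G}_{(A,B)}$ using effectiveness. Surjectivity onto $C^*_r(\mathcal{G}_{(A,B)};\mathcal{E}_{(A,B)})$ follows from a Stone--Weierstrass argument: $\Psi(N(B))$ contains enough compactly supported functions on $\pi_{(A,B)}^{-1}(U)$ for each open bisection $U$ of $\mathcal{G}_{(A,B)}$, and one invokes the analogue of Lemma~\ref{lem:bisection span}. Granted these two constructions, the displayed diagram and the recovery $(A,B) \cong (C^*_r(\mathcal{G}_{(A,B)};\mathcal{E}_{(A,B)}), D_r)$ together give inverse maps on isomorphism classes, establishing the bijection. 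The hardest technical step is likely the Hausdorffness of $\mathcal{E}_{(A,B)}$ and the verification that $\Psi(A)$ really fills out the reduced twisted $C^*$-algebra rather than some proper completion, since this is where Renault needs the uniqueness of the conditional expectation and delicate approximation arguments.
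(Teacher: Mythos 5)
The paper does not actually prove this theorem: it is stated with a citation to Renault's Theorem~5.9, and the chapter explicitly omits the proofs, so there is no in-text argument to compare against. Measured against Renault's published proof, your sketch tracks the standard route --- the evaluation map $\Psi(a)(\llbracket n,\phi\rrbracket)=\phi(\Phi(n^*a))\,\phi(n^*n)^{-1/2}$ is exactly Renault's formula, and selecting the unique fibre element where the $\mathbb{T}$-equivariant function is positive is exactly how $\zeta$ is built. But two steps as you describe them would not go through. First, your reduction of a class $\llbracket m,\phi\rrbracket$ to one represented by an element of $\Sigma_c(\mathcal{G};\mathcal{E})$ supported over a bisection is not justified: Theorem~\ref{thm:Weyl gpd} gives $[m,\phi]=[n,\phi]$ in the Weyl \emph{groupoid}, but the class $\llbracket\cdot\rrbracket$ in the twist remembers phase data that $[\cdot]$ forgets, so equality of germs of $\alpha_m$ and $\alpha_n$ does not give $\llbracket m,\phi\rrbracket=\llbracket n,\phi\rrbracket$. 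The repair is to define $\zeta$ directly from the equivariant function $j(m)\in C_0(\mathcal{E})$ attached to an arbitrary normaliser $m$, using the (nontrivial, effectiveness-dependent) fact that $j(m)$ is supported over an open bisection of $\mathcal{G}$; this is the key normaliser characterisation in Renault's paper and is missing from your outline.

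Second, the injectivity argument is circular as stated: Theorem~\ref{thm:uniqueness} has as its hypothesis that the \emph{domain} is the reduced algebra of an effective groupoid, which for the abstract Cartan pair $(A,B)$ is precisely what is being proved. The correct argument uses the faithful conditional expectation $\Phi:A\to B$ that is part of the Cartan-pair data: one shows $\Psi$ intertwines $\Phi$ with the canonical expectation of $C^*_r(\mathcal{G}_{(A,B)};\mathcal{E}_{(A,B)})$ onto $D_r$, and then $\Psi(a)=0$ forces $\Phi(a^*a)=0$ and hence $a=0$. (Relatedly, Hausdorffness and effectiveness of the Weyl groupoid come from $B$ being maximal abelian, not from uniqueness of the expectation, which in this development is a \emph{consequence} of the theorem, as in Corollary~\ref{cor:unique FCE}.) Your multiplicativity identity $\Phi(m^*a)\Phi(n^*b)=\Phi((mn)^*(ab))$ is also not correct as written --- what is needed is that $\widehat{ab}$ equals the convolution $\hat a*\hat b$, verified first on normalisers. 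You do correctly flag the genuinely hard analytic point, namely that $\|\Psi(a)\|_r\le\|a\|$ and that $\Psi$ has dense range in the reduced (not some other) completion; Renault handles this by identifying the GNS representations of $A$ for the states $\phi\circ\Phi$ with the regular representations of the twisted algebra.
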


\begin{corollary}\label{cor:unique FCE}
If $(A, B)$ is a Cartan pair, then there is only one conditional expectation of $A$ onto
$B$.
\end{corollary}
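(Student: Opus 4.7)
The plan is to reduce the uniqueness statement for a general Cartan pair $(A, B)$ to the special case of a twisted-groupoid pair $(C^*_r(\mathcal{G}; \mathcal{E}), D_r)$, where uniqueness is exactly the content of Proposition~\ref{prp:twisted FCE}. The bridge between the two is Theorem~\ref{thm:Renault-Feldman-Moore}, which asserts that every Cartan pair is isomorphic (as a Cartan pair) to a twisted-groupoid pair coming from an effective \'etale groupoid via the Weyl groupoid construction. Transporting uniqueness of the conditional expectation across this isomorphism yields the corollary.

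To execute this, given an arbitrary Cartan pair $(A, B)$, form the Weyl groupoid $\mathcal{G}_{(A,B)}$ and the twist $\mathcal{E}_{(A,B)}$ over it, as in Theorem~\ref{thm:Weyl gpd} and the propositions immediately preceding Theorem~\ref{thm:Renault-Feldman-Moore}. Theorem~\ref{thm:Renault-Feldman-Moore} then supplies a $^*$-isomorphism $\Theta\colon A \to C^*_r(\mathcal{G}_{(A,B)}; \mathcal{E}_{(A,B)})$ restricting to a $^*$-isomorphism of $B$ onto $D_r$, with $\mathcal{G}_{(A,B)}$ effective. Any conditional expectation $\Psi\colon A \to B$ is then conjugated by $\Theta$ to a conditional expectation $\Theta \circ \Psi \circ \Theta^{-1}\colon C^*_r(\mathcal{G}_{(A,B)}; \mathcal{E}_{(A,B)}) \to D_r$. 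If $\Phi$ and $\Psi$ are two conditional expectations $A \to B$, then Proposition~\ref{prp:twisted FCE} forces $\Theta \circ \Phi \circ \Theta^{-1} = \Theta \circ \Psi \circ \Theta^{-1}$, and applying $\Theta^{-1}(\cdot)\Theta$ to both sides gives $\Phi = \Psi$.

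The main obstacle is not in this corollary itself, whose proof reduces to a two-line appeal to results already on the board, but in the supporting Theorem~\ref{thm:Renault-Feldman-Moore}: one must know that the Weyl groupoid of any Cartan pair is effective (so that Proposition~\ref{prp:twisted FCE} applies), and one must construct the pair of compatible isomorphisms $(\theta,\zeta)$ that realize $(A,B)$ as a twisted-groupoid Cartan pair. Those are precisely the technical points for which only proof sketches are given above; with them in hand, uniqueness of the conditional expectation is an immediate transport-of-structure argument, since the existence and uniqueness of a faithful conditional expectation is a property of the inclusion $B \subseteq A$ that is manifestly preserved by Cartan-pair isomorphism.
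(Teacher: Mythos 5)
Your proposal is correct and follows exactly the route the paper takes: the paper's one-line proof also reduces to Proposition~\ref{prp:twisted FCE} via the identification of an arbitrary Cartan pair with a twisted-groupoid pair supplied by Theorem~\ref{thm:Renault-Feldman-Moore}. You have merely spelled out the transport-of-structure step (and correctly flagged effectiveness of the Weyl groupoid as the point that makes Proposition~\ref{prp:twisted FCE} applicable) that the paper leaves implicit.
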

\begin{proof}
We proved in Proposition~\ref{prp:twisted FCE} that the expectation extending restriction
of functions is the unique expectation from $C^*_r(\mathcal{G}; \mathcal{E})$ to $D_r$.
\end{proof}

In \cite{BL}, Barlak and Li showed how to extend Tu's result (Theorem~\ref{thm:Tu UCT})
that the $C^*$-algebras of amenable groupoids always belong to the UCT class to twisted
groupoids. Building on this, work of Takeishi and van Erp--Williams on nuclearity of
$C^*$-algebras of groupoid Fell bundles, and combining this with
Theorem~\ref{thm:Renault-Feldman-Moore}, Barlak and Li made substantial progress on ``UCT
question": does every nuclear $C^*$-algebra belong to the UCT class?

\begin{theorem}[{Barlak--Li \cite[Theorem~1.1 and Corollary~1.2]{BL}}]
If $\mathcal{G}$ is an \'etale groupoid and $\mathcal{E}$ is a twist over $\mathcal{G}$,
and if $C^*_r(\mathcal{G}; \mathcal{E})$ is nuclear, then $C^*_r(\mathcal{G};
\mathcal{E})$ belongs to the UCT class. In particular, if $A$ is a separable nuclear
$C^*$-algebra that is Morita equivalent to a $C^*$-algebra with a Cartan subalgebra, then
it is in the UCT class.
\end{theorem}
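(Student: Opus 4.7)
The plan is to split the proof into two pieces: first, the main assertion that nuclearity of $C^*_r(\mathcal{G}; \mathcal{E})$ forces it into the UCT class; then, a short deduction of the Morita-equivalent corollary using Renault's reconstruction theorem (Theorem~\ref{thm:Renault-Feldman-Moore}).

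For the first, main, assertion I would follow the Barlak--Li strategy, which reduces the twisted case to Tu's untwisted result (Theorem~\ref{thm:Tu UCT}) via a tensor-doubling trick. The steps I envision are: (i) show that nuclearity of $C^*_r(\mathcal{G}; \mathcal{E})$ forces $\mathcal{G}$ to be (topologically) amenable; (ii) form the conjugate twist $\overline{\mathcal{E}}$ (using the complex-conjugate of the line bundle $\widetilde{\mathcal{E}}$ from Remark~\ref{rmk:line bundle}), and observe that $\mathcal{E} \otimes_{\mathcal{G}^{(0)}} \overline{\mathcal{E}}$ is a \emph{trivialisable} twist over $\mathcal{G}$; (iii) identify $C^*_r(\mathcal{G}; \mathcal{E}) \otimes C^*_r(\mathcal{G}; \overline{\mathcal{E}})$ with the reduced $C^*$-algebra of a Fell bundle over $\mathcal{G} \times \mathcal{G}$ whose diagonal restriction is essentially $C^*_r(\mathcal{G})$ (up to a natural Morita equivalence); (iv) apply Tu's theorem, using amenability of $\mathcal{G}$ from (i), to conclude that this untwisted algebra is in the UCT class, and hence so is the tensor product; and (v) invoke closure of the UCT class under passage to tensor factors when the complementary factor is nuclear (Rosenberg--Schochet), which yields $C^*_r(\mathcal{G}; \mathcal{E}) \in \mathrm{UCT}$.

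For the corollary, let $A$ be a separable nuclear $C^*$-algebra Morita equivalent to some $C$ that admits a Cartan subalgebra $D$. Renault's reconstruction theorem (Theorem~\ref{thm:Renault-Feldman-Moore}) provides an effective \'etale groupoid $\mathcal{G}$ and a twist $\mathcal{E}$ over $\mathcal{G}$ with $(C, D) \cong (C^*_r(\mathcal{G}; \mathcal{E}), D_r)$. Since nuclearity is preserved by Morita equivalence (for separable $C^*$-algebras) and $A$ is nuclear, $C$ is nuclear too. Hence the first part applies to give $C \in \mathrm{UCT}$, and then $A \in \mathrm{UCT}$ by Morita-invariance of the UCT class (this is a standard consequence of $KK$-equivalence of Morita equivalent $C^*$-algebras combined with the fact that the UCT class is closed under $KK$-equivalence).

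The hardest step by far is (i): extracting amenability of $\mathcal{G}$ from nuclearity of the \emph{twisted} reduced $C^*$-algebra. For the untwisted case this is Theorem~\ref{thm:amen-nuc}, but in the twisted setting one has to work with the Fell bundle $\widetilde{\mathcal{E}}$ over $\mathcal{G}$ and verify that nuclearity of the reduced cross-sectional $C^*$-algebra of this bundle implies amenability of $\mathcal{G}$; this is the content of the Takeishi / van Erp--Williams results cited in the text, and is where the bulk of the real work lies. Step (iii) is also somewhat delicate: one must carefully set up the tensor-product bundle and check that the natural map on compactly supported sections extends to an isomorphism of reduced $C^*$-algebras, using that amenability forces reduced and full norms to coincide so that the universal property (Theorem~\ref{thm:twisted csa}) can be brought to bear. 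Once (i) and (iii) are in place, the remaining steps (ii), (iv), (v) are essentially formal manipulations with Tu's theorem and the Rosenberg--Schochet tensor-factor trick.
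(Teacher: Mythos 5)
First, a point of reference: the paper does not prove this statement---it is quoted from Barlak--Li \cite{BL} with a citation, so there is no in-text proof to compare against. Your deduction of the corollary from the main assertion is fine and is the standard one: Renault's reconstruction theorem realises $(C,D)$ as $(C^*_r(\mathcal{G};\mathcal{E}), D_r)$, nuclearity passes across Morita equivalence, and the UCT class is closed under $KK$-equivalence, hence under Morita equivalence of separable algebras. Your step (i) is also the right key input: nuclearity of the twisted reduced algebra forces amenability of $\mathcal{G}$ via the Takeishi / van Erp--Williams results on Fell bundles.

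The genuine gap is step (v). There is no ``closure of the UCT class under passage to tensor factors when the complementary factor is nuclear,'' and Rosenberg--Schochet prove no such thing; indeed such a principle would resolve the UCT problem outright. Concretely, for any separable nuclear $A$ the algebra $A \otimes \mathcal{O}_2$ is $KK$-contractible (since $1_{\mathcal{O}_2} = 0$ in $KK(\mathcal{O}_2,\mathcal{O}_2)$), hence lies in the bootstrap class, and $\mathcal{O}_2$ is nuclear---yet whether $A$ itself satisfies the UCT is precisely the open question. So knowing that $C^*_r(\mathcal{G};\mathcal{E}) \otimes C^*_r(\mathcal{G};\overline{\mathcal{E}})$ satisfies the UCT gives you nothing about the factor you care about. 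Step (iii) is also shakier than you suggest: the spatial tensor product of the two twisted algebras is the twisted algebra of $\mathcal{G}\times\mathcal{G}$ for the product twist, not the algebra of $\mathcal{G}$ for the (trivialisable) pullback twist $\mathcal{E} * \overline{\mathcal{E}}$, and the diagonal $\mathcal{G} \subseteq \mathcal{G}\times\mathcal{G}$ is a closed, non-open subgroupoid, so ``restriction to the diagonal'' does not produce a subalgebra, ideal, or quotient of the reduced tensor product in a way that transports the UCT. The argument Barlak and Li actually give avoids all of this: regard the total space $\mathcal{E}$ as a locally compact groupoid with Haar system in its own right; amenability of $\mathcal{G}$ (your step (i)) plus the fact that $\mathcal{E}$ is an extension of $\mathcal{G}$ by a bundle of circles gives amenability of $\mathcal{E}$; Tu's theorem (Theorem~\ref{thm:Tu UCT}, in the form for amenable groupoids with Haar systems \cite{Tu-UCT}) places $C^*_r(\mathcal{E})$ in the UCT class; and the $\mathbb{T}$-action decomposes $C^*_r(\mathcal{E})$ as $\bigoplus_{n\in\mathbb{Z}}$ of spectral subspaces, one of which is $C^*_r(\mathcal{G};\mathcal{E})$. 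Since the UCT class is closed under passing to ideals (in particular direct summands)---which, unlike tensor factors, \emph{is} a Rosenberg--Schochet closure property---the conclusion follows. I would rework your steps (ii)--(v) along these lines.
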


Another interesting application of Theorem~\ref{thm:Renault-Feldman-Moore} is a beautiful
theorem of Matsumoto and Matui \cite{MM}. Recall that the Cuntz--Krieger algebra of an
irreducible $\{0,1\}$ matrix $A$ (that is not a permutation matrix) agrees with the
$C^*$-algebra of the graph whose adjacency matrix is $A$ \cite{EW, KPRR}, and therefore
with the groupoid $C^*$-algebra associated to the graph groupoid described in
Example~\ref{ex:graph gpd}. A fundamental result of Franks \cite{Franks}, building on his
previous work with Bowen \cite{BF} and on work of Parry and Sullivan \cite{PS}, says that
the (two-sided) shift spaces of $\{0,1\}$-matrices $A$ and $B$ are flow equivalent if and
only if $\operatorname{coker}(1 - A^t) \cong \operatorname{coker}(1 - B^t)$ and, if
nonzero, $\det(1 - A^t)$ and $\det(1 - B^t)$ have the same sign (positive or negative).
Building on previous work of Cuntz \cite{Cuntz}, R{\o}rdam proved \cite{Rordam} that the
Cuntz--Krieger algebras $\mathcal{O}_A$ and $\mathcal{O}_B$ are stably isomorphic if and
only if $\operatorname{coker}(1 - A^t) = \operatorname{coker}(1 - B^t)$. So the
Cuntz--Krieger algebra ``forgets" some information about flow-equivalence. But, using
Renault's results and previous work of Matui, Matsumoto and Matui proved
part~(\ref{it:MM1}) and the equivalence of (\ref{it:MM2a})~and~(\ref{it:MM2b}) in the
following theorem in 2013. The final two equivalences, proved later, require the groupoid
equivalence theorem of \cite{CRS}, and the results of \cite{BCW}.

Recall that if $E$ is a directed graph, then the Tomforde stabilisation graph $SE$ is
obtained by attaching an infinite head at each vertex of $E$ \cite{T}. The resulting
``stabilised" symbolic dynamical system can be described as follows: if $X_E$ is the
shift space of $E$ (namely $E^\infty$ endowed with the usual shift map), then $X_{SE}$ is
given as a space by $X_{SE} = X_E \times \{0, 1, 2, \dots\}$, and the dynamics
$\overline{\sigma}$ on $X_{SE}$ is given by
\[
\overline{\sigma}(x,n) =
    \begin{cases}
        (x, n-1) &\text{ if $n \ge 1$}\\
        (\sigma(x), 0) &\text{ if $n = 1$.}
    \end{cases}
\]

We will call $(X_{SE}, \overline{\sigma})$ the \emph{stabilisation} of $(X_E, \sigma_E)$,
and say that $(X_E, \sigma_E)$ and $(X_F, \sigma_F)$ are \emph{stably orbit equivalent}
if their stabilisations are orbit equivalent.

\begin{theorem}
Let $A,B$ be irreducible $\{0,1\}$-matrices that are not permutation matrices.
\begin{enumerate}
\item\label{it:MM1} The following are equivalent:
    \begin{enumerate}
    \item The one-sided shift-spaces determined by $A$ and $B$ are continuously
        orbit equivalent;
    \item The Deaconu--Renault groupoids of the one-sided shift maps associated
        to $A$ and $B$ are isomorphic;
    \item There is an isomorphism $\mathcal{O}_A \cong \mathcal{O}_B$ that
        carries $C_0(\mathcal{G}_A^{(0)})$ to $C_0(\mathcal{G}_B^{(0)})$.
    \end{enumerate}
\item The following are equivalent
    \begin{enumerate}
    \item\label{it:MM2a} The two-sided shift-spaces determined by $A$ and $B$ are
        flow equivalent;
    \item\label{it:MM2b} The Cartan pairs $(\mathcal{O}_A \otimes \mathcal{K},
        C_0(\mathcal{G}_A^{(0)}) \otimes c_0)$ and $(\mathcal{O}_B \otimes
        \mathcal{K}, C_0(\mathcal{G}_B^{(0)}) \otimes c_0)$ are isomorphic as
        Cartan pairs;
    \item The groupoids $\mathcal{G}_A \times \mathcal{R}_\mathbb{N}$ and
        $\mathcal{G}_B \times \mathcal{R}_\mathbb{N}$ are isomorphic.
    \item The dynamical systems determined $(X_E, \sigma_E)$ and $(X_F,
        \sigma_F)$ are stably orbit equivalent.
\end{enumerate}
\end{enumerate}
\end{theorem}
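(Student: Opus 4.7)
The plan is to organise the proof around the Kumjian--Renault reconstruction theorem (Theorem~\ref{thm:Renault-Feldman-Moore}), which converts $C^*$-algebraic statements about Cartan pairs into purely groupoid-theoretic statements about twists over effective \'etale groupoids. The graph groupoids $\mathcal{G}_A$ and $\mathcal{G}_B$ attached to irreducible non-permutation $\{0,1\}$-matrices are effective (periodic points of the one-sided shift are non-isolated, so Lemma~\ref{lem:effective<->top prin} applies), and the canonical identification $\mathcal{O}_A \cong C^*(\mathcal{G}_A)$ of the last example in Section~2.3 exhibits the twist over $\mathcal{G}_A$ as trivial. Consequently every isomorphism of Cartan pairs appearing in the theorem corresponds, via Theorem~\ref{thm:Renault-Feldman-Moore}, to an isomorphism of the underlying \'etale groupoids, reducing the $C^*$-algebraic equivalences (b)\;$\Longleftrightarrow$\;(c) of part~(1) and the analogous implications of part~(2) to routine groupoid manipulations.

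For part~(1) the substantive remaining content is (a)\;$\Longleftrightarrow$\;(b). Given a continuous orbit equivalence $h : X_A \to X_B$, encoded by continuous cocycles $k,l : X_A \to \mathbb{N}$ with $\sigma_B^{k(x)}(h(\sigma_A x)) = \sigma_B^{l(x)}(h(x))$ (and symmetric data for $h^{-1}$), I would assemble the map $(x, m-n, y) \mapsto (h(x), K(x,m) - K(y,n), h(y))$, where $K(x,m) \coloneq \sum_{i=0}^{m-1}(k-l)(\sigma_A^i x)$, and verify using the cocycle identities that this is a topological-groupoid isomorphism $\mathcal{G}_A \to \mathcal{G}_B$. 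Conversely, any groupoid isomorphism $\Phi : \mathcal{G}_A \to \mathcal{G}_B$ restricts to a homeomorphism of unit spaces, and pulling back the canonical $\mathbb{Z}$-valued cocycle $(x,n,y) \mapsto n$ on $\mathcal{G}_B$ yields continuous integer-valued functions on $\mathcal{G}_A$ that, when evaluated on the generating bisections $\{(x,1,\sigma_A x) \mid x \in X_A\}$, produce the cocycles witnessing a continuous orbit equivalence.

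For part~(2) the plan is to thread the discrete full equivalence relation $\mathcal{R}_\mathbb{N} = \mathbb{N} \times \mathbb{N}$ through the picture so as to reduce flow equivalence to groupoid equivalence. By the groupoid-equivalence theorem of \cite{BCW}, flow equivalence of the two-sided shifts determined by $A$ and $B$ is equivalent to the Deaconu--Renault groupoids $\mathcal{G}_A$ and $\mathcal{G}_B$ being Renault-equivalent; by \cite{CRS} (the \'etale analogue of the Brown--Green--Rieffel theorem mentioned in the remark following Proposition~\ref{prp:linking gpd}), this in turn is equivalent to $\mathcal{G}_A \times \mathcal{R}_\mathbb{N} \cong \mathcal{G}_B \times \mathcal{R}_\mathbb{N}$, giving (a)\;$\Longleftrightarrow$\;(c). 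Since $C^*(\mathcal{R}_\mathbb{N}) \cong \mathcal{K}$ with canonical abelian subalgebra $c_0$, Theorem~\ref{thm:Renault-Feldman-Moore} applied to these product groupoids (which remain effective) delivers (b). Finally, stable orbit equivalence of $(X_E, \sigma_E)$ and $(X_F, \sigma_F)$ is, by definition, orbit equivalence of the stabilisations $(X_{SE}, \overline{\sigma})$ and $(X_{SF}, \overline{\sigma})$, and since the graph groupoid of $SE$ is canonically $\mathcal{G}_A \times \mathcal{R}_\mathbb{N}$, applying part~(1) to $SE$ and $SF$ identifies this with (c) and closes the cycle.

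The main obstacle is the \cite{BCW} step equating flow equivalence of the two-sided shifts with Renault equivalence of the one-sided Deaconu--Renault groupoids: flow equivalence allows arbitrary continuous reparametrisation of the time axis, which has no direct groupoid meaning, and bridging the gap requires building an explicit $\mathcal{G}_A$--$\mathcal{G}_B$-equivalence bimodule out of a flow by cutting along a clever transverse section. The \'etale Brown--Green--Rieffel theorem of \cite{CRS} is technically involved but conceptually parallel to the classical argument sketched in the remark after Proposition~\ref{prp:linking gpd}; once these two inputs are available, everything else is a routine combination of Theorem~\ref{thm:Renault-Feldman-Moore} with functoriality of the Deaconu--Renault construction.
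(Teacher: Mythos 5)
The paper itself gives no proof of this theorem: it is quoted as a combination of Matsumoto--Matui \cite{MM} (part~(1) and the equivalence of (2a) and (2b)) with the later results of \cite{CRS} and \cite{BCW}, so your proposal can only be judged against the architecture of those cited arguments. On that score, part~(1) and all of the purely formal translations are in good shape: effectiveness of $\mathcal{G}_A$ for irreducible non-permutation $A$, triviality of the twist, and Theorem~\ref{thm:Renault-Feldman-Moore} do convert (1)(c) into (1)(b) and (2)(b) into (2)(c), and the cocycle bookkeeping you sketch for (1)(a)$\iff$(1)(b) is exactly Matsumoto--Matui's argument (effectiveness is what makes the integer components of your map well defined and continuous, as you implicitly use). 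One small correction: you cannot literally ``apply part~(1) to $SE$ and $SF$'' for the equivalence of (2)(c) and (2)(d), since $SE$ is not described by a finite irreducible $\{0,1\}$-matrix; this is precisely why the paper cites \cite{BCW}, which extends the part~(1) dictionary to general graphs.

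The genuine gap is in part~(2)(a). Your entire treatment of flow equivalence rests on the assertion that flow equivalence of the two-sided shifts is equivalent to Renault equivalence of the one-sided Deaconu--Renault groupoids, which you attribute to \cite{BCW}. But \cite{BCW} is about (continuous and stable) orbit equivalence, not flow equivalence, and the assertion you are black-boxing is exactly the content of (2)(a)$\iff$(2)(c) --- the one step in part~(2) that cannot be obtained by formal manipulation with Theorem~\ref{thm:Renault-Feldman-Moore} and \cite{CRS}. In the actual proofs, the direction from flow equivalence to the stabilised Cartan pair (equivalently, stabilised groupoid) goes through Parry--Sullivan \cite{PS} and Franks \cite{Franks}: flow equivalence is generated by conjugacy and symbol expansion, and one verifies by hand that each elementary move preserves $(\mathcal{O}_A \otimes \mathcal{K}, C_0(\mathcal{G}_A^{(0)}) \otimes c_0)$ up to isomorphism of pairs. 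The converse direction shows that an isomorphism of the stabilised groupoids induces a stable continuous orbit equivalence of the one-sided systems, which is then integrated to a homeomorphism of mapping tori intertwining the suspension flows. Your one sentence about ``cutting along a clever transverse section'' gestures at the right picture for one of these directions but contains no argument, and without either direction the cycle of implications in part~(2) does not close.
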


\section{A Dixmier--Douady theorem for Fell
algebras}\label{sec:DD}

In what follows, given a $C^*$-algebra $A$, we shall write $\hat{A}$ for the space of
unitary equivalence classes of irreducible representations of $A$. We give $\hat{A}$ the
initial topology obtained from the quotient map from $\hat{A}$ to
$\operatorname{Prim}(A)$, where $\operatorname{Prim}(A)$ is given the Jacobson topology.

Recall that a $C^*$-algebra $A$ is \emph{liminary} or Type~I if every irreducible
representation $\pi : A \to \mathcal{B}(\mathcal{H})$ has image
$\mathcal{K}(\mathcal{H})$. A positive element $a$ of a liminary $C^*$-algebra is a
\emph{continuous trace element} if $\pi(a)$ has finite trace for every $\pi \in \hat{A}$
and the map $\pi \mapsto \operatorname{Tr}(\pi(a))$ is continuous. A
\emph{continuous-trace $C^*$-algebra} is a liminary $C^*$-algebra that is generated as an
ideal by its continuous-trace elements. The spectrum $\hat{A}$ of a continuous-trace
$C^*$-algebra is always Hausdorff. The Dixmier--Douady theorem \cite{DD} says that for a
given locally compact Hausdorff space $X$, the continuous-trace $C^*$-algebras with
spectrum $X$ are classified up to spectrum-preserving Morita equivalence by the
Dixmier--Douady invariant, which is an element of the cohomology group $H^3(X,
\mathbb{Z})$; moreover, the invariant is exhausted in the sense that each class in
$H^3(X, \mathbb{Z})$ occurs as the Dixmier--Douady invariant of some continuous-trace
algebra with spectrum $X$.

Raeburn and Taylor subsequently gave a very nice description of the continuous-trace
$C^*$-algebra with given Dixmier--Douady invariant $\delta \in H^3(X, \mathbb{Z})$ using
groupoids:

\begin{example}[Raeburn--Taylor \cite{RT}]\label{eg:RT construction}
Recall that a \v{C}ech $2$-cocycle on a locally compact Hausdorff space $X$ consists of a
cover of $X$ by open sets $U_i$ and a collection of continuous $\mathbb{T}$-valued
functions $c_{ijk}$ defined on triple-overlaps $U_{ijk} = U_i \cap U_j \cap U_k$ such
that $c_{iij}, c_{ijj} \equiv 1$ for each $i,j$, and such that on nonempty quadruple
overlaps $U_{ijkl}$ we have $c_{ijk}c_{ikl} = c_{ijl} c_{jkl}$. A coboundary is a cocycle
of the form $(\delta b)_{ijk}(x) \coloneq b_{ij}(x) b_{ik}(x)^{-1}b_{jk}(x)$ for some
collection of continuous functions $b_{ij} : U_{ij} \to \mathbb{T}$ defined on
double-overlaps. The \v{C}ech cohomology group is the quotient $\check{H}(X, \mathbb{T})$
of the group of 2-cocycles by the subgroup of 2-coboundaries. Given a \v{C}ech
$2$-cocycle on $X$, we can form an equivalence relation $R$ with unit space $\bigsqcup
\{i\} \times U_i \times \{i\}$ and with elements $\{(i, x, j) \mid x \in U_{ij}\}$, where
$r(i,x,j) = (i, x,i)$ and $s(i, x, j) = (j,x,j)$. We then construct a twist over $R$ by
putting $\mathcal{E} = R \times \mathbb{T}$ and defining multiplication on $\mathcal{E}$
by $\big((i,x,j), w\big)\big((j,x,k), z\big) = \big((i,x,k), c_{ijk}(x)wz\big)$. Note
that this twist comes from a continuous $2$-cocycle on $R$. Raeburn and Taylor \cite{RT}
proved that the $C^*$-algebra of this twist has Dixmier--Douady invariant equal to the
cohomology class of $c$.
\end{example}

In this section we will give a brief overview of how, using groupoids and the
construction of the preceding section, we can obtain a version of the Dixmier--Douady
theorem for Fell algebras based on the Raeburn--Taylor construction of the preceding
example. The details appear in \cite{aHKS}, though of course the ideas there owe a great
deal to the previous work of Dixmier--Douady \cite{DD}, Raeburn--Taylor \cite{RT}, and
the excellent monograph on Dixmier--Douady theory by Raeburn--Williams \cite{tfb}. The
details of the material in this section involve significant extra background and set-up,
so I will give almost no proofs, and just touch on the main points of the construction.

\begin{definition}
A $C^*$-algebra is called a \emph{Fell algebra} if it is liminary, and for every $[\pi]
\in \hat{A}$, there exists $b \in A^+$ and a neighbourhood $U$ of $[\pi]$ such that
$\psi(b)$ is a rank-1 projection whenever $[\psi] \in U$.
\end{definition}

Roughly speaking, this says that lots of elements of $A$ have the same rank under nearby
irreducible representations. So it should be related to the continuous-trace condition.
Indeed, it turns out that a Fell algebra is a continuous-trace algebra if and only if it
has Hausdorff spectrum. Theorem~3.3 of \cite{aHKS} says that $A$ is a Fell algebra if and
only if it is liminary and generated as an ideal by elements $a \in A_+$ such that $a A
a$ is abelian, and that this in turn happens if and only if there is a set $S$ of ideals
of $A$ each element of which is Morita equivalent to a commutative $C^*$-algebra and such
that $\bigcup S$ spans a dense subspace of $A$.

We now show how to obtain an equivalence relation from a Fell algebra.

\begin{proposition}\label{prp:construct diagonal}
Let $A$ be a Fell algebra. Choose a sequence $d_i$ of positive elements of $A$ with
$\|d_i\| = 1$ such that each $d_i A d_i$ is abelian, and such that the $A d_i A$ generate
$A$. For each $i$, let $a_i \coloneq d_i \otimes \theta_{i,i} \in A \otimes \mathcal{K}$.
Then $\sum_i a_i$ converges to an element $a$ of $\mathcal{M}(A \otimes \mathcal{K})$.
This element is full in the sense that $\overline{\operatorname{span}} (A \otimes
\mathcal{K})a(A \otimes \mathcal{K}) = A \otimes \mathcal{K}$. Moreover $C \coloneq a (A
\otimes \mathcal{K}) a$ and $D \coloneq \sum_i a_i (A \otimes \mathcal{K}) a_i \cong
\bigoplus_i d_i A d_i$ form a Cartan pair $(C, D)$.
\end{proposition}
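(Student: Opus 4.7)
\bigskip

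\noindent\textbf{Proof proposal.} The plan is to verify the four claims in order (convergence, fullness, the direct-sum description of $D$, and the Cartan-pair axioms), leaning on the simple observation that the matrix units $\theta_{i,i}$ are mutually orthogonal so that the elements $a_i$ are pairwise orthogonal and block-diagonal. I will use throughout the elementary fact that a positive element $d$ of a $C^*$-algebra satisfies $d\in\overline{dAd}$, since $d(d+\varepsilon)^{-1}d\to d$ in $A^+$.

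For convergence, the partial sums $a^{(N)}\coloneq\sum_{i=1}^N a_i$ are positive, bounded in norm by $\sup_i\|d_i\|=1$, and for any elementary tensor $b\otimes\theta_{j,k}$ we have $a^{(N)}(b\otimes\theta_{j,k})=d_jb\otimes\theta_{j,k}$ once $N\ge j$, so the sequence is eventually constant when multiplied by such an element. A standard density-and-boundedness argument then yields strict convergence of $\sum a_i$ to a positive multiplier $a$ with $\|a\|\le 1$. For fullness: $a(b\otimes\theta_{i,k})=d_ib\otimes\theta_{i,k}$ and hence $(c\otimes\theta_{j,i})a(b\otimes\theta_{i,k})=cd_ib\otimes\theta_{j,k}$, so the closed two-sided ideal generated by $a$ contains $Ad_iA\otimes\theta_{j,k}$ for every $i,j,k$. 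Since by hypothesis $\sum_i Ad_iA$ is dense in $A$, this ideal is all of $A\otimes\mathcal{K}$.

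For the description of $D$, the identities $a_ia_j=\delta_{i,j}(d_i^2\otimes\theta_{i,i})$ show that the subalgebras $a_i(A\otimes\mathcal{K})a_i=d_iAd_i\otimes\theta_{i,i}$ are mutually orthogonal. Hence $\sum_i a_i(A\otimes\mathcal{K})a_i$ is the internal $c_0$-direct sum $\bigoplus_i d_iAd_i$, and is abelian since each summand is. To see $D\subseteq C$, note $a(b\otimes\theta_{i,i})a=a_i(b\otimes\theta_{i,i})a_i=d_ibd_i\otimes\theta_{i,i}$, so every generator of $D$ lies in $C$. An approximate unit for $C$ (which also sits inside $D$) is assembled by taking finite sums of approximate units $e_{i,\lambda}$ of the abelian algebras $d_iAd_i$: since $d_i\in\overline{d_iAd_i}$, we have $e_{i,\lambda}d_i\to d_i$, from which $e_{i,\lambda}\cdot(d_ibd_j\otimes\theta_{i,j})\to d_ibd_j\otimes\theta_{i,j}$, handling arbitrary matrix entries of elements of $C$.

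The conditional expectation $\Phi\colon C\to D$ will be the restriction of the standard diagonal expectation $E=\operatorname{id}_A\otimes E_0$, where $E_0\colon\mathcal{K}\to c_0$ is the norm-convergent faithful expectation $T\mapsto\sum_i\theta_{i,i}T\theta_{i,i}$ (norm convergence uses compactness of $T$). A direct computation on dense spans shows that $E$ maps $C$ into $D$ and agrees with the identity on $D$; faithfulness of $\Phi$ is inherited from that of $E$. Regularity is easy: every generator $d_iyd_j\otimes\theta_{i,j}$ of $C$ is a normaliser of $D$, since multiplying it and its adjoint against $d_kxd_k\otimes\theta_{k,k}$ produces an element of $d_iAd_i\otimes\theta_{i,i}\subseteq D$ (only the terms with $k=j$ survive, by orthogonality of the matrix units).

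The one place I expect real work is verifying that $D$ is maximal abelian in $C$. The plan is: given $c\in C$ commuting with every element of $D$, write $c$ as a (norm-convergent) matrix $(c_{i,j})$ with $c_{i,j}\in d_iAd_j$. Multiplying $c$ on both sides by $e_i\otimes\theta_{i,i}$ for $e_i$ running through an approximate unit of $d_iAd_i$ and using $c_{i,j}e_j\to c_{i,j}$, $e_ic_{i,j}\to c_{i,j}$ (which relies on the fact, noted above, that $d_j\in\overline{d_jAd_j}$), the commutation relation forces $c_{i,j}=0$ for $i\ne j$. Then each diagonal entry $c_{i,i}\in d_iAd_i$ commutes with all of the abelian algebra $d_iAd_i$, so lies in $d_iAd_i$ itself. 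Hence $c\in D$. The main obstacle is handling the passage from algebraic statements about the matrix entries to norm statements about $c$ in the closure $C$; this is controlled by noting that if $c=\lim axa_n$ then the $(i,j)$ entry of $c$ is genuinely the norm limit of elements of $d_iAd_j$, keeping the matrix-entry analysis rigorous.
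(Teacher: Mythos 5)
Your proposal is essentially correct, and you should know that the notes themselves give no proof of this proposition: it sits in the final survey chapter, where the author explicitly defers all details to \cite{aHKS}, so there is no in-text argument to compare against. The route you take---orthogonal block decomposition along the $\theta_{i,i}$, strict convergence of the partial sums, fullness via $(c\otimes\theta_{j,i})a(b\otimes\theta_{i,k})=cd_ib\otimes\theta_{j,k}$ together with density of $\sum_i Ad_iA$, the diagonal expectation $\operatorname{id}_A\otimes E_0$, and a matrix-entry analysis for maximal abelianness---is the natural one and matches the strategy of the source. Two points need tightening. First, in the maximal-abelian step your justification for the diagonal entries (``$c_{i,i}$ commutes with all of the abelian algebra $d_iAd_i$, so lies in $d_iAd_i$ itself'') is a non sequitur: commuting with an abelian subalgebra does not force membership in it. Fortunately no such argument is needed, because the $(i,i)$-corner of $C=\overline{a(A\otimes\mathcal{K})a}$ is exactly $\overline{d_iAd_i}\otimes\theta_{i,i}$, which \emph{is} the $i$th summand of $D$; so once the off-diagonal entries are killed you get $c=\lim_N (1\otimes P_N)c(1\otimes P_N)=\lim_N\sum_{i\le N}c_{i,i}\otimes\theta_{i,i}\in D$ directly (equivalently $c=\Phi(c)$). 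Second, the claim that faithfulness of $\Phi$ is ``inherited'' from $E_0$ deserves a sentence: faithfulness of $\operatorname{id}_A\otimes E_0$ on $A\otimes\mathcal{K}$ follows either from a slice-map argument or from the $2\times 2$-corner estimate $x_{ij}^*x_{ij}\le\|x_{ii}\|\,x_{jj}$ for positive $x$, which shows that a positive element with vanishing diagonal entries is zero. With those repairs the argument is complete; the remaining steps (the approximate unit assembled from approximate units of the $\overline{d_iAd_i}$ using $d_i\in\overline{d_iAd_i}$, and regularity via the spanning normalisers $d_ibd_j\otimes\theta_{i,j}$) are sound as written.
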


By Corollary~\ref{cor:unique FCE}, if $(C, D)$ is a Cartan pair, then there is a unique
expectation $\Phi : A \to B$. Given $\phi \in \widehat{D}$, the composition $\phi \circ
\Phi$ gives a pure state of $C$, and then the GNS construction yields an irreducible
representation. So we obtain a well-defined map $\sigma : \widehat{D} \to \widehat{C}$,
which we call the \emph{spectral map}.

\begin{proposition}
Let $A$ be a Fell algebra, and choose a sequence $d_i$ as in the preceding proposition.
The Weyl groupoid $\mathcal{G}_{C, D}$ of Theorem~\ref{thm:Weyl gpd} is isomorphic to the
equivalence relation $R(\sigma)$ determined by the spectral map: $R(\sigma) =
\{(\phi,\psi) \in \widehat{D} \mid \sigma(\phi) = \sigma(\psi)\}$.
\end{proposition}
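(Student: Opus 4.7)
The plan is to construct an explicit isomorphism $\Theta\colon \mathcal{G}_{C,D} \to R(\sigma)$ and verify it respects the groupoid and topological structures. The natural candidate is $\Theta([n,\phi]) \coloneq (\alpha_n(\phi), \phi)$; this is a well-defined map because $[n,\phi] = [m,\phi]$ in $\mathcal{G}_{C,D}$ means $\alpha_n$ and $\alpha_m$ agree near $\phi$ so in particular agree at $\phi$. That $\Theta$ is a groupoid homomorphism into $\widehat{D} \times \widehat{D}$ follows directly from the definitions of the structure maps in $\mathcal{G}_{C,D}$.

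First I would show that the image lands in $R(\sigma)$, i.e.\ that $\sigma(\alpha_n(\phi)) = \sigma(\phi)$ for every $n \in N(D)$ and $\phi \in \operatorname{dom}(n)$. Write $n = v|n|$ for the polar decomposition of $n$ in $C^{**}$. Then $v$ implements a unitary equivalence of the GNS representations built from $\phi\circ\Phi$ and $\alpha_n(\phi)\circ\Phi$: the intertwining identity $\alpha_n(\phi)(nbn^*) = \phi(bn^*n)$ exactly says that conjugation by $v$ carries one state of $C$ to the other on the hereditary subalgebra $\overline{nCn^*}$, and by density this extends to a unitary intertwining operator between the two irreducible representations of $C$. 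Thus $\sigma(\phi) = \sigma(\alpha_n(\phi))$.

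The heart of the argument is surjectivity of $\Theta$ onto $R(\sigma)$: given $(\phi,\psi)$ with $\sigma(\phi) = \sigma(\psi)$, I need to manufacture a normaliser $n \in N(D)$ with $\alpha_n(\psi) = \phi$. Say $\phi$ lies in the $i$-th summand $\widehat{d_i A d_i}$ of $\widehat{D}$ and $\psi$ lies in the $j$-th summand. Because $d_i A d_i$ is abelian and $d_i$ becomes a rank-one projection in any irreducible representation of $A$ containing $\phi$ in its image, $\phi$ corresponds to a distinguished unit vector $\xi_\phi$ in the representation space $\mathcal{H}_\phi$ of a representation $\pi_\phi$ of $A$; and similarly for $\psi$. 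The hypothesis $\sigma(\phi) = \sigma(\psi)$ gives a unitary $U\colon \mathcal{H}_\psi \to \mathcal{H}_\phi$ intertwining $\pi_\psi$ with $\pi_\phi$, and by Kadison transitivity I can pick $b \in A$ with $\pi_\phi(b)\xi_\psi$ equal to a unit scalar multiple of $\xi_\phi$ (identifying the two spaces via $U$). Then $n \coloneq a_i (b \otimes \theta_{i,j}) a_j = d_i b d_j \otimes \theta_{i,j}$ lies in $C = a(A\otimes\mathcal{K})a$, normalises $D$ because $n^*n \in d_j A d_j$ and $nn^* \in d_i A d_i$, and by construction $\alpha_n(\psi) = \phi$.

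Finally I would verify injectivity of $\Theta$, which amounts to showing that $\mathcal{G}_{C,D}$ is principal: if $\alpha_n(\phi) = \phi$ then $[n,\phi]$ is a unit. Since each $d_i A d_i$ is abelian and $d_i$ has rank one in nearby irreducibles, any normaliser $n$ fixing $\phi$ must, after cutting down by a suitable positive function $h \in D$ with $\phi(h)\neq 0$, satisfy $nh \in D$; this gives $[n,\phi] = [nh, \phi] \in \mathcal{G}^{(0)}_{C,D}$. Continuity and openness of $\Theta$ come from the defining base $Z(n, V)$ of $\mathcal{G}_{C,D}$: the image of $Z(n, V)$ is $\{(\alpha_n(\phi), \phi) : \phi \in V\} \subseteq R(\sigma)$, which is open in the relative topology from $\widehat{D}\times\widehat{D}$ because $\alpha_n$ is a homeomorphism. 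I expect the main obstacle to be the injectivity step: principality of $\mathcal{G}_{C,D}$ is false for general Cartan pairs, and here it really exploits the very rigid structure of the diagonal $D = \bigoplus_i d_i A d_i$ cut out by the rank-one selectors $d_i$ inherent to the Fell condition.
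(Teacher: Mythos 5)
The paper itself gives no proof of this proposition---it sits in the final section where the author explicitly defers all details to \cite{aHKS}---so there is nothing to compare line-by-line; I can only assess your argument on its own terms. Your strategy is the right one: the map $\Theta([n,\phi]) = (\alpha_n(\phi),\phi)$ is exactly $\gamma \mapsto (r(\gamma), s(\gamma))$, so by Lemma~\ref{lem:principal<->EqRel} the whole statement reduces to three things: (i) $R(\mathcal{G}_{C,D}) \subseteq R(\sigma)$ (normalisers preserve the GNS class of $\phi \circ \Phi$), (ii) surjectivity onto $R(\sigma)$, and (iii) principality plus the statement that the Weyl topology agrees with the relative product topology. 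Your treatment of (i) via the polar decomposition and of (ii) via Kadison transitivity and the explicit normaliser $d_i b d_j \otimes \theta_{i,j}$ is correct and is, I believe, essentially how \cite{aHKS} proceeds; note that such corner elements normalise $D$ automatically, which is a pleasant feature of your construction.

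Two points need shoring up. First, in the principality step a one-sided cut-down does not suffice: writing $n = \sum_{k,l} d_k x_{kl} d_l \otimes \theta_{k,l}$, the product $nh$ with $h \in d_jAd_j \otimes \theta_{j,j}$ still has off-diagonal components $d_k x_{kj} d_j h_j \otimes \theta_{k,j}$ for $k \neq j$ and so need not lie in $D$. You must cut down on both sides, replacing $n$ by $h'nh$ with $h, h'$ supported near $\phi$ in the $j$-th summand (legitimate because $\alpha_n(\phi) = \phi$ lies in the same summand as $\phi$); then $h'nh \in d_jAd_jAd_jAd_j \otimes \theta_{j,j} \subseteq d_jAd_j \otimes \theta_{j,j} \subseteq D$, which is where the abelianness of the corners is actually used. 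Second, your justification that $\Theta(Z(n,V))$ is relatively open in $R(\sigma)$ ``because $\alpha_n$ is a homeomorphism'' is too weak: the graph of a homeomorphism need not be relatively open in an equivalence relation containing it. What makes it work here is that $\sigma$ is locally injective---each $\overline{d_iAd_i}$ is an abelian hereditary subalgebra, so $\widehat{d_iAd_i} \to \widehat{A}$ is a homeomorphism onto an open set---whence $(W \times V) \cap R(\sigma)$ is itself the graph of the partial homeomorphism $(\sigma|_W)^{-1} \circ \sigma|_V$ for $W, V$ contained in single summands, and this graph coincides with that of $\alpha_n$ where both are defined. Without this local injectivity the Weyl topology could be strictly finer than the product topology, exactly the phenomenon of Example~\ref{ex:2infty relation}. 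Both repairs are routine, so I would count your proposal as correct in substance.
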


Using this, we are able to characterise diagonal-preserving Morita equivalence of Fell
algebras in terms of groupoid equivalence.

We shall say that twists $\mathcal{E}_1 \to \mathcal{G}_1$ and $\mathcal{E}_2 \to
\mathcal{G}_2$ are equivalent if there is a linking groupoid $L$ for $\mathcal{G}_1$ and
$\mathcal{G}_2$ and a twist $\mathcal{L}$ over $L$ such that reduction of $L$ and
$\mathcal{L}$ to $\mathcal{G}^{(0)}_i \subseteq L^{(0)}$ yields a twist
$\mathcal{G}^{(0)}_i \times \mathbb{T} \to \mathcal{G}^{(0)}_i \mathcal{L}
\mathcal{G}^{(0)}_i \to \mathcal{G}^{(0)}_i L \mathcal{G}^{(0)}_i$ that is isomorphic to
$\mathcal{G}^{(0)}_i \times \mathbb{T} \to \mathcal{E}_i \to \mathcal{G}_i$. This is the
natural extension of the notion of groupoid equivalence to twists.

\begin{proposition}
If $(C_1, D_1)$ and $(C_2, D_2)$ are Cartan pairs in which $C_1$ and $C_2$ are Fell
algebras, then $C_1$ and $C_2$ are Morita equivalent if and only if the twists
$\mathbb{T} \times \widehat{D}_i \to \mathcal{E}_{C_i, D_i} \to \mathcal{G}_{C_i, D_i}$
($i = 1,2$) are equivalent twists.
\end{proposition}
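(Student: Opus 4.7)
The plan is to exploit the Kumjian--Renault reconstruction theorem (Theorem~\ref{thm:Renault-Feldman-Moore}) to reduce the claim to one about twisted groupoid $C^*$-algebras: up to isomorphism of Cartan pairs, we may assume $C_i = C^*_r(\mathcal{G}_i; \mathcal{E}_i)$ and $D_i = D_{r,i}$ for the two twists $\mathcal{E}_i \to \mathcal{G}_i$ in the statement. The proof then splits into the two implications, each of which is a twisted analogue of the corresponding half of the linking-groupoid theorem of Chapter~\ref{ch:C*-algs}.

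For the ``if'' direction, suppose an equivalence is implemented by a linking groupoid $L$ carrying a twist $\mathcal{L}$ whose reductions to $\mathcal{G}_i^{(0)} \subseteq L^{(0)}$ recover the $(\mathcal{E}_i, \mathcal{G}_i)$. I would form $C^*_r(L; \mathcal{L})$ and let $P_i \in \mathcal{M}(C^*_r(L; \mathcal{L}))$ be the projection produced, exactly as in the linking-groupoid theorem, by a strictly convergent approximate unit drawn from the copy of $C_c(\mathcal{G}_i^{(0)})$ in the diagonal subalgebra. Because the reduction of $(L, \mathcal{L})$ to $\mathcal{G}_i^{(0)}$ is $(\mathcal{G}_i, \mathcal{E}_i)$, the proof of the linking-groupoid theorem for untwisted reduced $C^*$-algebras transfers essentially verbatim, with $\Sigma_c$ in place of $C_c$ and convolution given by~\eqref{eq:representative}, to yield $P_i C^*_r(L; \mathcal{L}) P_i \cong C^*_r(\mathcal{G}_i; \mathcal{E}_i) = C_i$. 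Fullness of the $P_i$ follows from surjectivity of $\tilde r$ and $\tilde s$ in the definition of an equivalence, so $P_1 C^*_r(L; \mathcal{L}) P_2$ is an imprimitivity bimodule between $C_1$ and $C_2$.

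For the ``only if'' direction, suppose $C_1$ and $C_2$ are Morita equivalent Fell algebras via an imprimitivity bimodule $X$. I would form the $C^*$-algebra linking algebra $L(X)$, which is again a Fell algebra because the Fell property passes through Morita equivalence. I would then apply the construction of Proposition~\ref{prp:construct diagonal} to $L(X)$, choosing the generating sequence $\{d_n\}$ to be the concatenation of sequences chosen separately in $C_1$ and $C_2$ whose associated Cartan diagonals recover $D_1$ and $D_2$ respectively after stabilisation. The resulting Cartan pair inside $L(X) \otimes \mathcal{K}$ has Weyl groupoid identified with the spectral equivalence relation $R(\sigma)$, and the reductions of $R(\sigma)$ and of the canonical twist over it to the open subsets of the spectrum coming from $\widehat{D_i}$ are, by the proposition immediately preceding our statement, (stabilisations of) the original $(\mathcal{G}_i, \mathcal{E}_i)$. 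A Brown--Green--Rieffel-type argument for twists (removing the auxiliary stabilisation by $R_\mathbb{N}$, which does not change the equivalence class of a twist) then produces the required linking twist over a linking groupoid for $\mathcal{G}_1$ and $\mathcal{G}_2$.

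The main obstacle is the compatibility step in the ``only if'' direction: one must know that the twist produced from a Morita equivalence of $C^*$-algebras is essentially independent of the various choices (of the $d_n$, of compact cutdowns, and of stabilisation) made in Proposition~\ref{prp:construct diagonal}. This independence is exactly the content of the intrinsic identification of the Weyl groupoid with $R(\sigma)$: because $R(\sigma)$ and its natural twist depend only on the spectral map $\widehat{D} \to \widehat{C}$, they are determined up to equivalence by the pair $(C_i, D_i)$ alone. That intrinsic character is what allows a Morita equivalence between $C_1$ and $C_2$ to be upgraded to an equivalence of the associated twists, and making this precise is where the real work of the argument lies.
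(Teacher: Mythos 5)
The paper offers no proof of this proposition---it sits in the survey portion of Chapter~\ref{ch:Weyl gpd} where all details are deferred to \cite{aHKS}---so your argument can only be assessed on its own terms. Your ``if'' direction is sound: after invoking Theorem~\ref{thm:Renault-Feldman-Moore} to replace $(C_i, D_i)$ by $(C^*_r(\mathcal{G}_i;\mathcal{E}_i), D_{r,i})$, a linking twist $\mathcal{L} \to L$ witnessing the equivalence does yield complementary full multiplier projections $P_i$ with $P_i\, C^*_r(L;\mathcal{L})\, P_i \cong C^*_r(\mathcal{G}_i;\mathcal{E}_i)$, by the same compression-of-regular-representations argument used for the untwisted linking groupoid; this half does not even use the Fell hypothesis.

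The ``only if'' direction is where the proposal breaks down, and at exactly the point you flag as ``the real work.'' There are two problems. First, passing from the linking algebra $L(X)$ to a Cartan pair via Proposition~\ref{prp:construct diagonal} requires choosing the $d_n$ so that the resulting diagonal $\bigoplus_n d_n L(X) d_n$ restricts, on the two corners, to something stably isomorphic as a Cartan pair to the \emph{given} subalgebras $D_1$ and $D_2$; nothing in your sketch (or in the paper) guarantees that an arbitrary Cartan subalgebra of a Fell algebra arises from such a choice, and Theorem~\ref{thm:twists iso} asserts well-definedness of the class only for pairs actually constructed as in Proposition~\ref{prp:construct diagonal}. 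Second, and more seriously, your justification of the key independence statement is incorrect: you assert that ``$R(\sigma)$ and its natural twist depend only on the spectral map.'' Even granting that the Weyl groupoid is recoverable from the spectral map, the twist over it is genuinely extra data---that is the entire point of a Dixmier--Douady-type invariant, and Cartan pairs with homeomorphic spectral relations can carry inequivalent twists (otherwise $\delta(A)$ would carry no information). What you actually need is that the equivalence class of the Weyl twist of $(C,D)$ depends only on the Morita class of $C$ and not on the choice of Cartan subalgebra $D$; this is the substantive content of the proposition, it is where the Fell hypothesis does its work, and it is precisely the step your argument assumes rather than proves.
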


The Dixmier--Douady invariant of a continuous-trace $C^*$-algebra with spectrum $X$ is an
element of a cohomology group. For us, the collection of equivalence classes of twists
will act as a proxy for this cohomology group. (Theorem~\ref{thm:twists iso} describes
how these two groups are related in the continuous-trace setting.)

We say that twists $\mathcal{G}^{(0)} \times \mathbb{T} \to \mathcal{E} \to \mathcal{G}$
and $\mathcal{G}^{(0)} \times \mathbb{T} \to \mathcal{F} \to \mathcal{G}$ over the same
\'etale groupoid $\mathcal{G}$ are \emph{isomorphic} if there is a groupoid isomorphism
$\zeta : \mathcal{E} \to \mathcal{F}$ such that the diagram
\[
\begin{tikzpicture}[>=latex]
    \node (tl) at (0,1.5) {$\mathcal{G}^{(0)} \times \mathbb{T}$};
    \node (tm) at (3,1.5) {$\mathcal{E}$};
    \node (tr) at (6,1.5) {$\mathcal{G}$};
    \node (bl) at (0,0) {$\mathcal{G}^{(0)} \times \mathbb{T}$};
    \node (bm) at (3,0) {$\mathcal{F}$};
    \node (br) at (6,0) {$\mathcal{G}$};
    \draw[->] (tl) to (tm);
    \draw[->] (tm) to (tr);
    \draw[->] (bl) to (bm);
    \draw[->] (bm) to (br);
    \draw[->] (tl) to node[pos=0.5, right] {$\scriptstyle=$} (bl);
    \draw[->] (tm) to node[pos=0.5, right] {$\scriptstyle \zeta$} (bm);
    \draw[->] (tr) to node[pos=0.5, right] {$\scriptstyle=$} (br);
\end{tikzpicture}
\]
commutes.

For the following result, we need to describe the pullback construction for twists over a
given relation $R$. Let $R$ be an equivalence relation, and suppose that $\mathcal{E}$
and $\mathcal{E}'$ are twists over $R$. Define an equivalence relation $\sim$ on
\[
\mathcal{E} \mathbin{_\pi\times_{\pi'}} \mathcal{E}' := \{(\varepsilon, \varepsilon') \in \mathcal{E} \times \mathcal{E}' \mid
\pi(\varepsilon) = \pi'(\varepsilon')\}
\]
by $(\varepsilon, \varepsilon') \sim (\delta, \delta')$ if and only if there exists $z
\in \mathbb{T}$ such that $z\cdot\varepsilon = \delta$ and $\overline{z}\cdot
\varepsilon' = \delta'$.

The pullback $\mathcal{E} * \mathcal{E}'$ is defined as
\[
\mathcal{E} * \mathcal{E}' := \big(\mathcal{E} \mathbin{_\pi\times_{\pi'}} \mathcal{E}'\big)/{\sim}.
\]
This is a twist over $R$ with respect to the map $\pi*\pi' : \mathcal{E}
* \mathcal{E}' \to R$ given by $(\pi*\pi')([\varepsilon, \varepsilon']) = \pi(\varepsilon)$, and the map
$i * i' : R^{(0)} \times \mathbb{T} \to \mathcal{E} * \mathcal{E}'$ given by $(i*i')(x,
z) := [i(x,z), i'(x, 1)]$.

\begin{lemma}
Let $R$ be a topological equivalence relation. Then the collection of isomorphism classes
of twists over $R$ becomes an abelian group $\operatorname{Tw}_R$ with identity element
equal to the class of the trivial twist, and with group operation given by $[\mathcal{E}]
+ [\mathcal{E}'] := [\mathcal{E} * \mathcal{E}']$.
\end{lemma}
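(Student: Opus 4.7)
The plan is to verify the group axioms directly, leaning on the description of the pullback $\mathcal{E} * \mathcal{E}'$ and on the key fact (Lemma~\ref{lem:T-space}) that for any $\varepsilon, \delta \in \mathcal{E}$ with $\pi(\varepsilon) = \pi(\delta)$, there is a unique $w \in \mathbb{T}$ with $w \cdot \varepsilon = \delta$. First I would check that $\mathcal{E} * \mathcal{E}'$, equipped with the announced structure maps and with the quotient topology inherited from $\mathcal{E} \times \mathcal{E}'$, is actually a twist over $R$: the only nontrivial points are that the quotient is Hausdorff and that local triviality holds. For local triviality, given $\alpha \in R$, take a bisection neighbourhood $U$ of $\alpha$ on which both $\pi$ and $\pi'$ admit continuous sections $S, S'$; then $(\alpha, z) \mapsto [z \cdot S(\alpha), S'(\alpha)]$ furnishes the required section of $\pi * \pi'$ and trivialises $\pi^{-1}(U)$. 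Independence of the isomorphism class of $\mathcal{E} * \mathcal{E}'$ under replacing $\mathcal{E}, \mathcal{E}'$ by isomorphic twists is immediate from functoriality of the pullback.

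Next I would verify the three algebraic axioms. For the \emph{identity}, given $\mathcal{E} \to R$, the map $[\varepsilon, (\pi(\varepsilon), w)] \mapsto w \cdot \varepsilon$ is well defined (if $(\varepsilon, (\pi(\varepsilon), w)) \sim (z\cdot\varepsilon, (\pi(\varepsilon), \overline{z}w))$, then $\overline{z}w \cdot (z\cdot\varepsilon) = w\cdot\varepsilon$), and is a continuous groupoid isomorphism $\mathcal{E} * (R \times \mathbb{T}) \to \mathcal{E}$ intertwining the inclusions of $R^{(0)} \times \mathbb{T}$ and commuting with the quotients to $R$. For \emph{commutativity}, the swap $[\varepsilon, \varepsilon'] \mapsto [\varepsilon', \varepsilon]$ is a well-defined twist isomorphism $\mathcal{E} * \mathcal{E}' \to \mathcal{E}' * \mathcal{E}$ (here one uses that $\mathbb{T}$ is abelian, so the equivalence relation is symmetric in the two coordinates). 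For \emph{associativity}, one checks that sending $[[\varepsilon, \varepsilon'], \varepsilon'']$ to $[\varepsilon, [\varepsilon', \varepsilon'']]$ descends from $\mathcal{E} \times \mathcal{E}' \times \mathcal{E}''$ to a twist isomorphism $(\mathcal{E} * \mathcal{E}') * \mathcal{E}'' \to \mathcal{E} * (\mathcal{E}' * \mathcal{E}'')$.

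The step requiring the most care is the construction of \emph{inverses}. Given a twist $i \colon R^{(0)} \times \mathbb{T} \to \mathcal{E} \xrightarrow{\pi} R$, let $\overline{\mathcal{E}}$ be the same topological groupoid equipped with the new inclusion $\overline{i}(x,z) \coloneq i(x, \overline{z})$; this is again a twist over $R$, since $\overline{i}$ is an injective continuous groupoid homomorphism with the same image as $i$ and satisfies the centrality and bundle axioms. Writing $\overline{\cdot}$ for the $\mathbb{T}$-action on $\overline{\mathcal{E}}$, we have $z \, \overline{\cdot} \, \delta = \overline{z} \cdot \delta$, so the equivalence relation defining $\mathcal{E} * \overline{\mathcal{E}}$ becomes $(\varepsilon, \delta) \sim (z\cdot\varepsilon, z\cdot\delta)$. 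Define $\Theta \colon \mathcal{E} * \overline{\mathcal{E}} \to R \times \mathbb{T}$ by $\Theta([\varepsilon, \delta]) \coloneq (\pi(\varepsilon), w)$, where $w$ is the unique element of $\mathbb{T}$ with $\delta = w\cdot\varepsilon$ (Lemma~\ref{lem:T-space}); this is well defined because the $w$ attached to $(z\cdot\varepsilon, z\cdot\delta)$ is again the same $w$, and it is clearly a continuous groupoid isomorphism intertwining the twist structures. Hence $[\overline{\mathcal{E}}]$ is a two-sided inverse of $[\mathcal{E}]$.

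The main obstacle is really topological rather than algebraic: one must confirm that the various candidate isomorphisms built set-theoretically above are genuinely homeomorphisms in the quotient topology on pullbacks, and that the resulting groupoids have the continuity/local triviality required of a twist. Once trivialising sections of $\pi$ and $\pi'$ are combined as in the first paragraph, all of these checks reduce to routine (if slightly tedious) verifications, since every operation is locally just componentwise multiplication on a product of $\mathbb{T}$-bundles.
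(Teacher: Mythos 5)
The paper states this lemma without proof --- all of the section on Dixmier--Douady theory is explicitly presented with details deferred to \cite{aHKS} --- so there is no argument of the author's to compare against, and I can only assess your proposal on its own terms. Your overall strategy is the standard one and is sound: combine local sections of $\pi$ and $\pi'$ to trivialise $\pi * \pi'$ locally, note that the componentwise operations descend to the quotient because $i(R^{(0)}\times\mathbb{T})$ is central, check well-definedness on isomorphism classes, and verify identity, commutativity and associativity by the evident maps. All of those steps go through as you describe.

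The one place you need to be careful is the trivialisation $\Theta$ of $\mathcal{E} * \overline{\mathcal{E}}$. With your convention $\Theta([\varepsilon,\delta]) := (\pi(\varepsilon), w)$ where $\delta = w\cdot\varepsilon$, one computes
\[
\Theta\big((i * \overline{i})(x,z)\big) = \Theta\big([\,i(x,z),\, i(x,1)\,]\big) = (x,\overline{z}),
\]
since $i(x,1) = \overline{z}\cdot i(x,z)$. So $\Theta$ is anti-equivariant: it intertwines $i * \overline{i}$ with $(x,z)\mapsto(x,\overline{z})$ rather than with the inclusion $(x,z)\mapsto(x,z)$ of the trivial twist, and therefore fails the commuting-diagram condition in the definition of isomorphism of twists, which demands strict equality on $R^{(0)}\times\mathbb{T}$. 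This is not a conceptual gap --- the conjugate twist is the right candidate for the inverse --- but it is exactly the sort of orientation bookkeeping the construction is designed to track, so it should be fixed: define $w$ instead by $\varepsilon = w\cdot\delta$, or equivalently compose your $\Theta$ with the canonical isomorphism $(\gamma,w)\mapsto(\gamma,\overline{w})$ from the conjugate of the trivial twist to the trivial twist. With that one-line adjustment, $[\overline{\mathcal{E}}]$ is a two-sided inverse for $[\mathcal{E}]$ and the lemma follows.
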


If $A$ is a Fell algebra, we write $\mathcal{S}$ for the sheaf of germs of continuous
$\mathbb{T}$-valued functions on $\widehat{A}$. One can then form the sheaf cohomology
group $H^2(\widehat{A}, \mathcal{S})$. If $\widehat{A}$ is Hausdorff, then
$H^2(\widehat{A}, \mathcal{S})$ is isomorphic to $H^3(\widehat{A}, \mathbb{Z})$.

\begin{theorem}\label{thm:twists iso}
Let $R$ be a topological equivalence relation. Then there is a homomorphism $\rho_R :
\operatorname{Tw}_R \to H^2(\widehat{A}, \mathcal{S})$. If $A$ is a Fell algebra, $(C_1,
D_1)$ and $(C_2, D_2)$ are two Cartan pairs constructed as in
Proposition~\ref{prp:construct diagonal} and $\mathcal{E} \coloneq \mathcal{E}_{C_1, D_1}
\to \mathcal{G}_{C_1, D_1}$ and $\mathcal{E}' \coloneq \mathcal{E}_{C_2, D_2} \to
\mathcal{G}_{C_2, D_2}$ are the twists obtained from these two pairs, then
$\rho_R([\mathcal{E} \to R]) = \rho_{R'}([\mathcal{E}' \to R'])$.
\end{theorem}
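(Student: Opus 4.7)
The plan is to build $\rho_R$ as a \v{C}ech-cocycle map in the spirit of the Raeburn--Taylor construction of Example~\ref{eg:RT construction}, and then to reduce the invariance statement to the equivalence-of-twists conclusion of the preceding proposition.

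First I would construct $\rho_R$. Let $X \coloneq R^{(0)}/R$ with quotient map $q : R^{(0)} \to X$; in the setting relevant to a Fell algebra, $q$ is a local homeomorphism onto its image. Choose an open cover $\{U_i\}$ of $X$ together with continuous local sections $s_i : U_i \to R^{(0)}$ of $q$. On each double overlap $U_{ij}$ the pair $(s_i(x), s_j(x))$ lies in $R$, and the local-triviality axiom for the twist yields continuous lifts $t_{ij} : U_{ij} \to \mathcal{E}$ with $\pi(t_{ij}(x)) = (s_i(x), s_j(x))$. On a triple overlap $U_{ijk}$ the element $t_{ij}(x) t_{jk}(x) t_{ik}(x)^{-1}$ projects to a unit of $R$ and hence, by Lemma~\ref{lem:T-space}, equals $i(s_i(x), c_{ijk}(x))$ for a unique $c_{ijk}(x) \in \mathbb{T}$. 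The collection $\{c_{ijk}\}$ is a continuous $\mathcal{S}$-valued \v{C}ech $2$-cocycle, and I would set $\rho_R([\mathcal{E}]) \coloneq [c] \in H^2(X, \mathcal{S})$. Altering the lifts $t_{ij}$ by $\mathbb{T}$-valued functions modifies $c$ by a coboundary, and changing the cover or sections can be absorbed by passing to a common refinement, so $[c]$ depends only on the isomorphism class of $\mathcal{E}$.

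Next I would verify that $\rho_R$ is a homomorphism. Given twists $\mathcal{E}, \mathcal{E}'$, pass to a common refinement so that a single cover $\{U_i\}$ with sections $s_i$ serves both; choose lifts $t_{ij}$ and $t'_{ij}$ yielding cocycles $c$ and $c'$. The pullback $\mathcal{E} * \mathcal{E}'$ then admits the lifts $[t_{ij}, t'_{ij}]$, and the relation defining the pullback forces the triple product to represent the pointwise product $c \cdot c'$. The trivial twist manifestly gives the trivial cocycle, so $\rho_R$ is a group homomorphism.

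For the invariance statement, recall that by construction in Proposition~\ref{prp:construct diagonal} both $C_1$ and $C_2$ arise as full corners of $A \otimes \mathcal{K}$ and are therefore Morita equivalent to $A$, hence to each other; by the preceding proposition, the twists $\mathcal{E} \to R$ and $\mathcal{E}' \to R'$ are equivalent, witnessed by a linking twist $\mathcal{L}$ over a linking groupoid $L$. The spectral-map identifications $\widehat{D_i}/R(\sigma_i) \cong \widehat{C_i} \cong \widehat{A}$ identify both quotient spaces with $\widehat{A}$ and both sheaves with $\mathcal{S}$. Choosing an open cover $\{U_i\}$ of $\widehat{A}$ together with local sections $s_i : U_i \to R^{(0)}$ and $s'_i : U_i \to (R')^{(0)}$, and lifting $(s_i(x), s'_i(x)) \in L$ continuously to $\mathcal{L}$ via local triviality, produces a $\mathbb{T}$-valued $1$-cochain $b_{ij}$ on double overlaps whose coboundary is exactly $c^{-1} \cdot c'$. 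Hence $\rho_R([\mathcal{E}]) = \rho_{R'}([\mathcal{E}'])$ in $H^2(\widehat{A}, \mathcal{S})$. The main technical obstacle will be this last step: arranging simultaneous compatible local sections of $R$, $R'$, and $L$ over a single cover of $\widehat{A}$, and extracting from $\mathcal{L}$ the $1$-cochain whose coboundary realises the difference of the two cocycles. All of the ingredients are standard sheaf-theoretic bookkeeping once the right cover has been chosen, but threading the local data through the linking groupoid (whose unit space is a disjoint union $R^{(0)} \sqcup (R')^{(0)}$ rather than either base alone) requires care.
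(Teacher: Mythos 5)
The paper itself gives no proof of Theorem~\ref{thm:twists iso}---the whole of Section~\ref{sec:DD} is an overview that explicitly defers all details to \cite{aHKS}---so there is no argument in the text to compare yours against. Judged on its own terms, your outline is sound and is consistent with both the Raeburn--Taylor construction of Example~\ref{eg:RT construction} (of which your $\rho_R$ is precisely the inverse procedure) and the treatment in \cite{aHKS}. The construction of the cocycle $c_{ijk}$ from local sections and lifts, the use of Lemma~\ref{lem:T-space} to pin down the scalar, the verification that the pullback twist yields the pointwise product of cocycles, and the reduction of the invariance claim to a linking-twist computation producing the $1$-cochain $b_{ij}$ with $m_i^{-1}t_{ij}m_j = b_{ij}\cdot t'_{ij}$ and hence $c'c^{-1} = \delta b$, are all correct and are the right ingredients.

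Two points deserve more care than you give them. First, the construction of $\rho_R$ requires the quotient map $R^{(0)} \to R^{(0)}/R$ to admit continuous local sections over a cover of the quotient; for a literally arbitrary topological equivalence relation this can fail, and the first sentence of the theorem should be read (as in \cite{aHKS}) for relations $R(\psi)$ induced by a local homeomorphism $\psi$ of a locally compact Hausdorff space onto a locally locally compact, locally Hausdorff space. You flag this only in passing; in the application it holds because the spectral map $\sigma:\widehat{D}\to\widehat{C}$ of a Fell algebra is a local homeomorphism, but it is a genuine hypothesis, not bookkeeping. Second, the equality $\rho_R([\mathcal{E}]) = \rho_{R'}([\mathcal{E}'])$ takes place in $H^2(\widehat{A},\mathcal{S})$ only after the quotients $\widehat{D_1}/R$ and $\widehat{D_2}/R'$ are both identified with $\widehat{A}$ via the Rieffel homeomorphisms coming from the full corners of Proposition~\ref{prp:construct diagonal}, and the linking twist must be chosen compatibly with these identifications so that your sections $s_i$ and $s'_i$ over a common cover of $\widehat{A}$ really do land in the same $L$-orbit. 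Neither issue derails the argument, but both are where the actual work lies.
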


We denote the element $\rho_R([\mathcal{E} \to R]) \in H^2(\widehat{A}, \mathcal{S})$
obtained from any Cartan pair constructed as in Proposition~\ref{prp:construct diagonal}
by $\delta(A)$. For our final result, we need to recall that if $A$ is a $C^*$-algebra,
then its spectrum $\widehat{A}$ is a locally compact locally Hausdorff space whose every
open subset is again locally compact. Dixmier calls such spaces \emph{quasi locally
compact} \cite{Dixmier}, but I'm going to call them \emph{locally locally compact}.

\begin{theorem}\label{thm:DD Fell}
\begin{enumerate}
\item Let $A$ and $A'$ be Fell algebras. Then $A$ and $A'$ are Morita equivalent if
    and only if there is a homeomorphism $\widehat{A} \cong \widehat{A'}$ such that
    the induced isomorphism $H^2(\widehat{A}, \mathcal{S}) \cong H^2(\widehat{A'},
    \mathcal{S})$ carries $\delta(A)$ to $\delta(A')$.
\item\label{it:DD Fell surj} If $X$ is a locally locally compact, locally Hausdorff
    space, and $\delta \in H^2(X, \mathcal{S})$, then there exist a Fell algebra $A$
    and a homeomorphism $\widehat{A} \to X$ such that the induced isomorphism
    $H^2(\widehat{A}, \mathcal{S}) \cong H^2(X, \mathcal{S})$ carries $\delta(A)$ to
    $\delta$.
\end{enumerate}
\end{theorem}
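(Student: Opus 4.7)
The plan is to leverage the machinery built in Theorem~\ref{thm:Renault-Feldman-Moore}, Proposition~\ref{prp:construct diagonal}, and Theorem~\ref{thm:twists iso} to reduce both parts to statements about twists over principal \'etale equivalence relations. For part~(1), I would first fix Cartan pairs $(C,D)$ for $A$ and $(C',D')$ for $A'$ as produced by Proposition~\ref{prp:construct diagonal}, so that $C$ is Morita equivalent to $A$ and $C'$ to $A'$. The spectral map identifies $\widehat{D}$ with a fibration over $\widehat{A}$, and the resulting equivalence relation $R_A$ (and its companion $R_{A'}$) carries a twist $\mathcal{E}_A \to R_A$ whose class in $\operatorname{Tw}_{R_A}$ maps, via $\rho_{R_A}$, to $\delta(A)$, and similarly for $A'$.

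For the forward implication of part~(1), if $A$ and $A'$ are Morita equivalent then so are $C$ and $C'$, and by the Fell-algebra Morita-equivalence proposition immediately preceding Theorem~\ref{thm:twists iso} this forces the twists $\mathcal{E}_A \to R_A$ and $\mathcal{E}_{A'} \to R_{A'}$ to be equivalent in the twist-equivalence sense. The resulting groupoid equivalence descends to a homeomorphism $\widehat{A} \cong \widehat{A'}$ through the identifications of each spectrum with the orbit space of its Weyl relation, and functoriality of $\rho$ under pullback along this equivalence guarantees that $\delta(A)$ is sent to $\delta(A')$.

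For the reverse implication and for part~(2), I would go via the Raeburn--Taylor construction of Example~\ref{eg:RT construction}. Given a locally locally compact, locally Hausdorff space $X$ and a class $\delta \in H^2(X, \mathcal{S})$, choose a cover of $X$ by Hausdorff open sets $U_i$ fine enough that $\delta$ is represented by a \v{C}ech $2$-cocycle $c$ on the cover; assemble the relation $R = \{(i,x,j) \mid x \in U_i \cap U_j\}$ and the cocycle twist $\mathcal{E}$ over $R$ with $c$ as its multiplier, and set $A \coloneq C^*(R; \mathcal{E})$. Routine computations show that $R$ is a proper principal \'etale groupoid with $R^{(0)}/R \cong X$, hence $A$ is a Fell algebra with $\widehat{A} \cong X$. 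The key verification is that $\rho_R([\mathcal{E}]) = \delta$; this is essentially built into the construction, since the \v{C}ech cocycle defining $\mathcal{E}$ is by design the representative of $\delta$, and $\rho_R$ is set up to invert this passage. The reverse direction of part~(1) is then immediate: given a spectrum-preserving homeomorphism with $\delta(A) = \delta(A')$, pull back both twists to a common refinement $R$ and use that their cohomology classes agree to deduce twist-equivalence; then invoke the Fell-algebra Morita-equivalence proposition again to conclude $A \sim_{\textup{ME}} A'$.

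The main obstacle will be the bookkeeping around the non-Hausdorff spectrum: when $X$ is only locally locally compact and locally Hausdorff, the relation $R$ is non-Hausdorff as a topological space (though each fibre is discrete), so care is needed both in forming $C^*(R; \mathcal{E})$ outside the Hausdorff setting developed in these notes and in relating $\operatorname{Tw}_R$ to $H^2(X, \mathcal{S})$. A second delicate point is independence of the choice of the sequence $(d_i)$ in Proposition~\ref{prp:construct diagonal}: two such choices yield Cartan pairs whose Weyl twists must be shown to be equivalent over a common refinement of the two equivalence relations, which is precisely what Theorem~\ref{thm:twists iso} packages into the well-definedness of $\delta(A)$. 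Handling these two technical points carefully, rather than any single hard computation, is what makes the full argument in \cite{aHKS} substantial.
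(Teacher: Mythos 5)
Your outline matches the paper's own treatment: the notes give no detailed proof of this theorem, only the closing remark that part~(2) ``is very closely related to the Raeburn--Taylor construction'' (represent $\delta$ by a \v{C}ech cocycle on a cover by Hausdorff open sets, form $R(\psi)$ and the cocycle twist, and take the $C^*$-algebra of the resulting twist), which is exactly your argument for part~(2), while part~(1) is deferred entirely to \cite{aHKS} and your assembly of Proposition~\ref{prp:construct diagonal}, the Morita-equivalence/twist-equivalence proposition, and Theorem~\ref{thm:twists iso} is precisely the intended route through the section's machinery. The one ingredient you invoke that the notes never actually state is injectivity of $\rho_R$ (so that equality of classes in $H^2(X,\mathcal{S})$ after refinement forces twist equivalence); the introduction asserts $\operatorname{Tw}_R \cong H^2(X,\mathcal{S})$ for relations of local homeomorphisms, but this is among the details left to \cite{aHKS}, and you correctly identify it, together with the non-Hausdorff bookkeeping, as where the real work lies.
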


More or less by definition of the invariant $\delta(A)$, the proof of Theorem~\ref{thm:DD
Fell}(\ref{it:DD Fell surj}) is very closely related to the Raeburn--Taylor construction:
Take $\delta \in H^2(X, \mathcal{S})$, represent it by a \v{C}ech cocycle $c$ defined on
an open cover $X = \bigcup_{i \in I} U_i$ by Hausdorff neighbourhoods. Let $Y :=
\bigsqcup_{i \in I} \{i\} \times U_i$, and define $\psi : Y \to X$ by $\psi(i,x) = x$.
The cocycle $c$ then determines a continuous cocycle $\sigma$ on $R(\psi)$ as in
Example~\ref{eg:RT construction}, and the $C^*$-algebra of the resulting twist is then a
Fell algebra with invariant $\delta$.

\begin{remark}
I have refrained from calling $\delta(A)$ the Dixmier--Douady invariant of $A$ because,
unfortunately, if $A$ is a continuous-trace $C^*$-algebra, it is not clear that
$\delta(A)$ is equal to the classical Dixmier--Douady invariant.
\end{remark}


\newcommand{\etalchar}[1]{$^{#1}$}
\def\cprime{$'$}
\providecommand{\bysame}{\leavevmode\hbox to3em{\hrulefill}\thinspace}
\providecommand{\MR}{\relax\ifhmode\unskip\space\fi MR }
\providecommand{\MRhref}[2]{%
  \href{http://www.ams.org/mathscinet-getitem?mr=#1}{#2}
} \providecommand{\href}[2]{#2}

\end{document}